\documentclass[10pt,a4paper]{article}
\usepackage[T1]{fontenc}
\usepackage[utf8]{inputenc}
\usepackage[sc]{mathpazo}
\usepackage{amsmath, amssymb, amsthm, mathrsfs, amsfonts, esint, mathtools}
\usepackage{accents}
\usepackage{cite}
\usepackage{a4wide}
\usepackage{xcolor}
\usepackage{tikz}
\usepackage{pgfplots}
\usepackage{float}
\pgfplotsset{compat=1.18}

\usetikzlibrary{arrows.meta,calc,positioning,decorations.pathreplacing,shapes.geometric,fit,shadows.blur}
\definecolor{AJblue}{RGB}{18,76,140}
\definecolor{AJblue2}{RGB}{10,50,105}
\definecolor{AJlight}{RGB}{235,243,255}
\definecolor{AJmid}{RGB}{170,205,255}
\definecolor{AJgray}{RGB}{110,110,110}
\newtheorem{mainthm}{Theorem}

\newtheorem{teorema}{Theorem}
\newtheorem{corolario}[teorema]{Corollary}
\newtheorem{defini}[teorema]{Definition}
\newtheorem{lema}[teorema]{Lemma}
\newtheorem{prop}[teorema]{Proposition}
\newtheorem{obs}[teorema]{Remark}
\newtheorem{exe}[teorema]{Example}

\newtheorem{claim}{{\sc Claim}}
\renewcommand{\theclaim}{\arabic{claim}} 
\newcommand{\A}{\mathcal{A}}
\newenvironment{claimproof}{%
	\begin{description}
		\item[\textit{{\sc Proof of Claim.}}]
	}{%
		\hfill $\diamond$
	\end{description}
}
\DeclareMathOperator{\sgn}{sgn}
\newcommand{\supp}{\operatorname{supp}}
\usepackage{fancyhdr}
\usepackage[colorlinks=true,linkcolor=blue,citecolor=blue,urlcolor=blue]{hyperref}
\usepackage{cleveref}
\usepackage{chngcntr} 
\counterwithin*{claim}{mainthm}
\counterwithin*{claim}{teorema}
\begin{document}
	\title{Instantaneous analytic smoothing of rough data for the modified and cubic gKdV equations}
	
	\author{
		A.J. Méndez \thanks{
			Argenis J. Méndez G: Instituto de Matemática, Estatística e Computação Científica (IMECC),
			Universidade Estadual de Campinas (UNICAMP), 13083-859, Campinas, SP, Brazil.
			Email: \texttt{ajmendez@unicamp.br}.
			Supported by FAPESP Grant 2024/20513-7 and CNPq Grant 304088/2024-2.\\
			Marcelo Nogueira: Universidade Federal de Ouro Preto (UFOP),
			Campus Mariana (ICSA), 35420-057, Mariana, Minas Gerais, Brazil.
			Email: \texttt{marcelo.nogueira@ufop.edu.br}.
		}
		\and
		Marcelo Nogueira\thanks{Supported by FAPESP Grant 2024/20513-7}
	}
	\maketitle
	\footnotetext[0]{\textbf{2020 Mathematics Subject Classification:} 35Q53 (Primary); 35B65, 35A20, 35B30, 42B35, 35Q41 (Secondary). \\
		\textbf{Keywords:} modified Korteweg-de Vries equation; cubic generalized Korteweg-de Vries equation; analytic smoothing effect; real analyticity; Kato-Ogawa smoothing; Nelson analytic vectors; dilation generator; Fourier-Lebesgue spaces; Bourgain spaces; bilinear and multilinear estimates; Lorentz spaces; local well-posedness; dispersive equations.}
\begin{abstract}
	We consider the $k$-generalized Korteweg-de Vries equation
	\begin{equation*}
		\partial_{t}v+\partial_{x}^{3}v +\partial_{x}(v^{k+1})=0,
		\qquad (t,x)\in\mathbb R\times\mathbb R,
		\qquad k\in\mathbb Z_{+},
	\end{equation*}
	emphasizing the modified case $k=2$ and the cubic case $k=3$. We prove that
	solutions from low-regularity, possibly singular, data $u_{0}$ become real analytic
	in $(t,x)$ for all $t\neq0$, whenever $u_0$ satisfies a Nelson-type condition
	\begin{equation*}
		\sum_{k=0}^{\infty}\frac{\alpha^{k}}{k!}\,\big\|(x\partial_x)^{k}u_0\big\|_{X}<\infty,
	\end{equation*}
	for some $\alpha>0$. For the cubic equation, this includes data such as
	$u_0=x_{+}^{\lambda}$, singular at the origin; for mKdV even discontinuous
	data yield analytic solutions \emph{e.g} $u_{0}(x)=\sgn(x)e^{-x^{2}}$.For mKdV we work in the sharp well-posedness space
	$X=\widehat H^{r}_{s}(\mathbb R)$, $r\in(1,2]$, $s\geq\frac12-\frac1{2r}$,
	with $r=2$ recovering analyticity on $H^s(\mathbb R)$, $s\geq\frac14$, the
	best mKdV space in the sense of Kato; for the cubic equation we work in
	$X=H^{s}(\mathbb R)$, $s>-\frac16$, approaching the critical exponent
	$s=-\frac16$ from above. Analyticity thus holds on the largest known data class for which mKdV is well-posed, and on data approaching the corresponding threshold for the
	cubic equation, extending a known smoothing
	effect for KdV ($k=1$) to the modified and cubic nonlinearities and to a
	broader class of singular profiles, avoiding pseudo-differential calculus
	via Lorentz-space refinements replacing Bourgain-space localization. The
	mechanism is dispersive: analyticity is generated by the flow,
	symmetrically in time, and singular profiles become instantaneously
	analytic for $t\neq0$.
\end{abstract}
	\begin{center}
		\tableofcontents
	\end{center}
	\makeatother
\section{Introduction}
The following family of equations is called the \textit{$m$-generalized Korteweg-de Vries m-gKdV equation}
\begin{equation}\label{k-gKdv}
	\begin{cases}
		\partial_{t} u + \partial_{x}^{3}u +  \partial_{x}(u^{m+1})= 0, \\
		u(0,x) = u_{0}(x),
	\end{cases}
\end{equation}
where $x, t \in \mathbb{R},\,m \in \mathbb{Z}_{+}$ and $u=u(t,x)$ is a real-valued function. The case $m=1$ is the classical \textit{Korteweg-de Vries (KdV) equation}, the case $m=2$ is the \emph{modified Korteweg-de Vries (mKdV) equation}, and the case $m=3$ is the \emph{cubic gKdV equation}. Our results and analysis presented in this document will be restricted to the cases $m=2$ and $m=3$. Assuming sufficient smoothness and decay at infinity, equation \eqref{k-gKdv} conserves the $L^{1}$-\emph{mass}
\begin{equation}\label{Mass}
	P[u] (t):= \int_{\mathbb{R}} u(t,x)\,dx =  P[u](0),   
\end{equation}
the \emph{$L^{2}$-mass}
\begin{equation*}
	M[u](t):= \int_{\mathbb{R}} u^{2}(t,x)\,dx = M[u](0), 
\end{equation*}
and the \emph{energy}  
\begin{equation}\label{Hamiltonean}
	E[u] (t):= \int_{\mathbb{R}}\left(\frac{1}{2} u_{x}^{2}(t,x) 
	-  \frac{1}{m+2} u^{m+2}(t,x)\right)  \,dx = E[u](0). 
\end{equation}
For $m=1$ and $m=2$ the equation is completely integrable and possesses an infinite hierarchy of conserved quantities; for general $m\geq3,$ only the three quantities above are known to be conserved.

The $m$-gKdV equation admits \emph{solitary wave solutions} or \emph{solitons} with rapid decay at infinity, $v_{c,m}(x,t)=\varphi_{c,m}(x-ct)$, $c>0$, where
\begin{equation*}
	\varphi_{c,m}(x) = \left( \frac{m + 2}{2} c \, \mathrm{sech}^{2}\left( \frac{m}{2} \sqrt{c} \, x \right) \right)^{1/m}
\end{equation*}
is the unique, up to translation, positive decaying solution of $$-c\varphi+\varphi''+\varphi^{m+1}=0.$$

Notice already that these solitary waves are themselves real analytic functions of $x$: $\varphi_{c,m}$ is built out of $\operatorname{sech}$, an entire function, raised to a rational power, and remains real analytic since $\varphi_{c,m}>0$ everywhere. This elementary observation already suggests that analyticity, far from being a special feature of solitary-wave data, may be a structural property of the flow itself, at least for finite time. The main goal of the present paper is to establish exactly this for the modified ($m=2$) and cubic ($m=3$) equations: solutions arising from data in the sharp local well-posedness space for the modified equation, and in $H^s(\mathbb{R})$, $s>-\frac{1}{6}$, for the cubic equation approaching the critical exponent $s=-\frac16$ from above  become instantaneously real analytic in $(t,x)$ for every $t\neq0$ in their interval of existence. Solitary waves are then recovered as a special, already-analytic case of this general phenomenon, rather than an isolated example standing apart from it.

The dispersive smoothing mechanism exploited in this paper has classical roots. Kato~\cite{Kato83} and Kruzhkov and Faminskii~\cite{KF83} showed that solutions of the Korteweg-de Vries equation gain a finite, fixed number of derivatives locally in time, for $t \neq 0$, beyond the regularity of the initial datum. Kato and Ogawa \cite{KO2000} upgraded this finite local gain to full real analyticity under a suitable structural hypothesis on the datum, as we now recall.

The precedent for this type of result is due to Kato and Ogawa \cite{KO2000}, who addressed the analogous question for classical KdV. Given data at essentially the lowest Sobolev regularity for which the Cauchy problem is solvable, how much pointwise regularity can the corresponding solution instantaneously gain for $t\neq0$? Under a suitable structural condition on the datum, Kato and Ogawa \cite{KO2000} showed the answer is real analyticity: if the initial datum satisfies
\begin{equation*}
	\sum_{k = 0}^{\infty} \frac{A_{0}^{k}}{k!}\, \| (x \partial_{x})^{k} \phi \|_{H^{s}(\mathbb{R})} < \infty
\end{equation*}
for some $A_{0}>0$ and $s>-\tfrac{3}{4}$, and possesses only a single point singularity at $x=0$, then the corresponding solution of the focusing KdV equation is real analytic jointly in space and time. A related result from Gevrey-class data was obtained by de Bouard, Hayashi, and Kato \cite{BHK1995}, via operators constructed to commute, or almost commute, with the linear KdV flow. Similar dispersive-smoothing mechanisms, in the same spirit, have since been established for other models: Hayashi and Kato \cite{HK1997} proved an analogous analyticity-in-time and smoothing result for nonlinear Schr\"odinger equations, and Hayashi, Kato, and Ozawa \cite{HKO1996} obtained a related smoothing effect for the Benjamin-Ono equation via a dilation method, later extended by Kaikina, Kato, Naumkin, and Ogawa \cite{KKNO2002} to a full well-posedness and analytic smoothing theory for the same equation. We would like to call 
 particular attention to the work of Tarama \cite{Tarama2004} on the Korteweg-de Vries equation, whose elegant treatment rests on hypotheses on the initial datum that are remarkably rough, indicating that the analytic smoothing phenomenon can be driven by pure decay of the initial data alone, with no structural condition of Kato-Ogawa or Nelson type imposed. However, Tarama's method relies essentially on the inverse scattering transform, which is not available for the cubic equation treated here; this unavailability is precisely what leads us to adopt the unified, purely dispersive approach developed below, based on the commutation properties of $P=3t\partial_t+x\partial_x$ rather than on any integrable structure. The condition above is expressed through iterates of $x\partial_{x}$, the infinitesimal generator of the scaling symmetry $x\mapsto\lambda x$; a datum satisfying such a condition is, in the sense of Nelson's theory of analytic vectors for unbounded operators \cite{Nelson1959}, an \emph{analytic vector} for $x\partial_{x}$. It is precisely this type of condition, rather than Kato-Ogawa's pointwise single-singularity hypothesis, that we impose below for the mKdV and cubic gKdV equations, and this reformulation turns out to genuinely broaden the class of admissible singular data, as the examples at the end of this introduction illustrate.

The mechanism behind the Kato-Ogawa phenomenon is best understood by first examining the linear equation
\begin{equation}\label{linear}
	\begin{cases}
		\partial_{t}u + \partial_{x}^{3}u = 0, \\
		u(0,x) = u_{0}(x),
	\end{cases}
\end{equation}
whose solution
\begin{equation}\label{sol1}
	u(t,x)
	=
	\int_{\mathbb{R}}
	e^{2\pi i x \xi}
	e^{8\pi^{3} i \xi^{3} t}
	\widehat{u_{0}}(\xi)
	\,\mathrm{d}\xi
\end{equation}
depends on $(t,x)$ only through the phase $\Phi(t,x,\xi) = 2\pi x\xi + 8\pi^{3}t\xi^{3}$, which for each fixed $\xi$ is a polynomial in $(t,x)$. If $\widehat{u_{0}}$ decays exponentially, the integral \eqref{sol1} converges absolutely for complex $(t,x)$ near $\mathbb{R}^{2}$, so that the linear Airy flow already carries an intrinsic mechanism generating analyticity simultaneously in space and time.

We identify two different notions of analyticity that appear in this circle of ideas, and it is worth keeping them separate. \emph{Analyticity in Bourgain-Gevrey spaces}, which encodes spatial analyticity through exponential Fourier weights and temporal regularity through Bourgain norms~\cite{B1993a,B1993b}, and studies the analyticity of the solution map $u_{0}\mapsto u$ between suitable Banach spaces. This functional-analytic notion is somewhat different from the pointwise perspective adopted here; nevertheless, related issues in this direction have been treated in several works, see for instance~\cite{KM1986,FP2024,BGK2005}, and in particular the work of Kato and Masuda~\cite{KM1986} and of Foias and Temam~\cite{FT1989}.

Instead, following Kato and Ogawa, we treat analyticity as a \emph{pointwise} property of $(t,x)\mapsto u(t,x)$, detected by exploiting the dispersive smoothing of the flow together with the algebraic structure of the equation. The key tool is factorial control of derivatives generated by the dilation operator
\begin{equation*}
	P = 3t\partial_{t} + x\partial_{x},
\end{equation*}
which encodes the scaling invariance of the Airy equation: controlling every iterate $P^{k}u$ with factorial growth in $k$ yields real analyticity in both variables jointly.

For the linear equation, this can be verified directly. Expanding $e^{8\pi^{3}i\xi^{3}t} = \sum_{n\geq 0} \frac{(8\pi^{3}it)^{n}}{n!}\xi^{3n}$ in \eqref{sol1} and using the identity $\int_{\mathbb{R}} e^{2\pi i x\xi}(2\pi i \xi)^{k}\widehat{f}(\xi)\,d\xi = \partial_{x}^{k}f(x)$ yields the time-Taylor expansion
\begin{equation*}
	u(t,x)
	=
	\sum_{n=0}^{\infty}
	\frac{(-t)^{n}}{n!}\,\partial_{x}^{3n}u_{0}(x)
	=: e^{-t\partial_{x}^{3}}u_{0}(x).
\end{equation*}
If $u_{0}$ is itself real-analytic, the derivatives $\partial_{x}^{3n}u_{0}(x)$ obey factorial bounds, so the series converges near $t=0$, giving joint analyticity near any point $(x_{0},0)$.

The purpose of the present paper is to show that the same phenomenon persists for the nonlinear mKdV and cubic gKdV flows, where the nonlinear term destroys the explicit representation \eqref{sol1}. Our task is to show that dispersive smoothing, together with the commutation properties of $P$ with the equation, still allows factorial bounds on mixed space-time derivatives to be propagated along the nonlinear evolution.

The phenomenon is in fact already visible, in sharpened form, at the level of the fundamental solution of \eqref{linear}. Taking $u_{0}=\delta_{0}$, so that $\widehat{u_{0}}(\xi)=1$, the solution is the Airy kernel
\begin{equation*}
	K(x,t)
	=
	\int_{\mathbb{R}}
	e^{2\pi i x\xi}\,e^{8\pi^{3} i\xi^{3}t}\,
	\mathrm{d}\xi
	=
	(3t)^{-1/3}\,
	\operatorname{Ai}\!\left(\frac{x}{(3t)^{1/3}}\right),
	\qquad t>0.
\end{equation*}
Since $\operatorname{Ai}$ is entire, $x\mapsto K(x,t)$ is entire for each fixed $t>0$; and since $t\mapsto(3t)^{-1/3}\operatorname{Ai}(x(3t)^{-1/3})$ is, for fixed $x$, a composition of real-analytic functions away from $t=0$, it is real-analytic on $(0,\infty)$. Hence $K$ is jointly real-analytic on $\mathbb{R}\times(0,\infty)$: although $\delta_{0}$ is a distribution, singular at the origin, and lies into a quite rough Sobolev space, the solution is analytic in $(x, t)$ for every $t>0$, with only the prefactor $t^{-1/3}$ recording that $t=0$ itself is excluded. The same holds, by linearity, for any finite combination $u_{0}=\sum_{j=1}^{N}c_{j}\delta(\cdot-x_{j})$, since $u(x,t)=\sum_{j}c_{j}K(x-x_{j},t)$ remains entire in $x$ and real-analytic in $t>0$. These examples make the earlier distinction concrete: $K$ does not belong to any analytic Bourgain space uniformly down to $t=0$, nor does it arise from analytic, or even function-valued, initial data; its analyticity is not inherited from $u_{0}$ but created instantaneously by the dispersive dynamics. It is exactly this type of conclusion, pointwise analyticity for $t \neq 0$ generated dynamically from singular, low-regularity data, that we now establish for the nonlinear mKdV and cubic gKdV equations, where \eqref{sol1} is no longer available.

A substantial well-posedness theory underlies the thresholds appearing in Theorems~\ref{main1} and \ref{main2} below  sharp in the case of Theorem \ref{main1}, and approaching the critical exponent in the case of Theorem \ref{main2}. Refining the bilinear-estimate methods of Bourgain \cite{B1993a,B1993b}, Kenig, Ponce and Vega established local well-posedness for KdV in $H^{s}(\mathbb{R})$, $s>-\tfrac{3}{4}$ \cite{KPV2}, later extended to global well-posedness for $s>-\tfrac{3}{4}$ by Kishimoto \cite{K2010}, by Colliander, Keel, Staffilani, Takaoka and Tao \cite{CKSTT2003}, and by Guo \cite{Guo2009}, and shown to be sharp by Kenig, Ponce and Vega \cite{KPV2001}; well-posedness at $H^{-1}(\mathbb{R})$ was subsequently obtained by Buckmaster and Koch \cite{KB2015} and, in the full Hadamard sense using the integrable structure, by Killip and Vişan \cite{KV2019}, building on the earlier periodic result of Kappeler and Topalov \cite{KT2006}, and more recently extended below the $H^{-1}$-scale by Chapouto, Correia and Ramos \cite{CCR2026}. For mKdV, global well-posedness holds in $H^{s}(\mathbb{R})$ for $s>\tfrac{1}{4}$ by Colliander, Keel, Staffilani, Takaoka and Tao \cite{CKSTT2003}, recently lowered to the scaling-critical exponent $s>-\tfrac{1}{2}$ by Harrop-Griffiths, Killip and Vişan \cite{HKV2024}. On the Fourier-Lebesgue scale $\widehat{H}^{r}_{s}(\mathbb{R})$, $r\in(1,2]$ (which reduces to $H^{s}(\mathbb{R})$ when $r=2$, with norm $\|f\|_{\widehat{H}^{r}_{s}}:=\|\langle\xi\rangle^{s}\widehat{f}\|_{L^{r'}_{\xi}}$ recalled in Section~\ref{sec:notation}), local well-posedness of mKdV essentially down to $s\geq\tfrac{1}{2}-\tfrac{1}{2r}$ was established by Grünrock \cite{Gru2004-1} and by Grünrock and Vega \cite{GV2009}. For the (non-integrable) cubic gKdV equation, Grünrock \cite{Gru2005} obtained local well-posedness in $H^{s}(\mathbb{R})$ down to a threshold arbitrarily close to $s=-\tfrac{1}{6}$, via a bilinear Airy estimate adapted to the gKdV-3 non-linearity; this is exactly the regularity at which Theorem~\ref{main2} below establishes analytic smoothing. The local well-posedness threshold $s>-1/6$ established by Grünrock~\cite{Gru2005} was later shown by Tao~\cite{Tao2007} to extend to global well-posedness in the critical space $\dot{H}^{-1/6}(\mathbb{R})$ for small data, while Grünrock, Panthee and Drumond Silva \cite{GPS2007} obtained global well-posedness below $L^{2}(\mathbb{R})$ for the same equation.

\emph{We emphasize that our results are entirely local in time and do not draw on the completely integrable structure available for $m=1,2$: the mechanism is the local dispersive smoothing of the linear Airy flow together with the commutation properties of $P$, and the same conclusion is obtained for the non-integrable cubic equation. We therefore do not otherwise engage with the long-time, integrable-systems side of the theory (inverse scattering, soliton resolution, or breather asymptotic; see e.g. \cite{AC1991,AS1981,DZ1994,Tao2009,MM2001,DHKM2017,MM2008a,MM2008b,CL2021,DT1979}), beyond noting it here as the broader context in which the local regularity question of this paper sits.}

Writing $\varphi(x):=\varphi_{1,m}(x)$, so that $\varphi_{c,m}(x)=c^{1/m}\varphi(\sqrt{c}\,x)$, a change of variables in Fourier space gives
\begin{equation*}
	\| \varphi_{c,m} \|_{\dot{H}^{s}}^{2} = c^{s - s_{m}} \| \varphi \|_{\dot{H}^{s}}^{2},
	\qquad s_{m} := \frac{1}{2} - \frac{2}{m},
\end{equation*}
so that $\|\varphi_{c,m}\|_{\dot{H}^{s_{m}}}$ is independent of $c$, while $\|\varphi_{c,m}\|_{\dot{H}^{s}}\to0$ as $c\to0$ or $c\to\infty$ whenever $s\neq s_{m}$. The exponent $s_{m}$ is the scaling-critical Sobolev index for non-linearity $u^{m+1}$; for $m=2$ it equals $-\frac{1}{2}$, and it reappears below as the threshold in the sharpness remark following Theorem \ref{main1}.

We ask for conditions on $u_{0}$ guaranteeing that the corresponding solution is analytic jointly in space and time. Our first main result shows that the Nelson-type analytic vector condition of the preceding discussion, imposed on the Fourier-Lebesgue scale $\widehat{H}^{r}_{s}$, suffices.
\begin{mainthm}\label{main1}
	Let $r \in (1, 2]$ and $s \geq s(r):=\frac{1}{2} - \frac{1}{2r}$. Suppose the initial datum $u_{0}$ satisfies
	\begin{equation}\label{dec1}
		\sum_{k=0}^{\infty} \frac{\alpha^{k}}{k!} \big\| \left(x\partial_{x}\right)^{k} u_{0} \big\|_{\widehat{H}^{r}_{s}(\mathbb{R})} < \infty,
	\end{equation}
	for some constant $\alpha > 0$. Then, for any $b > \frac{1}{r}$, there exists $T>0$ such that the modified KdV equation admits a unique local-in-time solution
	\begin{equation*}
		u \in C\left(\big[-T,T\big]; \widehat{H}^{r}_{s}(\mathbb{R})\right) \cap X^{r}_{s,b},
	\end{equation*}
	where $X^{r}_{s,b}$ is the modified Bourgain space defined in Section~\ref{sec:notation} below, with continuous dependence on the data. Moreover, $u$ is real analytic in both space and time on $\big(\big[-T, T\big] \setminus \{0\}\big) \times \mathbb{R}$.
\end{mainthm}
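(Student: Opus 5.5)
The plan follows the Kato–Ogawa strategy, built on the dilation generator $P=3t\partial_t+x\partial_x$.

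\textbf{Step 1: well-posedness.} Local well-posedness in $C([-T,T];\widehat H^r_s)\cap X^r_{s,b}$ is taken from Grünrock and Grünrock–Vega. It comes from the trilinear estimate
\begin{equation*}
\|\partial_x(u_1u_2u_3)\|_{X^r_{s,b-1}}\lesssim\prod_{j=1}^3\|u_j\|_{X^r_{s,b}},
\end{equation*}
combined with the standard linear estimates for the Duhamel term, with a factor $T^{\theta}$, $\theta>0$, obtained by taking $b'$ slightly below $b$ in the source norm. A contraction argument then gives existence, uniqueness and continuous dependence.

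\textbf{Step 2: the commutator identities.} A direct computation gives
\begin{equation*}
[P,\partial_t+\partial_x^3]=-3(\partial_t+\partial_x^3),\qquad P\partial_x=\partial_x(P-1).
\end{equation*}
Since $P$ is a derivation, $P(u^3)=3u^2Pu$. Hence if $u$ solves mKdV, then
\begin{equation*}
(\partial_t+\partial_x^3)P^ku=-\partial_x\Big[(P+3-1)^k\ldots\Big],
\end{equation*}
that is, $w_k:=(P+3)^{k}u$, or more simply $P^ku$, satisfies a linear Airy equation. Its forcing is $\partial_x\sum c\,\binom{k}{k_1,k_2,k_3}(P^{k_1}u)(P^{k_2}u)(P^{k_3}u)$ up to shifted polynomial coefficients $(P+2)^j$, coming from the commutators. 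The initial value is $(P^ku)(0)=(x\partial_x)^ku_0$, because the factor $3t\partial_t$ vanishes at $t=0$.

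\textbf{Step 3: a priori bounds on the iterates.} Define
\begin{equation*}
F(T):=\sum_k\frac{\alpha_1^k}{k!}\,\|P^ku\|_{X^r_{s,b}(T)},\qquad \alpha_1<\alpha.
\end{equation*}
Applying the linear estimates and the trilinear estimate to the Duhamel formula for $P^ku$, and using the multinomial structure, should give
\begin{equation*}
F(T)\le C\sum_k\frac{\alpha_1^k}{k!}\|(x\partial_x)^ku_0\|_{\widehat H^r_s}+CT^\theta F(T)^3,
\end{equation*}
after absorbing the lower-order commutator shifts into a slight decrease of $\alpha_1$. A continuity argument, run on a truncated sum first and then on a regularized solution, shows $F(T)<\infty$ for small $T$. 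To make this rigorous I would approximate $u_0$ by smooth, rapidly decaying data, so that every $P^ku$ is meaningful, and pass to the limit using continuous dependence.

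\textbf{Step 4: from factorial bounds to analyticity.} For $t\neq0$ the identity
\begin{equation*}
3t\,\partial_t u=-3t\,\partial_x^3u-3t\,\partial_x(u^3)
\end{equation*}
lets one trade $P$ for spatial derivatives: on $t\neq 0$,
\begin{equation*}
x\partial_x-3t\partial_x^3=P+(\text{nonlinear terms}).
\end{equation*}
The operator $-3t\partial_x^3+x\partial_x$ is elliptic of order $3$ in $\partial_x$ when $t\neq0$. So factorial control of $P^ku$, together with the Kato smoothing gain in $X^r_{s,b}$, should give $\|\partial_x^{j}u(t)\|_{L^\infty_{loc}}\le C^{j+1}j!$ locally uniformly for $t\neq 0$. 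The argument is an induction on $j$ with a nonlinear bootstrap, as in Kato–Ogawa. Time derivatives then follow from the equation, which yields joint analyticity.

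\textbf{Main obstacle.} I expect Step 4 to be the hard part. Inverting the elliptic combination with factorial constants at low regularity $\widehat H^r_s$, while the nonlinear term $\partial_x(u^3)$ feeds back into the estimates, is the delicate point. The abstract indicates the authors avoid pseudodifferential calculus by using Lorentz-space refinements. My first attempt would be to write
\begin{equation*}
\partial_x^3u=(3t)^{-1}\big(x\partial_x u-Pu-3t\,\partial_x(u^3)\big),
\end{equation*}
localize in $x$, and iterate this identity. Each application gains $3$ derivatives and costs a factor $O(k)$, which yields Gevrey-$1$ (analytic) bounds.
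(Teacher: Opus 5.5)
Your Steps 1--3 match the paper's first claim. There, the whole system for $v_k=P^ku$ is solved by a contraction in $\mathcal{A}_\alpha(X^r_{s,b})$; the weights $2^{k_1}$ only produce a factor $e^{2\alpha}$, so $\alpha$ need not be decreased. Iterating $t\partial_x^3P^ku=-\frac13P^{k+1}u+\frac13x\partial_xP^ku-t\partial_x(P+2)^k(u^3)$ is exactly the paper's induction on $\mathcal{M}^lP^ku$, $\mathcal{M}=t^{1/3}\partial_x$, with bounds $C^{k+l}(k+l)!$.

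The first genuine gap is the \emph{base} of that induction, which you leave to ``Kato smoothing''. At a fixed time you only know $P^ku(t)\in\widehat{H}^r_s$, with $s$ possibly equal to $\frac{1}{2r'}$. This space is not an algebra, and for $r<\frac95$ it does not even give $u(t)^3\in L^1_{loc}$. For example, $\widehat{u}=\langle\xi\rangle^{-a}$ lies in $\widehat{H}^r_{1/(2r')}$ when $a>\frac{3}{2r'}$; taking also $a\le\frac23$ gives $u\sim|x|^{a-1}$ near $0$. Meanwhile the space-time nonlinearity only lies in $X^r_{s,b-1}$ with $b-1<0$, which has no fixed-time trace. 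So your fixed-time identity cannot be used to gain regularity. A one-derivative averaged smoothing does not reach a level where $\partial_x(u^3)$ obeys a product estimate.

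The paper does this step in space-time. For $t_0\neq0$ one has $\langle(\tau,\xi)\rangle^3\lesssim_{t_0,x_0}1+|t_0\xi^3|+|3t_0\tau+x_0\xi|^3$. Hence $\|g\|_{\widehat{H}^r_\sigma}\lesssim\|g\|_{\widehat{H}^r_{\sigma-3}}+\|t\partial_x^3g\|_{\widehat{H}^r_{\sigma-3}}+\|P^3g\|_{\widehat{H}^r_{\sigma-3}}$ for $g=\chi P^ku$ supported in a small ball around $(t_0,x_0)$ (Lemma~\ref{interpo1}). At $\sigma=1$ the cubic term $t\chi\Pi^{(3)}_k$ is needed only in $\widehat{H}^r_{-2}$. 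There it is controlled through its $L^1_{xt}$ norm, by H\"older and the Strichartz-type bounds $\|P^ku\|_{L^8_{xt}}\lesssim\alpha^kk!$. Bootstrapping through $\sigma=1,1+\theta,2+\theta,3+\theta$ gives $\|\chi P^ku\|_{\widehat{H}^r_{3+\theta}(\mathbb{R}^2)}\lesssim\alpha^kk!$. A trace theorem then yields fixed-time bounds on $\partial_x^lP^ku(t)$, $l\le1$, in $\widehat{H}^r_{1+\theta}(I_{x_0})$. This space satisfies $\|\widehat f\|_{L^1}\lesssim\|f\|_{\widehat{H}^r_{1+\theta}}$, so the identity can finally be iterated there.

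A second gap is the last sentence of Step 4. Trading each $\partial_t$ for $\partial_x^3$ through $\partial_tu=-\partial_x^3u-\partial_x(u^3)$ converts $\|\partial_x^ju(t)\|\le C^{j+1}j!$ only into $\|\partial_t^mu\|\lesssim C^{3m}(3m)!$, i.e.\ Gevrey-$3$ regularity in $t$. Uniform spatial analyticity plus the equation does not imply time analyticity. Consider the linear Airy solution with $u(t_0)=(1+x^2)^{-1}$: it has $|\widehat{u(t)}|\propto e^{-|\xi|}$ for every $t$, hence $\|\partial_x^ju(t)\|_{L^\infty}\lesssim j!$ uniformly. Yet $\partial_t^nu(t_0,0)=(-\partial_x^3)^nu(t_0,0)$ has modulus $(3n)!$ for even $n$.

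Unless you prove the much sharper anisotropic bound $\|\partial_x^ju\|\lesssim C^j(j!)^{1/3}$ and track the nonlinear terms in that class, time analyticity has to come from $P$. Each application of $P$ costs only one factorial factor. You must carry the joint bounds $\sup_t\|\partial_x^lP^ku(t)\|\le C^{k+l+1}(k+l)!$, not just bounds on $\partial_x^ju$. Then bound $(x\partial_x)^m\partial_x^lP^ku$ by induction on $m$. Next write $(t\partial_t)^m\partial_x^lu=3^{-m}(P-x\partial_x)^m\partial_x^lu$, using $[P,x\partial_x]=0$ and $P\partial_x^l=\partial_x^l(P-l)$. Finally remove the factor $t$ on $|t|\ge|t_0|-\epsilon>0$. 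This is how the paper reaches $\|\partial_t^m\partial_x^lu(t)\|\le C^{m+l+1}(m+l)!$.
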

\begin{obs}
	The two regimes covered by Theorem \ref{main1} correspond to the two
	sharp well-posedness results currently known for mKdV. On the classical
	Sobolev scale, the best local well-posedness result is due to Kenig,
	Ponce, and Vega \cite{KPV3}, at the scaling-critical regularity
	$s\geq\tfrac14$. On the Fourier-Lebesgue scale $\widehat H^r_s(\mathbb
	R)$, $r\in(1,2)$  a genuinely different refinement of $H^s(\mathbb
	R)$, \emph{not} a subspace of it, obtained by measuring $L^{r'}$ rather
	than $L^2$ integrability of the Fourier transform; the sharp local
	well-posedness threshold $s\geq\tfrac12-\tfrac1{2r}$ is due to Grünrock
	\cite{Gru2004-1} and Grünrock and Vega \cite{GV2009}. Theorem
	\ref{main1} establishes real analyticity on both scales simultaneously,
	through the single hypothesis \eqref{dec1}: as a byproduct, setting
	$r=2$ recovers analyticity precisely on the best space of
	well-posedness for mKdV proved by Kenig, Ponce, and Vega,
	\cite{KPV3,CKSTT2003}. In every case, the initial datum needs only
	exhibit mild decay at infinity, even within these rough function
	spaces; the corresponding solution nonetheless becomes analytic
	jointly in space and time, with infinite speed of propagation 
	(instantaneously), for every $t\neq0,$  the same phenomenon we
	establish for the cubic equation in Theorem \ref{main2}. We note that
	this sharpness is inherited from the sharpness of the underlying
	well-posedness theory; whether the analytic smoothing effect itself
	could persist below this threshold, if local well-posedness were
	eventually established there by other means, is not addressed by our
	argument.
\end{obs}
The reach of this hypothesis goes well beyond pointwise singularities of the Kato-Ogawa type, as the following example shows.
\begin{exe}
	We exhibit initial data for which analyticity is triggered. The following example intends to provide a family of examples from which the analyticity is generated; for such an end, we consider $\phi \in \mathcal{S}(\mathbb{R})$. Thus, it is straightforward to show that the function $u_{0}(x) = \operatorname{sgn}(x)\,\phi(x)$ satisfies
	\begin{equation*}
		(x\partial_{x})^{k}u_{0}(x) = \operatorname{sgn}(x)\,(x\partial_{x})^{k}\phi(x) \quad \text{in } \mathcal{S}'(\mathbb{R}), \qquad \text{for all } k \in \mathbb{N}.
	\end{equation*}
	Notice that, for each $k \in \mathbb{N}$, there exists a function $\psi_{k} \in \mathcal{S}(\mathbb{R})$ such that
	\begin{equation*}
		\mathcal{F}_{x}\big(\psi_{k}\big)(x) = (x\partial_{x})^{k}\phi(x), \qquad k \in \mathbb{N}.
	\end{equation*}
	Thus, $(x \partial_x)^{k}u_{0}$ can be rewritten as
	\begin{equation*}
	(x \partial_x)^{k}u_{0}(x) = i\,\mathcal{F}_{x}\big(\mathcal{H}(\psi_{k})\big)(x),
	\end{equation*}
	where $\mathcal{H}$ denotes the Hilbert transform\footnote{The \emph{Hilbert transform} of $f$ is defined as
		\begin{equation*}
			\mathcal{H}(f)(x) = \frac{1}{\pi}\,\mathrm{p.v.}\!\int_{\mathbb{R}} \frac{f(y)}{x-y}\,dy.
		\end{equation*}
		With the normalization $\mathcal{F}_{x}(v)(\xi) := \dfrac{1}{\sqrt{2\pi}} \displaystyle\int_{\mathbb{R}} e^{-ix\xi}\,v(x)\,dx$, this corresponds to the Fourier multiplier
		\begin{equation*}
			\mathcal{F}_{x}\big(\mathcal{H}(f)\big)(\xi) = -i\,\operatorname{sgn}(\xi)\,\mathcal{F}_{x}(f)(\xi).
	\end{equation*}}
	and $\mathcal{F}_{x}$ denotes the inverse Fourier transform (see Section~\ref{sec:notation} for the notation).
	
	Thus, for $r \in (1,2]$, $s \geq s(r)$, and $k \in \mathbb{N}$, restricting ourselves in what follows to the range $s \geq s(r)$ with $0 < s < \tfrac{1}{2}$, we obtain
\begin{equation}\label{controlfactorial}
	\begin{split}
		\left\|(x\partial_{x})^{k}u_{0}\right\|_{\widehat{H}^{r}_{s}(\mathbb{R})}
		&= \left\|\langle \xi \rangle^{s}\,\mathcal{H}\big(\psi_{k}\big)\right\|_{L^{r'}_{\xi}} \\
		&\lesssim \left\|\langle \xi \rangle^{s}\,\psi_{k}\right\|_{L^{r'}_{\xi}} \\
		&\sim \left\|\mathcal{F}_{x}\left(\langle \nabla_{x} \rangle^{s}\big((x\partial_{x})^{k}\phi\big)\right)\right\|_{L^{r'}_{\xi}}\\
		&\lesssim \left\|\langle \nabla_{x} \rangle^{s}\big((x\partial_{x})^{k}\phi\big)\right\|_{L^{r}_{x}} \\
		&< \infty,
	\end{split}
\end{equation}
The second estimate follows from the boundedness of the Hilbert transform $\mathcal{H}$ on the weighted space $L^{r'}_{\xi}(\langle\xi\rangle^{sr'}\,d\xi)$: by the classical $A_p$-weight theory of Hunt, Muckenhoupt, and Wheeden \cite{HMW1973}, the weight $\langle\xi\rangle^{sr'}$ belongs to the Muckenhoupt class $A_{r'}$ whenever $s\in\left(-\frac{1}{r'},\frac{1}{r'}\right)$; in our setting this translates into the range $0<s(r)=\frac{1}{2r'}\leq s<\frac{1}{r'}$ stated above. The third estimate follows from the Hausdorff-Young inequality, since $r\in(1,2]$ and its conjugate exponent $r'\in[2,\infty)$. Finally, the last bound holds because $(x\partial_{x})^{k}\phi\in\mathcal{S}(\mathbb{R})$ for every $k\in\mathbb{N}$, so that $\langle\nabla_{x}\rangle^{s}\big((x\partial_{x})^{k}\phi\big)\in\mathcal{S}(\mathbb{R})\subset L^{r}(\mathbb{R})$.

	To control the right-hand side uniformly in $k$ in \eqref{controlfactorial}, we take $s=1$ and assume that $\phi \in \mathcal{S}(\mathbb{R})$ satisfies a Gevrey-type bound: there exist constants $c_{0}, a > 0$ and $\sigma \geq 1$ such that
	\begin{equation*}
		\left\|\partial_{x}^{j}(x\partial_{x})^{k}\phi\right\|_{L^{r}_{x}(\mathbb{R})} \leq c_{0}\,a^{k+j}\,(k!)^{\sigma}\,j!, \qquad k \in \mathbb{N},\ j = 0,1.
	\end{equation*}
	In our case
	\begin{equation*}
		\left\|\langle \nabla_{x} \rangle\big((x\partial_{x})^{k}\phi\big)\right\|_{L^{r}_{x}} \lesssim c_{0}\,a^{k+1}\,(k!)^{\sigma}\,(1+k) \lesssim c_{1}\,b^{k}\,(k!)^{\sigma},
	\end{equation*}
	for suitable constants $c_{1}, b > 0$ depending on $c_{0}, a$. Consequently,
	\begin{equation*}
		\left\|(x\partial_{x})^{k}u_{0}\right\|_{\widehat{H}^{r}_{1}(\mathbb{R})} \lesssim c_{1}\,b^{k}\,(k!)^{\sigma}, \qquad k \in \mathbb{N},
	\end{equation*}
	which is precisely the Gevrey-$\sigma$ type estimate needed to conclude the corresponding regularity of $u_{0}$; the case $\sigma = 1$ corresponds to (real) analyticity.
	
A concrete class satisfying the hypothesis ($\sigma=1$). The prototypical example is the Gaussian
	\begin{equation*}
		\phi(x) = e^{-x^{2}/2}.
	\end{equation*}
	More generally, any $\phi$ belonging to the Gelfand-Shilov space $S^{1/2}_{1/2}(\mathbb{R})$~\cite{GS1968}, characterized equivalently 
	\begin{equation*}
		S^{1/2}_{1/2}(\mathbb{R}) \;=\; \Big\{ \phi \in C^{\infty}(\mathbb{R}) : \ \exists\, c, a, b > 0 \ \text{such that} \ 
		\left\| x^{m}\partial_{x}^{n}\phi \right\|_{L^{2}_{x}} \leq c\,a^{m}b^{n}\,(m!)^{1/2}(n!)^{1/2}, \ \ \forall\, m,n \in \mathbb{N} \Big\},
	\end{equation*}
	satisfies the required bound with $\sigma=1$; the equivalence between the $L^{\infty}_{x}$ formulation of Gelfand and Shilov's original definition and the $L^{2}_{x}$ formulation above is due to Chung, Chung, and Kim~\cite{CCK1996}. Fix such $\phi$ and let $c, a, b>0$ be constants for which the bound above holds.

	In the following, for simplicity, we restrict ourselves to the case $r=2$. Expanding $(x\partial_{x})^{k}$ via the Leibniz-type identity
	\begin{equation*}
		(x\partial_{x})^{k} = \sum_{j=1}^{k} S(k,j)\, x^{j}\partial_{x}^{j},
	\end{equation*}
	where $S(k,j)$ denotes the Stirling numbers\footnote{The \emph{Stirling numbers of the second kind} $S(n,k)$ are defined by
		\begin{equation*}
			S(n,k) = \frac{1}{k!}\sum_{j=0}^{k}(-1)^{j}\binom{k}{j}(k-j)^{n}.
	\end{equation*}} of the second kind, and applying the $S^{1/2}_{1/2}$ bound directly in $L^{2}_{x}$ with $m=n=j\le k$, we obtain
	\begin{equation*}
		\left\| (x\partial_{x})^{k}\phi \right\|_{L^{2}_{x}} \leq \sum_{j=1}^{k} S(k,j)\,\left\| x^{j}\partial_{x}^{j}\phi \right\|_{L^{2}_{x}} \leq c\sum_{j=1}^{k} S(k,j)\,(ab)^{j}\,j!.
	\end{equation*}
	The key point is that $S(k,j)j!\leq j^{k}$, which is a purely combinatorial fact: the total number of functions $f:\{1,\dots,k\}\to\{1,\dots,j\}$, $1\leq j\leq k$, equals $j^{k}$, and the surjections among them number exactly $S(k,j)j!$. Hence
	\begin{equation*}
		\sum_{j=1}^{k} S(k,j)(ab)^{j}j! \leq \sum_{j=1}^{k} j^{k}(ab)^{j} \leq k^{k}\sum_{j=1}^{k}(ab)^{j} = k^{k}\frac{(ab)^{k}-ab}{ab-1}, \qquad (ab\neq 1).
	\end{equation*}
	Since $k^{k}\leq e^{k}k!$ for all $k\in\mathbb{N}$, we conclude that
	\begin{equation*}
		\sum_{j=1}^{k} S(k,j)(ab)^{j}j! \lesssim_{a,b} e^{k}(ab)^{k}k! \leq (eab)^{k} k!, \qquad \forall k\in \mathbb{N}.
	\end{equation*}
	Combining the above estimates yields
	\begin{equation*}
		\left\| (x\partial_{x})^{k}\phi \right\|_{L^{2}_{x}} \leq c\,(eab)^{k}\,k!, \qquad k \in \mathbb{N},
	\end{equation*}
	with $a,b,c$ independent of $k$. Consequently,
	\begin{equation*}
		\left\| \langle \nabla_{x}\rangle(x\partial_{x})^{k}\phi \right\|_{L^{2}_{x}} \leq c_{1}\,\widetilde{a}^{k}\,k!, \qquad k \in \mathbb{N},
	\end{equation*}
	for suitable constants $c_{1}, \widetilde{a} > 0$ depending on $c,a,b$ but not on $k$.
	
	Besides the particular example above, another class of examples which can also be considered is the following:
	\begin{itemize}
		\item[(i)] the Hermite functions $h_{n}(x) = e^{-x^{2}/2}H_{n}(x)$, for each fixed $n \in \mathbb{N}$ (here $H_n$ is the $n$-th Hermite polynomial);
		\item[(ii)] finite linear combinations $\phi(x) = \sum_{m=0}^{n} c_{m}\,h_{m}(x)$;
		\item[(iii)] more general functions of the form $\phi(x) = P(x)\,e^{-x^{2}/2}$, with $P$ a polynomial.
	\end{itemize}
	In the following, we present the graph of our initial $u_{0}$, see Figure~\ref{sgn-gaussian} below.
	\begin{figure}[H]
		\centering
		\begin{tikzpicture}[xscale=1.5, yscale=1.3]
			\fill[red!6] (-0.15,-2.0) rectangle (0.15,2.1);
			
			\draw[-stealth, thick] (-3.5,0) -- (3.5,0) node[right] {$x$};
			\draw[-stealth, thick] (0,-2.0) -- (0,2.5) node[above] {$u_{0}$};
			\node[below left] at (-0.1,0) {$0$};
			
			\draw[blue!70!black, thick, domain=0.03:3, samples=200, smooth, variable=\x]
			plot ({\x},{exp(-(\x)*(\x))});
			\draw[blue!70!black, thick, domain=-3:-0.03, samples=200, smooth, variable=\x]
			plot ({\x},{-exp(-(\x)*(\x))});
			
			\draw[dashed, gray!50] (-3.5,1) -- (3.5,1);
			\draw[dashed, gray!50] (-3.5,-1) -- (3.5,-1);
			\node[gray!70, font=\small, left] at (-3.5,1) {$1$};
			\node[gray!70, font=\small, left] at (-3.5,-1) {$-1$};
			
			\node[circle, draw=red!70!black, fill=white, thick, minimum size=4pt, inner sep=0pt] at (0,1) {};
			\node[circle, draw=red!70!black, fill=white, thick, minimum size=4pt, inner sep=0pt] at (0,-1) {};
			
			\node[align=center, font=\small] at (2,1.99) {jump discontinuity at $x=0$\\ };
			\draw[-stealth, gray!70] (0,1.45) -- (0,1.08);
			
			\node[align=center, font=\small] at (2.6,0.6) {smooth, rapidly \\ decaying away from $x=0$\\ };
			\draw[-stealth, gray!70] (2.3,0.4) -- (2.05,0.08);
			
			\node[align=center, font=\small] at (1.9,-1.75) {no analytic (or even continuous)\\ extension across $x=0$};
			\draw[-stealth, gray!70] (0.6,-1.45) -- (0.08,-0.05);
		\end{tikzpicture}
		\caption{The function $\operatorname{sgn}(x)e^{-x^2}$. Since analyticity requires at least continuity, this jump
			already precludes any analytic or even continuous extension of
			the function across the origin.}
		\label{sgn-gaussian}
	\end{figure}
\end{exe}
The same smoothing phenomenon persists for the cubic gKdV equation, now on the classical Sobolev scale $H^s(\mathbb R)$, which corresponds to our second main result:
\begin{mainthm}\label{main2}
	Let $s >-\frac{1}{6}$. Suppose the initial data $u_{0}$ satisfy, for some $\alpha >0$,
	\begin{equation*}
		\sum_{ k = 0}^{\infty} \frac{\alpha^k}{k!} \big\| (x \partial_x)^{k} u_{0} \big\|_{H^s} < \infty. 
	\end{equation*}
	Then, for any $b > 1/2$, there exists a unique solution
	\begin{equation*}
		v \in C\left((-T,T); H^{s}(\mathbb{R})\right) \cap X^{s,b}
	\end{equation*}
	of the cubic gKdV equation on some interval $(-T,T)$, where $X^{s,b}$ denotes the classical Bourgain space, i.e.\ the case $r=2$ of $X^r_{s,b}$ defined in Section~\ref{sec:notation} below, depending continuously on the initial data. Moreover, $v$ is analytic at every point
	\begin{equation*}
		(t,x) \in \big[(-T, 0) \cup  (0,T)\big] \times \mathbb{R}. 
	\end{equation*}
\end{mainthm}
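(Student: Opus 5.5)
The plan follows the Kato--Ogawa scheme, with the dilation generator $P=3t\partial_t+x\partial_x$ and Grünrock's trilinear (quadrilinear) Airy estimate for $\partial_x(v^4)$ in $X^{s,b}$, $s>-\frac16$. The key algebraic fact is the commutator $[P,\partial_t+\partial_x^3]=-3(\partial_t+\partial_x^3)$. Consequently, if $v$ solves $\partial_tv+\partial_x^3v+\partial_x(v^4)=0$, then
\begin{equation*}
(\partial_t+\partial_x^3)P^kv=(P+3)^k(\partial_t+\partial_x^3)v=-(P+3)^k\partial_x(v^4).
\end{equation*}
Since $P\partial_x=\partial_x(P-1)$ and $P$ is a derivation, we get $(P+3)^k\partial_x(v^4)=\partial_x\,(P+2)^k(v^4)$. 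The Leibniz rule then gives
\begin{equation*}
(P+2)^k(v^4)=\sum \frac{k!}{k_0!k_1!k_2!k_3!k_4!}\,2^{k_0}\,P^{k_1}v\,P^{k_2}v\,P^{k_3}v\,P^{k_4}v .
\end{equation*}
At $t=0$ we have $P^kv(0)=(x\partial_x)^ku_0$, which is controlled by the hypothesis.

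\textbf{Step 1 (a priori bounds).} Introduce the generating norm
\begin{equation*}
\mathcal N_T(v):=\sum_{k\ge0}\frac{\beta^k}{k!}\|\psi_T P^kv\|_{X^{s,b}}
\end{equation*}
with $0<\beta\le\alpha$. Apply the Duhamel formula and the standard linear estimates with time cutoff $\psi_T$, which gain $T^{\theta}$ because $b'<b$ slightly. Combining these with Grünrock's estimate $\|\partial_x(v_1v_2v_3v_4)\|_{X^{s,b'-1}}\lesssim\prod\|v_j\|_{X^{s,b}}$ and the multinomial identity above, one obtains
\begin{equation*}
\mathcal N_T(v)\le C\sum_k\frac{\alpha^k}{k!}\|(x\partial_x)^ku_0\|_{H^s}+CT^\theta e^{2\beta}\mathcal N_T(v)^4 .
\end{equation*}
The right-hand side is the convolution of the generating series, so choosing $\beta$ small absorbs the $2^{k_0}$ factor. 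A contraction argument in the space where $\mathcal N_T<\infty$ then gives existence with $\mathcal N_T<\infty$. Uniqueness and continuous dependence come from the underlying $H^s$ theory. One subtlety is that $P$ contains $t\partial_t$, so $\psi_TP^kv$ must be handled via $P(\psi_T w)=\psi_TPw+3t\psi_T'w$; the error term is harmless since $t\psi_T'$ is a bounded cutoff.

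\textbf{Step 2 (from $P$-bounds to pointwise analyticity).} Using the equation, replace $t\partial_t$ by $-t\partial_x^3v-t\partial_x(v^4)$, so that $Pv=x\partial_xv-3t\partial_x^3v-3t\partial_x(v^4)$. For $t\ne0$ this lets us solve for $\partial_x^3v$ in terms of $Pv$, $x\partial_xv$ and lower-order nonlinear terms. Iterating, and combining this with the Kato smoothing effect (local gain of one derivative in $L^\infty_xL^2_t$, available via the $X^{s,b}$ estimates), we obtain inductive bounds
\begin{equation*}
\|\partial_x^j v\|_{L^\infty(K)}\le C A^j j!
\end{equation*}
on compact sets $K\subset\{t\ne0\}$. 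Mixed time derivatives then follow from the equation. Equivalently, one can argue as follows. The bounds on $P^kv$ say that $\lambda\mapsto v(\lambda^3t,\lambda x)$ is analytic in $\lambda$ near $1$ with values in $H^s$ locally. Together with the analyticity of $v$ in the spatial-translation direction, which again comes from the dispersion relation when $t\neq0$, this gives joint analyticity, because the vector fields $P$ and $\partial_x$ span the tangent directions away from $t=0$ and a regularity result for analytic vectors of a pair of generators applies.

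\textbf{Main obstacle.} The hardest step is the conversion in Step 2: extracting factorial bounds on $\partial_x^j\partial_t^lv$ from factorial bounds on $P^kv$ alone, at negative regularity $s>-\frac16$, while keeping the nonlinear remainders $t\partial_x(v^4)$ under factorial control. I would attack this by a nested induction on $j$ with weight $t^{j/3}$. At each step I would use the Lorentz-space refinement of the Kato smoothing estimate, $\|\partial_x e^{-t\partial_x^3}f\|_{L^\infty_xL^2_t}\lesssim\|f\|_{L^2}$, so that a derivative is gained locally without pseudodifferential localization. Fine Leibniz sums would be handled by the same generating-function bookkeeping used in Step 1.
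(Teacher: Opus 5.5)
Your Step 1 is essentially the paper's first claim: the paper solves the system for the sequence $v_k$ in $\mathcal{A}_{A_0}(X^{s,b}_T)$ with Gr\"unrock's quadrilinear estimate and then identifies $v_k$ with $P^kv$. The factor $2^{k_0}$ simply produces $e^{2\beta}$, so no smallness of $\beta$ is needed.

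\textbf{Gap 1: time analyticity.} You aim for $\|\partial_x^jv\|_{L^\infty(K)}\le CA^jj!$ and then say mixed time derivatives \emph{follow from the equation}. They do not. The relation $\partial_tv=-\partial_x^3v-\partial_x(v^4)$ trades one time derivative for three spatial ones, so spatial factorial bounds only yield $\|\partial_t^mv\|\lesssim CA^{3m}(3m)!$, which is Gevrey-$3$ regularity in $t$. This loss is genuine. Take the Airy evolution $u$ of $(1+x^2)^{-1}$ starting at time $t_0$. It satisfies $\sup_t\|\partial_x^ju(t)\|_{L^\infty}\lesssim j!$, yet $|\partial_t^mu(t_0,0)|=(3m)!$ for even $m$, so $u$ is not analytic in $t$.

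Time analyticity has to come from the dilation identity $t\partial_t=\frac13(P-x\partial_x)$, which is first order in $t$. For this, your induction must carry $P^k$ along and give bounds jointly factorial in all indices. The paper does this in three stages:
\begin{itemize}
\item it proves $\sup_t\|(t^{1/3}\partial_x)^lP^kv(t)\|_{H^1(I_{x_0})}\le CA^{k+l}(k+l)!$ by induction on $l$, using $\mathcal{M}^3P^kv=-\frac13P^{k+1}v+\frac13x\partial_xP^kv-t\partial_x(P+2)^k(v^4)$ (each use lowers the total order $k+l$ by two);
\item it bounds $(x\partial_x)^m\partial_x^lP^kv$ by induction on $m$;
\item only then does it pass to $\partial_t^m\partial_x^lv$, through $t\partial_t=\frac13(P-x\partial_x)$ and $|t|\ge|t_0|-\epsilon$.
\end{itemize}
Your ``equivalent'' route via analytic vectors does not repair this. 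It presupposes analyticity in the $\partial_x$ direction, which is exactly what must be proved. Moreover, separate analyticity along $P$ and along $\partial_x$ does not give joint analyticity without bounds on mixed words, and those are precisely what the last two inductions supply.

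\textbf{Gap 2: the base of that induction.} The step $l\to l+1$ reduces to $\mathcal{M}^{l-2}$. So you need, for $l=0,1,2$ and all $k$, bounds $CA^kk!$ on $\partial_x^lP^kv(t)$ in a fixed-time local algebra norm, uniformly for $t$ near $t_0$. The paper uses $H^1(I_{x_0})$; $L^\infty$ would also do. For low $s$, in particular near $-\frac16$, the well-posedness theory gives nothing of the sort. The solution $v(t)$ is only in $H^s$, and for $s<0$ even $v(t)^4$ is undefined at fixed time. Kato smoothing gains one derivative in $L^\infty_xL^2_t$, which is neither pointwise in time nor enough derivatives. ``Iterating'' the identity is left unspecified: it only converts $\partial_x^3$ into $x\partial_x$, $P$ and the nonlinearity, so by itself it says nothing about regularity in $t$, and at negative $s$ it cannot be run at fixed time.

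The paper closes this step with a finite space--time bootstrap based on the Kato--Ogawa localized elliptic estimate (Lemma~\ref{LemaA}). Near $t_0\neq0$ one has $1+|t_0\xi^3|+|3t_0\tau+x_0\xi|^3\gtrsim\langle(\tau,\xi)\rangle^3$. Hence $\|\chi v_k\|_{H^{\mu}_{xt}}$ is controlled by the $H^{\mu-3}_{xt}$ norms of $\chi v_k$, $t\partial_x^3(\chi v_k)$ and $P^3(\chi v_k)$. Here $P^3v_k=v_{k+3}$ costs no regularity, only factorial growth. The term $t\partial_x^3v_k$ is rewritten through the identity, and the quartic terms are controlled by $L^4_{xt}$ and $L^8_{xt}$ Strichartz bounds. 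This climbs $L^2\to H^{1/2}\to H^{3/2}\to H^2\to H^{7/2}$ locally in $(t,x)$ with bounds $CA^kk!$. The trace theorem then gives $P^kv(t)\in H^3(I_{x_0})$ uniformly in $t$, which is the base case. Your remark that $P$ and $\partial_x$ span the tangent directions away from $t=0$ is the right intuition, but it has to be implemented as this elliptic estimate, and Kato smoothing is not a substitute for it.
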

\begin{obs}
	For the cubic gKdV equation, our analyticity result holds for initial data
	in $H^s(\mathbb{R})$, $s>-\frac16$, approaching the critical regularity
	$\dot H^{-1/6}(\mathbb{R})$ where global well-posedness for small data
	was established by Tao \cite{Tao2007}. The endpoint $s=-\frac16$ itself
	remains outside the reach of our argument, and whether the analytic
	smoothing effect persists there is a question we do not address here
	and leave to a forthcoming work. Since the cubic equation is not
	completely integrable, this smoothing effect is not a consequence of
	any integrable structure, but rather of the dispersive mechanism
	described above.
\end{obs}
Proving Theorems \ref{main1} and \ref{main2} requires departing from the strategy of Kato and Ogawa \cite{KO2000} in two respects, both aimed at propagating the condition \eqref{dec1} along the nonlinear flow without the technical overhead a direct adaptation of \cite{KO2000} would carry. Commuting, or almost commuting, the operator $P=3t\partial_t+x\partial_{x}$ with the equation was achieved in \cite{KO2000} and \cite{BHK1995} through a pseudo-differential calculus adapted to the single-point singularity at $x=0$; this is effective for KdV, but becomes considerably more delicate for the modified and cubic nonlinearities treated here, whose relevant commutators involve products of several factors of the solution rather than a single one. We avoid pseudo-differential operators altogether, controlling the commutator between $P$ and the nonlinear term through elementary Leibniz-type identities for $(x\partial_{x})^k$ applied to products, combined with the dispersive linear estimates for the Airy propagator (in the spirit of the oscillatory integral bounds of \cite{KPV1991}) and the Kato-Ponce-type product estimates of \cite{GO2014}, which keeps the induction on $k$ used to propagate \eqref{dec1} self-contained. Separately, closing the estimates that propagate \eqref{dec1} requires controlling trilinear (for mKdV, in the spirit of \cite{KPV2}) and quadrilinear (for the cubic equation, building on the bilinear Airy estimate of \cite{Gru2005}) expressions built from $(x\partial_{x})^ku$ and its space-time derivatives. The standard route to such estimates in Bourgain spaces proceeds via dyadic Littlewood-Paley localization in frequency, which becomes increasingly cumbersome as the number of factors and the order $k$ grow. We instead establish these multilinear estimates through refinements based on Lorentz spaces $L^{p,q}$, exploiting the sharper embedding and interpolation properties of $L^{p,q}$ at the endpoints relevant to the k-gKdV non-linearity, following the classical interpolation-theoretic framework of Marcinkiewicz. This is not merely a change of technical convenience: propagating the Nelson-type condition \eqref{dec1} along the flow requires bounds on these multilinear expressions that are \emph{uniform in the order $k$} of differentiation, and dyadic localization becomes increasingly cumbersome as $k$ and the number of factors grow, whereas the sharper embedding and interpolation properties of $L^{p,q}$ at the relevant endpoints allow the estimates to be closed without explicit dyadic localization and, crucially, uniformly in $k$, exactly as required to sum the series \eqref{dec1}.

A further feature of this approach is that it treats the non-integrable cubic equation ($m=3$) with the same formalism used for the completely integrable mKdV equation ($m=2$): both are governed by the unified system (see \eqref{Pikm} below) for the iterated generator $v_{k} := P^{k} v$. Since the argument never makes use of the integrable structure available for $m=2$, but only of the dispersive smoothing of the linear Airy flow together with the commutation properties of $P$, this shows that the analytic smoothing effect established here is a structural, purely dispersive phenomenon, rather than a byproduct of integrability.

We close this introduction with a family of examples confirming that the class of data covered by Theorems \ref{main1} and \ref{main2} genuinely extends Kato-Ogawa's single-point-singularity condition to data singular at the origin that fail to belong to any classical space of analytic or Gevrey functions.
\begin{exe}
	This example exhibits a family of singular initial data in $H^{s}(\mathbb{R})$, for every $s\in(-\frac16,0)$, satisfying Nelson's condition \cite{Nelson1959} for the dilation generator $x\partial_{x}$  (see Figure \ref{x-plus-lambda} below ). The distributions in this family are generalized eigenfunctions of $x\partial_{x}$.
	
	Let $\lambda\in \big(-\frac{2}{3},-\frac{1}{2}\big)$, and define $x_{+}^{\lambda}\in\mathcal{S}'(\mathbb{R})$ by
	\begin{equation*}
		x_+^\lambda(x) := 
		\begin{cases}
			x^\lambda, & x > 0, \\
			0, & x \leq  0.
		\end{cases}
	\end{equation*}
	Its Fourier transform is
	\begin{equation*}
		\widehat{x_+^\lambda}(\xi) = -i\, e^{-i \lambda \pi/2}\,\Gamma(\lambda+1) \, (\xi - i0)^{-\lambda-1}, \qquad \xi\neq0,
	\end{equation*}
	where $(\xi-i0)^{-\lambda-1}:=\xi^{-\lambda-1}$ for $\xi>0$ and $(\xi-i0)^{-\lambda-1}:=|\xi|^{-\lambda-1}e^{i\pi(\lambda+1)}$ for $\xi<0$. One has $x_+^\lambda\in H^s(\mathbb R)$, with
	\begin{equation*}
		\|x_+^\lambda\|_{H^s} = \sqrt{2}\,\Gamma(\lambda+1)\, \left(\int_0^{\infty}(1+\xi^2)^s\,\xi^{-2 \lambda -2}\,d\xi \right)^{1/2} < \infty,
	\end{equation*}
	and, in the sense of tempered distributions, $(x \partial_x)^{k} x_+^\lambda = \lambda^{k}\, x_+^\lambda$, so that $\big\|(x\partial_x)^k x_+^\lambda\big\|_{H^s} = |\lambda|^k \,\big\|x_+^\lambda\big\|_{H^s}$ and, consequently,
	\begin{equation*}
		\sum_{k = 0}^{\infty} \frac{A_0^k}{k!} \big\| (x \partial_x)^k x_{+}^{\lambda}\big\|_{H^{s}(\mathbb{R})} = \|x_{+}^{\lambda}\|_{H^{s}(\mathbb{R})} \, e^{|\lambda| A_0} < \infty
	\end{equation*}
	for every $A_0>0$. The same holds for any finite linear combination $\beta_1 x_+^{\lambda_1}+\cdots+\beta_m x_+^{\lambda_m}$, $\lambda_j\in(-\frac23,-\frac12)$, $\beta_j\in\mathbb R$, since
	\begin{equation*}
		\left\|(x\partial_x)^k \Big(\sum_{j=1}^{m}\beta_j x_+^{\lambda_j}\Big)\right\|_{H^s(\mathbb{R})}
		\leq
		\Big(\max_{1\leq j \leq m}|\lambda_j|\Big)^k
		\sum_{j=1}^{m}|\beta_j|\,\big\|x_+^{\lambda_j}\big\|_{H^s(\mathbb{R})}.
	\end{equation*}
	A similar result also holds for $x_{-}^{\lambda}(x):=x_{+}^{\lambda}(-x)$; the proof requires only minor modifications.
	
	Theorem \ref{main2} therefore applies to $u_0=x_{+}^\lambda$, and to any such finite combination: although $u_0$ is singular at the origin, failing to be continuous, let alone smooth or analytic, at $x=0$, the corresponding cubic gKdV solution becomes instantaneously real analytic in $(t,x)$ for every $t\neq0$ in its interval of existence. This is a direct nonlinear counterpart of the instantaneous analytic smoothing exhibited by the linear Airy kernel above, and it strictly enlarges the class of admissible singular profiles beyond the single point singularity treated in \cite{KO2000}.
	
	This example isolates, in particularly sharp form, the sense in which the analyticity produced by Theorem \ref{main2} is genuinely a smoothing effect of the flow rather than a property carried over from the datum: $x_+^\lambda$ admits no analytic extension across $x=0$, so no analyticity of the solution could be anticipated from $u_0$ itself, and the real analyticity obtained for $t\neq0$ is encoded entirely in the decay, in $s$ and in $k$, of the evolving solution rather than in any regularity of $u_0$. Structurally, the example is best understood spectrally: $x_+^\lambda$ is a generalized eigenfunction of the dilation generator $x\partial_x$, with $(x\partial_x)^kx_+^\lambda=\lambda^kx_+^\lambda$ recording the corresponding eigenvalue $\lambda$. Nelson's condition \eqref{dec1} amounts precisely to requiring that $u_0$, decomposed along the (generalized) eigenfunctions of $x\partial_x$, be an analytic vector for this operator in the sense of \cite{Nelson1959}. Equivalently, writing $x=e^{y}$ for $x>0$, one has $x_+^\lambda=e^{\lambda y}$, so that being an eigenfunction of $x\partial_x$ is the same as being (entire) analytic in the logarithmic variable $y=\ln x$; the eigenfunction condition imposed here and the logarithmic-scale analyticity condition built directly into the initial datum of the preceding example are thus two instances of the same mechanism.
	\begin{figure}[H]
		\centering
		\begin{tikzpicture}[xscale=2.6, yscale=1.3]
			\fill[red!6] (0,0) rectangle (0.35,4.7);
			
			\draw[-stealth, thick] (-1.3,0) -- (3.4,0) node[right] {$x$};
			\draw[-stealth, thick] (0,-0.3) -- (0,4.7) node[above] {$x_+^{\lambda}(x)$};
			\node[below left] at (0,0) {$0$};
			
			\draw[blue!70!black, thick, domain=0.15:3, samples=200, smooth, variable=\x]
			plot ({\x},{(\x)^(-0.55)});
			
			\draw[blue!70!black, thick] (-1.3,0) -- (0,0);
			
			\draw[dashed, gray!70] (0,0) -- (0,4.7);
			\node[align=center, font=\small] at (1.15,3.4) {$x^{\lambda}_{+}\to\infty$ \\ as $x\to0^+$};
			\draw[-stealth, gray!80] (0.85,3.15) to[bend right=10] (0.32,2.55);
			
			\node[align=center, font=\small] at (2.5,-0.05) [below] {mild algebraic \\ decay as $x\to\infty$};
			\draw[-stealth, gray!80] (2.5,0.35) to[bend left=10] (2.15,0.62);
			
			\node[align=center, font=\small] at (-0.75,1.9) {$x_+^\lambda\equiv0$};
			\draw[-stealth, gray!80] (-0.75,1.55) to[bend left=10] (-0.4,0.08);
			
			\node[circle, draw=red!70!black, fill=white, thick, minimum size=4pt, inner sep=0pt] at (0,0) {};
		\end{tikzpicture}
		\caption{The tempered distribution $x_+^\lambda\in\mathcal S'(\mathbb R)$ for
			$\lambda\in\big(-\tfrac23,-\tfrac12\big)$: identically zero on $(-\infty,0]$,
			and equal to $x^\lambda$ on $(0,\infty)$, where it blows up at the origin
			while remaining locally integrable and decaying, in the mild
			algebraic sense, as $x\to\infty$. Although discontinuous indeed
			unbounded at $x=0$, the distribution lies in $H^s(\mathbb R)$ for
			suitable $s$, and is a generalized eigenfunction of the dilation
			generator $x\partial_x$, with eigenvalue $\lambda$.}
		\label{x-plus-lambda}
	\end{figure}
\end{exe}
	\subsection{Organization}
	This paper is organized as follows. 
	In Section~\ref{sec:notation}, we introduce our notation and the functional framework, including the Fourier-Lebesgue and modified Bourgain spaces $X^{r}_{s,b}$. 
	Section~\ref{sect2} presents the unified framework for the dilation operator $P = 3t\partial_t + x\partial_x$, establishing the system \eqref{SYS1} that tracks the growth of iterated derivatives. 
	The proof of Theorem \ref{main1} is detailed in Section \ref{sectA}, where we verify the uniform factorial bounds for the mKdV flow through a series of claims involving localized regularity bootstrap and trilinear estimates. 
	Finally, Section \ref{proofthmB} addresses the cubic gKdV equation, adapting the established framework to the $H^s$ scale. Technical estimates, including new trace theorems and commutator bounds in our modified spaces, are collected in Appendix \ref{apendiceA}.
	\subsection{Notation}\label{sec:notation}
	Throughout this work, $c$ denotes a generic positive constant that may change from line to line. Significant dependence on parameters is indicated by subscripts, such as $c_{t_{0}}$ or $c_{\epsilon}$. For two quantities $A$ and $B$, $A \lesssim B$ (or $A \lesssim_{t_{0}} B$) signifies $A \le c B$ (or $A \le c_{t_{0}} B$). We denote by $L^{2}(\mathbb{R}^{n})$ the \emph{standard Lebesgue space} of square-integrable functions, and for $s \in \mathbb{R}$, $H^{s}(\mathbb{R}^{n})$ represents the \emph{classical Sobolev space} with the norm $\|u\|_{H^{s}} = \|\langle \xi \rangle^{s} \hat{u}(\xi)\|_{L^{2}}$, where $\langle \xi \rangle = (1 + |\xi|^{2})^{1/2}$ and $\langle(\tau, \xi)\rangle = (1+\tau^{2}+\xi^{2})^{1/2}$ are the standard brackets. For any two operators $A$ and $B$, their \emph{commutator} is defined as $[A; B]: = AB - BA$.
	
	Central to our analysis is the \emph{Fourier transform} of a tempered distribution $v \in \mathcal{S}'(\mathbb{R})$. We consider the \emph{spatial Fourier transform} $\mathcal{F}_{x}$ for $v(x)$ as:
	$$\mathcal{F}_{x}(v)(\xi) :=\frac{1}{\sqrt{2\pi}} \int_{\mathbb{R}} e^{-i x \xi} v(x) \, dx,$$
	and the \emph{spacetime Fourier transform} $\mathcal{F}_{t,x}$ for $v(t,x)$, with respect to the dual variables $(\tau, \xi)$, as:
	$$\mathcal{F}_{t,x}(v)(\tau, \xi) := \frac{1}{2\pi}\int_{\mathbb{R}} \int_{\mathbb{R}} e^{-i(t\tau + x \xi)} v(t,x) \, dx \, dt.$$
	
	For $s \in \mathbb{R}$, we define the \emph{homogeneous fractional derivative operator} $|\nabla_x|^s$ (and its spacetime counterpart $|\nabla_{t,x}|^s$) via the Fourier multiplier $|\xi|^s$ (or $|(\tau, \xi)|^s$):
	$$\mathcal{F}_x(|\nabla_x|^s v)(\xi) := |\xi|^s \mathcal{F}_x(v)(\xi), \quad \text{and} \quad \mathcal{F}_{t,x}(|\nabla_{t,x}|^s v)(\tau, \xi) := |(\tau, \xi)|^s \mathcal{F}_{t,x}(v)(\tau, \xi),$$
	where $|(\tau, \xi)| = (\tau^2 + \xi^2)^{1/2}$. Consistent with this, the \emph{homogeneous Sobolev spaces} $\dot{H}^s(\mathbb{R})$ are defined by the norm $\|u\|_{\dot{H}^s} := \||\nabla_x|^s u\|_{L^2(\mathbb{R})}$.

	To study localization properties adapted to the linear part of the modified Korteweg-de Vries equation $\partial_{t} u + \partial_{x}^{3} u = 0$, we define for $ \sigma\in\mathbb{R}$, the \emph{spacetime Bessel potential operator} $\langle \nabla_{t,x}\rangle^{\sigma}$ as the Fourier multiplier:
	$$\langle \nabla_{t,x}\rangle^{\sigma} f := \mathcal{F}^{-1}_{t,x} \Big( \langle(\tau, \xi)\rangle^{\sigma} \mathcal{F}_{t,x}(f)(\tau, \xi) \Big).$$
	
	With these notations established, we define the functional framework necessary for our analysis, prioritizing the \emph{modified Fourier-Lebesgue} and \emph{Bourgain spaces}. For $s, b \in \mathbb{R}$ and $1 \le r \le \infty$ (with $r'$ as the \emph{conjugate exponent} satisfying $1/r + 1/r' = 1$), we consider the \emph{Fourier-Lebesgue Sobolev spaces} $\widehat{H}^{r}_{s}(\mathbb{R})$ and the \emph{anisotropic Fourier-Lebesgue Sobolev spaces} $\widehat{H}^{r}_{s, b}(\mathbb{R}^{2})$ as the spaces of tempered distributions equipped with the norms:
	$$\|f\|_{\widehat{H}^{r}_{s}(\mathbb{R})} := \| \langle\xi\rangle^{s} \mathcal{F}_{x}(f)(\xi) \|_{L^{r'}_{\xi}} \quad \text{and} \quad \|f\|_{\widehat{H}^{r}_{s, b}(\mathbb{R}^{2})} := \| \langle(\tau, \xi)\rangle^{b} \langle\xi\rangle^{s} \widehat{f}(\tau, \xi) \|_{L^{r'}_{\tau, \xi}}.$$
	Consistent with this, for $\mu \in \mathbb{R}$, we recognize $\widehat{H}^{r}_{\mu}(\mathbb{R}^{2})$ as the space of tempered distributions  equipped with the norm $\|\langle(\tau,\xi)\rangle^{\mu} \mathcal{F}_{t,x}(f)\|_{L^{r'}_{\tau\xi}}$. 
	
	These spaces, originally introduced by Grünrock and Vega in \cite{GV2009}, were specifically designed to exploit the dispersive properties of the equation in a more refined harmonic analysis setting. To account for the dispersive nature of the problem, we consider the \emph{modified Bourgain space} $X^{r}_{s,b}$, defined by the norm:
	\begin{equation*}
		\|f\|_{X^{r}_{s,b}} := \left( \int_{\mathbb{R}^{2}} \langle \xi \rangle^{sr'} \langle \tau - \xi^{3} \rangle^{br'} | \mathcal{F}_{t,x}(f)(\tau,\xi) |^{r'} \, \mathrm{d}\tau \, \mathrm{d}\xi \right)^{1/r'}
	\end{equation*}
	As established in \cite{GV2009}, these norms are well-defined across the full range of $r$:
	\begin{itemize}
		\item[(i)] For $r=1$ ($r'=\infty$), the norm characterizes the decay rate of the Fourier transform via the essential supremum.
		\item[(ii)] For $r=\infty$ ($r'=1$), the norm $\|f\|_{\widehat{H}^{\infty}_{s}(\mathbb{R})}$ reduces to the weighted $L^{1}$ norm $\|\langle\xi\rangle^{s}\mathcal{F}_{x}(f)\|_{L^{1}_{\xi}}$. 
		\item[(iii)] In the particular case $r=2$, we recover the standard Sobolev space $H^{s}(\mathbb{R})$ and the classical Bourgain space $X^{s,b}$.
	\end{itemize}
	Following the approach in \cite{GV2009}, we also use the time-restricted spaces $X^{r}_{s,b}(\delta)$ to facilitate the local-in-time theory. For any $\delta > 0$, we consider:
	\begin{equation*}
		X^{r}_{s,b}(\delta) := \left\{f = \tilde{f}|_{[-\delta,\delta]\times\mathbb{R}} : \tilde{f} \in X^{r}_{s,b} \right\}
	\end{equation*}
	equipped with the natural restriction norm
	\begin{equation*}
		\|f\|_{X^{r}_{s,b}(\delta)} := \inf \left\{ \|\tilde{f}\|_{X^{r}_{s,b}} : \tilde{f}|_{[-\delta,\delta]\times\mathbb{R}} = f \right\}.
	\end{equation*}
	To establish the analyticity of the flow, we define a sequence-based framework designed to characterize the growth of the Taylor coefficients. Let $\mathbf{f} = (f_k)_{k=0}^\infty$ be a sequence of tempered distributions where, for a given radius of analyticity $\rho \geq 0$, we monitor the convergence of the associated power series. Specifically, for $s \in \mathbb{R}$ and $1 < r \leq 2$, we define the \emph{spatial analytic space} $\mathcal{A}_{\rho}(\widehat{H}^{r}_{s})$ as the space of sequences where each $f_k \in \mathcal{S}'(\mathbb{R})$ is a spatial tempered distribution satisfying the norm condition
	\begin{equation*}
		\|\mathbf{f}\|_{\mathcal{A}_{\rho}(\widehat{H}^{r}_{s})} := \sum_{k=0}^{\infty} \frac{\rho^{k}}{k!} \|f_{k}\|_{\widehat{H}^{r}_{s}(\mathbb{R})} < \infty.
	\end{equation*}
	Similarly, to handle the dispersive evolution, we define the \emph{spacetime analytic space} $\mathcal{A}_{\rho}(X^{r}_{s,b})$ for $s, b \in \mathbb{R}$. This space consists of sequences of spacetime tempered distributions $f_k \in \mathcal{S}'(\mathbb{R}^2)$ acting on $\mathbb{R}_t \times \mathbb{R}_x$, equipped with the norm 
	\begin{equation*}
		\|\mathbf{f}\|_{\mathcal{A}_{\rho}(X_{s,b}^{r})} := \sum_{k=0}^{\infty} \frac{\rho^{k}}{k!} \|f_{k}\|_{X^{r}_{s,b}} < \infty.
	\end{equation*}
	In order to localize the suitable regions where we describe the analyticity of the solutions, we consider the following cut-off functions. Let $\chi \in C_{0}^{\infty}(\mathbb{R}^{2})$ be a smooth function such that $\chi(t, x) = 1$ for $|(t, x)| \leq 1$, $0 \leq \chi \leq 1$ and $\text{supp}(\chi) \subseteq \{(t, x) : |(t, x)| \leq 2\}$. 
	
	For a point $(t_{0}, x_{0}) \in \mathbb{R}^{2}$ and a vector of parameters $\vec{\boldsymbol{\epsilon}}= (\epsilon_{1}, \epsilon_{2}) \in (0, \infty)^{2}$, we define the \emph{localized function} $\chi_{(t_{0}, x_{0}), \vec{\boldsymbol{\epsilon}}}$ by:
	\begin{equation}\label{cutoff1}
		\chi_{(t_{0}, x_{0}), \vec{\boldsymbol{\epsilon}}}(t, x) := \chi \left( \frac{t - t_{0}}{\epsilon_{1}}, \frac{x - x_{0}}{\epsilon_{2}} \right).
	\end{equation}
	To decouple the variables in our directional estimates, we introduce corresponding 
	one-dimensional cut-off functions. Let $\chi_{1}, \chi_{2} \in C_{0}^{\infty}(\mathbb{R})$ 
	be standard bump functions satisfying $0 \le \chi_{i}(s) \le 1$ for $i \in \{1, 2\}$, 
	such that $\chi_{i}(s) = 1$ for $|s| \le 1$ and $\text{supp}(\chi_{i}) \subseteq \{s \in \mathbb{R} : |s| \le 2\}$. 
	
	Given arbitrary centers $t_{0}, x_{0} \in \mathbb{R}$ and positive scales $\epsilon_{1}, \epsilon_{2} > 0$, 
	we define the \emph{component-wise localized functions} for the time variable $t$ and space 
	variable $x$, respectively, by:
	\begin{equation}\label{cutoff1A}
		\chi_{1, t_{0}, \epsilon_{1}}(t) := \chi_{1} \left( \frac{t - t_{0}}{\epsilon_{1}} \right) 
		\quad \text{and} \quad 
		\chi_{2, x_{0}, \epsilon_{2}}(x) := \chi_{2} \left( \frac{x - x_{0}}{\epsilon_{2}} \right).
	\end{equation}
	\section{A Unified Framework for gKdV Analyticity}\label{sect2}
	In this section, we establish a common framework to treat the analyticity of solutions for the generalized Korteweg-de Vries (gKdV) equation of order $m\in \{3, 4\}$:
	\begin{equation}\label{mgkdv}
		\begin{cases} 
			\partial_{t} v + \partial_{x}^{3} v + \partial_{x} (v^{m}) = 0, \\ 
			v(0,x) = \phi_{0}(x), 
		\end{cases}
	\end{equation}
	where $m=3$ corresponds to the modified KdV, $m=4$ to the $3$-gKdV. To exploit the scaling symmetries of the equation, we consider the \emph{generator of dilation} $P$ associated with the \emph{linear operator of the KdV flow}, which we denote by $L.$ More precisely,
	\begin{equation*}
		P = x \partial_{x} + 3t \partial_{t}, \qquad L = \partial_{t} + \partial_{x}^3.
	\end{equation*}
	The fundamental commutation relations between these operators are:
	\begin{equation*}
		[L; P] = 3L, \qquad LP^{k}= (P+3I)^{k} L, \quad \text{and} \quad (P+3I)^{k} \partial_{x} = \partial_{x} (P+2I)^{k},\quad \forall \,\,k\in \mathbb{N}_{0}.
	\end{equation*}
	Given that $L v = -\partial_x (v^{m})$, we study the evolution of the iterated generator $v_{k} := P^{k} v$. Applying $L$ to $v_{k}$ yields:
	\begin{equation*}
		L v_{k} = (P+3I)^{k} L v = -\partial_{x} (P+2I)^{k} (v^{m}).
	\end{equation*}
	By applying the binomial expansion to $(P+2I)^k$ and the multinomial expansion to the $m$-th power of $v$, we collapse the nested sums into a single unified system:
	\begin{equation}\label{SYS1}
		\begin{cases} 
			\partial_{t} v_{k} + \partial_{x}^3 v_{k} + \Pi_k^{(m)}(v_0, v_1, \ldots, v_k) = 0, \\ 
		v_k(0,x) = \phi_{k}(x) = (x \partial_{x})^{k} \phi_{0}(x), \quad k = 0, 1, 2 , \ldots
		\end{cases}
	\end{equation}
	where $v_{0} := v$, and 
	\begin{equation}\label{Pikm}
		\Pi_{k}^{(m)}(v_0, v_1, \ldots, v_k) := \partial_{x} \left( \sum_{\substack{ k_{1} + \dots +k_{m+1} = k \\ 0\leq k_{1}\,k_{2},\dots,k_{m+1}\leq k }} \frac{k!\,2^{k_{1}}}{ k_{1}!k_{2}! \dots k_{m+1}!} \prod_{p=2}^{m+1} v_{k_{p}} \right).
	\end{equation}
	To establish the results asserted in Theorem \ref{main1}, we employ a strategy based on the local-in-time analysis of the flow. We organize the argument into several distinct stages, beginning with the fundamental local well-posedness result for the modified Korteweg-de Vries (mKdV) equation. This corresponds to the case $m=3$ in the generalized equation \eqref{mgkdv} and serves as the technical foundation for the estimates in more general settings.
	\begin{teorema}
		Let $1 < r \leq 2$, $s \geq s(r) \coloneqq \frac{1}{2} - \frac{1}{2r}$, and $u_0 \in \widehat{H}^r_s(\mathbb{R})$. Then there exist $b > \frac{1}{r}$, a time $\delta = \delta(\|u_0\|_{\widehat{H}^r_s}) > 0$, and a unique solution $u \in X_{s,b}^{r}(\delta)$ to \eqref{mgkdv} (with $m=3$). This solution is persistent, and for any $\delta_0 \in (0, \delta)$, the flow map 
		\begin{equation*}
			\begin{array}{rccc}
				\mathcal{W}\colon & \widehat{H}^r_s(\mathbb{R}) & \longrightarrow & X_{s,b}^{r}(\delta_0) \\
				& u_0 & \longmapsto & u
			\end{array}
		\end{equation*}
		is locally Lipschitz continuous.
	\end{teorema}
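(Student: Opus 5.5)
This is the Grünrock--Vega local well-posedness theorem. I would prove it by the standard Bourgain-space contraction argument in $X^r_{s,b}(\delta)$. The core input is the trilinear estimate of Grünrock \cite{Gru2004-1} and Grünrock--Vega \cite{GV2009}: for $s\ge s(r)$, $1<r\le 2$ and suitable $b>\frac1r$, $b'$ with $b'>b-1$ slightly,
\begin{equation*}
\|\partial_x(u_1u_2u_3)\|_{X^r_{s,b'}}\lesssim \prod_{j=1}^3\|u_j\|_{X^r_{s,b}},
\end{equation*}
with $b-1<b'<0$. I plan to prove this by duality in the $L^{r'}$-weighted Fourier setting. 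The steps are as follows. First, reduce it to an $L^{r}_{\tau\xi}$-convolution estimate for the multiplier $\xi\langle\xi\rangle^s\prod_j\langle\xi_j\rangle^{-s}$. Second, use the resonance identity $\tau-\xi^3-\sum_j(\tau_j-\xi_j^3)=3(\xi_1+\xi_2)(\xi_2+\xi_3)(\xi_1+\xi_3)$, which guarantees that one modulation dominates $|(\xi_1+\xi_2)(\xi_2+\xi_3)(\xi_3+\xi_1)|$. Third, combine this with the Fourier--Lebesgue versions of the bilinear smoothing estimates, namely $\|I^{1/r}(u,v)\|_{\widehat L^r_{xt}}\lesssim \|u\|_{X^r_{0,b}}\|v\|_{X^r_{0,b}}$ for the bilinear operator $I$ with symbol $|\xi_1-\xi_2|$, and its $I_-$ counterpart with $|\xi_1+\xi_2|$. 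These are Hölder-type computations in $\xi$ after integrating out $\tau$, using $b>\frac1r$ so that $\langle\tau-\xi^3\rangle^{-b}\in L^r_\tau$.

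Next come the linear estimates in $X^r_{s,b}$. For a time cutoff $\psi_\delta$, I need
\begin{align*}
\|\psi_\delta W(t)u_0\|_{X^r_{s,b}}&\lesssim \|u_0\|_{\widehat H^r_s},\\
\Bigl\|\psi_\delta\int_0^tW(t-t')F(t')\,dt'\Bigr\|_{X^r_{s,b}}&\lesssim \delta^{1-b+b'}\|F\|_{X^r_{s,b'}},
\end{align*}
where $W$ is the Airy group. Both reduce, for fixed $\xi$, to one-dimensional estimates in $\widehat H^r_b(\mathbb R_t)$, proved exactly as in the $L^2$ case with $L^{r'}_\tau$ in place of $L^2_\tau$. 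I also need the embedding $X^r_{s,b}\hookrightarrow C(\mathbb R;\widehat H^r_s)$ for $b>\frac1r$, which follows from Hausdorff--Young in $\tau$, since $\langle\tau-\xi^3\rangle^{-b}\in L^r_\tau$. That embedding gives persistence.

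Then I define $\Phi(u)=\psi W(t)u_0-\psi_\delta\int_0^tW(t-t')\partial_x(u^3)\,dt'$ on a ball of radius $2c\|u_0\|_{\widehat H^r_s}$. The positive power $\delta^{1-b+b'}$ makes $\Phi$ a contraction once $\delta\sim\|u_0\|^{-\theta}$. The difference estimate $\|\Phi(u)-\Phi(v)\|\lesssim\delta^{\epsilon}(\|u\|+\|v\|)^2\|u-v\|$ follows from multilinearity and gives local Lipschitz dependence of the flow map on $[-\delta_0,\delta_0]$. Uniqueness in $X^r_{s,b}(\delta)$ follows by the usual restriction-norm argument on short subintervals.

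The main obstacle is the trilinear estimate at the endpoint $s=s(r)$, and in particular the resonant high--high--high$\to$high interaction with $\xi_1+\xi_2\approx0$. In that region the resonance factor degenerates and only $\langle\xi\rangle^{1-2s}$ is available to compensate the derivative. I would handle it by first applying $I_-^{1/r}$ to the pair with the smallest $|\xi_i+\xi_j|$, and then the Fourier-restriction (Airy Strichartz) estimate in $\widehat L^r$ to the remaining factor. Reaching the sharp $s(r)$ requires exploiting $L^{r'}$ rather than $L^2$ summation, and this is precisely where the $\widehat H^r_s$ scale beats $H^s$. If the endpoint proves delicate, I would cite \cite{GV2009} directly for the trilinear estimate, since the remaining steps are routine.
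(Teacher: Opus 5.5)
Your proposal is correct and follows the same route as the paper, which proves this statement simply by invoking Gr\"unrock--Vega \cite[Theorem~1]{GV2009}: a contraction in $X^r_{s,b}(\delta)$ using the linear estimates of Lemma~\ref{Lemma6}, the trilinear estimate of Theorem~\ref{thm2} with the gain $\delta^{1-b+b'}$, and the embedding $X^r_{s,b}\hookrightarrow C_t\widehat H^r_s$ for $b>\frac1r$ is exactly how that result is obtained. Do rely on the citation for the endpoint trilinear estimate, though, because your sketch of it would not work as written: the $\widehat L^r$ bilinear Airy gain is $|\xi_1^2-\xi_2^2|^{1/r}$ (this comes from the Jacobian of $\xi_1\mapsto\xi_1^3+(\xi-\xi_1)^3$), neither $|\xi_1-\xi_2|^{1/r}$ nor $|\xi_1+\xi_2|^{1/r}$ alone gives a true estimate, and applying a smoothing operator with symbol $|\xi_i+\xi_j|^{1/r}$ to the pair where $|\xi_i+\xi_j|$ is smallest gains nothing.
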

	\begin{proof}
		This result is a consequence of the bilinear estimates in Bourgain-type spaces adapted to the $\widehat{H}^r_s$ framework; for a comprehensive treatment, we refer the reader to \cite[Theorem 1]{GV2009}.
	\end{proof}
	We also present the linear estimates associated with the Airy group, distinguishing between classical $H^s$ and $X^{s,b}$ structures and the modified $\widehat{H}^r_s$ and $X^r_{s,b}$ spaces required for the Fourier-Lebesgue theory.
	\begin{lema}\label{Lemma6}
		Let $1 < r < \infty$ and $s \in \mathbb{R}$.
		\begin{enumerate}
			\item For any $b \in \mathbb{R}$ and any cut-off function $\psi \in C_{0}^{\infty}(\mathbb{R})$, the linear propagator $S(t)$ satisfies:
			\begin{equation*}
				\|\psi(t) S(t) u_{0}\|_{X^{r}_{s,b}} = \|\psi\|_{\widehat{H}^{r}_{b}} \|u_{0}\|_{\widehat{H}^{r}_{s}} \le c_{\psi} \|u_{0}\|_{\widehat{H}^{r}_{s}},
			\end{equation*}
			for any initial data $u_{0} \in \widehat{H}^{r}_{s}(\mathbb{R})$.
			\item  Assume $-\frac{1}{r'} < b' \leq 0 \leq b \leq 1+b'$. Then for any $\delta \in (0, 1]$, we have:
			\begin{equation*}
				\left\|\psi_{\delta} \int_{0}^{t} S(t-t') F(t') dt' \right\|_{X^{r}_{s,b}} \leq c \delta^{1+b'-b} \|F\|_{X^{r}_{s,b'}},
			\end{equation*}
			where $c$ is a constant independent of $\delta$.
		\end{enumerate}
	\end{lema}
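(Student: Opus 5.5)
The plan is to compute the space--time Fourier transform explicitly for part~1, and for part~2 to conjugate by the Airy group so that everything reduces to a one-dimensional estimate in the time variable, which is then proved by a low/high frequency splitting in the spirit of Kenig--Ponce--Vega and Ginibre, adapted from $L^2$ to $L^{r'}$.

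\textbf{Part 1.} With $S(t)u_0=\mathcal F_x^{-1}\big(e^{it\xi^3}\mathcal F_x u_0\big)$ (up to the sign convention for the Airy phase), one has, up to a harmless normalising constant,
\begin{equation*}
\mathcal F_{t,x}\big(\psi(t)S(t)u_0\big)(\tau,\xi)=\widehat\psi(\tau-\xi^3)\,\mathcal F_x(u_0)(\xi).
\end{equation*}
Inserting this into the definition of $\|\cdot\|_{X^r_{s,b}}$ and substituting $\sigma=\tau-\xi^3$ in the inner $\tau$-integral for fixed $\xi$ factorises the integral. This gives exactly $\|\langle\sigma\rangle^b\widehat\psi\|_{L^{r'}_\sigma}\,\|\langle\xi\rangle^s\mathcal F_xu_0\|_{L^{r'}_\xi}$, i.e. the claimed identity. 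The constant $c_\psi=\|\psi\|_{\widehat H^r_b}$ is finite because $\widehat\psi\in\mathcal S$.

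\textbf{Part 2, reduction.} Set $g(t)=S(-t)F(t)$ and $u(t)=\psi_\delta(t)\int_0^tS(t-t')F(t')\,dt'$, so that $S(-t)u(t)=\psi_\delta(t)\int_0^t g(t')\,dt'$. Since
\begin{equation*}
\|u\|_{X^r_{s,b}}=\big\|\langle\xi\rangle^s\,\|\langle\tau\rangle^b\mathcal F_{t,x}(S(-\cdot)u)(\tau,\xi)\|_{L^{r'}_\tau}\big\|_{L^{r'}_\xi},
\end{equation*}
and similarly for $F$ with $b'$, it suffices to prove, for each fixed $\xi$ and a function $h$ of $t$ alone, the one-dimensional bound
\begin{equation*}
\Big\|\psi_\delta\int_0^th\Big\|_{\widehat H^r_b(\mathbb R_t)}\lesssim\delta^{1+b'-b}\|h\|_{\widehat H^r_{b'}(\mathbb R_t)}.
\end{equation*}
One then takes the $L^{r'}_\xi$ norm with weight $\langle\xi\rangle^s$. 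The basic scaling fact I will use throughout is $\|t^n\psi_\delta\|_{\widehat H^r_b}\lesssim C^n\delta^{n+\frac1r-b}$ for $\delta\le1$ and $b\ge0$.

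\textbf{Part 2, the one-dimensional estimate.} Write
\begin{equation*}
\int_0^th=\int\widehat h(\tau)\,\frac{e^{it\tau}-1}{i\tau}\,d\tau
\end{equation*}
and split the integral into $|\tau|\le1/\delta$ and $|\tau|>1/\delta$.
\begin{itemize}
\item \emph{Low frequencies.} Expand $e^{it\tau}-1=\sum_{n\ge1}(it\tau)^n/n!$. The $n$-th term is $\frac{t^n\psi_\delta(t)}{n!}\int_{|\tau|\le1/\delta}(i\tau)^{n-1}\widehat h\,d\tau$. By H\"older,
\begin{equation*}
\Big|\int_{|\tau|\le1/\delta}\tau^{n-1}\widehat h\,d\tau\Big|\le\delta^{-(n-1)}\,\big\|\langle\tau\rangle^{-b'}\mathbf 1_{|\tau|\le1/\delta}\big\|_{L^r}\,\|h\|_{\widehat H^r_{b'}}\lesssim\delta^{-(n-1)}\,\delta^{b'-\frac1r}\,\|h\|_{\widehat H^r_{b'}},
\end{equation*}
using $b'\le0$. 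Multiplying by $\|t^n\psi_\delta\|_{\widehat H^r_b}$ gives $C^n\delta^{1+b'-b}/n!$, which is summable in $n$.
\item \emph{High frequencies, constant term.} The term $-\psi_\delta\int_{|\tau|>1/\delta}\widehat h/(i\tau)\,d\tau$ is a constant times $\psi_\delta$. H\"older bounds the constant by $\|\tau^{-1}\langle\tau\rangle^{-b'}\mathbf 1_{|\tau|>1/\delta}\|_{L^r}\,\|h\|_{\widehat H^r_{b'}}\sim\delta^{1+b'-\frac1r}\|h\|_{\widehat H^r_{b'}}$. This is exactly where $b'>-\frac1{r'}$ is needed, since the $L^r$ norm is finite iff $(1+b')r>1$. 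Multiplying by $\|\psi_\delta\|_{\widehat H^r_b}\sim\delta^{\frac1r-b}$ again gives $\delta^{1+b'-b}$.
\item \emph{High frequencies, oscillatory term.} The remaining term is $\psi_\delta\,k$, where $\widehat k=\widehat h\,\mathbf 1_{|\tau|>1/\delta}/(i\tau)$. From $\langle\tau\rangle^b\lesssim\langle\tau-\sigma\rangle^b+\langle\sigma\rangle^b$ and Young's inequality,
\begin{equation*}
\|\psi_\delta k\|_{\widehat H^r_b}\lesssim\|\widehat{\psi_\delta}\|_{L^1}\|k\|_{\widehat H^r_b}+\|\langle\cdot\rangle^b\widehat{\psi_\delta}\|_{L^1}\|\widehat k\|_{L^{r'}},
\end{equation*}
with $\|\widehat{\psi_\delta}\|_{L^1}\lesssim1$ and $\|\langle\cdot\rangle^b\widehat{\psi_\delta}\|_{L^1}\lesssim\delta^{-b}$. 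Using $b\le1+b'$ on $|\tau|>1/\delta$,
\begin{equation*}
\|k\|_{\widehat H^r_b}\le\sup_{|\tau|>1/\delta}\langle\tau\rangle^{b-1-b'}\,\|h\|_{\widehat H^r_{b'}}\lesssim\delta^{1+b'-b}\|h\|_{\widehat H^r_{b'}},
\end{equation*}
and similarly $\delta^{-b}\|\widehat k\|_{L^{r'}}\lesssim\delta^{1+b'-b}\|h\|_{\widehat H^r_{b'}}$.
\end{itemize}

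I expect the main obstacle to be the bookkeeping of powers of $\delta$ in the low-frequency Taylor series together with the endpoint integrability condition $b'>-1/r'$ in the high-frequency constant term. These are precisely the places where the $L^2$ argument must be redone with the H\"older exponents $(r,r')$. The remaining steps are routine once the scaling bound for $\|t^n\psi_\delta\|_{\widehat H^r_b}$ is in place.
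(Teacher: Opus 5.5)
Your proof is correct. It is essentially the argument the paper relies on: the paper gives no proof of its own and cites Gr\"unrock \cite{Gru2004-1}, whose proof is the Ginibre--Tsutsumi--Velo argument adapted from $L^2$ to $L^{r'}$, namely explicit factorisation for part~1, then conjugation by $S(-t)$ and splitting at $|\tau|\sim 1/\delta$ into a Taylor-expanded low part and a constant-plus-oscillatory high part. You use each hypothesis where it is needed: $b'\le 0$ and $b'>-1/r'$ in the two H\"older bounds, and $0\le b\le 1+b'$ in the scaling of $\|t^n\psi_\delta\|_{\widehat H^r_b}$ and in the oscillatory term.
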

	\begin{proof}
		See Gr\"unrock \cite{Gru2004-1}.
	\end{proof}
	The primary contribution of Grünrock and Vega in \cite{GV2009} lies in deriving  multi-linear estimates within the framework of the modified Bourgain spaces $X^{r}_{s,b}$. These estimates were deduced to push the well-posedness theory down to remarkably rough Sobolev spaces. In our analysis, we rely on these technical tools not only to establish the existence of solutions in low-regularity regimes but also to prove their analyticity.
	
	The core of this approach is summarized in the following multi-linear estimate, which handles the derivative non-linearity of the equation:
	\begin{teorema}\label{thm2}
		Let $1 < r \leq 2$ and $s \geq s(r) = \frac{1}{2} - \frac{1}{2r}$. Then, for any $b' < 0$ and $b > \frac{1}{r}$, the following estimate holds:
		\begin{equation}\label{est3}
			\left\| \partial_{x} \left( \prod_{j=1}^{3} u_{j} \right) \right\|_{X^{r}_{s,b'}} \leq c \prod_{j=1}^{3} \|u_{j}\|_{X^{r}_{s,b}}.
		\end{equation}
	\end{teorema}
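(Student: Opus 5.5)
This is the trilinear estimate of Grünrock--Vega (\cite{GV2009}). The plan is to reproduce its proof by duality and reduction to bilinear Airy estimates in Fourier--Lebesgue norms.

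\textbf{Reduction to Fourier variables.} Write $f_j(\tau_j,\xi_j):=\langle\xi_j\rangle^{s}\langle\tau_j-\xi_j^3\rangle^{b}|\mathcal{F}_{t,x}u_j|$, so that $\|u_j\|_{X^r_{s,b}}=\|f_j\|_{L^{r'}}$. Since $\mathcal{F}_{t,x}$ of the product is a convolution, (\ref{est3}) reduces to an $L^{r'}$ bound for the multiplier form
\begin{equation*}
\int_{*}\frac{|\xi|\langle\xi\rangle^{s}\langle\tau-\xi^3\rangle^{b'}}{\prod_{j}\langle\xi_j\rangle^{s}\langle\tau_j-\xi_j^3\rangle^{b}}\prod_j f_j(\tau_j,\xi_j).
\end{equation*}
Here $*$ denotes integration over $\tau=\sum\tau_j$, $\xi=\sum\xi_j$. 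We may assume $s=s(r)$, since for larger $s$ the excess weight $\langle\xi\rangle^{s-s(r)}$ is dominated by $\prod_j\langle\xi_j\rangle^{s-s(r)}$.

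\textbf{Key algebraic identity.} The resonance function is
\begin{equation*}
\tau-\xi^3-\sum_j(\tau_j-\xi_j^3)=-3(\xi_1+\xi_2)(\xi_2+\xi_3)(\xi_1+\xi_3).
\end{equation*}
Consequently the largest modulation among $\langle\tau-\xi^3\rangle$ and $\langle\tau_j-\xi_j^3\rangle$ is at least comparable to $|(\xi_1+\xi_2)(\xi_2+\xi_3)(\xi_1+\xi_3)|$.

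\textbf{Splitting into regions.} We split according to which pairwise sum is small. In the nonresonant regions, the modulation gain $|\xi_a+\xi_b|^{\cdot}$ compensates the derivative $|\xi|$. The main tool there is the bilinear smoothing estimate in $\widehat{H}^r$ spaces (Grünrock \cite{Gru2004-1}):
\begin{equation*}
\big\|I_{-}^{1/r}(u,v)\big\|_{\widehat L^{r}_{xt}}\lesssim \|u\|_{X^r_{0,b}}\|v\|_{X^r_{0,b}},\qquad b>\tfrac1r,
\end{equation*}
where $I_-^{\sigma}$ has symbol $|\xi_1-\xi_2|^{\sigma}$, together with its $I_+$ analogue with symbol $|\xi_1+\xi_2|^{\sigma}$. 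It is combined with the linear $\widehat L^{r}$ Strichartz and maximal estimates. These are glued via Hölder's and Young's inequalities in the $L^{r'}$ Fourier norms; because $r'\ge2$, Young's inequality on the Fourier side replaces Plancherel. The negative exponent $b'<0$ absorbs the output modulation when it is the largest one.

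\textbf{Main obstacle.} The hardest case is the resonant configuration, for example $\xi_1+\xi_2\approx0$ with $|\xi_3|\approx|\xi|$ large, or two frequencies high and opposite. There the resonance factor degenerates, and the derivative must be recovered entirely from the bilinear $I_+^{1/r}$ gain on the pair $(u_1,u_3)$ or $(u_2,u_3)$, plus the weights $\langle\xi_j\rangle^{-s(r)}$. This is exactly where the threshold $s=\frac12-\frac1{2r}$ is sharp: the exponent count reads $1 \le \frac1r+2s(r)$, and with $s(r)=\frac12-\frac1{2r}$ the right-hand side equals $1$ exactly, with no room left over. At this endpoint I would follow \cite{GV2009} precisely, using the $I_\pm^{1/r}$ estimates with $b>\frac1r$ (note $b>1/r$, not $1/2$, is forced by the $L^{r'}_\tau$ integrability of $\langle\tau-\xi^3\rangle^{-b}$). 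I would treat the fully symmetric high--high--high case by a second application of the resonance identity to the output modulation.
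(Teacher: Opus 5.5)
The paper gives no argument of its own for this statement; it only cites \cite{GV2009}, so deferring to Gr\"unrock--Vega is in line with it. The sketch you add, however, rests on a bilinear estimate that is false. For free Airy solutions the map $(\xi_1,\xi_2)\mapsto(\xi_1+\xi_2,\xi_1^3+\xi_2^3)$ has Jacobian $3|\xi_1^2-\xi_2^2|=3|\xi_1-\xi_2|\,|\xi_1+\xi_2|$. Changing variables gives, up to constants and the symmetry $\xi_1\leftrightarrow\xi_2$,
\begin{equation*}
\big\|\mathcal F_{t,x}\big(I_m(u,v)\big)\big\|_{L^{r'}_{\tau\xi}}^{r'}\approx\iint m(\xi_1,\xi_2)^{r'}\,|\xi_1^2-\xi_2^2|^{1-r'}\,|\widehat{u_0}(\xi_1)|^{r'}\,|\widehat{v_0}(\xi_2)|^{r'}\,d\xi_1\,d\xi_2 ,
\end{equation*}
where $I_m$ has symbol $m$. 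The weight is bounded exactly when $m\lesssim|\xi_1^2-\xi_2^2|^{1/r}$. So the true estimate is $\|I_+^{1/r}I_-^{1/r}(u,v)\|_{\widehat L^r_{xt}}\lesssim\|u\|_{X^r_{0,b}}\|v\|_{X^r_{0,b}}$. With $I_-^{1/r}$ alone you are left with $|\xi_1+\xi_2|^{-r'/r}$, $r'/r\ge1$, which is not locally integrable. Concretely, $\widehat{u_0}=\mathbf 1_{[N,N+1]}$, $\widehat{v_0}=\mathbf 1_{[-N-1,-N]}$ with a time cutoff make the ratio of the two sides grow like $N^{1/r-1/r'}$ (like $(\log N)^{1/2}$ when $r=2$). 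Symmetrically, $I_+^{1/r}$ alone fails near $\xi_1=\xi_2$.

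This is exactly where your endpoint argument breaks. Take the fully resonant configuration $\xi_1\approx\xi_3\approx-\xi_2$, $|\xi_j|\sim N$; this is the Knapp-type example that makes $s(r)$ sharp. Every pair is degenerate in one factor, since $|\xi_1-\xi_3|$, $|\xi_1+\xi_2|$ and $|\xi_2+\xi_3|$ are all small. The correct estimate on $(u_1,u_3)$ therefore gains $N^{1/r}|\xi_1-\xi_3|^{1/r}$, not $N^{1/r}$, and the count $1\le\frac1r+2s(r)$ does not follow from the tools you list.

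What does work is to use an opposite-sign pair, say $(u_1,u_2)$ on the piece $|\xi_1+\xi_2|\sim A$, with gain $(NA)^{1/r}$, and to pay for the small factor with the third function. Since $\xi_3=\xi-(\xi_1+\xi_2)$ lies within $O(A)$ of $\xi$, split the output frequencies into intervals of length $A$. Then combine Young's inequality, the bound $\|\mathbf 1_I(\xi_3)\mathcal F_{t,x}u_3\|_{L^1_{\tau\xi}}\lesssim|I|^{1/r}\|u_3\|_{X^r_{0,b}}$, and $\ell^{r'}$-summation over the intervals. The middle bound is H\"older in $\tau$ with $\langle\cdot\rangle^{-b}\in L^r_\tau$; it is $L^r_\tau$, not $L^{r'}_\tau$, integrability that forces $b>\frac1r$. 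The result is the bound $N^{1-2s}(NA)^{-1/r}A^{1/r}=N^{1-2s-1/r}$ for this piece. That is the actual source of the threshold. At $s=s(r)$ these pieces are only $O(1)$, so their summation over the degenerate scales still has to be organized.

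Finally, the statement is claimed for every $b'<0$, hence for $b'$ arbitrarily close to $0$. The output weight then gives only $\langle\tau-\xi^3\rangle^{b'}$ with $|b'|$ tiny. So whenever the output modulation is the largest, the resonance identity pays for nothing, and the whole derivative must come from bilinear smoothing and frequency localization. Your ``modulation gain compensates the derivative'' is available only when an input modulation dominates.
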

	\begin{proof}
		See \cite{GV2009}.
	\end{proof}
	Additionally, we include the following remarks regarding the lifespan of solutions, as derived in \cite{GV2009}:
	\begin{obs}\label{Remark9} For functions $u_{1}, u_{2}, u_{3}$ supported in $[-\delta, \delta] \times \mathbb{R}$ with $0 < \delta \leq 1$, we have the estimate:
		\begin{equation*}
			\left\| \partial_{x} \left( \prod_{j=1}^{3} u_{j} \right) \right\|_{X^{r}_{s,b-1}} \leq c \delta^{1 - \frac{1}{r} - \epsilon} \prod_{j=1}^{3} \|u_{j}\|_{X^{r}_{s,b}},
		\end{equation*}
		provided $1 < r \leq 2$, $s \geq s(r)$, $b > \frac{1}{r}$, and $\epsilon > 0$. Incorporating this estimate into the fixed-point argument yields a lifespan of size:
		\begin{equation*}
			\delta \sim \|u_{0}\|_{\widehat{H}^{r}_{s}}^{-\left( \frac{2r}{r-1} \right) - \epsilon'}.
		\end{equation*}
		For the case $r = 2$, this coincides up to the factor $\epsilon'$ with the classical result established in \cite{KPV3} (see also \cite{FLP1999}).
	\end{obs}
	In what follows, we present the main ideas of the proof of Theorem \ref{main1}.
	\section{Proof of Theorem \ref{main1}}\label{sectA}
	\begin{proof}[Proof of Theorem \ref{main1}]
		We organize the proof into a series of claims that contain the essential information required to establish the analyticity of the solutions within these rough Fourier-Lebesgue spaces. 
        \setcounter{claim}{0}
	\begin{claim}\label{c0}
	Let $1 < r \leq 2$, $ \frac{1}{r} < b < 1$, and $s \geq s(r):= \frac{1}{2}\left(1 - \frac{1}{r}\right)$. Suppose there exists a constant $\alpha > 0$ such that the initial data $\boldsymbol{\phi} = (\phi_{k})_{k=0}^{\infty}$ for the system \eqref{SYS1} satisfies
	\begin{equation*}
		\|\boldsymbol{\phi}\|_{\mathcal{A}_{\alpha}\left(\widehat{H}^{r}_{s}(\mathbb{R})\right)} := \sum_{k=0}^{\infty} \frac{\alpha^k}{k!} \|\phi_k\|_{\widehat{H}^{r}_{s}(\mathbb{R})} < \infty.
	\end{equation*}
	Then, there exists a lifespan $T = T\left(\|\boldsymbol{\phi}\|_{ \mathcal{A}_{\alpha}\left(\widehat{H}^{r}_{s}(\mathbb{R})\right)}\right) > 0$ such that the system \eqref{SYS1} with $m = 3$ satisfies:
	\begin{enumerate}
		\item \underline{Existence and Uniqueness:} There exists a unique solution $\mathbf{v}(t) = (v_k(t))_{k \geq 0}$ belonging to the class
		\begin{equation*}
			\mathbf{v} \in C\left([-T, T]; \mathcal{A}_{\alpha}\left(\widehat{H}^{r}_{s}(\mathbb{R})\right)\right) \cap \mathcal{A}_{\alpha}(X^{r}_{s,b}(T)).
		\end{equation*}
		\item \underline{Uniform Bound:} The solution $\mathbf{v}$ satisfies the analytical estimate:
		\begin{equation*}
			\|\mathbf{v}\|_{\mathcal{A}_{\alpha}(X^{r}_{s,b}(T))} \leq c \|\boldsymbol{\phi}\|_{\mathcal{A}_{\alpha}\left(\widehat{H}^{r}_{s}(\mathbb{R})\right)}.
		\end{equation*}
 \item \underline{Lipschitz continuity:} for every $M>0$ there is $T(M)>0$ (as in Item~1) such that: for any $\boldsymbol\phi_1,\boldsymbol\phi_2 \in \mathcal A_{\alpha}(\widehat H_s^r(\mathbb R))$ with $\|\boldsymbol{\phi}_1\|_{\mathcal A_{\alpha}(\widehat H_s^r)},\|\boldsymbol{\phi}_2\|_{\mathcal A_{\alpha}(\widehat H_s^r)}\le M$, the corresponding solutions $\mathbf{v}_1,\mathbf{v}_2$ satisfy
\[
\|\mathbf{v}_1-\mathbf{v}_2\|_{C([-T(M),T(M)];\mathcal A_{\alpha}(\widehat H_s^r))}\le c_{T(M)}\|\boldsymbol{\phi}_1-\boldsymbol{\phi}_2\|_{\mathcal A_{\alpha}(\widehat H_s^r)}.
\]
\end{enumerate}
\end{claim}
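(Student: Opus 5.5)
We will prove Claim~\ref{c0} by a contraction argument in the weighted sequence space $\mathcal{A}_{\alpha}(X^{r}_{s,b}(T))$. The system \eqref{SYS1} with $m=3$ is rewritten in Duhamel form. For each $k$ we set
\begin{equation*}
	\Phi_k(\mathbf{v})(t) := \psi(t)S(t)\phi_k - \psi_{T}(t)\int_0^t S(t-t')\,\Pi^{(3)}_k(v_0,\dots,v_k)(t')\,dt',
\end{equation*}
and let $\Phi=(\Phi_k)_{k\ge0}$. The aim is to show that $\Phi$ maps a ball of radius $2c\|\boldsymbol\phi\|_{\mathcal{A}_\alpha(\widehat H^r_s)}$ into itself and is a contraction there, for $T$ small enough.

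\textbf{Linear part.} By Lemma~\ref{Lemma6}(1), $\sum_k\frac{\alpha^k}{k!}\|\psi S(t)\phi_k\|_{X^r_{s,b}}\le c_\psi\|\boldsymbol\phi\|_{\mathcal{A}_\alpha(\widehat H^r_s)}$.

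\textbf{Nonlinear part.} We apply Lemma~\ref{Lemma6}(2) with $b'=b-1$; this is admissible since $b<1$ and $b'>-1/r'$ because $b>1/r$. Combined with Remark~\ref{Remark9} (Theorem~\ref{thm2} with the gain $T^{1-1/r-\epsilon}$ for time-localized factors), it gives
\begin{equation*}
	\|\Phi_k(\mathbf v)-\psi S\phi_k\|_{X^r_{s,b}}\le cT^{\theta}\sum_{k_1+k_2+k_3+k_4=k}\frac{k!\,2^{k_1}}{k_1!k_2!k_3!k_4!}\prod_{p=2}^{4}\|v_{k_p}\|_{X^r_{s,b}(T)},
\end{equation*}
for some $\theta>0$.

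\textbf{The key algebraic step.} We multiply by $\alpha^k/k!$ and sum over $k$. The multinomial weight factorizes as
\begin{equation*}
	\frac{\alpha^k}{k!}\cdot\frac{k!\,2^{k_1}}{k_1!\cdots k_4!}=\frac{(2\alpha)^{k_1}}{k_1!}\prod_{p=2}^4\frac{\alpha^{k_p}}{k_p!}.
\end{equation*}
This is a Cauchy product, so the sum is bounded by $e^{2\alpha}\,cT^\theta\|\mathbf v\|^3_{\mathcal{A}_\alpha(X^r_{s,b}(T))}$. The crucial point is that the constant in Theorem~\ref{thm2} is independent of $k$, so no $k$-dependent loss appears and the harmless factor $e^{2\alpha}$ absorbs the $2^{k_1}$ coming from $(P+2I)^k$.

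\textbf{Fixed point and consequences.} The difference estimate follows in the same way from trilinearity, using the telescoping $\prod a_p-\prod b_p=\sum(\cdots)(a_p-b_p)(\cdots)$. This yields the bound $cT^\theta e^{2\alpha}(\|\mathbf v\|+\|\mathbf w\|)^2\|\mathbf v-\mathbf w\|$. Choosing $T^\theta\sim (e^{2\alpha}\|\boldsymbol\phi\|^2)^{-1}$ closes the contraction, which gives existence and the uniform bound (Item~2).

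Uniqueness in $X^r_{s,b}(T)$ follows from the standard restriction-norm argument.

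For continuity in time with values in $\mathcal{A}_\alpha(\widehat H^r_s)$, we use the embedding $X^r_{s,b}\hookrightarrow C(\mathbb R;\widehat H^r_s)$ for $b>1/r$. Its constant is uniform in $k$, so dominated convergence of the series in $k$ gives continuity of the whole sequence.

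Item~3 follows by applying the same difference estimate to two data bounded by $M$ on the common interval $T(M)$.

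\textbf{Consistency with $P^k$.} We must also check that the solution of the system really equals $(P^kv)_k$, i.e.\ that the component $v_0$ solves mKdV and that $v_k=P^kv_0$. This is not needed for the claim as stated, since it concerns the system \eqref{SYS1} only, and is handled later.

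\textbf{Main obstacle.} The main obstacle is justifying that Theorem~\ref{thm2} and Remark~\ref{Remark9} apply with a constant uniform in $k$ and with the time factor $T^\theta$ for the restriction spaces. Each $v_{k_p}$ must be replaced by an extension supported in $[-2T,2T]$ with comparable norm, chosen simultaneously for all $k$ so that the sum over $k$ is not affected. We will take near-optimal extensions separately for each $k$, with error $\varepsilon\,k!/(\alpha^k 2^k)$, and then multiply by the cutoff $\psi_T$. This requires the bound $\|\psi_T f\|_{X^r_{s,b}}\lesssim\|f\|_{X^r_{s,b}}$ uniformly in $T\le1$, which we take from \cite{Gru2004-1}.
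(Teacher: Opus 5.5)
Apart from one step, your argument is the paper's: the same Duhamel map, Lemma~\ref{Lemma6} with Theorem~\ref{thm2} and Remark~\ref{Remark9} for each fixed $k$, the same factorization of the multinomial weight producing $e^{2\alpha}$, a contraction on a ball of radius comparable to $\|\boldsymbol\phi\|_{\mathcal A_\alpha(\widehat H^r_s)}$, and Item~3 from the difference estimate plus the embedding into $C_t\widehat H^r_s$.

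The step that fails is your resolution of the ``main obstacle''. The bound $\|\psi_T f\|_{X^r_{s,b}}\lesssim\|f\|_{X^r_{s,b}}$ uniformly in $T\le1$ is false when $b>1/r$. Take $f=\eta(t)S(t)u_0$ with $\eta\in C_0^\infty$ and $\eta\equiv1$ on $[-2,2]$. For $T\le1$ one has $\psi_Tf=\psi_T(t)S(t)u_0$, so by Lemma~\ref{Lemma6}(1)
\[
\frac{\|\psi_T f\|_{X^r_{s,b}}}{\|f\|_{X^r_{s,b}}}=\frac{\|\psi_T\|_{\widehat H^r_b}}{\|\eta\|_{\widehat H^r_b}},\qquad \|\psi_T\|_{\widehat H^r_b}=\big\|\langle\tau\rangle^{b}\,T\widehat\psi(T\tau)\big\|_{L^{r'}_\tau}\gtrsim T^{\frac1r-b}\longrightarrow\infty .
\]
So cutting off each of the three factors costs at least $T^{\frac1r-b}$ apiece. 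Your scheme can then at best give $T^{1-\frac1r-\epsilon}\,T^{3(\frac1r-b)}=T^{1+\frac2r-3b-\epsilon}$. This is not a positive power once $b\ge\frac13\big(1+\frac2r\big)$ (e.g.\ $b\ge\frac23$ for $r=2$), whereas the claim allows every $b\in(\frac1r,1)$. The contraction estimate suffers the same loss.

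The repair is not to localize the factors at all. Use Lemma~\ref{Lemma6}(2) with some $b'\in(b-1,0)$ instead of $b'=b-1$. The hypotheses $-\frac1{r'}<b'\le0\le b\le1+b'$ still hold because $b-1>-\frac1{r'}$. Theorem~\ref{thm2} applies to this $b'$ with no support assumption on the factors, and you gain $T^{1+b'-b}$ with $1+b'-b>0$ for every $b\in(\frac1r,1)$.

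The restriction-norm issue then disappears: for $|t|\le T$ the Duhamel term involves $\Pi^{(3)}_k$ only on $[-T,T]$. You can therefore insert your near-optimal extensions $\tilde v_k$ directly, with the summable errors $\varepsilon\,k!/(\alpha^k2^k)$ and no cutoff on them.

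This matches the spirit of the paper, which puts the cutoff $\psi_\delta(t')$ on the nonlinearity $\Pi^{(3)}_k$ inside the Duhamel integral, never on the individual factors. Time-localizing a single function in a space with negative modulation exponent gains, rather than loses, a power of $\delta$. With this change the rest of your proposal goes through as written.
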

		\begin{claimproof}
			For a sequence  $\mathbf v=(v_k)_{k\ge0}$ in the  Banach space $\A_{\alpha}\big(X^r_{s,b}\big)$, we define the operator  $\Phi(\mathbf{v}) := \bigl(\Phi_k(\mathbf{v})\bigr)_{k\geq 0}$, where  
\[
\Phi_k(\mathbf{v})(t):=\psi(t)S(t)\phi_k-\psi(t) \int_0^t  S(t-t')\; \psi_{\delta}(t') \;\Pi_k^{(3)}(v_0,\dots,v_k)(t')\,dt',
\]
and  $\psi \in C_{0}^{\infty}(\mathbb{R})$ is a smooth cut-off function with $\psi \equiv 1$ on $[-1,1]$ and $\supp (\psi) \subset [-2,2]$, and $\psi_{\delta}(t):= \psi(t/\delta)$. 

By Lemma \ref{Lemma6}, Theorem \ref{thm2} and Remark \ref{Remark9} we obtain, for every $k\ge0$,
\[
\|\Phi_k(\mathbf{v})\|_{X^r_{s,b}}\lesssim\|\phi_k\|_{\widehat{H}_s^{r}(\mathbb{R})}+\delta^{1-\frac1r-\epsilon}\!\!\sum_{k_1+k_2+k_3+k_4=k}\!\!2^{k_1}\binom{k}{k_1,k_2,k_3,k_4}\prod_{\ell=2}^{4}\|v_{k_\ell}\|_{X^r_{s,b}},
\]
valid for $1<r\le2$, $s\ge s(r)$, $b>\tfrac1r$, and any $\epsilon>0$. Multiplying by $\alpha^k/k!$, summing in $k$, we obtain

\begin{equation}\label{EqXXXX}
\|\Phi(\mathbf v)\|_{\A_{\alpha}(X^r_{s,b})}\le c_0\|\boldsymbol\phi\|_{\A_{\alpha}(\widehat{H}_{s}^{r})}+c_1e^{2\alpha}\delta^{1-\frac1r-\epsilon}\|\mathbf v\|_{\A_{\alpha}(X^r_{s,b})}^3.    
\end{equation}

\emph{Fixed point.} Given the initial data
$\boldsymbol\phi\in\A_{\alpha}(\widehat H_s^r)$, let $R':=\|\boldsymbol\phi\|_{\A_{\alpha}(\widehat H_s^r)}$, $R:= 2c_0 R'$ and choose
\[
0<\delta<\delta(R'):=\big( 24 c_0^2 c_1 \,e^{2\alpha}(R')^{2}\big)^{-1/(1-\frac1r-\epsilon)}.
\]
Let $B_R:=\{\mathbf v\in\A_{\alpha}(X^r_{s,b}):
\|\mathbf v\|_{\A_{\alpha}(X^r_{s,b})}\le R\}$.  To show that $\Phi(B_R)\subset B_R$, note that
$c_1 e^{2\alpha}\delta^{1-\frac1r-\epsilon}R^2\le\frac12$. Hence, by \eqref{EqXXXX},
\[
\begin{aligned}
\|\Phi(\mathbf v)\|_{\A_{\alpha}(X^r_{s,b})}
&\le c_0R'
   +c_1e^{2\alpha}\delta^{1-\frac{1}{r}-\epsilon}R^3 \\
&\le c_0R'
   +\frac{1}{2}R = R.
\end{aligned}
\]
 Furthermore, the same multilinear estimates imply
\[
\|\Phi(\mathbf v)-\Phi(\mathbf w)\|_{\A_{\alpha}(X^r_{s,b})}\le L\,\|\mathbf v-\mathbf w\|_{\A_{\alpha}(X^r_{s,b})},\qquad L:= 3c_1e^{2\alpha}\delta^{1-\frac1r-\epsilon}R^2<1,
\]
for $\mathbf v,\mathbf w\in B_R$. From the definition of $\delta(R')$, we have $\delta^{1-\frac1r-\epsilon}\sim (R')^{-2}$. Since $R=2c_0R'$, it follows that $R^2\sim (R')^2$, and therefore $\delta^{1-\frac1r-\epsilon}R^2\sim1$. Consequently,
\[
\delta(R')
\sim
(R')^{-\frac{2r}{r-1-r\epsilon}}
=
(R')^{-\frac{2r}{r-1}
-
\frac{2r^{2}\epsilon}{(r-1)(r-1-r\epsilon)}}.
\]
Since $\frac{2r^{2}\epsilon}{(r-1)(r-1-r\epsilon)}
\longrightarrow0 $ as $\epsilon\to0^+$, it follows that, for every $\epsilon'>0$, one can choose $\epsilon>0$ sufficiently small so that $\delta(R') \sim (R')^{-\frac{2r}{r-1}-\epsilon'}$, which agrees with the lifespan stated in Remark~\ref{Remark9}.

Banach's fixed point theorem gives a unique $\mathbf v\in B_R$ with $\Phi(\mathbf v)=\mathbf v$, and setting $T:=\delta$ yields the asserted lifespan. It is easy to see when $t \in [-\delta, \delta]$ , 
\[
v_{k}(t)= S(t)\phi_k-\int_0^t  S(t-t')\; \;\Pi_k^{(3)}(v_0,\dots,v_k)(t')\,dt',
\]
hence $v_k$ is a solution to  the system \eqref{SYS1}
for $k = 0, 1, 2, \ldots$ and $v_k \in X_{s,b}^{r}(\delta)$. Now, since  $\mathbf v\in B_R$, we have  $\|\mathbf v\|_{\A_{\alpha}(X^r_{s,b}(\delta))}\le R=2c_0R'=2c_0\|\boldsymbol\phi\|_{\A_{\alpha}(\widehat{H}_s^{r})}$. Moreover, uniqueness extends to the whole space
$\A_{\alpha}(X^r_{s,b}(\delta))$ by adapting the standard argument in \cite[pp.~380--382]{BOP1998}.

\emph{Lipschitz continuity.}
Let $M:=\max\!\left(
\|\boldsymbol{\phi}_1\|_{\A_{\alpha}(\widehat H_s^r)},
\|\boldsymbol{\phi}_2\|_{\A_{\alpha}(\widehat H_s^r)}
\right)$. Apply the fixed-point construction with this bound \(M\). The corresponding lifespan
\(\delta=\delta(M)\) and radius $R:=2c_0M$ ensure that the associated solutions \(\mathbf{v}_1\) and \(\mathbf{v}_2\) both belong to the same ball
\(B_R\subset\A_{\alpha}(X^r_{s,b}(\delta))\) on the same interval
\([-\delta,\delta]\). Write
\[
\mathbf{v}_1 -\mathbf{v}_2 
=
\big[\Phi_{\boldsymbol{\phi}_1}(\mathbf{v}_1)-\Phi_{\boldsymbol{\phi}_1}(\mathbf{v}_2)\big]
+
\big[\Phi_{\boldsymbol{\phi}_1}(\mathbf{v}_2)-\Phi_{\boldsymbol{\phi}_2}(\mathbf{v}_2)\big]
=: I+II.
\]
By the contraction estimate, we have $  \|I\|_{\A_{\alpha}(X^r_{s,b}(\delta))}\le L \cdot \|\mathbf{v}_1-\mathbf{v}_2\|_{\A_{\alpha}(X^r_{s,b}(\delta))}$, while 
\[
 \|II\|_{\A_{\alpha}(X^r_{s,b}(\delta))} = \| \psi(t)S(t)(\boldsymbol{\phi}_1-\boldsymbol{\phi}_2) \|_{\A_{\alpha}(X^r_{s,b}(\delta))}\le c_0\|\boldsymbol{\phi}_1-\boldsymbol{\phi}_2\|_{\A_{\alpha}(\widehat{H}_s^{r})}
\]
by Lemma \ref{Lemma6}, part (1). Since $L<1$,
\[
\|\mathbf{v}_1 -\mathbf{v}_2 \|_{\A_{\alpha}(X^r_{s,b}(\delta))}\le\frac{c_0}{1-L}\|\boldsymbol{\phi}_1-\boldsymbol{\phi}_2\|_{\A_{\alpha}(\widehat{H}_s^{r})},
\]
and the continuous embedding $X^r_{s,b}(\delta)\hookrightarrow C([-\delta,\delta];\widehat{H}_s^{r})$ (valid for $b>1/r\ge1/2$), summed termwise with weights $\alpha^k/k!$, yields 
\[
\|\mathbf{v}_1 -\mathbf{v}_2\|_{C([-\delta,\delta];\A_{\alpha}(\widehat{H}_s^{r}))} \leq \frac{c_0 C_b}{1-L} \|\boldsymbol{\phi}_1-\boldsymbol{\phi}_2\|_{\A_{\alpha}(\widehat{H}_s^{r})}
\]
 where  $C_b$ denotes the constant from the embedding; this completes the proof of Claim 1. 
\end{claimproof}
	
\setcounter{claim}{1}	
		\begin{claim}\label{c2.1}
			Let $b$ and $r$ be numbers such that $1 < r \leq 2$, then 
			\begin{equation*}
				\|v_{k}\|_{\widehat{L}^r_{xt}(\mathbb{R}^2)}  \leq c \alpha^{k} k!, \quad \text{for } k = 0, 1, 2, \dots.
			\end{equation*}
		\end{claim}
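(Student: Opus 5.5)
The plan is to deduce the bound directly from the uniform analytic estimate of Claim~\ref{c0}. The only extra ingredient is a comparison between the space--time Fourier--Lebesgue norm $\|\cdot\|_{\widehat L^r_{xt}}$ and the modified Bourgain norm. Throughout, $\widehat L^r_{xt}(\mathbb R^2)$ is the space with norm $\|\mathcal F_{t,x}f\|_{L^{r'}_{\tau\xi}}$, that is, $\widehat H^r_{0,0}(\mathbb R^2)$ in the notation of Section~\ref{sec:notation}. The bound is to be proved for the (restricted) solution $v_k\in X^r_{s,b}(T)$ given by Claim~\ref{c0}.

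\emph{Step 1: embedding.} Since $s\ge s(r)>0$ and $b>\frac1r>0$, the weights satisfy $\langle\xi\rangle^{s}\langle\tau-\xi^3\rangle^{b}\ge1$ pointwise. Hence, for any extension $\tilde v_k$ of $v_k$ off $[-T,T]\times\mathbb R$,
\[
\|\tilde v_k\|_{\widehat L^r_{xt}}\le\|\tilde v_k\|_{X^r_{s,b}}.
\]
Taking the infimum over extensions (with the same restriction convention on the left-hand side) gives $\|v_k\|_{\widehat L^r_{xt}}\le\|v_k\|_{X^r_{s,b}(T)}$.

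\emph{Step 2: coefficient extraction.} Item~2 of Claim~\ref{c0} gives
\[
\sum_{k\ge0}\frac{\alpha^k}{k!}\|v_k\|_{X^r_{s,b}(T)}\le c\,\|\boldsymbol\phi\|_{\A_\alpha(\widehat H^r_s)}=:C_0<\infty .
\]
Each term of this series is bounded by the sum, so $\|v_k\|_{\widehat L^r_{xt}}\le C_0\,k!\,\alpha^{-k}$.

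\emph{Step 3: matching the exponent as stated.}
\begin{itemize}
\item When $\alpha\ge1$, we have $\alpha^{-k}\le1\le\alpha^{k}$. Step~2 then gives $\|v_k\|_{\widehat L^r_{xt}}\le C_0\,\alpha^k k!$, which is the claim with $c=C_0$ depending only on $\|\boldsymbol\phi\|_{\A_\alpha(\widehat H^r_s)}$.
\item When $\alpha<1$, the extracted bound $k!\,\alpha^{-k}$ is strictly weaker than $\alpha^k k!$, by the unbounded factor $\alpha^{-2k}$. So Step~2 alone does not suffice, and one cannot simply rename the constant.
\end{itemize}

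\emph{The main obstacle} is therefore the regime $\alpha<1$. There I would first try to rerun the fixed point of Claim~\ref{c0} in the larger weighted space with weights $\alpha^{-k}/k!$. This would yield a summable bound for $\sum_k \alpha^{-k}\|v_k\|/k!$, which gives exactly $\|v_k\|\le c\,\alpha^{k}k!$.

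The nonlinear estimate \eqref{EqXXXX} is insensitive to the weight: the factor $2^{k_1}/k_1!$ sums to $e^{2\rho}$ for any $\rho$. So the only input this rerun needs is that the data satisfy $\sum_k \alpha^{-k}\|\phi_k\|_{\widehat H^r_s}/k!<\infty$. The linear part, by Lemma~\ref{Lemma6}(1), preserves whatever the data give. That data condition does not follow from \eqref{dec1} when $\alpha<1$. If it cannot be verified for the data at hand, this step does not go through, and the statement is established only for $\alpha\ge1$.
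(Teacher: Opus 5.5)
Your Steps~1--2 are exactly the paper's proof: the pointwise bound $\langle\xi\rangle^{s}\langle\tau-\xi^{3}\rangle^{b}\ge1$ gives $\|v_k\|_{\widehat L^r_{xt}}\le\|v_k\|_{X^r_{s,b}}$, and reading off the $k$-th term of the series in Claim~\ref{c0} then yields $C_0\,\alpha^{-k}k!$. The discrepancy you flag in Step~3 comes from the paper's loose notation, not from your argument: its proof writes $\alpha^{k}k!$ where the extraction gives $\alpha^{-k}k!$, and afterwards $\alpha$ is only used as some geometric constant (e.g.\ $\alpha_8=\max(2,\alpha)$ in Claim~\ref{c7}), so your Step~2 already proves the claim as intended for every $\alpha>0$; the rerun with weights $\alpha^{-k}/k!$ is therefore unnecessary, and you are right that the literal bound with the same $\alpha<1$ does not follow from \eqref{dec1}.
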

		\begin{claimproof}
			By the definition of the Fourier-Lebesgue and Bourgain norms, we have for each $k \in \mathbb{N}_0$:
			\begin{equation*}
				\|v_{k}\|_{\widehat{L}^r_{xt}(\mathbb{R}^2)} \leq \left\| \langle \tau-\xi^{3}\rangle^{b} \mathcal{F}_{t,x}(v_{k})(\tau,\xi) \right\|_{L^{r'}_{\tau\xi}} = \|v_{k}\|_{X^{r}_{0,b}},
			\end{equation*}
			Using the estimate obtained in {\sc claim }\ref{c0}, it follows that
			$
			\|v_{k}\|_{X^{r}_{0,b}} \leq \alpha^{k} k!, \quad k \in \mathbb{N}_0,$  which finish the proof.
		\end{claimproof}
        \setcounter{claim}{2}
		\begin{claim}\label{c2.2}
			Let $b$ and $r$ be numbers such that $1 < r \leq 2$ and $b > \frac{1}{r}$. Then 
			\begin{equation*}
				\|v_{k}\|_{L^{8}_{xt}} \leq c \alpha^{k} k!, \quad \text{for } k = 0, 1, 2, \dots.
			\end{equation*}
		\end{claim}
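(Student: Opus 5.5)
The plan is to deduce the $L^{8}_{xt}$ bound from the uniform factorial control provided by {\sc Claim}~\ref{c0}, combined with a Strichartz-type embedding of the modified Bourgain space $X^r_{s,b}$ into $L^8_{xt}$. First, by item 2 of {\sc Claim}~\ref{c0} we have $\frac{\alpha^k}{k!}\|v_k\|_{X^r_{s,b}(T)}\le \|\mathbf v\|_{\A_\alpha(X^r_{s,b}(T))}\le c\|\boldsymbol\phi\|_{\A_\alpha(\widehat H^r_s)}$ for every $k$. Hence $\|v_k\|_{X^r_{s,b}}\le C\,\alpha^{-k}k!$. Replacing $\alpha$ by a smaller parameter if necessary (the statement's $\alpha^k$ should be read with the reciprocal radius, as in {\sc Claim}~\ref{c2.1}), this gives the factorial growth. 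It therefore suffices to prove the linear embedding
\[
\|u\|_{L^8_{xt}}\lesssim \|u\|_{X^r_{s,b}},\qquad b>\tfrac1r,\ s\ge s(r),
\]
and to apply it to each (cut-off extension of) $v_k$.

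For the embedding, I would use the transference principle in the form adapted to $\widehat H^r$ spaces, as in Grünrock~\cite{Gru2004-1}. Write $u$ as a superposition $u(t,x)=\int e^{it\tau_0}\,S(t)f_{\tau_0}(x)\,d\tau_0$ with $\widehat{f_{\tau_0}}(\xi)=\widehat u(\tau_0+\xi^3,\xi)$. Minkowski's inequality then gives $\|u\|_{L^8}\le\int\|S(t)f_{\tau_0}\|_{L^8}d\tau_0$. Hölder in $\tau_0$ with weight $\langle\tau_0\rangle^{-b}\in L^{r}$ (which holds precisely because $b>1/r$) reduces the problem to the free estimate
\[
\|S(t)f\|_{L^8_{xt}}\lesssim \|f\|_{\widehat H^r_{s}} .
\]
This estimate has to be measured in $L^{r'}$ in $\tau_0$ to match the $X^r_{s,b}$ norm, and mixed-norm Minkowski is what makes that legitimate. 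For the free estimate, I would use the Fourier-Lebesgue Strichartz estimate of Grünrock, $\|D_x^{1/(4r)}\ldots\|$-type bounds, obtained by interpolating the Kenig–Ponce–Vega $L^8_{xt}$ estimate $\|D_x^{1/8}S(t)f\|_{L^8}\lesssim\|f\|_{L^2}$ (the case $r=2$) with the trivial $\widehat H^1$ bound $\|S(t)f\|_{L^\infty_{xt}}\le\|\widehat f\|_{L^1}$.

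The main obstacle is the derivative bookkeeping: at $r=2$ the $L^8$ estimate costs $-\frac18$ derivatives, well within $s\ge\frac14$. For $r<2$, one must check that the interpolated smoothing exponent combined with $\langle\xi\rangle^{s}$, $s\ge\frac12-\frac1{2r}$, covers the loss near the $\widehat H^1$ endpoint. If a direct interpolation fails near $r\to1$, I would instead use Hausdorff–Young in $(t,x)$, $\|u\|_{L^8}\lesssim\|\widehat u\|_{L^{8/7}}$, together with Hölder against $\langle\tau-\xi^3\rangle^{-b}\langle\xi\rangle^{-s}$ in the regime where that weight is integrable. Finally, a time cut-off $\psi_\delta$ yields the bound on $[-T,T]\times\mathbb R$ with $c$ independent of $k$.
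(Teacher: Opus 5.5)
Your transference step is fine (Minkowski in $\tau_0$, then H\"older against $\langle\tau_0\rangle^{-b}\in L^r_{\tau_0}$, which is exactly where $b>\frac1r$ enters). You also read Claim~\ref{c0} correctly: it gives $\|v_k\|_{X^r_{s,b}}\lesssim\alpha^{-k}k!$. The gap is the free estimate $\|S(t)f\|_{L^8_{xt}}\lesssim\|f\|_{\widehat H^r_s}$, which you never establish and which is false in part of the range the claim requires.

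To see this, take $\supp\widehat f\subset[1,2]$ and $\widehat{f_\lambda}(\xi)=\lambda^{-1}\widehat f(\xi/\lambda)$. Then $S(t)f_\lambda(x)=(S(\lambda^3t)f)(\lambda x)$, so $\|\psi(t)S(t)f_\lambda\|_{L^8_{xt}}\simeq\lambda^{-1/2}$ for large $\lambda$. On the other hand, by Lemma~\ref{Lemma6}, $\|\psi(t)S(t)f_\lambda\|_{X^r_{s,b}}\simeq\|f_\lambda\|_{\widehat H^r_s}\simeq\lambda^{s-\frac1r}$. Thus $X^r_{s,b}\hookrightarrow L^8_{xt}$ forces $s\ge\frac1r-\frac12$. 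Since $\frac1r-\frac12>s(r)=\frac12-\frac1{2r}$ exactly when $r<\frac32$, for such $r$ and $s(r)\le s<\frac1r-\frac12$ your reduction lands on a false statement, and no interpolation can rescue it.

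The specific tools you name also misfire where the estimate is true:
\begin{itemize}
\item Theorem~\ref{kpvstrichartz} is the scale-invariant bound with no derivative gain; a $D_x^{1/8}$ version is false by the same scaling.
\item The bound $\|S(t)f\|_{L^\infty_{xt}}\le\|\widehat f\|_{L^1}$ is the $r=\infty$ ($r'=1$) endpoint in the paper's indexing, not $\widehat H^1$. Interpolating it with $r=2$ only reaches $r\ge2$ and Lebesgue exponents $q\ge8$, which is the wrong side on both counts.
\item The Hausdorff--Young fallback needs $\langle\tau-\xi^3\rangle^{-b}\langle\xi\rangle^{-s}\in L^q_{\tau\xi}$ with $\frac1q=\frac1r-\frac18$, i.e.\ $s>\frac1r-\frac18\ge\frac38$. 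This lies above $s(r)\le\frac14$ for every $r\in(1,2]$.
\end{itemize}

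Where the free bound can hold, the tool that gives it is restriction theory. The Zygmund (Carleson--Sj\"olin) extension estimate for the curve $\{(\xi,\xi^3):1\le\xi\le2\}$ from $L^{r'}$ to $L^8$ is valid since $8>4$ and $8\ge 3r$. Rescaled to $|\xi|\sim N$, it yields $\|S(t)f\|_{L^8_{xt}}\lesssim N^{\frac1r-\frac12}\|\widehat f\|_{L^{r'}}$. Dyadic summation then gives $X^r_{s,b}\hookrightarrow L^8_{xt}$ for $s>\frac1r-\frac12$ and $b>\frac1r$, which covers all $s\ge s(r)$ only when $\frac32<r\le 2$.

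For comparison, the paper's proof uses the same skeleton with $s=0$: Theorem~\ref{kpvstrichartz}, then ``Plancherel'', then Lemma~\ref{stabilitylinfty}. That lemma, however, requires the free bound in $\widehat L^r_x$, which Plancherel supplies only for $r=2$. The scaling above shows $X^r_{0,b}\not\hookrightarrow L^8_{xt}$ for every $r<2$. So at $r=2$ your argument and the paper's coincide and are correct. For $r<2$ the paper does not contain the missing ingredient either, and your proposal correctly locates the difficulty without resolving it.
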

		\begin{claimproof}
			Notice that by Theorem \ref{kpvstrichartz} in Appendix \ref{apendiceA}, the group $S(t) = e^{-t\partial_{x}^{3}}$ associated to the Airy equation satisfies:
			\begin{equation*}
				\|S(t)f\|_{L^{8}_{xt}} \leq c \|f\|_{L^{2}_{x}}, \quad \text{for any } f \in L^{2}(\mathbb{R}),
			\end{equation*}
			for some positive constant $c$. Now, by Plancherel's Theorem, it implies that 
			\begin{equation*}
				\|S(t)f\|_{L^{8}_{xt}} \leq c \|f\|_{\widehat{L}^{2}_{x}}, \quad \text{for any } f \in L^{2}(\mathbb{R}).
			\end{equation*}
			Thus, by Lemma \ref{stabilitylinfty} in Appendix \ref{apendiceA}, we obtain that for any $b > \frac{1}{r}$, it holds that
			\begin{equation*}
				\|f\|_{L^{8}_{xt}} \leq c(b) \|f\|_{X^{r}_{0,b}}, \quad \text{whenever } f \in X^{r}_{0,b}.
			\end{equation*}
			In particular, by the result proved in {\sc claim }\ref{c0}, it follows that each $P^k v$ satisfies the bound 
			\begin{equation*}
				\|v_{k}\|_{L^{8}_{xt}} \leq c(b) \|v_{k}\|_{X^{r}_{0,b}} \leq c \alpha^{k} k!, \quad k \in \mathbb{N}_{0}.
			\end{equation*}
		\end{claimproof}
		\begin{obs}
			For the sake of simplicity in the notation, we shall henceforth write $\Pi_{k}^{(m)}(v)$ instead of
			$\Pi_{k}^{(m)}(v_0,v_1,\ldots,v_k)$, whenever no ambiguity arises.
		\end{obs}
		\setcounter{claim}{3}
		\begin{claim} \label{c3}
			Let $\vec{\boldsymbol{\epsilon}} = (\epsilon, \epsilon) \in (0, \infty)^{2}$ and let $\chi_{(t_{0},x_{0}),\boldsymbol{\vec{\epsilon}}}$ be the cutoff function defined in \eqref{cutoff1}. For $1 < r \leq 2$ with $\frac{1}{r}+\frac{1}{r'}=1$, there exist constants $c > 0,$  such that the following estimate holds for all $k \in \mathbb{N}_0$:
			\begin{equation*}
				\left\| \chi_{(t_{0},x_{0}),\boldsymbol{\vec{\epsilon}}} \,v_{k} \right\|_{\widehat{H}^{r}_{1}(\mathbb{R}^{2})} \leq c \alpha^{k} k!.
			\end{equation*}
		\end{claim}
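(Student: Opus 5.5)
The plan is to gain one full spacetime derivative on the localized iterate by combining the equation \eqref{SYS1} with the local smoothing near $(t_0,x_0)$, $t_0\neq 0$. The norm $\widehat{H}^r_1(\mathbb{R}^2)$ weights by $\langle(\tau,\xi)\rangle$. Using the splitting $\langle(\tau,\xi)\rangle \lesssim \langle\tau-\xi^3\rangle + \langle\xi\rangle^3$, I would reduce matters to bounding the $\widehat{L}^r_{xt}$ norms (that is, the $L^{r'}$ norms of the Fourier transform) of three quantities:
\begin{equation*}
\chi v_k,\qquad \partial_t(\chi v_k),\qquad \partial_x(\chi v_k),
\end{equation*}
where $\chi=\chi_{(t_0,x_0),\vec{\boldsymbol\epsilon}}$. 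The zeroth-order term is immediate from Claim~\ref{c2.1}, since multiplication by $\chi$ is bounded on $\widehat{L}^r$ (convolution of $\widehat\chi\in L^1$ with an $L^{r'}$ function).

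The key algebraic step is to trade $t\partial_t$ for the generator $P$. From $P=3t\partial_t+x\partial_x$ we get
\begin{equation*}
3t\,\partial_t v_k = v_{k+1}-x\partial_x v_k ,
\end{equation*}
and on $\supp\chi$, with $\epsilon<|t_0|/4$, the factor $t$ is bounded below. Hence
\begin{equation*}
\chi\,\partial_t v_k=\tfrac{1}{3}\,\tfrac{\chi}{t}\,\bigl(v_{k+1}-x\partial_x v_k\bigr).
\end{equation*}
The function $\chi/t$ is again a smooth compactly supported cutoff, and $\chi x/t$ is smooth with compact support. The $v_{k+1}$ term is then controlled by Claim~\ref{c2.1} with bound $c\,\alpha^{k+1}(k+1)!$. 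Adjusting $\alpha$, that is, working with a slightly smaller radius $\alpha'<\alpha$ and absorbing $(k+1)\alpha^{k+1}\le c\,\alpha'^{-k}\cdots$ into a geometric factor, puts this back in the form $c\,\alpha^k k!$ with a renamed constant.

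What remains is one spatial derivative of $\chi v_k$, which appears both directly and through $x\partial_x v_k$. Here I would use the equation $\partial_t v_k=-\partial_x^3v_k-\Pi_k^{(3)}(v)$ together with the relation $\partial_t = (v_{k+1}-x\partial_xv_k)/(3t)$. This gives a third-order ODE-type relation in $x$:
\begin{equation*}
\partial_x^3 v_k - \tfrac{x}{3t}\partial_x v_k = -\tfrac{1}{3t}v_{k+1}-\Pi_k^{(3)}(v).
\end{equation*}
Localizing and inverting $\partial_x^3$ modulo lower order on the Fourier side gains three derivatives. One derivative only is needed, so I would interpolate: $\partial_x(\chi v_k)$ is controlled by the $\widehat{L}^r$ norms of $\chi v_k$ and $\partial_x^3(\chi v_k)$, modulo commutators with derivatives of $\chi$ that involve $\partial_x v_k,\partial_x^2v_k$ on a larger cutoff. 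These commutators are handled by an induction on nested cutoffs $\widetilde\chi$ with $\widetilde\chi\equiv1$ on $\supp\chi$, which costs only fixed constants.

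The nonlinear term $\chi\,\Pi_k^{(3)}(v)$ is to be estimated in $\widehat{H}^r_{-2}$ (or $\widehat{L}^r$ after one $\partial_x$). I would write it as $\partial_x$ of the trilinear sum and bound $\|\prod_{p} v_{k_p}\|_{\widehat{L}^r}$ via Hausdorff--Young together with Claim~\ref{c2.2}, using the $L^8$ (or, for $r<2$, Fourier--Lebesgue) norms. This gives
\begin{equation*}
\sum 2^{k_1}\frac{k!}{k_1!\cdots k_4!}\,\alpha^{k_2+k_3+k_4}\,k_2!k_3!k_4!\le c\,(2+\alpha)^{k}\,k!\cdot\text{poly}(k),
\end{equation*}
which is again of the factorial form after renaming $\alpha$.

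I expect the main obstacle to be the $x\partial_x v_k$ term. Because of it, the derivative appears on both sides, and the estimate does not close directly. The first thing I would try is a smallness choice of $\epsilon$: by making $\epsilon$ small, the contribution $\|\chi\,\tfrac{x}{t}\partial_x v_k\|$ is absorbed relative to the main $\partial_x^3$ term by the scaling of the cutoff. If that fails, I would fall back on the commutator and trace estimates of Appendix~\ref{apendiceA} to justify the absorption.
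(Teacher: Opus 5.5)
There is a genuine gap in the step that is supposed to gain the spatial derivative. You propose to bound $\partial_x(\chi v_k)$ in $\widehat{L}^r$ by interpolating between the $\widehat{L}^r$ norms of $\chi v_k$ and $\partial_x^3(\chi v_k)$, using $\partial_x^3 v_k=\frac{x}{3t}\partial_x v_k-\frac{1}{3t}v_{k+1}-\Pi_k^{(3)}(v)$. That would require $\chi\,\Pi_k^{(3)}(v)$, i.e.\ $\chi\,\partial_x$ of the cubic products $v_{k_2}v_{k_3}v_{k_4}$, to lie in $\widehat{L}^r$. This is a full derivative of the nonlinearity. The only inputs you have here are Claims~\ref{c2.1} and~\ref{c2.2}, which give derivative-free $\widehat{L}^r$ and $L^8$ bounds. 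The commutator terms $\partial_x\chi\,\partial_x^2 v_k$ have the same defect, so an induction on nested cutoffs climbs in regularity instead of closing. Your hedge of putting the nonlinear term in $\widehat{H}^r_{-2}(\mathbb{R}^2)$ does not fit the interpolation. The spacetime weight gives $|\xi|^3\langle(\tau,\xi)\rangle^{-2}$, which is much smaller than $|\xi|$ when $|\tau|\gg|\xi|$ (e.g.\ $|\xi|\sim M$, $|\tau|\sim M^2$). So such a bound does not control $\partial_x(\chi v_k)$ in $\widehat{L}^r$. Covering that region with your $\partial_t$ bound is circular, because that bound brings back $x\partial_x v_k$ with an $O(1)$ coefficient.

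The paper avoids this with Lemma~\ref{interpo1} at $s=1$. Near $t_0\neq0$ one has $\langle(\tau,\xi)\rangle^{3}\lesssim_{t_0,x_0}1+|t_0\xi^3|+|3t_0\tau+x_0\xi|^3$. Hence $t\partial_x^3$ and $P^3$ together control three spacetime derivatives, and $P^3$ covers exactly the region where $t\partial_x^3$ is weak. The paper then only needs $\chi v_k$, $t\partial_x^3(\chi v_k)$ and $P^3(\chi v_k)$ in $\widehat{H}^r_{-2}(\mathbb{R}^2)$, where every term is lower order:
\begin{itemize}
\item by \eqref{identity1}, $x\chi\partial_x v_k$ costs only $\|v_k\|_{\widehat{H}^r_{-1}}$;
\item $\Pi_k^{(3)}$ needs the cubic products only in $\widehat{H}^r_{-1}$, which is reduced to $L^1$ and then H\"older with Claim~\ref{c2.2};
\item $P^3$ produces $v_{k+j}$, $j\le 3$, via Leibniz and Claim~\ref{c2.1}.
\end{itemize}
So the $x\partial_x v_k$ term you single out is not the real obstacle. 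Absorbing it by taking $\epsilon$ small would not work anyway: $x/(3t)\approx x_0/(3t_0)$ does not shrink, while cutoff derivatives grow like $\epsilon^{-1}$.

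Your route can still be repaired without Lemma~\ref{interpo1}. Take a smooth compactly supported $\psi$ with $|t|$ bounded below on its support, and measure $\partial_x^3(\psi v_k)$ with the purely spatial weight, $\|\langle\xi\rangle^{-2}\mathcal{F}_{t,x}(\cdot)\|_{L^{r'}_{\tau\xi}}$. Since $|\xi|\le 1+2|\xi|^3\langle\xi\rangle^{-2}$, this controls $\partial_x(\psi v_k)$ in $\widehat{L}^r$. The $v_{k+1}$, $x\partial_x v_k$ and commutator terms are then bounded by Claim~\ref{c2.1}. The nonlinear term only needs $\|\langle\xi\rangle^{-1}\mathcal{F}_{t,x}(\psi v_{k_2}v_{k_3}v_{k_4})\|_{L^{r'}_{\tau\xi}}\lesssim\|\psi v_{k_2}v_{k_3}v_{k_4}\|_{L^r_tL^1_x}$, by Hausdorff--Young in $t$ and $\langle\xi\rangle^{-1}\in L^{r'}_\xi$, and Claim~\ref{c2.2} controls this. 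Applying it with $\psi=\chi$ and with $\psi=x\chi/(3t)$, your $P$-substitution for $\partial_t$ closes with no absorption and uses only $v_k$ and $v_{k+1}$.
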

		\begin{claimproof}
			Applying Lemma \ref{interpo1}, we obtain the following inequality:
			\begin{equation*}
				\begin{aligned}
					\| \chi_{(t_{0},x_{0}),\boldsymbol{\vec{\epsilon}}} \, v_{k} \|_{\widehat{H}^r_{1}} \lesssim_{t_0, x_0} \,\, &\| \chi_{(t_{0},x_{0}),\boldsymbol{\vec{\epsilon}}} \, v_{k} \|_{\widehat{H}^r_{-2}(\mathbb{R}^{2})} 
					+ \left\| t\partial_x^3 \left( \chi_{(t_{0},x_{0}),\boldsymbol{\vec{\epsilon}}} \, v_{k} \right) \right\|_{\widehat{H}^r_{-2}(\mathbb{R}^{2})} \\
					&+ \left\| P^{3} \left( \chi_{(t_{0},x_{0}),\boldsymbol{\vec{\epsilon}}} \, v_{k} \right) \right\|_{\widehat{H}^r_{-2}(\mathbb{R}^{2})} \\
					=: \,\, &\Xi_{1} + \Xi_{2} + \Xi_{3}.
				\end{aligned}
			\end{equation*}
			We now estimate each term individually. For the first term, using Lemma \ref{bach1}, we have:
			\begin{equation*}
				\begin{aligned}
					\Xi_{1} &= \| \chi_{(t_{0},x_{0}),\vec{\boldsymbol{\epsilon}}} \, v_{k} \|_{\widehat{H}^r_{-2}(\mathbb{R}^{2})} \\
					&\lesssim \| \chi_{(t_{0},x_{0}),\vec{\boldsymbol{\epsilon}}}\|_{\widehat H^{\infty}_{2}(\mathbb R^{2})} \|v_{k}\|_{\widehat H^{r}_{-2}(\mathbb R^{2})} \\
					&\lesssim \| v_{k} \|_{\widehat{L}^r_{xt}(\mathbb{R}^2)} \\
					&\leq c \alpha^{k} k!.
				\end{aligned}
			\end{equation*}
			The penultimate inequality holds due to the continuous embedding $\widehat{L}^r_{xt}(\mathbb{R}^2) \hookrightarrow \widehat H^{r}_{-2}(\mathbb R^{2})$, and the final bound follows from \textbf{{\sc Claim}} \ref{c2.1}.
			
			Next, we handle $\Xi_{2}$. 	In this sense, we first rewrite 
			\begin{equation}\label{identity2}
				\begin{split}
					t\partial_{x}^{3}\left(\chi_{(t_{0},x_{0}),\boldsymbol{\vec{\epsilon}}} \,v_{k} \right)&=t\chi_{(t_{0},x_{0}),\boldsymbol{\vec{\epsilon}}} \, \partial_{x}^{3}v_{k}  +\partial_{x}^{2}\left(3t\partial_{x}\chi_{(t_{0},x_{0}),\boldsymbol{\vec{\epsilon}}} \,v_{k} \right)
					-\partial_{x}\left(3t\partial_{x}^{2}\chi_{(t_{0},x_{0}),\boldsymbol{\vec{\epsilon}}} \,v_{k} \right)\\
					&\quad +t\partial_{x}^{3}\chi_{(t_{0},x_{0}),\boldsymbol{\vec{\epsilon}}} \,v_{k}
				\end{split}
			\end{equation}
			Thus,
			\begin{equation*}
				\begin{split}
					\Xi_{2}&\leq  \left\|t\chi_{(t_{0},x_{0}),\boldsymbol{\vec{\epsilon}}} \, \partial_{x}^{3}v_{k}\right\|_{\widehat{H}^r_{-2}(\mathbb{R}^{2})}+
					\left\|\partial_{x}^{2}\left(3t\partial_{x}\chi_{(t_{0},x_{0}),\boldsymbol{\vec{\epsilon}}} \,v_{k} \right)\right\|_{\widehat{H}^r_{-2}(\mathbb{R}^{2})}
					\\
					&\quad+\left\|\partial_{x}\left(3t\partial_{x}^{2}\chi_{(t_{0},x_{0}),\boldsymbol{\vec{\epsilon}}} \,v_{k} \right)\right\|_{\widehat{H}^r_{-2}(\mathbb{R}^{2})} +\left\|t\partial_{x}^{3}\chi_{(t_{0},x_{0}),\boldsymbol{\vec{\epsilon}}} \,v_{k}\right\|_{\widehat{H}^r_{-2}(\mathbb{R}^{2})}\\
					&=\Xi_{2,1}+\Xi_{2,2}+\Xi_{2,3}+\Xi_{2,4}.
				\end{split}
			\end{equation*}
			Regarding $\Xi_{2,1}$, we use the following identity:
			\begin{equation}\label{identity1}
				t \chi_{(t_{0},x_{0}),\boldsymbol{\vec{\epsilon}}} \partial_{x}^{3}v_{k} = -t \chi_{(t_{0},x_{0}),\boldsymbol{\vec{\epsilon}}} \Pi_{k}^{(3)}(v) - \frac{\chi_{(t_{0},x_{0}),\boldsymbol{\vec{\epsilon}}}}{3} Pv_{k} + \frac{x \chi_{(t_{0},x_{0}),\boldsymbol{\vec{\epsilon}}}}{3} \partial_{x}v_{k},
			\end{equation}
			for all $k \in \mathbb{N}_{0}$. Here, we recall that $\Pi_{k}^{(3)}(v)$ is defined as:
			\begin{equation*}
				\Pi_{k}^{(3)}(v) = \partial_{x} \left( \sum_{\substack{k=k_{1}+k_{2}+k_{3}+k_{4}\\ 0 \leq k_{1},k_{2},k_{3},k_{4} \leq k}} \frac{k! 2^{k_{1}}}{k_{1}!k_{2}!k_{3}!k_{4}!} \, v_{k_{2}}v_{k_{3}}v_{k_{4}} \right), \quad k \in \mathbb{N}_0.
			\end{equation*}
			Consequently, we obtain the estimate:
			\begin{equation*}
				\begin{aligned}
					\Xi_{2,1} &\lesssim \left\| t \chi_{(t_{0},x_{0}),\boldsymbol{\vec{\epsilon}}} \Pi_{k}^{(3)}(v) \right\|_{\widehat{H}^r_{-2}(\mathbb{R}^{2})} + \left\| \chi_{(t_{0},x_{0}),\boldsymbol{\vec{\epsilon}}} v_{k+1} \right\|_{\widehat{H}^r_{-2}(\mathbb{R}^{2})} + \left\| x \chi_{(t_{0},x_{0}),\boldsymbol{\vec{\epsilon}}} \partial_{x} v_{k} \right\|_{\widehat{H}^r_{-2}(\mathbb{R}^{2})} \\
					&= \mathcal{T}_{1} + \mathcal{T}_{2} + \mathcal{T}_{3}.
				\end{aligned}
			\end{equation*}
			Thus, we have the following estimate:
			\begin{equation*}
				\begin{split}
					\mathcal{T}_{1}
					&\lesssim  \sum_{\substack{k=k_{1}+k_{2}+k_{3}+k_{4} \\ 0 \leq k_{1},k_{2},k_{3},k_{4}\leq k}} \frac{k! 2^{k_{1}}}{k_{1}!k_{2}!k_{3}!k_{4}!} \,\left\| t\chi_{(t_{0},x_{0}),\boldsymbol{\vec{\epsilon}}}\, v_{k_{2}}v_{k_{3}}v_{k_{4}}  \right\|_{\widehat{H}^r_{-1}(\mathbb{R}^{2})}\\
					&\quad+\sum_{\substack{k=k_{1}+k_{2}+k_{3}+k_{4} \\ 0 \leq k_{1},k_{2},k_{3},k_{4} \leq k}} \frac{k! 2^{k_{1}}}{k_{1}!k_{2}!k_{3}!k_{4}!} \,\left\| t\partial_{x}\chi_{(t_{0},x_{0}),\boldsymbol{\vec{\epsilon}}}\, v_{k_{2}}v_{k_{3}}v_{k_{4}}  \right\|_{\widehat{H}^r_{-2}(\mathbb{R}^{2})}\\
					&=\mathcal{T}_{1,1}+\mathcal{T}_{1,2}.
				\end{split}
			\end{equation*}
			By the Hausdorff-Young inequality together with the continuity of the Bessel potential and {\sc Claim} \ref{c2.1}, we obtain the following estimate for $\mathcal{T}_{1,1}$:
			\small
			\begin{align*}
				\mathcal{T}_{1,1}
				&\sim \sum_{\substack{k=k_{1}+k_{2}+k_{3}+k_{4} \\ 0 \leq k_{1},k_{2},k_{3},k_{4} \leq k}} \frac{k! 2^{k_{1}}}{k_{1}!k_{2}!k_{3}!k_{4}!} \\
				&\quad \times \left\|\langle (\tau,\xi)\rangle^{-1}\,\mathcal{F}_{t,x}\left( t\chi_{(t_{0},x_{0}),\boldsymbol{\vec{\epsilon}}}\, v_{k_{2}}v_{k_{3}}v_{k_{4}} \right)(\tau,\xi) \right\|_{L^{r'}_{\tau\xi}(\mathbb{R}^{2})}\\
				&\sim \sum_{\substack{k=k_{1}+k_{2}+k_{3}+k_{4} \\ 0 \leq k_{1},k_{2},k_{3},k_{4} \leq k}} \frac{k! 2^{k_{1}}}{k_{1}!k_{2}!k_{3}!k_{4}!} \\
				&\quad \times \left\|\mathcal{F}_{t,x}\left(\langle \nabla_{t,x}\rangle^{-1}\left( t\chi_{(t_{0},x_{0}),\boldsymbol{\vec{\epsilon}}}\, v_{k_{2}}v_{k_{3}}v_{k_{4}} \right)\right)(\tau,\xi)\right\|_{L^{r'}_{\tau\xi}(\mathbb{R}^{2})}\\
				&\lesssim \sum_{\substack{k=k_{1}+k_{2}+k_{3}+k_{4} \\ 0 \leq k_{1},k_{2},k_{3},k_{4} \leq k}} \frac{k! 2^{k_{1}}}{k_{1}!k_{2}!k_{3}!k_{4}!} \,\left\|\langle \nabla_{t,x}\rangle^{-1}\left( t\chi_{(t_{0},x_{0}),\boldsymbol{\vec{\epsilon}}}\, v_{k_{2}}v_{k_{3}}v_{k_{4}} \right) \right\|_{L^{r}_{xt}(\mathbb{R}^{2})}\\
				&\lesssim \sum_{\substack{k=k_{1}+k_{2}+k_{3}+k_{4} \\ 0 \leq k_{1},k_{2},k_{3},k_{4} \leq k}} \frac{k! 2^{k_{1}}}{k_{1}!k_{2}!k_{3}!k_{4}!} \\
				&\quad \times \left\|\langle \nabla_{t,x}\rangle^{-1}\left( t\chi_{(t_{0},x_{0}),\boldsymbol{\vec{\epsilon}}}\, v_{k_{2}}v_{k_{3}}v_{k_{4}} \right) \right\|_{L^{1,\infty}_{xt}(\mathbb{R}^{2})}^{1-\theta}\,\left\|\langle \nabla_{t,x}\rangle^{-1}\left( t\chi_{(t_{0},x_{0}),\boldsymbol{\vec{\epsilon}}}\, v_{k_{2}}v_{k_{3}}v_{k_{4}} \right) \right\|_{L^{p,\infty}_{xt}(\mathbb{R}^{2})}^{\theta}\\
				&\sim_{r,s} \sum_{\substack{k=k_{1}+k_{2}+k_{3}+k_{4} \\ 0 \leq k_{1},k_{2},k_{3},k_{4} \leq k}} \frac{k! 2^{k_{1}}}{k_{1}!k_{2}!k_{3}!k_{4}!} \,\left\| t\chi_{(t_{0},x_{0}),\boldsymbol{\vec{\epsilon}}}\, v_{k_{2}}v_{k_{3}}v_{k_{4}} \right\|_{L^{1}_{xt}(\mathbb{R}^{2})}\\
				&\lesssim \sum_{\substack{k=k_{1}+k_{2}+k_{3}+k_{4} \\ 0 \leq k_{1},k_{2},k_{3},k_{4} \leq k}} \frac{k! 2^{k_{1}}}{k_{1}!k_{2}!k_{3}!k_{4}!} \\
				&\quad \times \left\|t\chi_{(t_{0},x_{0}),\boldsymbol{\vec{\epsilon}}}\right\|_{L^{8/5}_{xt}(\mathbb{R}^{2})}\,\| v_{k_{2}}\|_{L^{8}_{xt}(\mathbb{R}^{2})}\,\| v_{k_{3}}\|_{L^{8}_{xt}(\mathbb{R}^{2})} \,\| v_{k_{4}}\|_{L^{8}_{xt}(\mathbb{R}^{2})}\\
				&\lesssim_{\epsilon} k!\sum_{ \substack{k=k_{1}+k_{2}+k_{3}+k_{4} \\ 0 \leq k_{1},k_{2},k_{3},k_{4} \leq k} }\frac{ 2^{k_{1}}}{k_{1}!}\alpha^{k_{2}+k_{3}+k_{4}}\\
				&\sim k!\alpha^{k}\sum_{k_{1}=0}^{k}\frac{2^{k_{1}}}{k_{1}!}\alpha^{-k_{1}}(k-k_{1})\left((k-k_{1}+1)(k-k_{1}+2)-3(k-k_{1})\right)\\
				&\lesssim e^{\frac{2}{\alpha}}(k+4)!\alpha^{k},
			\end{align*}
			where we applied the interpolation inequality
			\begin{equation*}
				\|f\|_{L^{r}} \leq \left(\frac{r}{r-1} + \frac{r}{p-r}\right)^{\frac{1}{r}} \|f\|_{L^{1,\infty}}^{1-\theta} \, \|f\|_{L^{p,\infty}}^{\theta},
			\end{equation*}
			which holds for $1 < r < p \leq \infty$ and $\theta \in (0,1)$ such that $\frac{1}{r} = 1 - \theta + \frac{\theta}{p}$.\footnote{Recall that $L^{p,\infty}_{xt}(\mathbb{R}^{2})$ denotes the standard \emph{weak $L^p$ space}.}
			
			Thus
			\begin{equation*}
				\begin{split}
					\left\|\langle\nabla_{t,x}\rangle^{-1}(f_{1}f_{2}f_{3}f_{4})\right\|_{L^{r}_{xt}}&\lesssim_{p,r}\left\|\langle\nabla_{t,x}\rangle^{-1}(f_{1}f_{2}f_{3}f_{4})\right\|_{L^{1,\infty}_{xt}}^{1-\theta}\, \left\|\langle\nabla_{t,x}\rangle^{-1}(f_{1}f_{2}f_{3}f_{4})\right\|_{L^{p,\infty}_{xt}}^{\theta}.
				\end{split}
			\end{equation*}
			Since  $L^{1}_{xt}\hookrightarrow L^{1,\infty}_{xt}$ then  we have 
			\begin{equation*}
				\begin{split}
					\left\|\langle\nabla_{t,x}\rangle^{-1}(f_{1}f_{2}f_{3}f_{4})\right\|_{L^{1,\infty}_{xt}}&\lesssim 	\left\|\langle\nabla_{t,x}\rangle^{-1}(f_{1}f_{2}f_{3}f_{4})\right\|_{L^{1}_{xt}}\\
					&\sim 	\left\|G_{1}*\left(f_{1}f_{2}f_{3}f_{4}\right)\right\|_{L^{1}_{xt}}\\
					&\lesssim \|G_{1}\|_{L^{1}}\left\|f_{1}f_{2}f_{3}f_{4}\right\|_{L^{1}_{xt}},
				\end{split}
			\end{equation*}
			where we have used Young's convolution inequality and the fact that $\|G_{1}\|_{L^{1}}\sim 1.$	 For the remainder term, we use the continuity of Bessel's potentials, which is the Hardy-Littlewood-Sobolev inequality for Bessel's potentials. More precisely, we have
			\begin{equation*}
				\left\|\langle\nabla_{t,x}\rangle^{-1}(f_{1}f_{2}f_{3}f_{4})\right\|_{L^{p,\infty}_{xt}}\lesssim_{p}\|f_{1}f_{2}f_{3}f_{4}\|_{L^{1}_{xt}},\quad \mbox{whenever}\,\, p\in (1,\infty).
			\end{equation*}
			Now, we choose in particular $p>r>1,$ so that 
			${\displaystyle
				\theta:=\left(\frac{p}{p-1}\right)\,\left(\frac{r-1}{r}\right)\in (0,1).}$
			
			Gathering the estimates above, we finally get the following estimate
			\begin{equation*}
				\begin{split}
					\left\|\langle\nabla_{t,x}\rangle^{-1}(f_{1}f_{2}f_{3}f_{4})\right\|_{L^{r}_{xt}}&\lesssim \left\|f_{1}f_{2}f_{3}f_{4}\right\|_{L^{1}_{xt}}\\
					&\leq c\|f_{1}\|_{L^{8/5}_{xt}}\|f_{2}\|_{L^{8}_{xt}}\|f_{3}\|_{L^{8}_{xt}}\|f_{4}\|_{L^{8}_{xt}}.
				\end{split}
			\end{equation*}
			Next, we estimate the term $\mathcal{T}_{1,2}$. By employing an argument analogous to the one used above, we obtain the following upper bound:
			\begin{equation*}
				\begin{aligned}
					\mathcal{T}_{1,2} &\lesssim \sum_{\substack{k=k_{1}+k_{2}+k_{3}+k_{4} \\ 0 \leq k_{1},k_{2},k_{3},k_{4} \leq k}} \frac{k! 2^{k_{1}}}{k_{1}!k_{2}!k_{3}!k_{4}!} \left\| t\partial_{x}\chi_{(t_{0},x_{0}),\boldsymbol{\vec{\epsilon}}}\, v_{k_{2}}v_{k_{3}}v_{k_{4}} \right\|_{\widehat{H}^r_{-1}(\mathbb{R}^{2})} \\
					&\lesssim_{\epsilon} k! \sum_{\substack{k=k_{1}+k_{2}+k_{3}+k_{4} \\ 0 \leq k_{1},k_{2},k_{3},k_{4} \leq k}} \frac{2^{k_{1}}}{k_{1}!} \alpha^{k_{2}+k_{3}+k_{4}}\\
					&\lesssim e^{\frac{2}{\alpha}}(k+4)!\alpha^{k}.
				\end{aligned}
			\end{equation*}
			Next, we address the term $\mathcal{T}_{2}$. By invoking Lemma \ref{bach1} that combined with \textbf{{\sc Claim}} \ref{c2.1},  allows us to obtain 
			\begin{equation*}
				\begin{aligned}
					\mathcal{T}_{2} &\lesssim \left\| \chi_{(t_{0},x_{0}),\boldsymbol{\vec{\epsilon}}}\right\|_{\widehat{H}^{\infty}_{2}(\mathbb{R}^{2})} \left\|v_{k+1}\right\|_{\widehat{H}^r_{-2}(\mathbb{R}^{2})} \\
					&\lesssim_{\epsilon} \left\|v_{k+1}\right\|_{\widehat{L}^{r}_{xt}(\mathbb{R}^{2})} \\
					&\leq c\alpha^{k+1} (k+1)!, \quad \text{for all } k \in \mathbb{N}_{0}.
				\end{aligned}
			\end{equation*}
			We  estimate the last contribution of the term $\Xi_{2,1}$, that is given by $\mathcal{T}_{3}.$ By combining  Lemma \ref{bach1} and \textbf{{\sc Claim}} \ref{c2.1}, we obtain:
			\begin{equation*}
				\begin{aligned}
					\mathcal{T}_{3} &\lesssim \| \partial_{x} ( x \chi_{(t_{0},x_{0}), \boldsymbol{\vec{\epsilon}}} \, v_{k} ) \|_{\widehat{H}^{r}_{-2}(\mathbb{R}^{2})} + \| \partial_{x}(x\chi_{(t_{0},x_{0}),\boldsymbol{\vec{\epsilon}}}) \, v_{k} \|_{\widehat{H}^{r}_{-2}(\mathbb{R}^{2})} \\
					&\lesssim \left( \| x \chi_{(t_{0},x_{0}),\boldsymbol{\vec{\epsilon}}} \|_{\widehat{H}^{\infty}_{1}(\mathbb{R}^{2})} + \| \partial_{x}(x \chi_{(t_{0},x_{0}), \epsilon}) \|_{\widehat{H}^{\infty}_{2}(\mathbb{R}^{2})} \right) \| v_{k} \|_{\widehat{H}^{r}_{-1}(\mathbb{R}^{2})} \\
					&\lesssim \left( \| x \chi_{(t_{0},x_{0}) ,\boldsymbol{\vec{\epsilon}}} \|_{\widehat{H}^{\infty}_{1}(\mathbb{R}^{2})} + \| \partial_{x}(x \chi_{(t_{0},x_{0}) ,\boldsymbol{\vec{\epsilon}}}) \|_{\widehat{H}^{\infty}_{2}(\mathbb{R}^{2})} \right) \| v_{k} \|_{\widehat{L}^{r}_{xt}(\mathbb{R}^{2})} \\
					&\lesssim_{\epsilon} \alpha^{k}k!, \quad \text{for all } k \in \mathbb{N}_{0}.
				\end{aligned}
			\end{equation*}
			Next, the terms $\Xi_{2,2}, \Xi_{2,3},$ and  $\Xi_{2,4}$ are estimated by using a similar argument as the one used to handle the term $\mathcal{T}_{3}$. For the sake of brevity, we omit the details; nevertheless, these terms satisfy the bound after combining Lemma \ref{bach1} and {\sc Claim } \ref{c2.1} from where we get 
			\begin{equation*}
				\Xi_{2,j}\lesssim_{\epsilon} \alpha^{k} k!,\quad \mbox{for all } \,\, k\in \mathbb{N}_{0}.
			\end{equation*}
			Finally, we address the term $\Xi_{3}$. More precisely, by combining Lemma \ref{bach1} and \textbf{{\sc Claim}} \ref{c2.1}, we establish the following bound:
			\begin{equation*}
				\begin{aligned}
					\Xi_{3} &\lesssim \sum_{j=0}^{3} \binom{3}{j} \left\| P^{j} v_{k} P^{3-j} \big( \chi_{(t_{0},x_{0}),\boldsymbol{\vec{\epsilon}}} \big) \right\|_{\widehat{H}^r_{-2}(\mathbb{R}^{2})} \\
					&\lesssim \sum_{j=0}^{3} \| v_{k+j} \|_{\widehat{H}^r_{-2}(\mathbb{R}^{2})} \left\| P^{3-j} \big( \chi_{(t_{0},x_{0}),\boldsymbol{\vec{\epsilon}}} \big) \right\|_{\widehat{H}^{\infty}_{2}(\mathbb{R}^{2})} \\
					&\lesssim \sum_{j=0}^{3} \alpha^{k+j}(k+j)! \left\| P^{3-j} \big( \chi_{(t_{0},x_{0}),\boldsymbol{\vec{\epsilon}}} \big) \right\|_{\widehat{H}^{\infty}_{2}(\mathbb{R}^{2})} \\
					&\lesssim_{\epsilon} \alpha^{k+3}(k+3)!, \quad \text{for any } k \in \mathbb{N} \setminus \{0\}.
				\end{aligned}
			\end{equation*}
		\end{claimproof}
        \setcounter{claim}{4}
		\begin{claim}\label{c4}
			Let $\vec{\boldsymbol{\epsilon}} = (\epsilon, \epsilon) \in (0, \infty)^2$ and let $\chi_{(t_0, x_0), \vec{\boldsymbol{\epsilon}}}$ be the cutoff function defined in \eqref{cutoff1}. For $1 < r \leq 2$ with $\frac{1}{r} + \frac{1}{r'} = 1$, there exists a constant $c > 0$ such that for all $k \in \mathbb{N}_0$, the following estimate holds:
			\begin{equation*}
				\left\| \chi_{(t_0, x_0), \boldsymbol{\vec{\epsilon}}} \, v_k \right\|_{\widehat{H}^{r}_{1+\theta}(\mathbb{R}^2)} \leq c \alpha^k k!, \quad \mbox{with}\, \theta\in(0,1).
			\end{equation*}
		\end{claim}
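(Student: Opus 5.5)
The plan is to bootstrap Claim~\ref{c3} by one more fractional step, running the same scheme as its proof with the regularity index shifted by $\theta$. Concretely, I would apply the interpolation/elliptic-type Lemma~\ref{interpo1} at the level $-2+\theta$ instead of $-2$. Here I use that $L=\partial_t+\partial_x^3$ and $P$ jointly control the full space-time gradient up to order three away from $t=0$, which is why $t_0\neq0$ enters through the implicit constant. This gives
\begin{equation*}
\|\chi_{(t_0,x_0),\vec{\boldsymbol{\epsilon}}}v_k\|_{\widehat H^r_{1+\theta}}\lesssim_{t_0,x_0}\Xi_1^\theta+\Xi_2^\theta+\Xi_3^\theta,
\end{equation*}
where each $\Xi_j^\theta$ is the corresponding term $\Xi_j$ of Claim~\ref{c3}, now measured in $\widehat H^r_{-2+\theta}(\mathbb R^2)$.

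The key device is to replace the cutoff by a slightly larger one. I choose a second cutoff $\tilde\chi$ with $\tilde\chi\equiv1$ on $\operatorname{supp}\chi_{(t_0,x_0),\vec{\boldsymbol{\epsilon}}}$, e.g. $\tilde\chi=\chi_{(t_0,x_0),3\vec{\boldsymbol{\epsilon}}}$, and write $\chi v_j=\chi\,\tilde\chi v_j$ for every factor. Lemma~\ref{bach1} with a smooth compactly supported multiplier in $\widehat H^\infty_{2}$ then gives $\|\chi\,\tilde\chi v_j\|_{\widehat H^r_{-2+\theta}}\lesssim\|\tilde\chi v_j\|_{\widehat H^r_{-2+\theta}}\le\|\tilde\chi v_j\|_{\widehat H^r_{1}}$. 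The last quantity is bounded by $c\alpha^jj!$ by Claim~\ref{c3} applied on the enlarged neighbourhood, which is legitimate provided $3\epsilon<|t_0|$.

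With this device the terms are handled as follows.
\begin{itemize}
\item The linear terms $\Xi_1^\theta$, $\mathcal T_2$, $\mathcal T_3$, $\Xi_{2,2},\Xi_{2,3},\Xi_{2,4}$ and $\Xi_3^\theta$ are all bounded by this device. Each carries at most three derivatives while the target space is $\widehat H^r_{-2+\theta}$, so every factor only needs to lie in $\widehat H^r_{1+\theta-3}\supset\widehat H^r_1$-type spaces with slack. The only price is shifts $k\mapsto k+j$ with $j\le3$, giving bounds $\alpha^{k+3}(k+3)!$, exactly as in Claim~\ref{c3}.
\item The nonlinear term $\mathcal T_1$ comes from identity \eqref{identity1} and needs $t\chi\,v_{k_2}v_{k_3}v_{k_4}$ in $\widehat H^r_{-1+\theta}$ (and the companion $\mathcal T_{1,2}$ in $\widehat H^r_{-2+\theta}$). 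I would repeat the Lorentz interpolation of Claim~\ref{c3}, now with the Bessel potential $\langle\nabla_{t,x}\rangle^{-1+\theta}$. The steps are: Hausdorff--Young to pass to $L^r_{xt}$; the interpolation between $L^{1,\infty}$ and $L^{p,\infty}$; and Hardy--Littlewood--Sobolev for Bessel potentials, which maps $L^1\to L^{p,\infty}$ for $\frac1p=1-\frac{1-\theta}{2}$ in dimension two. The $L^1\to L^1$ bound holds since $G_{1-\theta}\in L^1$. For these steps I need $r<p$, i.e. $\theta<\frac{2}{r'}$.
\end{itemize}

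The step I expect to be the main obstacle is exactly this nonlinear term. If $\theta$ is too large relative to $r$, the $L^1$ bound on the product no longer suffices. In that case I would first instead put the product in $L^q$ with $q>1$: I localize each factor as $\tilde\chi v_{k_j}$ and use the Sobolev embedding $\widehat H^r_1(\mathbb R^2)\hookrightarrow L^{q_j}_{xt}$ (Hausdorff--Young plus $\langle\cdot\rangle^{-1}\in L^{r'-}$ in two dimensions), together with the $L^8$ bounds of Claim~\ref{c2.2}. I would then run HLS from $L^q$.

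Either way the combinatorics are identical to Claim~\ref{c3}. The factors give
\begin{equation*}
k!\sum_{k_1+\dots+k_4=k}\frac{2^{k_1}}{k_1!}\alpha^{k_2+k_3+k_4}\lesssim e^{2/\alpha}(k+4)!\,\alpha^k .
\end{equation*}
Polynomial losses $(k+4)!/k!$ are absorbed by slightly enlarging $\alpha$, consistent with the constant convention in the statement, and this yields the claimed bound $c\alpha^kk!$ for some $\theta\in(0,1)$.
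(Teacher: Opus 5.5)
Your proposal follows the paper's proof essentially step for step: Lemma~\ref{interpo1} at level $\theta-2$, the splittings \eqref{identity2} and \eqref{identity1}, an enlarged cutoff (the paper takes $(2\epsilon,2\epsilon)$) so that Claim~\ref{c3} controls every linear term, the Bessel-potential/Lorentz argument for the trilinear term, and the same combinatorics. One correction, which strengthens your own caveat: the condition $r<p=\frac{2}{1+\theta}$ is equivalent to $\theta<1-\frac{2}{r'}$, not $\theta<\frac{2}{r'}$; so the $L^1$ route is void at $r=2$, and for the $\theta>\frac23$ used in Claim~\ref{c5} it works only when $r<\frac65$. Your fallback is therefore the argument that actually does the work: H\"older directly into $L^r_{xt}$ with the $L^8$ bounds of Claim~\ref{c2.2} (taking $\frac1r=\frac1s+\frac38$, and $t\chi\in L^s$ by compact support), then boundedness of $\langle\nabla_{t,x}\rangle^{\theta-1}$ on $L^r$ (its kernel is integrable), then Hausdorff--Young; this also repairs the paper's unrestricted assertion that $\langle\nabla_{t,x}\rangle^{\theta-1}$ maps $L^1$ into $L^{p,\infty}$ for every $p\in(1,\infty)$.
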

		\begin{claimproof}
			Applying Lemma \ref{interpo1}, we obtain the following inequality:
			\begin{equation*}
				\begin{aligned}
					\| \chi_{(t_{0},x_{0}),\vec{\boldsymbol{\epsilon}}} \, v_{k} \|_{\widehat{H}^{r}_{1+\theta}} \lesssim_{t_0, x_0} \,\, &\| \chi_{(t_{0},x_{0}),\boldsymbol{\vec{\epsilon}}} \, v_{k} \|_{\widehat{H}^r_{\theta-2}(\mathbb{R}^{2})} 
					+ \left\| t\partial_x^3 \left( \chi_{(t_{0},x_{0}),\boldsymbol{\vec{\epsilon}}} \, v_{k} \right) \right\|_{\widehat{H}^r_{\theta-2}(\mathbb{R}^{2})} \\
					&+ \left\| P^{3} \left( \chi_{(t_{0},x_{0}),\boldsymbol{\vec{\epsilon}}} \, v_{k} \right) \right\|_{\widehat{H}^r_{\theta-2}(\mathbb{R}^{2})} \\
					=: \,\, &	\overset{\sim}{\Xi_{1}} + 	\overset{\sim}{\Xi_{2}} + 	\overset{\sim}{\Xi_{3}}.
				\end{aligned}
			\end{equation*}
			The term $	\overset{\sim}{\Xi}_{1}$ may be readily estimated by applying \textbf{{\sc Claim}} \ref{c2.1} and \textbf{{\sc Claim}} \ref{c3} joint with  Lemma~\ref{bach1}. From these, we arrive at
			\begin{equation*}
				\overset{\sim}{\Xi}_{1}\leq c\alpha^{k}k!,\quad \mbox{for all}\,\, k\in\mathbb{N}_{0}, 
			\end{equation*}
			for some $c>0$.
			
			In virtue of  identity \eqref{identity2}, it is straightforward to obtain 
			\begin{equation*}
				\begin{split}
					\overset{\sim}{\Xi_{2}}&\leq  \left\|t\chi_{(t_{0},x_{0}),\boldsymbol{\vec{\epsilon}}} \, \partial_{x}^{3}v_{k}\right\|_{\widehat{H}^r_{\theta-2}(\mathbb{R}^{2})}+
					\left\|\partial_{x}^{2}\left(3t\partial_{x}\chi_{(t_{0},x_{0}),\boldsymbol{\vec{\epsilon}}} \,v_{k} \right)\right\|_{\widehat{H}^r_{\theta-2}(\mathbb{R}^{2})}
					\\
					&\quad+\left\|\partial_{x}\left(3t\partial_{x}^{2}\chi_{(t_{0},x_{0}),\boldsymbol{\vec{\epsilon}}} \,v_{k} \right)\right\|_{\widehat{H}^r_{\theta-2}(\mathbb{R}^{2})} +\left\|t\partial_{x}^{3}\chi_{(t_{0},x_{0}),\boldsymbol{\vec{\epsilon}}} \,v_{k}\right\|_{\widehat{H}^r_{\theta-2}(\mathbb{R}^{2})}\\
					&=	\overset{\sim}{\Xi_{2,1}}+	\overset{\sim}{\Xi_{2,2}}+	\overset{\sim}{\Xi_{2,3}}+	\overset{\sim}{\Xi_{2,4}}.
				\end{split}
			\end{equation*}
			Now, by identity \eqref{identity1} we obtain 
			\begin{equation*}
				\begin{aligned}
					\overset{\sim}{\Xi_{2,1}} &\lesssim \left\| t \chi_{(t_{0},x_{0}),\boldsymbol{\vec{\epsilon}}} \Pi_{k}^{(3)}(v) \right\|_{\widehat{H}^r_{\theta-2}(\mathbb{R}^{2})} + \left\| \chi_{(t_{0},x_{0}),\boldsymbol{\vec{\epsilon}}}v_{k+1} \right\|_{\widehat{H}^r_{\theta-2}(\mathbb{R}^{2})} + \left\| x \chi_{(t_{0},x_{0}),\boldsymbol{\vec{\epsilon}}} \partial_{x} v_{k} \right\|_{\widehat{H}^r_{\theta-2}(\mathbb{R}^{2})} \\
					&= 	\overset{\sim}{\mathcal{T}_{1}} + 	\overset{\sim}{\mathcal{T}_{2}} + 	\overset{\sim}{\mathcal{T}_{3}}.
				\end{aligned}
			\end{equation*}
			Thus, we have the following estimate:
			\begin{equation*}
				\begin{split}
					\overset{\sim}{\mathcal{T}_{1}}
					&\lesssim  \sum_{\substack{k=k_{1}+k_{2}+k_{3}+k_{4} \\ 0 \leq k_{1},k_{2},k_{3},k_{4}\leq k}} \frac{k! 2^{k_{1}}}{k_{1}!k_{2}!k_{3}!k_{4}!} \,\left\| t\chi_{(t_{0},x_{0}),\boldsymbol{\vec{\epsilon}}}\, v_{k_{2}}v_{k_{3}}v_{k_{4}}  \right\|_{\widehat{H}^r_{\theta-1}(\mathbb{R}^{2})}\\
					&\quad+\sum_{\substack{k=k_{1}+k_{2}+k_{3}+k_{4} \\ 0 \leq k_{1},k_{2},k_{3},k_{4} \leq k}} \frac{k! 2^{k_{1}}}{k_{1}!k_{2}!k_{3}!k_{4}!} \,\left\| t\partial_{x}\chi_{(t_{0},x_{0}),\boldsymbol{\vec{\epsilon}}}\, v_{k_{2}}v_{k_{3}}v_{k_{4}}  \right\|_{\widehat{H}^r_{\theta-2}(\mathbb{R}^{2})}\\
					&=	\overset{\sim}{\mathcal{T}_{1,1}}+	\overset{\sim}{\mathcal{T}_{1,2}}.
				\end{split}
			\end{equation*}
			A similar argument as the used  to handle $\mathcal{T}_{1,1}$ in previous {\sc Claim} \ref{c3}  yields 
			\begin{align*}
				\overset{\sim}{\mathcal{T}}_{1,1} &\sim \sum_{\substack{k=k_{1}+k_{2}+k_{3}+k_{4} \\ 0 \leq k_{1},k_{2},k_{3},k_{4}\leq k}} \frac{k! \, 2^{k_{1}}}{k_{1}!k_{2}!k_{3}!k_{4}!} \\
				&\quad \times \left\| \langle (\tau,\xi) \rangle^{\theta-1} \mathcal{F}_{t,x} \bigg( t \chi_{(t_{0},x_{0}),\boldsymbol{\vec{\epsilon}}} \, v_{k_{2}}v_{k_{3}}v_{k_{4}} \bigg)(\tau,\xi) \right\|_{L^{r'}_{\tau\xi}(\mathbb{R}^{2})} \\[2ex]
				&\lesssim_{r,s} \sum_{\substack{k=k_{1}+k_{2}+k_{3}+k_{4} \\ 0 \leq k_{1},k_{2},k_{3},k_{4} \leq k}} \frac{k! 2^{k_{1}}}{k_{1}!k_{2}!k_{3}!k_{4}!} \,\left\| t\chi_{(t_{0},x_{0}),\boldsymbol{\vec{\epsilon}}}\, v_{k_{2}}v_{k_{3}}v_{k_{4}} \right\|_{L^{1}_{xt}(\mathbb{R}^{2})} \\[2ex]
				&\lesssim \sum_{\substack{k=k_{1}+k_{2}+k_{3}+k_{4} \\ 0 \leq k_{1},k_{2},k_{3},k_{4} \leq k}} \frac{k! 2^{k_{1}}}{k_{1}!k_{2}!k_{3}!k_{4}!} \\
				&\quad \times \left\|t\chi_{(t_{0},x_{0}),\boldsymbol{\vec{\epsilon}}}\right\|_{L^{8/5}_{xt}(\mathbb{R}^{2})}\,\| v_{k_{2}}\|_{L^{8}_{xt}(\mathbb{R}^{2})}\,\| v_{k_{3}}\|_{L^{8}_{xt}(\mathbb{R}^{2})} \,\| v_{k_{4}}\|_{L^{8}_{xt}(\mathbb{R}^{2})} \\[2ex]
				&\lesssim_{\epsilon} k! \sum_{\substack{k=k_{1}+k_{2}+k_{3}+k_{4} \\ 0 \leq k_{1},k_{2},k_{3},k_{4} \leq k}} \frac{ 2^{k_{1}}}{k_{1}!}\alpha^{k_{2}+k_{3}+k_{4}}\\
				&\lesssim e^{\frac{2}{\alpha}}(k+4)!\alpha^{k}.
			\end{align*}
			The situation for $	\overset{\sim}{\mathcal{T}_{1,2}}$ is different, and it is handled by using H\"older's inequality joint with {\sc Claim }\ref{c2.1}. More precisely, we obtain the following upper bound 
			\begin{equation*}
				\begin{split}
					\overset{\sim}{\mathcal{T}}_{1,2}& \lesssim\sum_{\substack{k=k_{1}+k_{2}+k_{3}+k_{4} \\ 0 \leq k_{1},k_{2},k_{3},k_{4} \leq k}} \frac{k! 2^{k_{1}}}{k_{1}!k_{2}!k_{3}!k_{4}!} \,\left\| t\partial_{x}\chi_{(t_{0},x_{0}),\boldsymbol{\vec{\epsilon}}}\, v_{k_{2}}v_{k_{3}}v_{k_{4}}  \right\|_{\widehat{H}^r_{\theta-2}(\mathbb{R}^{2})}\\
					&\lesssim_{\epsilon} k! \sum_{\substack{k=k_{1}+k_{2}+k_{3}+k_{4} \\ 0 \leq k_{1},k_{2},k_{3},k_{4} \leq k}} \frac{2^{k_{1}}}{k_{1}!} \alpha^{k_{2}+k_{3}+k_{4}}\\
					&\lesssim e^{\frac{2}{\alpha}}(k+4)!\alpha^{k}.
				\end{split}
			\end{equation*}
			In the case of $	\overset{\sim}{\mathcal{T}_{3}}$ is straightforward to handle. Specifically, we combine 
			Lemma \ref{bach1} and \textbf{{\sc Claim}} \ref{c2.1} to obtain:
			\begin{equation*}
				\begin{aligned}
					\overset{\sim}{\mathcal{T}_{3}} &\lesssim \| \partial_{x} ( x \chi_{(t_{0},x_{0}), \boldsymbol{\vec{\epsilon}}} \, v_{k} ) \|_{\widehat{H}^{r}_{\theta-2}(\mathbb{R}^{2})} + \| \partial_{x}(x\chi_{(t_{0},x_{0}),\boldsymbol{\vec{\epsilon}}}) \, v_{k} \|_{\widehat{H}^{r}_{\theta-2}(\mathbb{R}^{2})} \\
					&\lesssim \left( \| x \chi_{(t_{0},x_{0}) ,\boldsymbol{\vec{\epsilon}}} \|_{\widehat{H}^{\infty}_{1-\theta}(\mathbb{R}^{2})} + \| \partial_{x}(x \chi_{(t_{0},x_{0}) ,\boldsymbol{\vec{\epsilon}}}) \|_{\widehat{H}^{\infty}_{2-\theta}(\mathbb{R}^{2})} \right) \| v_{k} \|_{\widehat{L}^{r}_{xt}(\mathbb{R}^{2})} \\
					&\lesssim_{\epsilon} \alpha^{k}k!, \quad \text{for all } k \in \mathbb{N}_{0}.
				\end{aligned}
			\end{equation*}
			Next, we focus on estimate $	\overset{\sim}{\Xi_{2,2}}$, that is handled after using {\sc Claim } \ref{c3} as follows
			\begin{equation*}
				\begin{split}
					\overset{\sim}{\Xi_{2,2}}&\lesssim \left\|t\partial_{x}\chi_{(t_{0},x_{0}),\boldsymbol{\vec{\epsilon}}} \,v_{k} \right\|_{\widehat{H}^r_{\theta}(\mathbb{R}^{2})},
				\end{split}
			\end{equation*}
			which is straightforward to handle by using {\sc claim } \ref{c3}. More precisely,
			we employ the cut-off function $\chi_{(t_{0},x_{0}),\boldsymbol{	\overset{\sim}{\vec{\epsilon}}}}$ 
			as defined in \eqref{cutoff1}, specializing to the case 
			$\boldsymbol{	\overset{\sim}{\vec{\epsilon}}} = (2\epsilon, 2\epsilon)$. Since 
			$\chi_{(t_{0},x_{0}),\boldsymbol{	\overset{\sim}{\vec{\epsilon}}}} \equiv 1$ on 
			$\operatorname{supp}(\chi_{(t_{0},x_{0}),\boldsymbol{	\vec{\epsilon}}})$ we can apply again {\sc claim } \ref{c3} to this cut-off function to obtain 
			\begin{equation*}
				\begin{split}
					\left\|t\partial_{x}\chi_{(t_{0},x_{0}),\boldsymbol{\vec{\epsilon}}} \,v_{k} \right\|_{\widehat{H}^{r}_{\theta}(\mathbb{R}^{2})}&\leq 
					\left\|\chi_{(t_{0},x_{0}),\boldsymbol{	\overset{\sim}{\vec{\epsilon}}}} \,v_{k}\right\|_{\widehat{H}^{r}_{1}(\mathbb{R}^{2})}\\
					&\lesssim_{\epsilon} \alpha^{k}k!,\quad k\in\mathbb{N}_{0}.
				\end{split}
			\end{equation*}
			Next,  we focus on $	\overset{\sim}{\Xi_{2,3}},$ which satisfies the following bound 
			\begin{equation*}
				\begin{split}
					\overset{\sim}{\Xi_{2,3}}&\lesssim \left\|t\partial_{x}^{2}\chi_{(t_{0},x_{0}),\boldsymbol{\vec{\epsilon}}} \,v_{k} \right\|_{\widehat{H}^r_{\theta-1}(\mathbb{R}^{2})}\\
					&=c\left\|t\partial_{x}^{2}\chi_{(t_{0},x_{0}),\boldsymbol{\vec{\epsilon}}} \,\chi_{(t_{0},x_{0}),\boldsymbol{	\overset{\sim}{\vec{\epsilon}}}} \,v_{k} \right\|_{\widehat{H}^r_{\theta-1}(\mathbb{R}^{2})}\\
					&\lesssim_{\epsilon}\left\|\chi_{(t_{0},x_{0}),\boldsymbol{	\overset{\sim}{\vec{\epsilon}}}} \,v_{k} \right\|_{\widehat{H}^r_{\theta-1}(\mathbb{R}^{2})}\\
					&\lesssim \alpha^{k} k!,\quad k\in\mathbb{N}_{0},
				\end{split}
			\end{equation*}
			where  we have used the {\sc Claims} \ref{c2.1} and \ref{c3}.
			
			The next term in the argument is $	\overset{\sim}{\Xi_{2,4}}$, which we control by using a similar argument as the one employed above. More precisely, it satisfies
			\begin{equation*}
				\overset{\sim}{\Xi_{2,4}}\lesssim \alpha^{k}k!,\quad k\in\mathbb{N}_{0}.
			\end{equation*}
			Finally we deal with $	\overset{\sim}{\Xi_{3}},$ which in virtue of Leibniz rule, Lemma \ref{bach1} and {\sc Claim } \ref{c2.1} it satisfies 
			\begin{equation*}
				\begin{aligned}
					\overset{\sim}{\Xi_{3}} &\lesssim \sum_{j=0}^{3} \binom{3}{j} \left\| P^{j} v_{k} P^{3-j} \big( \chi_{(t_{0},x_{0}),\boldsymbol{\vec{\epsilon}}} \big) \right\|_{\widehat{H}^r_{\theta-2}(\mathbb{R}^{2})} \\
					&\lesssim \sum_{j=0}^{3} \alpha^{k+j}(k+j)! \left\| P^{3-j} \big( \chi_{(t_{0},x_{0}),\boldsymbol{\vec{\epsilon}}} \big) \right\|_{\widehat{H}^{\infty}_{2-\theta}(\mathbb{R}^{2})} \\
					&\lesssim_{\epsilon} \alpha^{k+3}(k+3)!, \quad \text{for any } k \in \mathbb{N} \setminus \{0\}.
				\end{aligned}
			\end{equation*}
			
		\end{claimproof}
        \setcounter{claim}{5}
		\begin{claim}\label{c5}
			Let $\vec{\boldsymbol{\epsilon}} = (\epsilon, \epsilon) \in (0, \infty)^2$ and let $\chi_{(t_0, x_0), \vec{\boldsymbol{\epsilon}}}$ be the cutoff function defined in \eqref{cutoff1}. For $1 < r \leq 2$ with $\frac{1}{r} + \frac{1}{r'} = 1$, there exists a constant $c > 0$ such that for all $k \in \mathbb{N}_0$, the following estimate holds:
			\begin{equation*}
				\left\| \chi_{(t_0, x_0), \vec{\boldsymbol{\epsilon}}} \, v_k \right\|_{\widehat{H}^{r}_{2+\theta}(\mathbb{R}^2)} \leq c \alpha^k k!, \quad \mbox{with}\, \theta\in\left(\frac{2}{3},1\right).
			\end{equation*}
		\end{claim}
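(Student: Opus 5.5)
The plan is to run the same bootstrap as in {\sc Claims} \ref{c3} and \ref{c4}, gaining one more derivative at the price of one more level of the hierarchy. First, apply Lemma \ref{interpo1} with target regularity $2+\theta$. This gives
\begin{equation*}
\| \chi_{(t_{0},x_{0}),\vec{\boldsymbol{\epsilon}}} v_{k} \|_{\widehat{H}^{r}_{2+\theta}}
\lesssim_{t_0,x_0}
\| \chi v_{k} \|_{\widehat{H}^r_{\theta-1}}
+\| t\partial_x^3(\chi v_{k}) \|_{\widehat{H}^r_{\theta-1}}
+\| P^{3}(\chi v_{k}) \|_{\widehat{H}^r_{\theta-1}},
\end{equation*}
where $\chi=\chi_{(t_0,x_0),\vec{\boldsymbol{\epsilon}}}$.

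The first term is immediate from {\sc Claim} \ref{c4}, since $\theta-1<1+\theta$. For the third term, use the Leibniz rule for $P^3$, Lemma \ref{bach1}, and {\sc Claim} \ref{c3} applied to the enlarged cutoff $\chi_{(t_0,x_0),(2\epsilon,2\epsilon)}$, which equals $1$ on $\supp\chi$. Note that $\theta-1<1$. This yields the bound $\alpha^{k+3}(k+3)!$, which is harmless after renaming $\alpha$: any fixed shift $(k+j)!\alpha^{k+j}$ is absorbed into $c\,\tilde\alpha^{k}k!$ for slightly larger $\tilde\alpha$. I will make this absorption explicit at the end, as is implicitly done in the earlier claims.

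For the second term, expand with identity \eqref{identity2}. The three commutator pieces
\begin{equation*}
\partial_x^{2}(3t\partial_x\chi\, v_k),\qquad \partial_x(3t\partial_x^2\chi\, v_k),\qquad t\partial_x^3\chi\, v_k
\end{equation*}
lose at most two derivatives. Measured in $\widehat H^r_{\theta-1}$, they are therefore bounded by $\|\tilde\chi v_k\|_{\widehat H^r_{1+\theta}}$, where $\tilde\chi$ is a slightly larger cutoff. {\sc Claim} \ref{c4} then controls them.

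The main piece is $t\chi\partial_x^3 v_k$. By \eqref{identity1} it splits into
\begin{equation*}
-\,t\chi\Pi^{(3)}_k(v),\qquad -\tfrac13\chi v_{k+1},\qquad \tfrac13 x\chi\partial_x v_k .
\end{equation*}
The term $\chi v_{k+1}$ is handled by {\sc Claim} \ref{c4} at level $k+1$. For the term $x\chi\partial_xv_k$, write it as $\partial_x(x\chi v_k)-\partial_x(x\chi)v_k$; in $\widehat H^r_{\theta-1}$ this is controlled by $\|\tilde\chi v_k\|_{\widehat H^r_{\theta}}$, which is bounded by {\sc Claim} \ref{c3}.

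The main obstacle is the nonlinear term $t\chi\,\partial_x(v_{k_2}v_{k_3}v_{k_4})$ in $\widehat H^r_{\theta-1}$, that is, the product $t\chi\, v_{k_2}v_{k_3}v_{k_4}$ in $\widehat H^r_{\theta}$ with $\theta>0$. The $L^1$/Lorentz trick of {\sc Claim} \ref{c3} no longer applies, because it needs a negative Bessel index. Instead I insert $\tilde\chi^3$, writing
\begin{equation*}
t\chi\, v_{k_2}v_{k_3}v_{k_4}=t\chi\,(\tilde\chi v_{k_2})(\tilde\chi v_{k_3})(\tilde\chi v_{k_4}),
\end{equation*}
and use an algebra-type estimate on $\widehat H^r_{\sigma}(\mathbb R^2)$. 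By Young's inequality in Fourier variables with $\langle\zeta\rangle^{\theta}\lesssim\sum_j\langle\zeta_j\rangle^{\theta}$, I want
\begin{equation*}
\|fgh\|_{\widehat H^r_\theta}\lesssim \|f\|_{\widehat H^r_{\theta}}\|g\|_{\widehat H^{\infty}_0}\|h\|_{\widehat H^{\infty}_0}+\text{perms}.
\end{equation*}
To bound $\|\tilde\chi v_j\|_{\widehat H^\infty_0}=\|\langle\zeta\rangle^{-(1+\theta)}\langle\zeta\rangle^{1+\theta}\widehat{\tilde\chi v_j}\|_{L^1}$, apply Hölder in two dimensions. This requires $(1+\theta)r>2$, which holds because $r>1$ and $\theta$ is close to $1$; this is where I expect the restriction $\theta\in(\frac23,1)$ to enter. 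Then {\sc Claim} \ref{c4} bounds each factor by $c\alpha^{k_j}k_j!$. The multinomial sum is closed exactly as for $\mathcal T_{1,1}$, giving $e^{2/\alpha}(k+4)!\alpha^k$, which is again absorbed by enlarging $\alpha$. If the Hölder exponent check fails for small $r$, my fallback is to interpolate $\tilde\chi v_j$ between $\widehat H^r_{1+\theta}$ and $L^8$ ({\sc Claim} \ref{c2.2}) to obtain the needed $\widehat H^\infty_0$ bound.
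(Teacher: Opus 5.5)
Your reduction matches the paper's: Lemma \ref{interpo1} at level $2+\theta$, the identities \eqref{identity1}--\eqref{identity2}, the enlarged cutoff $\tilde\chi$, and {\sc Claims} \ref{c2.1}--\ref{c4} for the linear pieces. You also treat the three commutator pieces from \eqref{identity2}, which the paper's proof of this claim skips.

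\textbf{The gap.} It lies in the one genuinely new step, the bound for $t\chi\,v_{k_2}v_{k_3}v_{k_4}$ in $\widehat H^r_\theta$. Placing two $v$-factors in $\mathcal F L^1=\widehat H^\infty_0$ requires $\langle\zeta\rangle^{-(1+\theta)}\in L^r(\mathbb R^2)$, i.e.\ $(1+\theta)r>2$. This is \emph{not} implied by $\theta\in(\frac23,1)$ and $r\in(1,2]$. For example, $r=\frac{11}{10}$ and $\theta=\frac{7}{10}$ give $(1+\theta)r=\frac{187}{100}<2$. So $\theta>\frac23$ does not enter where you expect. For $r$ near $1$ your estimate only covers $\theta>\frac2r-1$.

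The $L^8$ fallback cannot repair this. $L^8_{xt}$ control, even with compact support, gives at best $\widehat{\tilde\chi v_j}\in L^2$ with no weight, i.e.\ no Fourier-side decay. Interpolating the weighted $L^{r'}$ bound against an unweighted one only weakens the Hölder condition; it never beats the endpoint $(1+\theta)r>2$.

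A minor omission: moving $\partial_x$ off $\Pi^{(3)}_k$ also produces $t(\partial_x\chi)\,v_{k_2}v_{k_3}v_{k_4}$ in $\widehat H^r_{\theta-1}$. That term has a negative Bessel index, so the Lorentz/$L^1$ argument of {\sc Claim} \ref{c3} handles it.

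\textbf{The missing idea.} Put no $v$-factor in Fourier-$L^1$. Instead, spread the integrability deficit over all four factors in the fourfold Young inequality $\|f_1*f_2*f_3*f_4\|_{L^{r'}}\le\prod_j\|f_j\|_{L^{p_j}}$, where $\sum_j 1/p_j=3+1/r'$.

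For the term with the weight on $t\chi$, the paper sets $1/p_j=1/q_j+1/r'$ with $q_1=\frac{r(4+3\theta)}{3}$ and $q_j=\frac{r(4+3\theta)}{3(1+\theta)}$ for $j=2,3,4$. It then applies Hölder: $\|\widehat{\tilde\chi v_{k_j}}\|_{L^{p_j}}\le\|\langle\cdot\rangle^{-(1+\theta)}\|_{L^{q_j}}\|\tilde\chi v_{k_j}\|_{\widehat H^r_{1+\theta}}$, and similarly with $\langle\cdot\rangle^{-1}$ for $t\chi$. Since $\sum_j 1/q_j=3/r$, one gets exactly $\sum_j 1/p_j=3+1/r'$. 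All Hölder factors are finite iff $r(4+3\theta)>6$, which holds for every $r>1$ precisely because $\theta>\frac23$. The terms with the weight on a $v$-factor are handled by the same redistribution, with $\langle\cdot\rangle^{-1}$ on the weighted factor.

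Alternatively, keep your argument but state explicitly that $\theta\in(\max\{\frac23,\frac2r-1\},1)$. This suffices for the later claims, which use one fixed $\theta$, but proves a weaker statement than the one given.
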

		\begin{claimproof}
			We apply  Lemma \ref{interpo1} to obtain
			\begin{equation*}
				\begin{aligned}
					\| \chi_{(t_{0},x_{0}),\vec{\boldsymbol{\epsilon}}} \, v_{k} \|_{\widehat{H}^{r}_{2+\theta}} \lesssim_{t_0, x_0} \,\, &\| \chi_{(t_{0},x_{0}),\boldsymbol{\vec{\epsilon}}} \, v_{k} \|_{\widehat{H}^r_{\theta-1}(\mathbb{R}^{2})} 
					+ \left\| t\partial_x^3 \left( \chi_{(t_{0},x_{0}),\boldsymbol{\vec{\epsilon}}} \, v_{k} \right) \right\|_{\widehat{H}^r_{\theta-1}(\mathbb{R}^{2})} \\
					&+ \left\| P^{3} \left( \chi_{(t_{0},x_{0}),\boldsymbol{\vec{\epsilon}}} \, v_{k} \right) \right\|_{\widehat{H}^r_{\theta-1}(\mathbb{R}^{2})} \\
					&	=\overset{\approx}{\Xi_{1}}+ \overset{\approx}{\Xi_{2}} +  \overset{\approx}{\Xi_{3}}.
				\end{aligned}
			\end{equation*}
			Notice that by {\sc Claim } \ref{c2.1}, we have for all $k\in\mathbb{N}_{0}:$
			\begin{equation*}
				\overset{\approx}{\Xi_{1}}\lesssim \alpha^{k}k!.
			\end{equation*}
			Now,  we use decomposition in \cref{identity1,identity2} to obtain 
			\begin{equation*}
				\begin{aligned}
					\overset{\approx}{\Xi_{2}} &\lesssim \left\| t \chi_{(t_{0},x_{0}),\boldsymbol{\vec{\epsilon}}} \Pi_{k}^{(3)}(v) \right\|_{\widehat{H}^r_{\theta-1}(\mathbb{R}^{2})} + \left\| \chi_{(t_{0},x_{0}),\boldsymbol{\vec{\epsilon}}} v_{k+1} \right\|_{\widehat{H}^r_{\theta-1}(\mathbb{R}^{2})} + \left\| x \chi_{(t_{0},x_{0}),\boldsymbol{\vec{\epsilon}}} \partial_{x} v_{k} \right\|_{\widehat{H}^r_{\theta-1}(\mathbb{R}^{2})} \\
					&= 	\overset{\approx}{\mathcal{T}_{1}} + \overset{\approx}{\mathcal{T}_{2}} + \overset{\approx}{\mathcal{T}_{3}}.
				\end{aligned}
			\end{equation*}
			Hence
			\begin{equation*}
				\begin{split}
					\overset{\approx}{\mathcal{T}_{1}}
					&\lesssim  \sum_{\substack{k=k_{1}+k_{2}+k_{3}+k_{4} \\ 0 \leq k_{1},k_{2},k_{3},k_{4}\leq k}} \frac{k! 2^{k_{1}}}{k_{1}!k_{2}!k_{3}!k_{4}!} \,\left\| t\chi_{(t_{0},x_{0}),\boldsymbol{\vec{\epsilon}}}\, v_{k_{2}}v_{k_{3}}v_{k_{4}}  \right\|_{\widehat{H}^r_{\theta}(\mathbb{R}^{2})}\\
					&\quad+\sum_{\substack{k=k_{1}+k_{2}+k_{3}+k_{4} \\ 0 \leq k_{1},k_{2},k_{3},k_{4} \leq k}} \frac{k! 2^{k_{1}}}{k_{1}!k_{2}!k_{3}!k_{4}!} \,\left\| t\partial_{x}\chi_{(t_{0},x_{0}),\boldsymbol{\vec{\epsilon}}}\, v_{k_{2}}v_{k_{3}}v_{k_{4}}  \right\|_{\widehat{H}^r_{\theta-1}(\mathbb{R}^{2})}\\
					&=	\overset{\approx}{\mathcal{T}_{1,1}}+	\overset{\approx}{\mathcal{T}_{1,2}}.
				\end{split}
			\end{equation*}
			To handle  $\overset{\approx}{\mathcal{T}_{1,1}}$ we  shall emphasize several issues that allows to handle the trilinear estimate with the level of regularity required at this step of the argument.
			
			First, we consider  the cut-off function $\chi_{(t_{0},x_{0}),\boldsymbol{	\overset{\sim}{\vec{\epsilon}}}}$ 
			as defined in \eqref{cutoff1}, specializing to the case 
			$\boldsymbol{	\overset{\sim}{\vec{\epsilon}}} = (2\epsilon, 2\epsilon)$. Since 
			$\chi_{(t_{0},x_{0}),\boldsymbol{	\overset{\sim}{\vec{\epsilon}}}} \equiv 1$ on 
			$\operatorname{supp}(\chi_{(t_{0},x_{0}),\vec{\boldsymbol{\epsilon}}})$  then for all $(\tau,\xi)\in \mathbb{R}^{2}$ the following identity holds
			\begin{equation*}
				\begin{split}
					\langle (\tau,\xi)\rangle^{\theta}\mathcal{F}_{t,x}\left( t\chi_{(t_{0},x_{0}),\boldsymbol{\vec{\epsilon}}}\, v_{k_{2}}v_{k_{3}}v_{k_{4}} \right)(\tau,\xi)&=\langle (\tau,\xi)\rangle^{\theta}\mathcal{F}_{t,x}\left( t\chi_{(t_{0},x_{0}),\boldsymbol{\vec{\epsilon}}}\, \prod_{j=1}^{4}\chi_{(t_{0},x_{0}),\boldsymbol{	\overset{\sim}{\vec{\epsilon}}}} v_{k_{j}} \right)(\tau,\xi).
				\end{split}
			\end{equation*}
			Additionally,
			\begin{equation*}
				\begin{split}
					&\mathcal{F}_{t,x}\left( t\chi_{(t_{0},x_{0}),\boldsymbol{\vec{\epsilon}}}\, \prod_{j=2}^{4}\chi_{(t_{0},x_{0}),\boldsymbol{\overset{\sim}{\vec{\epsilon}}}} v_{k_{j}} \right)(\tau,\xi)\\
					&\quad = \iiint\limits_{\mathbb{R}^{6}} \mathcal{F}_{t,x} (t\chi_{(t_{0},x_{0}),\boldsymbol{\vec{\epsilon}}})(\eta_{1}) \, \mathcal{F}_{t,x}(\chi_{(t_{0},x_{0}),\boldsymbol{\overset{\sim}{\vec{\epsilon}}}} v_{k_{2}})(\eta_{2})  \, \mathcal{F}_{t,x} (\chi_{(t_{0},x_{0}),\boldsymbol{\overset{\sim}{\vec{\epsilon}}}} v_{k_{3}})(\eta_{3}) \\
					&\quad\qquad \times \mathcal{F}_{t,x} (\chi_{(t_{0},x_{0}),\boldsymbol{\overset{\sim}{\vec{\epsilon}}}} v_{k_{4}})((\tau,\xi)-\eta_{1}-\eta_{2}-\eta_{3}) \,\mathrm{d}\eta_{1} \,\mathrm{d}\eta_{2} \,\mathrm{d}\eta_{3},
				\end{split}
			\end{equation*}
			where $\eta_{j} := (\tilde{\eta}_{j1}, \tilde{\eta}_{j2}) \in \mathbb{R}^{2}$ for $j=1,2,3$.
			
			Notice that for $\theta\in(0,1)$ we have
			\begin{equation}\label{deco1}
				\begin{split}
					\langle(\tau,\xi) \rangle^{\theta} 
					&\lesssim \langle(\tau,\xi)-\eta_{1}-\eta_{2}-\eta_{3}\rangle^{\theta} + \langle \eta_{1}\rangle^{\theta} + \langle \eta_{2}\rangle^{\theta} + \langle \eta_{3}\rangle^{\theta}.
				\end{split}
			\end{equation}
			Consequently, 
			\begin{equation}\label{deco3}
				\begin{split}
					\overset{\approx}{\mathcal{T}_{1,1}}
					&\sim \sum_{\substack{k=k_{1}+k_{2}+k_{3}+k_{4} \\ 0 \leq k_{1},k_{2},k_{3},k_{4} \leq k}} \frac{k! 2^{k_{1}}}{k_{1}!k_{2}!k_{3}!k_{4}!}\,\left\|\langle (\tau,\xi)\rangle^{\theta}\,\mathcal{F}_{t,x}\left( t\chi_{(t_{0},x_{0}),\boldsymbol{\vec{\epsilon}}}\, v_{k_{2}}v_{k_{3}}v_{k_{4}} \right)(\tau,\xi) \right\|_{L^{r'}_{\tau\xi}(\mathbb{R}^{2})}\\
					&\lesssim \sum_{\substack{k=k_{1}+k_{2}+k_{3}+k_{4} \\ 0 \leq k_{1},k_{2},k_{3},k_{4} \leq k}} \frac{k! 2^{k_{1}}}{k_{1}!k_{2}!k_{3}!k_{4}!}\, \Bigg\{ \\
					& \sum_{j=1}^{3} \left\|\iiint\limits_{\mathbb{R}^{6}} \langle \eta_{j}\rangle^{\theta} \left| \mathcal{F}_{t,x} (t\chi_{(t_{0},x_{0}),\boldsymbol{\vec{\epsilon}}})(\eta_{1}) \, \mathcal{F}_{t,x} (\chi_{(t_{0},x_{0}),\boldsymbol{\overset{\sim}{\vec{\epsilon}}}} v_{k_{2}}) (\eta_{2}) \,  \mathcal{F}_{t,x} (\chi_{(t_{0},x_{0}),\boldsymbol{\overset{\sim}{\vec{\epsilon}}}} v_{k_{3}}) (\eta_{3}) \right| \right.\\
					&\left. \times \left| \mathcal{F}_{t,x} (\chi_{(t_{0},x_{0}),\boldsymbol{\overset{\sim}{\vec{\epsilon}}}} v_{k_{4}}) ((\tau,\xi)-\eta_{1}-\eta_{2}-\eta_{3}) \right| \,\mathrm{d}\eta_{1} \,\mathrm{d}\eta_{2} \,\mathrm{d}\eta_{3} \right\|_{L^{r'}_{\tau\xi}(\mathbb{R}^{2})}\\
					&+\left\| \iiint\limits_{\mathbb{R}^{6}} \left| \mathcal{F}_{t,x} (t\chi_{(t_{0},x_{0}),\boldsymbol{\vec{\epsilon}}})(\eta_{1}) \,  \mathcal{F}_{t,x} (\chi_{(t_{0},x_{0}),\boldsymbol{\overset{\sim}{\vec{\epsilon}}}} v_{k_{2}}) (\eta_{2}) \right|  \, \langle (\tau,\eta)-\eta_{1}-\eta_{2}-\eta_{3}\rangle^{\theta} \left| \mathcal{F}_{t,x} (\chi_{(t_{0},x_{0}),\boldsymbol{\overset{\sim}{\vec{\epsilon}}}} v_{k_{3}}) (\eta_{3}) \right| \right.\\
					&\left.\times \left| \mathcal{F}_{t,x} (\chi_{(t_{0},x_{0}),\boldsymbol{\overset{\sim}{\vec{\epsilon}}}} v_{k_{4}}) ((\tau,\xi)-\eta_{1}-\eta_{2}-\eta_{3}) \right| \,\mathrm{d}\eta_{1} \,\mathrm{d}\eta_{2} \,\mathrm{d}\eta_{3}\right\|_{L^{r'}_{\tau\xi}(\mathbb{R}^{2})} \Bigg\}\\
				\end{split}
			\end{equation}
			The expression above can be represented as a sum of fourfold convolutions. More precisely, the term reduces to:
			\begin{equation}\label{deco2}
				\begin{aligned}
					\overset{\approx}{\mathcal{T}_{1,1}}
					&\lesssim\sum_{\substack{k = k_1+k_2+k_3+k_4 \\ 0 \leq k_1,k_2,k_3,k_4 \leq k}}  \frac{k! 2^{k_1}}{k_1!k_2!k_3!k_4!} \Biggl\{ \\
					& \left\| \big( \langle \cdot \rangle^{\theta} |\mathcal{F}_{t,x}(t\chi_{(t_{0},x_{0}),\boldsymbol{\vec{\epsilon}}})| \big) * |\mathcal{F}_{t,x}(\chi_{(t_{0},x_{0}),\boldsymbol{\overset{\sim}{\vec{\epsilon}}}} v_{k_2})| * |\mathcal{F}_{t,x}(\chi_{(t_{0},x_{0}),\boldsymbol{\overset{\sim}{\vec{\epsilon}}}} v_{k_3})| * |\mathcal{F}_{t,x}(\chi_{(t_{0},x_{0}),\boldsymbol{\overset{\sim}{\vec{\epsilon}}}} v_{k_4})| \right\|_{L^{r'}_{\tau\xi}} \\
					& + \left\| |\mathcal{F}_{t,x}(t\chi_{(t_{0},x_{0}),\boldsymbol{\vec{\epsilon}}})| * \big( \langle \cdot \rangle^{\theta} |\mathcal{F}_{t,x}(\chi_{(t_{0},x_{0}),\boldsymbol{\overset{\sim}{\vec{\epsilon}}}} v_{k_2})| \big) * |\mathcal{F}_{t,x}(\chi_{(t_{0},x_{0}),\boldsymbol{\overset{\sim}{\vec{\epsilon}}}} v_{k_3})| * |\mathcal{F}_{t,x}(\chi_{(t_{0},x_{0}),\boldsymbol{\overset{\sim}{\vec{\epsilon}}}} v_{k_4})| \right\|_{L^{r'}_{\tau\xi}} \\
					& + \left\| |\mathcal{F}_{t,x}(t\chi_{(t_{0},x_{0}),\boldsymbol{\vec{\epsilon}}})| * |\mathcal{F}_{t,x}(\chi_{(t_{0},x_{0}),\boldsymbol{\overset{\sim}{\vec{\epsilon}}}} v_{k_2})| * \big( \langle \cdot \rangle^{\theta} |\mathcal{F}_{t,x}(\chi_{(t_{0},x_{0}),\boldsymbol{\overset{\sim}{\vec{\epsilon}}}} v_{k_3})| \big) * |\mathcal{F}_{t,x}(\chi_{(t_{0},x_{0}),\boldsymbol{\overset{\sim}{\vec{\epsilon}}}} v_{k_4})| \right\|_{L^{r'}_{\tau\xi}} \\
					& + \left\| |\mathcal{F}_{t,x}(t\chi_{(t_{0},x_{0}),\boldsymbol{\vec{\epsilon}}})| * |\mathcal{F}_{t,x}(\chi_{(t_{0},x_{0}),\boldsymbol{\overset{\sim}{\vec{\epsilon}}}} v_{k_2})| * |\mathcal{F}_{t,x}(\chi_{(t_{0},x_{0}),\boldsymbol{\overset{\sim}{\vec{\epsilon}}}} v_{k_3})| * \big( \langle \cdot \rangle^{\theta} |\mathcal{F}_{t,x}(\chi_{(t_{0},x_{0}),\boldsymbol{\overset{\sim}{\vec{\epsilon}}}} v_{k_4})| \big) \right\|_{L^{r'}_{\tau\xi}} \Biggr\}\\
					&=\Upsilon_{1}+\Upsilon_{2}+\Upsilon_{3}+\Upsilon_{4}.
				\end{aligned}
			\end{equation}
			By symmetry, the analysis for each term is essentially identical; thus, we shall focus our estimation on a single representative term, with the others following \emph{mutatis mutandis}.
			\underline{{\sc  Estimate for $\Upsilon_{1}$:}}
			
			For $q_{1} := \frac{r(4+3\theta)}{3} > 2$, we define $p_{1}$ via the relation
			${\displaystyle
				\frac{1}{p_{1}} := \frac{1}{q_{1}} + \frac{1}{r'}}$
			which ensures $p_{1} \in (1, \infty)$. Applying H\"older's inequality, we obtain:
			\begin{equation*}
				\begin{split}
					\left\|\langle \cdot \rangle^{\theta} \mathcal{F}_{t,x} (t\chi_{(t_{0},x_{0}),\boldsymbol{\vec{\epsilon}}})\right\|_{L^{p_{1}}_{\tau\xi}(\mathbb{R}^{2})} 
					&\leq \left\|\frac{1}{\langle\cdot\rangle}\right\|_{L^{q_{1}}(\mathbb{R}^{2})} \left\|\langle \cdot
					\rangle^{1+\theta} \mathcal{F}_{t,x} (t\chi_{(t_{0},x_{0}),\boldsymbol{\vec{\epsilon}}})\right\|_{L^{r'}_{\tau\xi}(\mathbb{R}^{2})} \\
					&\sim \left\|\langle \cdot\rangle^{1+\theta} \mathcal{F}_{t,x} (t\chi_{(t_{0},x_{0}),\boldsymbol{\vec{\epsilon}}})\right\|_{L^{r'}_{\tau\xi}(\mathbb{R}^{2})}.
				\end{split}
			\end{equation*}
			\underline{{\sc  Estimates for the remainder terms:} }
			
			A similar analysis applies to the remaining factors. More precisely, for $j=2, 3, 4$, we set ${\displaystyle
				q_{j} := \frac{r(4+3\theta)}{3(1+\theta)}}$
			and define the corresponding exponents $p_{j}$ such that
			${\displaystyle 
				\frac{1}{p_{j}} := \frac{1}{q_{j}} + \frac{1}{r'},\quad j=2,3,4.}$
			
			Thus, by H\"older's inequality, we get 
			\begin{equation*}
				\begin{split}
					\left\|\mathcal{F}_{t,x} (\chi_{(t_{0},x_{0}),\boldsymbol{\overset{\sim}{\vec{\epsilon}}}} v_{k_{j}}) \right\|_{L^{p_{j}}_{\tau\xi}(\mathbb{R}^{2})}&\lesssim \left\|\frac{1}{\langle \cdot\rangle^{1+\theta}}\right\|_{L^{q_{j}}_{\tau\xi}(\mathbb{R}^{2})}\left\| \langle \cdot
					\rangle^{1+\theta} \mathcal{F}_{t,x} (\chi_{(t_{0},x_{0}),\boldsymbol{\overset{\sim}{\vec{\epsilon}}}} v_{k_{j}}) \right\|_{L^{r'}_{\tau\xi}(\mathbb{R}^{2})}\\
					&\sim \left\| \langle \cdot
					\rangle^{1+\theta} \mathcal{F}_{t,x} (\chi_{(t_{0},x_{0}),\boldsymbol{\overset{\sim}{\vec{\epsilon}}}} v_{k_{j}}) \right\|_{L^{r'}_{\tau\xi}(\mathbb{R}^{2})},
				\end{split}
			\end{equation*}
			for $j=2,3,4.$
			
			Gathering these results and employing Young’s convolution inequality, we arrive at the following estimate
			\begin{equation*}
				\begin{split}
					\Upsilon_{1}&\lesssim \left\|\langle \cdot\rangle^{1+\theta} \mathcal{F}_{t,x} (t\chi_{(t_{0},x_{0}),\boldsymbol{\vec{\epsilon}}})\right\|_{L^{r'}_{\tau\xi}(\mathbb{R}^{2})}\prod_{j=2}^{4} \left\| \langle \cdot\rangle^{1+\theta} \mathcal{F}_{t,x} (\chi_{(t_{0},x_{0}),\boldsymbol{\overset{\sim}{\vec{\epsilon}}}} v_{k_{j}}) \right\|_{L^{r'}_{\tau\xi}(\mathbb{R}^{2})},
				\end{split}
			\end{equation*}
			whenever
			${\displaystyle
				\frac{1}{p_{1}}+\frac{1}{p_{2}}+\frac{1}{p_{3}}+\frac{1}{p_{4}}=\frac{3}{r}+\frac{4}{r'}=3+\frac{1}{r'}.}$
			
			Now,  we use  {\sc claim } \ref{c4}  adapting  the weight $\chi_{(t_{0},x_{0}),\boldsymbol{	\overset{\sim}{\epsilon}}}$ 
			as defined in \eqref{cutoff1}, to the case 
			$\boldsymbol{	\overset{\sim}{\vec{\epsilon}}} = (2\epsilon, 2\epsilon)$  ($\epsilon>0$) to obtain
			\begin{equation*}
				\begin{split}
					\Upsilon_{1}&\lesssim  k! \sum_{\substack{k=k_{1}+k_{2}+k_{3}+k_{4} \\ 0 \leq k_{1},k_{2},k_{3},k_{4}\leq k}} \,\frac{2^{k_{1}}}{k_{1}!} \,\alpha^{k_{2}+k_{3}+k_{4}}\\
					&\lesssim e^{\frac{2}{\alpha}}(k+4)!\alpha^{k}.
				\end{split}
			\end{equation*}
			An analogous analysis applies to the terms $\Upsilon_{2}, \Upsilon_{3},$ and $\Upsilon_{4}$. For the sake of brevity, we omit the explicit details; nevertheless, these terms satisfy the following bound for $j=2,3,4$:
			\begin{equation*}
				\begin{split}
					\Upsilon_{j} &\lesssim k! \sum_{\substack{k=k_{1}+k_{2}+k_3+k_4 \\ 0 \leq k_{1},k_{2},k_{3},k_{4}\leq k}} \frac{2^{k_{1}}}{k_{1}!} \alpha^{k_{2}+k_{3}+k_{4}}\\
					&\lesssim e^{\frac{2}{\alpha}}(k+4)!\alpha^{k}.
				\end{split}
			\end{equation*}
			For the term $\overset{\approx}{\mathcal{T}_{1,2}}$, a combination of the Hausdorff-Young inequality and {\sc claim }\ref{c2.2} yields:
			\begin{equation*}
				\begin{split}
					\overset{\approx}{\mathcal{T}_{1,2}}&\lesssim	\sum_{\substack{k=k_{1}+k_{2}+k_{3}+k_{4} \\ 0 \leq k_{1},k_{2},k_{3},k_{4} \leq k}} \frac{k! 2^{k_{1}}}{k_{1}!k_{2}!k_{3}!k_{4}!} \,\left\|\langle (\tau,\xi)\rangle^{\theta-1} \,\mathcal{F}_{t,x}\left(\partial_{x}\chi_{(t_{0},x_{0}),\boldsymbol{\vec{\epsilon}}}\, v_{k_{2}}v_{k_{3}}v_{k_{4}} \right)(\tau,\xi)
					\right\|_{L^{r'}_{\tau\xi}(\mathbb{R}^{2})}\\
					&\lesssim \sum_{\substack{k=k_{1}+k_{2}+k_{3}+k_{4} \\ 0 \leq k_{1},k_{2},k_{3},k_{4} \leq k}} \frac{k! 2^{k_{1}}}{k_{1}!k_{2}!k_{3}!k_{4}!} \,\left\|\langle \nabla_{t,x}\rangle^{\theta-1}\left( t\chi_{(t_{0},x_{0}),\boldsymbol{\vec{\epsilon}}}\, v_{k_{2}}v_{k_{3}}v_{k_{4}} \right) \right\|_{L^{r}_{xt}(\mathbb{R}^{2})}\\
					&\lesssim \sum_{\substack{k=k_{1}+k_{2}+k_{3}+k_{4} \\ 0 \leq k_{1},k_{2},k_{3},k_{4} \leq k}} \frac{k! 2^{k_{1}}}{k_{1}!k_{2}!k_{3}!k_{4}!} \\
					&\quad \times \left\|\langle \nabla_{t,x}\rangle^{\theta-1}\left( t\chi_{(t_{0},x_{0}),\boldsymbol{\vec{\epsilon}}}\, v_{k_{2}}v_{k_{3}}v_{k_{4}} \right) \right\|_{L^{1,\infty}_{xt}(\mathbb{R}^{2})}^{1-\gamma}\,\left\|\langle \nabla_{t,x}\rangle^{\theta-1}\left( t\chi_{(t_{0},x_{0}),\boldsymbol{\vec{\epsilon}}}\, v_{k_{2}}v_{k_{3}}v_{k_{4}} \right) \right\|_{L^{p,\infty}_{xt}(\mathbb{R}^{2})}^{\gamma
					}\\
					&\sim_{r,s} \sum_{\substack{k=k_{1}+k_{2}+k_{3}+k_{4} \\ 0 \leq k_{1},k_{2},k_{3},k_{4} \leq k}} \frac{k! 2^{k_{1}}}{k_{1}!k_{2}!k_{3}!k_{4}!} \,\left\| t\chi_{(t_{0},x_{0}),\boldsymbol{\vec{\epsilon}}}\, v_{k_{2}}v_{k_{3}}v_{k_{4}} \right\|_{L^{1}_{xt}(\mathbb{R}^{2})}\\
					&\lesssim \sum_{\substack{k=k_{1}+k_{2}+k_{3}+k_{4} \\ 0 \leq k_{1},k_{2},k_{3},k_{4} \leq k}} \frac{k! 2^{k_{1}}}{k_{1}!k_{2}!k_{3}!k_{4}!} \\
					&\quad \times\left\|t\chi_{(t_{0},x_{0}),\boldsymbol{\vec{\epsilon}}}\right\|_{L^{8/5}_{xt}(\mathbb{R}^{2})}\,\| v_{k_{2}}\|_{L^{8}_{xt}(\mathbb{R}^{2})}\,\| v_{k_{3}}\|_{L^{8}_{xt}(\mathbb{R}^{2})} \,\| v_{k_{4}}\|_{L^{8}_{xt}(\mathbb{R}^{2})}\\
					&\lesssim_{\epsilon} k! \sum_{\substack{k=k_{1}+k_{2}+k_{3}+k_{4} \\ 0 \leq k_{1},k_{2},k_{3},k_{4} \leq k}} \frac{ 2^{k_{1}}}{k_{1}!}\alpha^{k_{2}+k_{3}+k_{4}}\\
					&\lesssim e^{\frac{2}{\alpha}}(k+4)!\alpha^{k}.
				\end{split}
			\end{equation*}
			We have used the following interpolation inequality for Lorentz spaces:
			\begin{equation*}
				\|f\|_{L^{r}_{xt}(\mathbb{R}^{2})}\leq \left(\frac{r}{r-1}+\frac{r}{p-r}\right)^{\frac{1}{r}}\|f\|_{L^{1,\infty}_{xt}(\mathbb{R}^{2})}^{1-\gamma}\,\|f\|_{L^{p,\infty}_{xt}(\mathbb{R}^{2})}^{\gamma},\quad \frac{1}{r}=1-\gamma+\frac{\gamma}{p}, \quad \gamma\in(0,1),
			\end{equation*}
			with $1<r<p\leq \infty.$ Thus, it follows that
			\begin{equation*}
				\begin{split}
					\left\|\langle\nabla_{t,x}\rangle^{\theta-1}(f_{1}f_{2}f_{3}f_{4})\right\|_{L^{r}_{xt}(\mathbb{R}^{2})} &\lesssim_{p,r} \left\|\langle\nabla_{t,x}\rangle^{\theta-1}(f_{1}f_{2}f_{3}f_{4})\right\|_{L^{1,\infty}_{xt}(\mathbb{R}^{2})}^{1-\gamma} \left\|\langle\nabla_{t,x}\rangle^{\theta-1}(f_{1}f_{2}f_{3}f_{4})\right\|_{L^{p,\infty}_{xt}(\mathbb{R}^{2})}^{\gamma}.
				\end{split}
			\end{equation*}
			Thus,
			\begin{equation*}
				\begin{split}
					\left\|\langle\nabla_{t,x}\rangle^{\theta-1}(f_{1}f_{2}f_{3}f_{4})\right\|_{L^{1,\infty}_{xt}(\mathbb{R}^{2})}&\lesssim \left\|\langle\nabla_{t,x}\rangle^{\theta-1}(f_{1}f_{2}f_{3}f_{4})\right\|_{L^{1}_{xt}(\mathbb{R}^{2})}\\
					&\sim \left\|G_{1-\theta}*(f_{1}f_{2}f_{3}f_{4})\right\|_{L^{1}_{xt}(\mathbb{R}^{2})}\\
					&\lesssim \|G_{1-\theta}\|_{L^{1}}\left\|f_{1}f_{2}f_{3}f_{4}\right\|_{L^{1}_{xt}(\mathbb{R}^{2})},
				\end{split}
			\end{equation*}
			where we have used Young's convolution inequality and the fact that $\|G_{1-\theta}\|_{L^{1}}\sim 1.$ For the remainder term, we appeal to the continuity of Bessel potentials, specifically the Hardy-Littlewood-Sobolev inequality for Bessel potentials. More precisely, we have
			\begin{equation*}
				\left\|\langle\nabla_{t,x}\rangle^{\theta-1}(f_{1}f_{2}f_{3}f_{4})\right\|_{L^{p,\infty}_{xt}}\lesssim_{p}\|f_{1}f_{2}f_{3}f_{4}\|_{L^{1}_{xt}},\quad \text{whenever } p\in (1,\infty).
			\end{equation*}
			Now, choosing $p>r>1$ such that
			${\displaystyle 		\gamma := \left(\frac{p}{p-1}\right)\left(\frac{r-1}{r}\right) \in (0,1),}$
			and gathering the estimates above, we finally obtain the following bound:
			\begin{equation*}
				\begin{split}
					\left\|\langle\nabla_{t,x}\rangle^{\theta-1}(f_{1}f_{2}f_{3}f_{4})\right\|_{L^{r}_{xt}}&\lesssim \left\|f_{1}f_{2}f_{3}f_{4}\right\|_{L^{1}_{xt}}\\
					&\leq c\|f_{1}\|_{L^{8/5}_{xt}}\|f_{2}\|_{L^{8}_{xt}}\|f_{3}\|_{L^{8}_{xt}}\|f_{4}\|_{L^{8}_{xt}}.
				\end{split}
			\end{equation*}
			Next, we handle $ \overset{\approx}{\mathcal{T}_{2}}$ which  in virtue of {\sc claim } \ref{c2.1} satisfies
			\begin{equation*}
				\begin{split}
					\overset{\approx}{\mathcal{T}_{2}}&\sim	\left\| \langle (\tau,\xi)\rangle^{\theta-1}\mathcal{F}_{t,x}\left(\chi_{(t_{0},x_{0}),\boldsymbol{\vec{\epsilon}}} v_{k+1} \right)\right\|_{L^{r'}_{\tau\xi}(\mathbb{R}^{2})}\\
					&\lesssim \left\|\chi_{(t_{0},x_{0}),\boldsymbol{\vec{\epsilon}}} v_{k+1} \right\|_{\widehat{L^{r}}_{xt}(\mathbb{R}^{2})}\\
					&\lesssim \alpha^{k+1}(k+1)!
				\end{split}
			\end{equation*}
			Subsequently, we combine the results of {\sc  Claims} \ref{c2.1} and \ref{c3}  and Lemma \ref{bach1} as applied to the weight $\chi_{(t_{0},x_{0}),\boldsymbol{\overset{\sim}{\vec{\epsilon}}}}$, where we set $\boldsymbol{\overset{\sim}{\vec{\epsilon}}}=(2\epsilon,2\epsilon)$ for some $\epsilon>0$. Indeed, we obtain that for all $k\in\mathbb{N}_{0}:$
			\begin{equation*}
				\begin{split}
					\overset{\approx}{\mathcal{T}_{3}}&\leq \left\| x \chi_{(t_{0},x_{0}),\boldsymbol{\vec{\epsilon}}}\,\chi_{(t_{0},x_{0}),\boldsymbol{\overset{\sim}{\vec{\epsilon}}}}\, v_{k} \right\|_{\widehat{H}^r_{\theta}(\mathbb{R}^{2})} +\left\| \partial_{x}\left(x \chi_{(t_{0},x_{0}),\boldsymbol{\vec{\epsilon}}}\right)\,
					\chi_{(t_{0},x_{0}),\boldsymbol{\overset{\sim}{\vec{\epsilon}}}}v_{k} \right\|_{\widehat{H}^r_{\theta-1}(\mathbb{R}^{2})}\\
					&\lesssim \left\| x \chi_{(t_{0},x_{0}),\boldsymbol{\vec{\epsilon}}}\right\|_{\widehat H^{\,\infty}_{\theta}(\mathbb R^{2})}\,\|\chi_{(t_{0},x_{0}),\boldsymbol{\overset{\sim}{\vec{\epsilon}}}}\, v_{k} \|_{\widehat H^{\,r}_{\theta}(\mathbb R^{2})}\\
					&\quad+\left\| \partial_{x}\left(x \chi_{(t_{0},x_{0}),\boldsymbol{\vec{\epsilon}}}\right) \right\|_{\widehat H^{\,\infty}_{1-\theta}(\mathbb R^{2})}\,\|\chi_{(t_{0},x_{0}),\boldsymbol{\overset{\sim}{\vec{\epsilon}}}}\, v_{k} \|_{ \widehat{L^{r}}_{xt}(\mathbb R^{2})}\\
					&\lesssim \alpha^{k}k!.
				\end{split}
			\end{equation*}
			In  virtue of Leibniz rule, Lemma \ref{bach1} and {\sc Claim } \ref{c2.1} we get
			\begin{equation*}
				\begin{aligned}
					\overset{\approx}{\Xi_{3}} &\lesssim \sum_{j=0}^{3} \binom{3}{j} \left\| P^{j} v_{k} P^{3-j} \big( \chi_{(t_{0},x_{0}),\boldsymbol{\vec{\epsilon}}} \big) \right\|_{\widehat{H}^r_{\theta-1}(\mathbb{R}^{2})} \\
					&\lesssim \sum_{j=0}^{3} \alpha^{k+j}(k+j)! \left\| P^{3-j} \big( \chi_{(t_{0},x_{0}),\boldsymbol{\vec{\epsilon}}} \big) \right\|_{\widehat{H}^{\infty}_{2-\theta}(\mathbb{R}^{2})} \\
					&\lesssim_{\epsilon} \alpha^{k+3}(k+3)!, \quad \text{for any } k \in \mathbb{N} \setminus \{0\}.
				\end{aligned}
			\end{equation*}
		\end{claimproof}
        \setcounter{claim}{6}
		\begin{claim}\label{c6}
			Let $\vec{\boldsymbol{\epsilon}} = (\epsilon, \epsilon) \in (0, \infty)^2$ and let $\chi_{(t_0, x_0), \vec{\boldsymbol{\epsilon}}}$ be the cutoff function defined in \eqref{cutoff1}. For $1 < r \leq 2$ with $\frac{1}{r} + \frac{1}{r'} = 1$, there exists a constant $c > 0$ such that for all $k \in \mathbb{N}_0$, the following estimate holds:
			\begin{equation*}
				\left\| \chi_{(t_0, x_0), \vec{\boldsymbol{\epsilon}}} \, v_k \right\|_{\widehat{H}^{r}_{3+\theta}(\mathbb{R}^2)} \leq c \alpha^k k!, \quad \mbox{with}\, \theta\in\left(\frac{2}{3},1\right).
			\end{equation*}
		\end{claim}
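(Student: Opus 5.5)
The plan is to repeat the bootstrap scheme of Claims \ref{c3}--\ref{c5}, now raising the regularity by one more unit. First, apply Lemma \ref{interpo1} with target index $3+\theta$ and base index $\theta$:
\begin{equation*}
\| \chi_{(t_{0},x_{0}),\vec{\boldsymbol{\epsilon}}} \, v_{k} \|_{\widehat{H}^{r}_{3+\theta}} \lesssim_{t_0,x_0} \| \chi_{(t_{0},x_{0}),\vec{\boldsymbol{\epsilon}}} v_{k} \|_{\widehat{H}^r_{\theta}} + \| t\partial_x^3 ( \chi_{(t_{0},x_{0}),\vec{\boldsymbol{\epsilon}}} v_{k} ) \|_{\widehat{H}^r_{\theta}} + \| P^{3} ( \chi_{(t_{0},x_{0}),\vec{\boldsymbol{\epsilon}}} v_{k} ) \|_{\widehat{H}^r_{\theta}} .
\end{equation*}
The three resulting terms are then handled as follows.

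\emph{First term.} It is bounded directly by Claim \ref{c4}.

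\emph{Third term.} Expand $P^3$ by the Leibniz rule and write $v_k=\chi_{(t_0,x_0),\tilde{\vec{\boldsymbol\epsilon}}}v_k$ on the support, with $\tilde{\vec{\boldsymbol\epsilon}}=(2\epsilon,2\epsilon)$. Lemma \ref{bach1} then reduces each summand to $\|P^{3-j}\chi\|_{\widehat H^\infty_{\theta}}$ times $\|\chi_{\tilde{\vec{\boldsymbol\epsilon}}}v_{k+j}\|_{\widehat H^r_\theta}$. Claim \ref{c4}, applied at the enlarged scale, gives the bound $\alpha^{k+3}(k+3)!$.

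\emph{Second term.} Use the decompositions \eqref{identity2} and \eqref{identity1}. The commutator pieces $\partial_x^{2}(3t\partial_x\chi\, v_k)$, $\partial_x(3t\partial_x^2\chi\, v_k)$ and $t\partial_x^3\chi\, v_k$ lose at most two derivatives relative to $\theta$. Inserting the enlarged cutoff and using Lemma \ref{bach1}, they are controlled by $\|\chi_{\tilde{\vec{\boldsymbol\epsilon}}}v_k\|_{\widehat H^r_{2+\theta}}$, i.e.\ by Claim \ref{c5}. In the main piece $t\chi\partial_x^3 v_k$, the terms $\chi v_{k+1}$ and $x\chi\partial_x v_k$ are estimated in $\widehat H^r_\theta$ and $\widehat H^r_{1+\theta}$ via Claims \ref{c4} and \ref{c5} respectively. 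The nonlinear term $t\chi\,\Pi^{(3)}_k(v)$ requires estimating $\|t\chi\, v_{k_2}v_{k_3}v_{k_4}\|_{\widehat H^r_{1+\theta}}$ (plus a harmless piece with $\partial_x\chi$, treated in $\widehat H^r_\theta$ exactly like $\overset{\approx}{\mathcal T_{1,1}}$ in Claim \ref{c5}).

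\emph{Main obstacle: the trilinear bound in $\widehat H^r_{1+\theta}$.} Here I would mimic the $\Upsilon_j$ argument. Localize each factor with $\chi_{\tilde{\vec{\boldsymbol\epsilon}}}$ and distribute the weight via $\langle(\tau,\xi)\rangle^{1+\theta}\lesssim\sum_j\langle\eta_j\rangle^{1+\theta}$ over the four convolution variables. The factor carrying the full weight is measured in $L^{r'}$ with weight $\langle\cdot\rangle^{1+\theta}$, and the bound follows if the remaining three factors are placed in $L^{p_j}$ via Hölder against $\langle\cdot\rangle^{-(2+\theta)}$, with $\|\langle\cdot\rangle^{2+\theta}\widehat{\chi_{\tilde{\vec{\boldsymbol\epsilon}}}v_{k_j}}\|_{L^{r'}}$ controlled by Claim \ref{c5}. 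Young's inequality requires $\sum 1/p_j = 3+1/r'$. Since $\langle\cdot\rangle^{-(2+\theta)}\in L^{q}(\mathbb R^2)$ for every $q>2/(2+\theta)$, in particular for all $q\ge1$ (and even $q=1$ is allowed), there is ample room to choose exponents. This is easier than in Claim \ref{c5}, and it is where the restriction $\theta>2/3$ inherited from Claim \ref{c5} is consistent with the exponent bookkeeping. I would verify that the choice $q_1=q_j$ with $1/p=1/q+1/r'$ satisfies both Young's condition and integrability, which is the only delicate check.

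\emph{Summation.} Finally, sum over $k_1+\dots+k_4=k$ with the multinomial weights exactly as before. This gives $k!\sum 2^{k_1}\alpha^{k-k_1}/k_1!\lesssim e^{2/\alpha}(k+4)!\alpha^k$, which yields the claim after adjusting $\alpha$ and $c$, as in the previous claims.
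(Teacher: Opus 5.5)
Your proposal is correct and follows essentially the same route as the paper: Lemma~\ref{interpo1} with base index $\theta$, the decompositions \eqref{identity2}--\eqref{identity1}, Leibniz plus Lemma~\ref{bach1} for the $P^{3}$ term, the $\Upsilon_j$-type convolution argument fed by Claim~\ref{c5} for the trilinear piece in $\widehat{H}^{r}_{1+\theta}$, and the same multinomial summation with $\alpha$ adjusted. The differences are minor. The paper reuses the balanced exponents $q_1=\frac{r(4+3\theta)}{3}$, $q_j=\frac{r(4+3\theta)}{3(1+\theta)}$ from Claim~\ref{c5} and bounds every factor in $\widehat{H}^{r}_{2+\theta}$. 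Your choice puts the full-weight factor in $L^{r'}$ and the other three in $L^{1}$ via H\"older against $\langle\cdot\rangle^{-(2+\theta)}\in L^{r}(\mathbb{R}^2)$, i.e.\ $q_j=r$; this is simpler and does not use $\theta>\frac23$ at this step. You also explicitly treat the commutator pieces of \eqref{identity2}, which the paper leaves implicit here.
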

		\begin{claimproof}
			We apply  Lemma \ref{interpo1} to obtain
			\begin{equation*}
				\begin{aligned}
					\| \chi_{(t_{0},x_{0}),\boldsymbol{\epsilon}} \, v_{k} \|_{\widehat{H}^{r}_{3+\theta}} \lesssim_{t_0, x_0} \,\, &\| \chi_{(t_{0},x_{0}),\boldsymbol{\vec{\epsilon}}} \, v_{k} \|_{\widehat{H}^r_{\theta}(\mathbb{R}^{2})} 
					+ \left\| t\partial_x^3 \left( \chi_{(t_{0},x_{0}),\boldsymbol{\vec{\epsilon}}} \, v_{k} \right) \right\|_{\widehat{H}^r_{\theta}(\mathbb{R}^{2})} \\
					&+ \left\| P^{3} \left( \chi_{(t_{0},x_{0}),\boldsymbol{\vec{\epsilon}}} \, v_{k} \right) \right\|_{\widehat{H}^r_{\theta}(\mathbb{R}^{2})} \\
					&	=\overset{\approx \approx \approx}{\Xi_{1}}+ \overset{\approx \approx \approx}{\Xi_{2}} +  \overset{\approx \approx \approx}{\Xi_{3}}.
				\end{aligned}
			\end{equation*}
			{\sc Claim } \ref{c3} implies that 
			\begin{equation*}
				\overset{\approx \approx \approx}{\Xi_{1}}\lesssim \alpha^{k}k!,\quad \forall k\in \mathbb{N}_{0}.
			\end{equation*}
			Now,  we use decomposition in \cref{identity1,identity2} to obtain 
			\begin{equation*}
				\begin{aligned}
					\overset{\approx \approx \approx}{\Xi_{2}} &\lesssim \left\| t \chi_{(t_{0},x_{0}),\boldsymbol{\vec{\epsilon}}} \Pi_{k}^{(3)}(v) \right\|_{\widehat{H}^r_{\theta}(\mathbb{R}^{2})} + \left\| \chi_{(t_{0},x_{0}),\boldsymbol{\vec{\epsilon}}} v_{k+1} \right\|_{\widehat{H}^r_{\theta}(\mathbb{R}^{2})} + \left\| x \chi_{(t_{0},x_{0}),\boldsymbol{\vec{\epsilon}}} \partial_{x} v_{k} \right\|_{\widehat{H}^r_{\theta}(\mathbb{R}^{2})} \\
					&= 	\overset{\approx \approx \approx}{\mathcal{T}_{1}} +\overset{\approx \approx \approx}{\mathcal{T}_{2}} +\overset{\approx \approx \approx}{\mathcal{T}_{3}}.
				\end{aligned}
			\end{equation*}
			Thus
			\begin{equation*}
				\begin{split}
					\overset{\approx \approx \approx}{\mathcal{T}_{1}}
					&\lesssim  \sum_{\substack{k=k_{1}+k_{2}+k_{3}+k_{4} \\ 0 \leq k_{1},k_{2},k_{3},k_{4}\leq k}} \frac{k! 2^{k_{1}}}{k_{1}!k_{2}!k_{3}!k_{4}!} \,\left\| t\chi_{(t_{0},x_{0}),\boldsymbol{\vec{\epsilon}}}\, v_{k_{2}}v_{k_{3}}v_{k_{4}}  \right\|_{\widehat{H}^r_{\theta+1}(\mathbb{R}^{2})}\\
					&\quad+\sum_{\substack{k=k_{1}+k_{2}+k_{3}+k_{4} \\ 0 \leq k_{1},k_{2},k_{3},k_{4} \leq k}} \frac{k! 2^{k_{1}}}{k_{1}!k_{2}!k_{3}!k_{4}!} \,\left\| t\partial_{x}\chi_{(t_{0},x_{0}),\boldsymbol{\vec{\epsilon}}}\, v_{k_{2}}v_{k_{3}}v_{k_{4}}  \right\|_{\widehat{H}^r_{\theta}(\mathbb{R}^{2})}\\
					&=\overset{\approx \approx \approx}{\mathcal{T}_{1,1}}+	\overset{\approx \approx \approx}{\mathcal{T}_{1,2}}.
				\end{split}
			\end{equation*}
			The same analysis  described in this term in the {\sc claim} \ref{c5} implies that 
			\begin{equation*}
				\begin{aligned}
					\overset{\approx \approx \approx}{\mathcal{T}_{1,1}}
					&\lesssim\sum_{\substack{k = k_1+k_2+k_3+k_4 \\ 0 \leq k_1,k_2,k_3,k_4 \leq k}}  \frac{k! 2^{k_1}}{k_1!k_2!k_3!k_4!} \Biggl\{ \\
					& \left\| \big( \langle \cdot \rangle^{\theta+1} |\mathcal{F}_{t,x}(t\chi_{(t_{0},x_{0}),\boldsymbol{\vec{\epsilon}}})| \big) * |\mathcal{F}_{t,x}(\chi_{(t_{0},x_{0}),\boldsymbol{\overset{\sim}{\vec{\epsilon}}}} v_{k_2})| * |\mathcal{F}_{t,x}(\chi_{(t_{0},x_{0}),\boldsymbol{\overset{\sim}{\vec{\epsilon}}}} v_{k_3})| * |\mathcal{F}_{t,x}(\chi_{(t_{0},x_{0}),\boldsymbol{\overset{\sim}{\vec{\epsilon}}}} v_{k_4})| \right\|_{L^{r'}_{\tau\xi}} \\
					& + \left\| |\mathcal{F}_{t,x}(t\chi_{(t_{0},x_{0}),\boldsymbol{\vec{\epsilon}}})| * \big( \langle \cdot \rangle^{\theta+1} |\mathcal{F}_{t,x}(\chi_{(t_{0},x_{0}),\boldsymbol{\overset{\sim}{\vec{\epsilon}}}} v_{k_2})| \big) * |\mathcal{F}_{t,x}(\chi_{(t_{0},x_{0}),\boldsymbol{\overset{\sim}{\vec{\epsilon}}}} v_{k_3})| * |\mathcal{F}_{t,x}(\chi_{(t_{0},x_{0}),\boldsymbol{\overset{\sim}{\vec{\epsilon}}}} v_{k_4})| \right\|_{L^{r'}_{\tau\xi}} \\
					& + \left\| |\mathcal{F}_{t,x}(t\chi_{(t_{0},x_{0}),\boldsymbol{\vec{\epsilon}}})| * |\mathcal{F}_{t,x}(\chi_{(t_{0},x_{0}),\boldsymbol{\overset{\sim}{\vec{\epsilon}}}} v_{k_2})| * \big( \langle \cdot \rangle^{\theta+1} |\mathcal{F}_{t,x}(\chi_{(t_{0},x_{0}),\boldsymbol{\overset{\sim}{\vec{\epsilon}}}} v_{k_3})| \big) * |\mathcal{F}_{t,x}(\chi_{(t_{0},x_{0}),\boldsymbol{\overset{\sim}{\vec{\epsilon}}}} v_{k_4})| \right\|_{L^{r'}_{\tau\xi}} \\
					& + \left\| |\mathcal{F}_{t,x}(t\chi_{(t_{0},x_{0}),\boldsymbol{\vec{\epsilon}}})| * |\mathcal{F}_{t,x}(\chi_{(t_{0},x_{0}),\boldsymbol{\overset{\sim}{\vec{\epsilon}}}} v_{k_2})| * |\mathcal{F}_{t,x}(\chi_{(t_{0},x_{0}),\boldsymbol{\overset{\sim}{\vec{\epsilon}}}} v_{k_3})| * \big( \langle \cdot \rangle^{\theta+1} |\mathcal{F}_{t,x}(\chi_{(t_{0},x_{0}),\boldsymbol{\overset{\sim}{\vec{\epsilon}}}} v_{k_4})| \big) \right\|_{L^{r'}_{\tau\xi}} \Biggr\}\\
					&=	\overset{\sim}{\Upsilon_{1}}+	\overset{\sim}{\Upsilon_{2}}+	\overset{\sim}{\Upsilon_{3}}+	\overset{\sim}{\Upsilon_{4}}.
				\end{aligned}
			\end{equation*}
			We shall focus our estimation on a single representative term, with the others following \emph{Mutatis mutandis}.

			\underline{{\sc  Estimate for $\Upsilon_{1}$:}}
			
			For $q_{1} := \frac{r(4+3\theta)}{3} > 2$, we define $p_{1}$ via the relation ${\displaystyle 
				\frac{1}{p_{1}} := \frac{1}{q_{1}} + \frac{1}{r'}},$
			which ensures $p_{1} \in (1, \infty)$. Applying H\"older's inequality, we obtain:
			\begin{equation*}
				\begin{split}
					\left\|\langle \cdot \rangle^{\theta+1} \mathcal{F}_{t,x} (t\chi_{(t_{0},x_{0}),\boldsymbol{\vec{\epsilon}}})\right\|_{L^{p_{1}}_{\tau\xi}(\mathbb{R}^{2})} 
					&\leq \left\|\frac{1}{\langle\cdot\rangle}\right\|_{L^{q_{1}}(\mathbb{R}^{2})} \left\|\langle \cdot
					\rangle^{2+\theta} \mathcal{F}_{t,x} (t\chi_{(t_{0},x_{0}),\boldsymbol{\vec{\epsilon}}})\right\|_{L^{r'}_{\tau\xi}(\mathbb{R}^{2})} \\
					&\sim \left\|\langle \cdot\rangle^{2+\theta} \mathcal{F}_{t,x} (t\chi_{(t_{0},x_{0}),\boldsymbol{\vec{\epsilon}}})\right\|_{L^{r'}_{\tau\xi}(\mathbb{R}^{2})}.
				\end{split}
			\end{equation*}
			\underline{{\sc  Estimates for the remainder terms:} }
			
			A similar analysis applies to the remaining factors. More precisely, for $j=2, 3, 4$, we set ${\displaystyle 
				q_{j} := \frac{r(4+3\theta)}{3(1+\theta)}}$
			and define the corresponding exponents $p_{j}$ such that
			${\displaystyle 
				\frac{1}{p_{j}} := \frac{1}{q_{j}} + \frac{1}{r'},\quad j=2,3,4.}$
			
			Thus, by H\"older's inequality, we get 
			\begin{equation*}
				\begin{split}
					\left\|\mathcal{F}_{t,x} (\chi_{(t_{0},x_{0}),\boldsymbol{\overset{\sim}{\vec{\epsilon}}}} v_{k_{j}}) \right\|_{L^{p_{j}}_{\tau\xi}(\mathbb{R}^{2})}&\lesssim \left\|\frac{1}{\langle \cdot\rangle^{1+\theta}}\right\|_{L^{q_{j}}_{\tau\xi}(\mathbb{R}^{2})}\left\| \langle \cdot
					\rangle^{1+\theta} \mathcal{F}_{t,x} (\chi_{(t_{0},x_{0}),\boldsymbol{\overset{\sim}{\vec{\epsilon}}}} v_{k_{j}}) \right\|_{L^{r'}_{\tau\xi}(\mathbb{R}^{2})}\\
					&\lesssim \left\| \langle \cdot
					\rangle^{2+\theta} \mathcal{F}_{t,x} (\chi_{(t_{0},x_{0}),\boldsymbol{\overset{\sim}{\vec{\epsilon}}}} v_{k_{j}}) \right\|_{L^{r'}_{\tau\xi}(\mathbb{R}^{2})},
				\end{split}
			\end{equation*}
			for $j=2,3,4.$
			
			Gathering these results a combined with  Young’s convolution inequality  allows us to obtain 
			\begin{equation*}
				\begin{split}
					\overset{\sim}{\Upsilon_{1}}&\lesssim \left\|\langle \cdot\rangle^{2+\theta} \mathcal{F}_{t,x} (t\chi_{(t_{0},x_{0}),\boldsymbol{\vec{\epsilon}}})\right\|_{L^{r'}_{\tau\xi}(\mathbb{R}^{2})}\prod_{j=2}^{4} \left\| \langle \cdot\rangle^{2+\theta} \mathcal{F}_{t,x} (\chi_{(t_{0},x_{0}),\boldsymbol{\overset{\sim}{\vec{\epsilon}}}} v_{k_{j}}) \right\|_{L^{r'}_{\tau\xi}(\mathbb{R}^{2})},
				\end{split}
			\end{equation*}
			whenever ${\displaystyle 
				\frac{1}{p_{1}}+\frac{1}{p_{2}}+\frac{1}{p_{3}}+\frac{1}{p_{4}}=\frac{3}{r}+\frac{4}{r'}=3+\frac{1}{r'}.}$
			
			Now, we use  {\sc claim } \ref{c5}  to obtain that 
			\begin{equation*}
				\begin{split}
					\overset{\sim}{\Upsilon_{1}}&\lesssim  k! \sum_{\substack{k=k_{1}+k_{2}+k_{3}+k_{4} \\ 0 \leq k_{1},k_{2},k_{3},k_{4}\leq k}} \,\frac{2^{k_{1}}}{k_{1}!} \,\alpha^{k_{2}+k_{3}+k_{4}}\\
					&\lesssim e^{\frac{2}{\alpha}}(k+4)!\alpha^{k}.
				\end{split}
			\end{equation*}
			An analogous analysis applies to the terms $\Upsilon_{2}, \Upsilon_{3},$ and $\Upsilon_{4}$. For the sake of brevity, we omit the explicit details; nevertheless, these terms satisfy the following bound for $j=2,3,4$:
			\begin{equation*}
				\begin{split}
					\overset{\sim}{\Upsilon_{j}} &\lesssim k! \sum_{\substack{k=k_{1}+k_{2}+k_3+k_4 \\ 0 \leq k_{1},k_{2},k_{3},k_{4}\leq k}} \frac{2^{k_{1}}}{k_{1}!} \alpha^{k_{2}+k_{3}+k_{4}}\\
					&\lesssim e^{\frac{2}{\alpha}}(k+4)!\alpha^{k}.
				\end{split}
			\end{equation*}
			For the term $	\overset{\approx \approx \approx}{\mathcal{T}_{1,2}}$ we 
			\begin{equation*}
				\begin{split}
					\overset{\approx \approx \approx}{\mathcal{T}_{1,2}} &\lesssim \sum_{\substack{k=k_{1}+k_{2}+k_{3}+k_{4} \\ 0 \leq k_{1},k_{2},k_{3},k_{4} \leq k}} \frac{k! 2^{k_{1}}}{k_{1}!k_{2}!k_{3}!k_{4}!} \,\left\|\langle (\tau,\xi)\rangle^{\theta} \,\mathcal{F}_{t,x}\left(\partial_{x}\chi_{(t_{0},x_{0}),\boldsymbol{\vec{\epsilon}}}\, v_{k_{2}}v_{k_{3}}v_{k_{4}} \right)(\tau,\xi) \right\|_{L^{r'}_{\tau\xi}(\mathbb{R}^{2})} \\
					&\lesssim \sum_{\substack{k = k_1+k_2+k_3+k_4 \\ 0 \leq k_1,k_2,k_3,k_4 \leq k}} \frac{k! 2^{k_1}}{k_1!k_2!k_3!k_4!} \Biggl\{ \\
					&\, \left\| \big( \langle \cdot \rangle^{\theta} |\mathcal{F}_{t,x}(\partial_{x}\chi_{(t_{0},x_{0}),\boldsymbol{\vec{\epsilon}}})| \big) * |\mathcal{F}_{t,x}(\chi_{(t_{0},x_{0}),\boldsymbol{\overset{\sim}{\vec{\epsilon}}}} v_{k_2})| * |\mathcal{F}_{t,x}(\chi_{(t_{0},x_{0}),\boldsymbol{\overset{\sim}{\vec{\epsilon}}}} v_{k_3})| * |\mathcal{F}_{t,x}(\chi_{(t_{0},x_{0}),\boldsymbol{\overset{\sim}{\vec{\epsilon}}}} v_{k_4})| \right\|_{L^{r'}_{\tau\xi}} \\
					&\, + \left\| |\mathcal{F}_{t,x}(\partial_{x}\chi_{(t_{0},x_{0}),\boldsymbol{\vec{\epsilon}}})| * \big( \langle \cdot \rangle^{\theta} |\mathcal{F}_{t,x}(\chi_{(t_{0},x_{0}),\boldsymbol{\overset{\sim}{\vec{\epsilon}}}} v_{k_2})| \big) * |\mathcal{F}_{t,x}(\chi_{(t_{0},x_{0}),\boldsymbol{\overset{\sim}{\vec{\epsilon}}}} v_{k_3})| * |\mathcal{F}_{t,x}(\chi_{(t_{0},x_{0}),\boldsymbol{\overset{\sim}{\vec{\epsilon}}}} v_{k_4})| \right\|_{L^{r'}_{\tau\xi}} \\
					&\, + \left\| |\mathcal{F}_{t,x}(\partial_{x}\chi_{(t_{0},x_{0}),\boldsymbol{\vec{\epsilon}}})| * |\mathcal{F}_{t,x}(\chi_{(t_{0},x_{0}),\boldsymbol{\overset{\sim}{\vec{\epsilon}}}} v_{k_2})| * \big( \langle \cdot \rangle^{\theta} |\mathcal{F}_{t,x}(\chi_{(t_{0},x_{0}),\boldsymbol{\overset{\sim}{\vec{\epsilon}}}} v_{k_3})| \big) * |\mathcal{F}_{t,x}(\chi_{(t_{0},x_{0}),\boldsymbol{\overset{\sim}{\vec{\epsilon}}}} v_{k_4})| \right\|_{L^{r'}_{\tau\xi}} \\
					&\, + \left\| |\mathcal{F}_{t,x}(\partial_{x}\chi_{(t_{0},x_{0}),\boldsymbol{\vec{\epsilon}}})| * |\mathcal{F}_{t,x}(\chi_{(t_{0},x_{0}),\boldsymbol{\overset{\sim}{\vec{\epsilon}}}} v_{k_2})| * |\mathcal{F}_{t,x}(\chi_{(t_{0},x_{0}),\boldsymbol{\overset{\sim}{\vec{\epsilon}}}} v_{k_3})| * \big( \langle \cdot \rangle^{\theta} |\mathcal{F}_{t,x}(\chi_{(t_{0},x_{0}),\boldsymbol{\overset{\sim}{\vec{\epsilon}}}} v_{k_4})| \big) \right\|_{L^{r'}_{\tau\xi}} \Biggr\}\\
					&=	\overset{\approx}{\Upsilon_{1}}+	\overset{\approx}{\Upsilon_{2}}+	\overset{\approx}{\Upsilon_{3}}+	\overset{\approx}{\Upsilon_{4}}.
				\end{split}
			\end{equation*}
			A similar analysis as the employed to handle the term $	\overset{\approx}{\mathcal{T}_{1,1}}$ in {\sc claim } \ref{c5} allows us to obtain that  for $j=1,2,3,4,$ the terms $\overset{\approx}{\Upsilon_{j}}$ satisfy:
			\begin{equation*}
				\begin{split}
					\overset{\approx}{\Upsilon_{j}}&\lesssim  k! \sum_{\substack{k=k_{1}+k_{2}+k_{3}+k_{4} \\ 0 \leq k_{1},k_{2},k_{3},k_{4}\leq k}} \,\frac{2^{k_{1}}}{k_{1}!} \,\alpha^{k_{2}+k_{3}+k_{4}}\\
					&\lesssim e^{\frac{2}{\alpha}}(k+4)!\alpha^{k}.
				\end{split}
			\end{equation*}
			Next, we handle $ \overset{\approx}{\mathcal{T}_{2}}$ which  in virtue of {\sc claim } \ref{c2.1} satisfies
			\begin{equation*}
				\begin{split}
					\overset{\approx}{\mathcal{T}_{2}}&\sim	\left\| \langle (\tau,\xi)\rangle^{\theta-1}\mathcal{F}_{t,x}\left(\chi_{(t_{0},x_{0}),\boldsymbol{\vec{\epsilon}}} v_{k+1} \right)\right\|_{L^{r'}_{\tau\xi}(\mathbb{R}^{2})}\\
					&\lesssim \left\|\chi_{(t_{0},x_{0}),\boldsymbol{\vec{\epsilon}}} v_{k+1} \right\|_{\widehat{L^{r}}_{xt}(\mathbb{R}^{2})}\\
					&\lesssim \alpha^{k+1}(k+1)!
				\end{split}
			\end{equation*}
			We combine the results of {\sc  Claims} \ref{c3} and \ref{c4}  and Lemma \ref{bach1} as applied to the weight $\chi_{(t_{0},x_{0}),\boldsymbol{\overset{\sim}{\vec{\epsilon}}}}$, where we set $\boldsymbol{\overset{\sim}{\vec{\epsilon}}}=(2\epsilon,2\epsilon)$ for some $\epsilon>0$. Indeed, we obtain that for all $k\in\mathbb{N}_{0}:$
			\begin{equation*}
				\begin{split}
					\overset{\approx}{\mathcal{T}_{3}}&\leq \left\| x \chi_{(t_{0},x_{0}),\boldsymbol{\vec{\epsilon}}}\,\chi_{(t_{0},x_{0}),\boldsymbol{\overset{\sim}{\vec{\epsilon}}}}\, v_{k} \right\|_{\widehat{H}^r_{1+\theta}(\mathbb{R}^{2})} +\left\| \partial_{x}\left(x \chi_{(t_{0},x_{0}),\boldsymbol{\vec{\epsilon}}}\right)\,
					\chi_{(t_{0},x_{0}),\boldsymbol{\overset{\sim}{\vec{\epsilon}}}}v_{k} \right\|_{\widehat{H}^r_{\theta}(\mathbb{R}^{2})}\\
					&\lesssim \left\| x \chi_{(t_{0},x_{0}),\boldsymbol{\vec{\epsilon}}}\right\|_{\widehat H^{\,\infty}_{1+\theta}(\mathbb R^{2})}\,\|\chi_{(t_{0},x_{0}),\boldsymbol{\overset{\sim}{\vec{\epsilon}}}}\, v_{k} \|_{\widehat H^{\,r}_{1+\theta}(\mathbb R^{2})}\\
					&\quad+\left\| \partial_{x}\left(x \chi_{(t_{0},x_{0}),\boldsymbol{\vec{\epsilon}}}\right) \right\|_{\widehat H^{\,\infty}_{\theta}(\mathbb R^{2})}\,\|\chi_{(t_{0},x_{0}),\boldsymbol{\overset{\sim}{\vec{\epsilon}}}}\, v_{k} \|_{\widehat H^{\,r}_{\theta}(\mathbb R^{2})}\\
					&\lesssim \alpha^{k}k!.
				\end{split}
			\end{equation*}
			In  virtue of Leibniz rule, Lemma \ref{bach1} and {\sc Claim } \ref{c3} we get
			\begin{equation*}
				\begin{aligned}
					\overset{\approx}{\Xi_{3}} &\lesssim \sum_{j=0}^{3} \binom{3}{j} \left\| P^{j} v_{k} P^{3-j} \big( \chi_{(t_{0},x_{0}),\boldsymbol{\vec{\epsilon}}} \big) \right\|_{\widehat{H}^r_{\theta}(\mathbb{R}^{2})} \\
					&\lesssim \sum_{j=0}^{3} \alpha^{k+j}(k+j)! \left\| P^{3-j} \big( \chi_{(t_{0},x_{0}),\boldsymbol{\vec{\epsilon}}} \big) \right\|_{\widehat{H}^{\infty}_{\theta}(\mathbb{R}^{2})} \\
					&\lesssim_{\epsilon} \alpha^{k+3}(k+3)!, \quad \text{for any } k \in \mathbb{N} \setminus \{0\}.
				\end{aligned}
			\end{equation*}
		\end{claimproof}
        \setcounter{claim}{7}
		\begin{claim}\label{c7}
			Let $x_{0} \in \mathbb{R}$ and $t_{0} \in \mathbb{R}\setminus\{0\}$. Given $\epsilon > 0$ such that $\epsilon <|t_{0}|$, we define the spatial interval $I_{x_{0}} := (x_{0}-\epsilon, x_{0}+\epsilon)$ and the temporal interval $J_{t_0} := (t_{0}-\epsilon, t_{0}+\epsilon)$. For $1 < r \leq 2$, let $r'$ denote the conjugate exponent satisfying $\frac{1}{r}+\frac{1}{r'}=1$. Then, there exists a constant $\alpha_{8} > 0$, independent of $k$ and $l$, such that
			\begin{equation}\label{inter1}
				\sup_{t \in J_{t_0}} \left\| \mathcal{M}^{l} v_{k}(t, \cdot) \right\|_{\widehat{H}^{r}_{1+\theta}(I_{x_{0}})} \lesssim  \alpha^{k+l}_{8} (k+l)!
			\end{equation}
			holds for all $k, l \in \mathbb{N}_{0}$, where $\mathcal{M} := t^{1/3} \partial_{x}$ is the weighted differentiation operator.
		\end{claim}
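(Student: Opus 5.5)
The plan is to convert the weighted spatial derivative $\mathcal{M}=t^{1/3}\partial_x$ into the dilation generator $P$ plus lower-order terms, and then to run an induction on $l$ with $k$ free. The starting point is the identity behind \eqref{identity1}. From $P=3t\partial_t+x\partial_x$ and the system \eqref{SYS1} one gets
\begin{equation*}
3t\partial_x^{3}v_{k}=x\partial_x v_k-v_{k+1}-3t\,\Pi_k^{(3)}(v).
\end{equation*}
Since $|t|\ge |t_0|-\epsilon>0$ on $J_{t_0}$, this gives
\begin{equation*}
\mathcal{M}^{3}v_k=\tfrac13\bigl(x\partial_x v_k-v_{k+1}\bigr)-t\,\Pi_k^{(3)}(v).
\end{equation*}
This expresses three $\mathcal{M}$-derivatives through one extra $P$ (the shift $k\mapsto k+1$), one $x\partial_x$ (equal to $x\,t^{-1/3}\mathcal{M}$, which is one $\mathcal{M}$ with a bounded smooth coefficient on $I_{x_0}\times J_{t_0}$), and the nonlinear term.

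The base cases $l=0,1,2$ come directly from Claims \ref{c4}--\ref{c6}. The local bound $\|\chi v_k\|_{\widehat H^r_{3+\theta}(\mathbb{R}^2)}\lesssim \alpha^k k!$ controls up to three spatial derivatives of $v_k$ in $\widehat H^r_{\theta}$ locally in space-time. The trace lemma of Appendix~\ref{apendiceA} (restriction to fixed $t$ costs $\frac1r+$ of a derivative, which is where $\theta>\frac23$ and $1+\theta$ enter) then gives $\sup_{t\in J_{t_0}}\|\mathcal{M}^{l}v_k(t)\|_{\widehat H^r_{1+\theta}(I_{x_0})}\lesssim \alpha^{k}k!$ for $l\le 2$. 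Here the factor $|t|^{l/3}$ is bounded and the cutoff is $\equiv 1$ on the slightly smaller rectangle.

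For the inductive step, write $l=3j+i$ with $i\in\{0,1,2\}$ and apply the identity above to $\mathcal{M}^{l}v_k=\mathcal{M}^{l-3}\mathcal{M}^3 v_k$. The commutators $[\mathcal{M}^{l-3},x]=(l-3)t^{1/3}\mathcal{M}^{l-4}$ produce combinatorial factors of size at most $l$, which are absorbed into $(k+l)!$. The linear terms are then bounded by the induction hypothesis at $(k+1,l-3)$ and $(k,l-2)$, giving $\alpha_8^{k+l-2}(k+l-2)!$ up to factors of $l$, hence at most $\tfrac{c}{\alpha_8}\,\alpha_8^{k+l}(k+l)!$. The nonlinear term $t\,\mathcal{M}^{l-3}\partial_x\bigl(\sum \frac{k!2^{k_1}}{k_1!k_2!k_3!k_4!}v_{k_2}v_{k_3}v_{k_4}\bigr)$ is handled with the Leibniz rule for $\mathcal{M}^{l-2}$ acting on the triple product:
\begin{equation*}
\sum_{l_2+l_3+l_4=l-2}\frac{(l-2)!}{l_2!l_3!l_4!}\,\mathcal{M}^{l_2}v_{k_2}\,\mathcal{M}^{l_3}v_{k_3}\,\mathcal{M}^{l_4}v_{k_4}.
\end{equation*}
Because $\widehat H^{r}_{1+\theta}(I_{x_0})$ is an algebra (via Lemma \ref{bach1}, as $\widehat H^r_{1+\theta}\hookrightarrow \widehat L^{\infty}$ when $(1+\theta)r'>1$), each factor is controlled by the induction hypothesis at $(k_p,l_p)$ with $k_p+l_p<k+l$. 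Applying the Cauchy-type majorant inequality
\begin{equation*}
\sum \binom{n}{n_2,n_3,n_4}\,\frac{(n_2)!(n_3)!(n_4)!}{\text{(weights)}}\le C\, n!
\end{equation*}
in the form $\sum_{a+b+c=n} a!\,b!\,c!\le C\,n!$ together with $\sum_{k_1}\frac{2^{k_1}}{k_1!}\alpha_8^{-k_1}\le e^{2/\alpha_8}$, the nonlinear contribution becomes $\le C\,\alpha_8^{k+l-2}(k+l-2)!\cdot(\text{poly}(l))$, which is again absorbed for $\alpha_8$ large.

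The main obstacle will be this closing step. The induction has to absorb polynomial losses in $l$ and the loss of the single derivative $\partial_x$ in $\Pi_k^{(3)}$. I would first strengthen the hypothesis to the weighted form $\alpha_8^{k+l}(k+l-2)!$ for $k+l\ge 2$, with the extra $(k+l)^{-2}$ giving the room needed to sum the multinomial convolutions, as in Kato--Ogawa. The derivative loss is handled by the fact that we trade three $\mathcal{M}$'s while spending only one $\partial_x$ on the product, so each step strictly decreases the total order $k+l$. A nested shrinking of $I_{x_0}\times J_{t_0}$ is avoided by working on the fixed rectangle with the algebra property, which requires no cutoff derivatives.
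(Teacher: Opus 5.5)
Your route is the paper's. You use the identity $\mathcal{M}^3v_k=\tfrac13\bigl(x\partial_xv_k-v_{k+1}\bigr)-t\,\Pi^{(3)}_k(v)$, induction on $l$ with $k$ free via $\mathcal{M}^{l}=\mathcal{M}^{l-3}\mathcal{M}^{3}$, the commutator $[\mathcal{M}^{n},x]=n\,t^{1/3}\mathcal{M}^{n-1}$, and an algebra property of $\widehat H^r_{1+\theta}(I_{x_0})$ for the cubic term (the paper justifies this through near-optimal extensions). \textbf{The step that fails is the base case $l=2$.}

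The trace in Theorem~\ref{trace1} costs $1/r$ derivatives ($\mu\ge s+\frac1r$), and Claim~\ref{c6} only gives $\chi v_k\in\widehat H^r_{3+\theta}(\mathbb{R}^2)$. So $\partial_x^2v_k$ lies locally in $\widehat H^r_{1+\theta}(\mathbb{R}^2)$, and its trace at fixed $t$ lies only in $\widehat H^r_{1+\theta-1/r}(I_{x_0})$, not in $\widehat H^r_{1+\theta}(I_{x_0})$. You cannot recycle the $\tau$-regularity, because the trace is obtained precisely by integrating out $\tau$ with H\"older, and that is where the $1/r$ is lost. Since $1/r<1$, Claim~\ref{c6} yields $l=0,1$ but not $l=2$. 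In the $L^2$ template (Claim~\ref{ct3.3-4}) the case $l\le 2$ works only because one has $H^{7/2}$ and the trace costs $1/2$.

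Nor can $l=2$ be bypassed. The recursion only lowers $l\ge3$, and already $l=4$ calls $x\,t^{-1/3}\mathcal{M}^2v_k$ and $\mathcal{M}^2$ acting on the cubic product. The paper's own base step has the same defect: its bound $\|\partial_x^lP^kv\|_{\widehat H^r_{2+\theta}}\lesssim\|P^kv\|_{\widehat H^r_{3+\theta}}$ holds only for $l\le1$, yet the induction starts from ``valid up to $l\ge2$''. So this is a point you must supply yourself.

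One way to close it within the same framework is one more application of Lemma~\ref{interpo1}, with $s=4+\theta$. There, $t\chi\,\Pi^{(3)}_k(v)\in\widehat H^r_{1+\theta}$ requires the triple product in $\widehat H^r_{2+\theta}(\mathbb{R}^2)$. This follows from Claim~\ref{c6} by the convolution--Young argument of Claims~\ref{c5}--\ref{c6}, and the remaining terms need only Claims~\ref{c4}--\ref{c6}. Then $\partial_x^2v_k$ is locally in $\widehat H^r_{2+\theta}$ and its trace lands in $\widehat H^r_{1+\theta}(I_{x_0})$.

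Two smaller corrections. First, the algebra property is not Lemma~\ref{bach1}, which concerns multiplication by a fixed $C_0^\infty$ function. It comes from $\langle\xi\rangle^{1+\theta}\lesssim\langle\xi-\eta\rangle^{1+\theta}+\langle\eta\rangle^{1+\theta}$, Young's inequality, and $\|\widehat f\|_{L^1}\lesssim\|f\|_{\widehat H^r_{1+\theta}}$. The last bound requires $(1+\theta)r>1$ (H\"older pairs $L^{r'}$ with $L^{r}$), not $(1+\theta)r'>1$; it holds here anyway.

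Second, ``$\mathrm{poly}(l)$, absorbed for $\alpha_8$ large'' is true only if the polynomial has degree at most two in $k+l$. The exact count $\sum_{k_2+k_3+k_4=n}\sum_{l_2+l_3+l_4=m}\prod_p\binom{k_p+l_p}{k_p}=\binom{n+m+2}{2}\binom{n+m}{n}$, which is the identity the paper uses, gives the factor $\binom{k-k_1+l}{2}$. This factor is exactly paid for by the two orders gained in $t\mathcal{M}^{l-3}\partial_x=t^{2/3}\mathcal{M}^{l-2}$, leaving a nonlinear bound $\lesssim c^3e^{2/\alpha_8}\alpha_8^{k+l-2}(k+l)!$. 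So the plain hypothesis $c\,\alpha_8^{k+l}(k+l)!$ already closes once $\alpha_8^2\gg c^2$, and your strengthened $(k+l-2)!$ form is unnecessary.
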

		\begin{claimproof}
			The argument of the proof follows by applying induction on $l$. In this sense, we study the cases $l=0$ and $l=1$ that can be treated by using a similar analysis we proceed to describe below. 
			
			By Theorems \ref{trace1}-\ref{trace2}, 
			\begin{equation*}
				\begin{split}
					\left\|\mathcal{M}^{l}P^{k}v(t)\right\|_{\widehat{H}^{r}_{1+\theta}(I_{x_{0}})}&=|t|^{\frac{l}{3}}\left\|\partial_{x}^{l}P^{k}v(t)\right\|_{\widehat{H}^{r}_{1+\theta}(I_{x_{0}})}\\
					&\leq c \left(|t_{0}|+\epsilon\right)^{\frac{l}{3}}\,\left\|\partial_{x}^{l}P^{k}v\right\|_{\widehat{H}^{r}_{2+\theta}(I_{t_{0}}\times I_{x_{0}})}\\
					&\lesssim  \left(|t_{0}|+\epsilon\right)^{\frac{l}{3}}\,\left\|P^{k}v\right\|_{\widehat{H}^{r}_{3+\theta}(\mathbb{R}^{2})}\\
					&\lesssim  \left(|t_{0}|+\epsilon\right)^{\frac{l}{3}}\,\alpha^{k} k!\\
					&\lesssim  \left(|t_{0}|+\epsilon\right)^{\frac{l}{3}}\,\alpha^{k+l} (k+l)!\\
					&\leq c \alpha_{8}^{k+l}(k+l)!, \quad \forall \, k\in\mathbb{N}_{0}
				\end{split}
			\end{equation*}
			where we have used {\sc claim } \ref{c6} in  the penultimate inequality and $\alpha_{8}=\max(2,\alpha).$
			
			Since the  estimate \eqref{inter1} is true from the  arguments described above, we now  suppose that \eqref{inter1} is valid up to  $l\in \mathbb{N}$ with $l\geq 2.$
			
			Notice that 
			\begin{equation*}
				\mathcal{M}^{3}P^{k}v=-t\partial_{x}(P+2I)^{k}(v^{3})-\frac{P^{k+1}v}{3}+\frac{x}{3}\partial_{x}v_{k}, \quad k\in \mathbb{N}.
			\end{equation*}
			then
			\begin{equation*}
				\begin{split}
					\left\|\mathcal{M}^{l+1}P^{k}v(t)\right\|_{\widehat{H}^{r}_{1+\theta}(I_{x_{0}})}&=\left\|\mathcal{M}^{l-2}\mathcal{M}^{3}P^{k}v(t)\right\|_{\widehat{H}^{r}_{1+\theta}(I_{x_{0}})}\\
					&\lesssim \left\|t\mathcal{M}^{l-2}\partial_{x}(P+2I)^{k}(v^{3})\,\right\|_{\widehat{H}^{r}_{1+\theta}(I_{x_{0}})}+\left\|\mathcal{M}^{l-2}P^{k+1}v(t)\right\|_{\widehat{H}^{r}_{1+\theta}(I_{x_{0}})}\\
					&\quad +\left\|\mathcal{M}^{l-2}\left(x\partial_{x}v_{k}\right)\right\|_{\widehat{H}^{r}_{1+\theta}(I_{x_{0}})}\\
					&=:\Pi_{1}+\Pi_{2}+\Pi_{3}.
				\end{split}
			\end{equation*}
			First, notice that 
			\begin{equation*}
				\begin{split}
					\mathcal{M}^{l-1} (P+2I)^{k} (v^{3}) &= \sum_{\substack{k_{1}+k_{2}=k \\ 0 \leq k_{1},k_{2} \leq k}} 2^{k_{2}} \Bigg\{ \sum_{\substack{m_{1}+m_{2}+m_{3}=k_{1} \\ 0 \leq m_{1},m_{2},m_{3} \leq k_{1}}} \sum_{\substack{n_{1}+n_{2}+n_{3}=l-1 \\ 0 \leq n_{1},n_{2},n_{3} \leq l-1}} \\
					&\quad \frac{k! (l-1)!}{k_{2}! m_{1}! m_{2}! m_{3}! n_{1}! n_{2}! n_{3}!} \prod_{i=1}^{3} \mathcal{M}^{n_{i}} \left( P^{m_{i}} (v) \right) \Bigg\},
				\end{split}
			\end{equation*}
			so that,
			\begin{equation}\label{t2}
				\begin{split}
					\Pi_{1} &\lesssim (|t_{0}|+\epsilon)^{\frac{2}{3}} \sum_{j=0}^{k} \binom{k}{j} 2^{k-j} \left\{ \sum_{\substack{m_{1}+m_{2}+m_{3}=j \\ 0 \leq m_{1},m_{2},m_{3} \leq j}} \frac{j!}{m_{1}! m_{2}! m_{3}!}\right. \\
					&\left.\quad \times \sum_{\substack{n_{1}+n_{2}+n_{3}=l-1 \\ 0 \leq n_{1},n_{2},n_{3} \leq l-1}} \frac{(l-1)!}{n_{1}! n_{2}! n_{3}!} \left\|\prod_{i=1}^{3} \mathcal{M}^{n_{i}} \left( P^{m_{i}} (v) \right)\right\|_{\widehat{H}^{r}_{1+\theta}(I_{x_{0}})}\right\}.
				\end{split}
			\end{equation}
			To handle the trilinear term, we decouple the threefold product while properly accounting for the domain restriction. For a fixed $t \in J_{t_0}$ and any $\delta > 0$, the definition of the restricted norm as an infimum implies there exist extensions $\zeta_{j, \delta}(\cdot, t) \in \widehat{H}^{r}_{1+\theta}(\mathbb{R})$ for $j=1,2,3$, such that $\zeta_{j, \delta}|_{I_{x_{0}}} = \mathcal{M}^{n_{j}} P^{m_{j}} v(\cdot, t)$ and
			\begin{equation}\label{t1}
				\|\zeta_{j, \delta}(\cdot, t)\|_{\widehat{H}^{r}_{1+\theta}(\mathbb{R})} < \|\mathcal{M}^{n_{j}} P^{m_{j}} v(\cdot, t)\|_{\widehat{H}^{r}_{1+\theta}(I_{x_{0}})} + \delta.
			\end{equation}
			Since the product $\prod_{j=1}^{3} \zeta_{j, \delta}(\cdot, t)$ is a valid global extension of the product $\prod_{j=1}^{3} \mathcal{M}^{n_{j}} P^{m_{j}} v(\cdot, t)$ restricted to $I_{x_0}$, it follows by the definition of the restricted norm that:
			\begin{equation*}
				\left\| \prod_{j=1}^{3} \mathcal{M}^{n_{j}} P^{m_{j}} v(t) \right\|_{\widehat{H}^{r}_{1+\theta}(I_{x_{0}})} \leq \left\| \zeta_{1, \delta} \zeta_{2, \delta} \zeta_{3, \delta} \right\|_{\widehat{H}^{r}_{1+\theta}(\mathbb{R})}.
			\end{equation*}
			Applying the decomposition arguments from \eqref{deco1}-\eqref{deco2} on the real line, we obtain
			\begin{equation*}
				\begin{aligned}
					\left\|\zeta_{1, \delta}\zeta_{2, \delta}\zeta_{3, \delta}\right\|_{\widehat{H}^{r}_{1+\theta}(\mathbb{R})} &\lesssim \sum_{j=1}^3 \mathcal{T}_{j, \delta},
				\end{aligned}
			\end{equation*}
			where each $\mathcal{T}_{j, \delta}$ involves the weight $\langle \cdot \rangle^{1+\theta}$ acting on the $j$-th factor in the convolution. For $\mathcal{T}_{1, \delta}$, invoking Young's inequality and the estimate \eqref{t1} yields:
			\begin{equation*}
				\begin{aligned}
					\mathcal{T}_{1, \delta} &\lesssim \left\|\zeta_{1, \delta}\right\|_{\widehat{H}^{r}_{1+\theta}(\mathbb{R})} \left\|\mathcal{F}_{x}(\zeta_{2, \delta})\right\|_{L^1_\xi} \left\|\mathcal{F}_{x}(\zeta_{3, \delta})\right\|_{L^1_\xi} \\
					&\lesssim \prod_{i=1}^{3} \left\|\zeta_{i, \delta}\right\|_{\widehat{H}^{r}_{1+\theta}(\mathbb{R})} \\
					&\leq c \prod_{i=1}^{3} \left( \left\|\mathcal{M}^{n_{i}} P^{m_{i}} v(\cdot, t)\right\|_{\widehat{H}^{r}_{1+\theta}(I_{x_{0}})} + \delta \right).
				\end{aligned}
			\end{equation*}
			The same bound applies to $\mathcal{T}_{2, \delta}$ and $\mathcal{T}_{3, \delta}$. Substituting these into the product inequality and taking the limit as $\delta \to 0$, we conclude that for each $t \in J_{t_0}$:
			\begin{equation*}
				\left\| \prod_{j=1}^{3} \mathcal{M}^{n_{j}} P^{m_{j}} v(t) \right\|_{\widehat{H}^{r}_{1+\theta}(I_{x_{0}})} \lesssim \prod_{j=1}^{3} \left\|\mathcal{M}^{n_{j}} P^{m_{j}} v(t)\right\|_{\widehat{H}^{r}_{1+\theta}(I_{x_{0}})}.
			\end{equation*}
			Finally, taking the supremum over $t \in J_{t_0}$ completes the proof.
			
			Once we  apply the result above to \eqref{t2}, combined with the inductive hypothesis
			\begin{equation*}
				\begin{split}
					\Pi_{1} 
					&\lesssim (|t_{0}|+\epsilon)^{\frac{2}{3}} \sum_{j=0}^{k} \binom{k}{j} 2^{k-j} \Bigg\{ \sum_{\substack{m_{1}+m_{2}+m_{3}=j \\ 0 \leq m_{1},m_{2},m_{3} \leq j}} \frac{j!}{m_{1}! m_{2}! m_{3}!} \\
					&\qquad \times \sum_{\substack{n_{1}+n_{2}+n_{3}=l-1 \\ 0 \leq n_{1},n_{2},n_{3} \leq l-1}} \frac{(l-1)!}{n_{1}! n_{2}! n_{3}!} \prod_{i=1}^{3}\left\|\mathcal{M}^{n_{i}}\big( P^{m_{i}} (v)\big)\right\|_{\widehat{H}^{r}_{1+\theta}(I_{x_{0}})}\Bigg\}\\
					&\lesssim (|t_{0}|+\epsilon)^{\frac{2}{3}} \sum_{j=0}^{k} \binom{k}{j} 2^{k-j} \Bigg\{ \sum_{\substack{m_{1}+m_{2}+m_{3}=j \\ 0 \leq m_{1},m_{2},m_{3} \leq j}} \frac{j!}{m_{1}! m_{2}! m_{3}!} \\
					&\qquad \times \sum_{\substack{n_{1}+n_{2}+n_{3}=l-1 \\ 0 \leq n_{1},n_{2},n_{3} \leq l-1}} \frac{(l-1)!}{n_{1}! n_{2}! n_{3}!} \alpha^{j+l-1} \,(m_{1}+n_{1})!(m_{2}+n_{2})!(m_{3}+n_{3})! \Bigg\}\\
					&\sim  (|t_{0}|+\epsilon)^{\frac{2}{3}} \sum_{j=0}^{k} \binom{k}{j} 2^{k-j} \alpha^{j+l-1} j! (l-1)!  \sum_{\substack{m_{1}+m_{2}+m_{3}=j, \\ 0\leq m_{1},m_{2},m_{3}\leq j}} \sum_{\substack{n_{1}+n_{2}+n_{3}=l-1 \\  0\leq n_{1},n_{2},n_{3}\leq l-1}} \prod_{i=1}^{3} \binom{m_{i}+n_{i}}{m_i},\\
					&\lesssim  (|t_{0}|+\epsilon)^{\frac{2}{3}}(k+l+1)! \alpha^{k+l+1}_{8},		
				\end{split}
			\end{equation*}
			where we have used that 
			\begin{equation*}
				\sum_{\substack{m_{1}+m_{2}+m_{3}=j, \\ 0\leq m_{1},m_{2},m_{3}\leq j}} \sum_{\substack{n_{1}+n_{2}+n_{3}=l-1 \\  0\leq n_{1},n_{2},n_{3}\leq l-1}} \prod_{i=1}^{3} \binom{m_{i}+n_{i}}{m_i}= \binom{j+l+1}{2} \binom{j+l-1}{j}.
			\end{equation*}
			Now, we estimate the remaining terms. The treatment of $\Pi_{2}$ is straightforward; indeed, the inductive hypothesis directly yields
			\begin{equation*}
				\Pi_{2} \lesssim \alpha^{k+l-1}(k+l-1)!\lesssim (k+l+1)! \alpha^{k+l+1}_{8}.
			\end{equation*}
			Finally, we consider $\Pi_{3}$. We observe that for $l \geq 3$, the operator satisfies the identity
			\begin{equation*}
				\mathcal{M}^{l-2}(x\partial_{x})=x\partial_{x}\mathcal{M}^{l-2} + (l-2)\mathcal{M}^{l-2}.
			\end{equation*}
			This relation facilitates the derivation of the following upper bound:
			\begin{equation*}
				\begin{split}
					\Pi_{3}& \leq \left\|x\partial_{x}\mathcal{M}^{l-2}v_{k}\right\|_{\widehat{H}^{r}_{1+\theta}(I_{x_{0}})}+(l-2)\left\|\mathcal{M}^{l-2}v_{k}\right\|_{\widehat{H}^{r}_{1+\theta}(I_{x_{0}})}\\
					&=\Pi_{3,1}+\Pi_{3,2}.
				\end{split}
			\end{equation*}
			The term $\Pi_{3,2}$ is handled directly by the inductive hypothesis, yielding $$\Pi_{3,2} \lesssim \alpha^{k+l-2}(k+l-2)!.$$ 
			
			However, the estimate for $\Pi_{3,1}$ requires a more detailed localization argument. Let $\eta \in C^{\infty}_{0}(\mathbb{R})$ be a cutoff function such that $\eta \equiv 1$ on $I_{x_{0}}$ and $\text{supp}\,\eta \subseteq 2I_{x_{0}}$.
			
			For any $\delta > 0$ and a fixed $t \in J_{t_{0}}$, by the definition of the restricted norm as an infimum over extensions, there exists a function $\Psi_\delta(\cdot, t) \in \widehat{H}^{r}_{1+\theta}(\mathbb{R})$ such that $\Psi_\delta|_{I_{x_{0}}} = \mathcal{M}^{l-1} v_k|_{I_{x_{0}}}$ and
			\begin{equation}\label{deltadef}
				\|\Psi_\delta(\cdot, t)\|_{\widehat{H}^{r}_{1+\theta}(\mathbb{R})} < \|\mathcal{M}^{l-1}v_{k}(\cdot, t)\|_{\widehat{H}^{r}_{1+\theta}(I_{x_{0}})} + \delta.
			\end{equation}
			Since $\eta \equiv 1$ on $I_{x_{0}}$, the function $x \eta(x) t^{-1/3} \Psi_{\delta}$ is as a global extension for the term $x \partial_x \mathcal{M}^{l-1} v_{k}$ restricted to $I_{x_{0}}$. Thus, it follows that:
			\begin{equation*}
				\begin{aligned}
					\|x\partial_{x}\mathcal{M}^{l-2}v_{k}\|_{\widehat{H}^{r}_{1+\theta}(I_{x_{0}})} &\leq \|x\eta(x) t^{-1/3}\Psi_\delta(\cdot, t)\|_{\widehat{H}^{r}_{1+\theta}(\mathbb{R})} \\
					&\leq |t|^{-1/3} \left\| \langle \xi\rangle^{1+\theta} \left( \mathcal{F}_x(x\eta) * \mathcal{F}_x(\Psi_\delta) \right) \right\|_{L^{r'}_{\xi}(\mathbb{R})} \\
					&\leq (|t_{0}| - \epsilon)^{-1/3} \|\mathcal{F}_x(x\eta)\|_{L^1_\xi(\mathbb{R})} \|\Psi_\delta(\cdot, t)\|_{\widehat{H}^r_{1+\theta}(\mathbb{R})}.
				\end{aligned}
			\end{equation*}
			Substituting the inequality \eqref{deltadef} into the above estimate, we obtain:
			\begin{equation*}
				\|x\partial_{x}\mathcal{M}^{l-2}v_{k}\|_{\widehat{H}^{r}_{1+\theta}(I_{x_{0}})} \leq c_{t_{0}, \epsilon, \eta} \left( \|\mathcal{M}^{l-1}v_{k}(\cdot, t)\|_{\widehat{H}^{r}_{1+\theta}(I_{x_{0}})} + \delta \right).
			\end{equation*}
			Finally, by taking the limit as $\delta \to 0$ and passing to the supremum over $t \in J_{t_0}$, we conclude that
			\begin{equation*}
				\begin{aligned}
					\Pi_{3,1} &\leq C_{t_0, \epsilon, \eta} \|\mathcal{M}^{l-1}v_{k}(\cdot, t)\|_{\widehat{H}^{r}_{1+\theta}(I_{x_{0}})} \\
					&\lesssim \alpha^{k+l-1}(k+l-1)!\lesssim (k+l+1)! \alpha^{k+l+1}_{8},
				\end{aligned}
			\end{equation*}
			where the penultimate inequality follows from the inductive hypothesis established in {\sc Claim} \ref{c7}.
			
			Since this holds for arbitrary $\delta > 0$, we let $\delta \to 0$. Finally, taking the supremum over $t \in J_{t_0}$ and applying the inductive bound from {\sc Claim }\ref{c7}, we conclude the estimate for $\Pi_{3,1}$.
		\end{claimproof}
        \setcounter{claim}{8}
		\begin{claim}\label{c8}
			Let $x_{0} \in \mathbb{R}$, $t_{0} \in \mathbb{R}\setminus\{0\}$, and $0<\epsilon<|t_0|$. Let $I_{x_{0}}$ and $J_{t_0}$ be defined as in Claim \ref{c7}. For $1 < r \leq 2$ with conjugate exponent $r'$, assume there exist positive constants $c$ and $\alpha_{8}$ such that the bound
			\begin{equation*}
				\sup_{t \in J_{t_0}} \left\| \partial_{x}^{l} P^{k} v(t, \cdot) \right\|_{\widehat{H}^{r}_{1+\theta}(I_{x_{0}})} \leq c \alpha_{8}^{k+l} (k+l)!
			\end{equation*}
			holds for all $k, l \in \mathbb{N}_{0}$. Then, there exist positive constants $c$ and $\alpha_{9}$ such that the estimate
			\begin{equation*}
				\sup_{t \in J_{t_0}} \left\| \partial_{t}^{m} \partial_{x}^{l} v(t, \cdot) \right\|_{\widehat{H}^{r}_{1+\theta}(I_{x_{0}})} \leq c \alpha_{9}^{m+l} (m+l)!
			\end{equation*}
			is satisfied for all $m, l \in \mathbb{N}_{0}$.
		\end{claim}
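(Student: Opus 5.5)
The idea is to convert control of $P=3t\partial_t+x\partial_x$ and $\partial_x$ into control of $\partial_t$. On the strip $J_{t_0}\times I_{x_0}$ we have $|t|\geq |t_0|-\epsilon>0$, so $3t$ is invertible there and
\begin{equation*}
\partial_t=\frac{1}{3t}\big(P-x\partial_x\big).
\end{equation*}
We first commute powers of $\partial_t$ past this expression. From $[\partial_t,P]=3\partial_t$ we get $\partial_t P=(P+3I)\partial_t$. Since $\partial_t$ commutes with $x\partial_x$ and with $\partial_x$, an induction on $m$ gives an identity of the form
\begin{equation*}
\partial_t^m=\sum_{j=0}^{m}\ \sum_{i=0}^{m-j} c_{m,j,i}\, t^{-m}\, x^{i}\,\partial_x^{i} P^{j},
\end{equation*}
where $|c_{m,j,i}|\le C^m m!/(j!\,i!)$ up to harmless binomial factors. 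A cleaner way to organise this is to write $3t\partial_t=P-x\partial_x$, so that
\begin{equation*}
(3t\partial_t)^{m}=\sum_{j}\binom{m}{j}(-x\partial_x)^{m-j}P^{j}.
\end{equation*}
This uses that $P$ and $x\partial_x$ commute, which holds because $t\partial_t$ commutes with $x\partial_x$. We also need the inverse relation
\begin{equation*}
t^m\partial_t^m=\prod_{i=0}^{m-1}(t\partial_t-i).
\end{equation*}
This expresses $\partial_t^m$ as $t^{-m}$ times a polynomial of degree $m$ in $t\partial_t$, with coefficients given by Stirling numbers of the first kind. Those coefficients are bounded by $m!\,C^m$ in total, since $\sum_j |s(m,j)|=m!$.

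We then apply this to $\partial_x^l v$. Because $\partial_x$ commutes with $t\partial_t$, we may write $\partial_t^m\partial_x^l v=\partial_x^l\partial_t^m v$. Expanding $(x\partial_x)^{a}$ via Stirling numbers of the second kind, as in the example in the introduction, gives $(x\partial_x)^a=\sum_i S(a,i)x^i\partial_x^i$. Each resulting term has the form
\begin{equation*}
t^{-m}\,\partial_x^{l}\big(x^{i}\partial_x^{i}P^{j}v\big),\qquad i+j\le m .
\end{equation*}
Using Leibniz to move $\partial_x^l$ through $x^i$ produces at most $\binom{l}{q}\,i!/(i-q)!$ terms of the form $x^{i-q}\partial_x^{\,i+l-q}P^j v$.

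Next we estimate each term in $\widehat H^r_{1+\theta}(I_{x_0})$. The factors $t^{-m}$ cost at most $(|t_0|-\epsilon)^{-m}$. For the multiplication by $x^{i-q}$ on $I_{x_0}$ we use the same extension-and-cutoff argument as for $\Pi_{3,1}$ in Claim~\ref{c7}: multiply by $\eta(x)x^{i-q}$ and use $\|\mathcal F_x(x^{p}\eta)\|_{L^1}\le C_\eta^{\,p}\,p!$ (any growth bound of the form $C^p p!$ suffices). The hypothesis of the claim then bounds $\|\partial_x^{i+l-q}P^j v\|$ by
\begin{equation*}
c\,\alpha_8^{\,i+j+l-q}\,(i+j+l-q)!.
\end{equation*}

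The main obstacle is the combinatorics: summing all these contributions must produce only $C^{m+l}(m+l)!$ and not something like $(m!)^2$. The key points are the following. Since $i+j\le m$, the factorial $(i+j+l-q)!$ is at most $(m+l)!$. The Stirling sums $\sum_i S(a,i)$ combined with $i!$ are controlled by $a^a\le e^a a!$, exactly as in the introductory example. The first-kind Stirling coefficients contribute a total of $m!$, but this is divided by the $j!$ and $a!$ that arise when expanding $(t\partial_t)^n$. If the direct count becomes loose, we instead prove the bound by induction on $m$, starting from the recursion
\begin{equation*}
\partial_t^{m+1}\partial_x^l v=\partial_t^m\big[(3t)^{-1}(P-x\partial_x)\partial_x^l v\big].
\end{equation*}
Here $\partial_t^m$ falls on $t^{-1}$ (giving $m!\,t^{-1-n}$ type factors) and the $P$, $x\partial_x$ terms are absorbed using $[P,\partial_x]=-\partial_x$. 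In this induction we track a joint bound on $\partial_t^m\partial_x^l P^k v$ of the form $C\,\alpha_9^{m+l+k}(m+l+k)!$, starting from $m=0$, which is the hypothesis. With that form of the bound, each step costs one factor $\alpha_9(m+l+k+1)$, provided $\alpha_9\gg \alpha_8/(|t_0|-\epsilon)$ and $\alpha_9$ is large compared with the cutoff constants. Taking $k=0$ then yields the claim.
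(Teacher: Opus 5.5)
\textbf{The induction does not close.} The induction on $m$ you fall back on fails as formulated. In the recursion $\partial_t^{m+1}\partial_x^lP^kv=\partial_t^m\big[(3t)^{-1}(P-x\partial_x)\partial_x^lP^kv\big]$, the term in which no derivative falls on $t^{-1}$ is
\[
\frac{1}{3t}\Big(\partial_t^m\partial_x^lP^{k+1}v-l\,\partial_t^m\partial_x^lP^kv-x\,\partial_t^m\partial_x^{l+1}P^kv\Big).
\]
Its first and last pieces have total order $m+l+k+1$, the same as the quantity being estimated: trading $\partial_t$ for $P$ or $\partial_x$ does not lower the order. Under your isotropic hypothesis $C\alpha_9^{m+l+k}(m+l+k)!$, these pieces are bounded by exactly the target $C\alpha_9^{m+l+k+1}(m+l+k+1)!$ times $(1+c_\eta)/(3(|t_0|-\epsilon))$. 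So no factor $\alpha_9(m+l+k+1)$ is gained, and no choice of $\alpha_9$ absorbs this constant when $|t_0|-\epsilon$ is small.

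The missing device, which is what the paper uses, is a separate weight for the time derivatives. Prove
\[
\sup_{t\in J_{t_0}}\|\partial_t^m\partial_x^lP^kv(t)\|_{\widehat H^r_{1+\theta}(I_{x_0})}\le c\,\alpha_8^{k+l+m}B^m(k+l+m)!
\]
by induction on $m$, uniformly in $(k,l)$; the case $m=0$ is the hypothesis. Each step now gains a factor $B$, which absorbs the leading term once $B\gtrsim(1+c_\eta)/(|t_0|-\epsilon)$. The remaining pieces are lower order and summable for $B$ large: the $l$-term from $[P,\partial_x]=-\partial_x$, and the terms with $\partial_t^n(t^{-1})$, $n\ge1$, whose factor $m!/(m-n)!$ is offset by the drop of $n$ in total order. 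In the paper's bookkeeping the analogous term is $m\,\partial_t^m$, coming from $t\partial_t^{m+1}=\partial_t^m(t\partial_t)-m\,\partial_t^m$. Setting $k=0$ and $\alpha_9=\alpha_8\max(B,1)$ gives the claim. Your size condition $\alpha_9\gg\alpha_8/(|t_0|-\epsilon)$ is right, but it only comes out of this two-weight induction.

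\textbf{The direct expansion.} This could give a genuinely different, non-inductive proof, but two points need repair.

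First, the remark that any bound of the form $C^pp!$ for multiplication by $x^p$ suffices is false. The extra $(i-q)!$ cancels the denominator of the Leibniz factor $i!/(i-q)!$, so the terms with $i=m$ carry $m!\,(m+l)!$. You need a cost $c_\eta^{\,p}$, which you do have by iterating the cutoff bound for multiplication by $x$ from Claim~\ref{c7}.

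Second, the coefficient bound $|c_{m,j,i}|\le C^mm!/(j!\,i!)$ is what actually rules out $(m!)^2$. It is true, but not for the reason you give. It follows from $\binom{n}{j}S(n-j,i)\le e^{n-j}\,n!/(j!\,i!)$ together with
\[
\sum_{n}|s(m,n)|\,n!=\int_0^\infty e^{-y}\,y(y+1)\cdots(y+m-1)\,dy\le e^m\,m!.
\]
With these two facts, $(l+i+j)!/(i!\,j!)\le 2^m(l+m)!/m!$ for $i+j\le m$. The sum then closes at $C^{m+l}(m+l)!$ without any induction.
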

		\begin{claimproof}
			First, we show that for any fixed $t \in J_{t_{0}}$, there exist positive constants $c$, $\alpha_{10}$, and $\alpha_{11}$ such that
			\begin{equation*}
				\left\|\left( x \partial_{x}\right)^{m} \partial_{x}^{l} P^{k} v(t) \right\|_{\widehat{H}^{r}_{1+\theta}(I_{x_{0}})} \leq c \alpha_{10}^{k+l+m} \alpha_{11}^{m} (k+m+l)! \quad \text{for all } k, l, m \geq 0.
			\end{equation*}
			The argument of proof will be as in the previous claim, i,e., induction on the variable $m$.
			
			The case $m=0$ is valid since, by hypothesis 
			\begin{equation*}
				\left\| \partial_{x}^{l} P^{k} v(t, \cdot) \right\|_{\widehat{H}^{r}_{1+\theta}(I_{x_{0}})} \leq c \alpha_{8}^{k+l} (k+l)!\quad \text{for all } k, l\geq 0.
			\end{equation*}
			Now,  for the case $m=1$, we have 
			\begin{equation*}
				\begin{split}
					\left\|\left( x \partial_{x}\right) \partial_{x}^{l} P^{k} v(t) \right\|_{\widehat{H}^{r}_{1+\theta}(I_{x_{0}})} &=\left\| x \partial_{x}^{l+1} P^{k} v(t) \right\|_{\widehat{H}^{r}_{1+\theta}(I_{x_{0}})}
				\end{split}
			\end{equation*}
			However, the estimate for this term requires a more detailed localization argument. Let $\eta \in C^{\infty}_{0}(\mathbb{R})$ be a cutoff function such that $\eta \equiv 1$ on $I_{x_{0}}$ and $\text{supp}\,\eta \subseteq 2I_{x_{0}}$.
			
			For any $\delta > 0$ and a fixed $t \in J_{t_{0}}$, by the definition of the restricted norm as an infimum over extensions, there exists a function $\Psi_\delta(\cdot, t) \in \widehat{H}^{r}_{1+\theta}(\mathbb{R})$ such that $\Psi_\delta|_{I_{x_{0}}} = \partial_{x}^{l+1} P^{k} v(t)|_{I_{x_{0}}}$ and
			\begin{equation}\label{deltadef2}
				\left\|\Psi_\delta(\cdot, t)\right\|_{\widehat{H}^{r}_{1+\theta}(\mathbb{R})} < \left\|\partial_{x}^{l+1} P^{k} v(t)\right\|_{\widehat{H}^{r}_{1+\theta}(I_{x_{0}})} + \delta.
			\end{equation}
			Since $\eta \equiv 1$ on $I_{x_{0}}$, the function $x \eta(x) \Psi_{\delta}$ serves as a global extension for the term $x \partial_{x}^{l+1} P^{k} v(t)$ restricted to $I_{x_{0}}$. Thus, using Young's convolution inequality, it follows that:
			\begin{equation*}
				\begin{aligned}
					\left\|x \partial_{x}^{l+1} P^{k} v(t)\right\|_{\widehat{H}^{r}_{1+\theta}(I_{x_{0}})} &\leq \left\|x\eta(x) \Psi_\delta(\cdot, t)\right\|_{\widehat{H}^{r}_{1+\theta}(\mathbb{R})} \\
					&\leq \left\| \langle \xi\rangle^{1+\theta} \left( \mathcal{F}_x(x\eta) * \mathcal{F}_x(\Psi_\delta) \right) \right\|_{L^{r'}_{\xi}(\mathbb{R})} \\
					&\leq \left\|\mathcal{F}_x(x\eta)\right\|_{L^{1}_\xi(\mathbb{R})} \left\|\Psi_\delta(\cdot, t)\right\|_{\widehat{H}^r_{1+\theta}(\mathbb{R})}.
				\end{aligned}
			\end{equation*}
			Substituting the inequality \eqref{deltadef2} into the above estimate, we obtain:
			\begin{equation*}
				\left\|x \partial_{x}^{l+1} P^{k} v(t)\right\|_{\widehat{H}^{r}_{1+\theta}(I_{x_{0}})} \leq c_{\eta} \left( \left\|\partial_{x}^{l+1} P^{k} v(t)\right\|_{\widehat{H}^{r}_{1+\theta}(I_{x_{0}})} + \delta \right),
			\end{equation*}
			where $c_{\eta} := \|\mathcal{F}_x(x\eta)\|_{L^1_\xi(\mathbb{R})}$ is a constant depending only on the choice of the cutoff function.
			
			Finally, by taking the limit as $\delta \to 0$ and passing to the supremum over $t \in J_{t_0}$, we conclude that
			\begin{equation*}
				\begin{aligned}
					\sup_{t \in J_{t_0}} \left\|x \partial_{x}^{l+1} P^{k} v(t)\right\|_{\widehat{H}^{r}_{1+\theta}(I_{x_{0}})} &\leq c_{\eta} \sup_{t \in J_{t_0}}\left \|\partial_{x}^{l+1} P^{k} v(t)\right\|_{\widehat{H}^{r}_{1+\theta}(I_{x_{0}})} \\
					&\leq c_{\eta} c \alpha_{8}^{k+l+1}(k+l+1)!,
				\end{aligned}
			\end{equation*}
			where the last inequality follows from applying the assumed hypothesis.
			Assume as our inductive hypothesis that the result holds up to $m$. More precisely,
			\begin{equation*}
				\left\| (x \partial_{x})^{j} \partial_{x}^{l} P^{k} v(t) \right\|_{\widehat{H}^{r}_{1+\theta}(I_{x_{0}})} \leq c \alpha_{10}^{k+l+j} \alpha_{11}^{j} (k+j+l)! \quad \text{for all } k, l \in \mathbb{N}_0 \text{ and } 0 \leq j \leq m.
			\end{equation*}
			Applying an argument analogous to the base case, we expand $(x\partial_x + I)^m$ and invoke our cutoff bound to obtain
			\begin{equation*}
				\begin{aligned}
					\left\| (x \partial_{x})^{m+1} \partial_{x}^{l} P^{k} v(t) \right\|_{\widehat{H}^{r}_{1+\theta}(I_{x_{0}})} 
					&= \left\| x \left( x\partial_{x} + I \right)^{m} \partial_{x}^{l+1} P^{k} v(t) \right\|_{\widehat{H}^{r}_{1+\theta}(I_{x_{0}})}\\
					&\leq \sum_{m_{1}=0}^{m} \binom{m}{m_1} \left\| x (x\partial_{x})^{m_{1}} \partial_{x}^{l+1} P^{k} v(t) \right\|_{\widehat{H}^{r}_{1+\theta}(I_{x_{0}})}\\
					&\leq c_{\eta} \sum_{m_{1}=0}^{m} \binom{m}{m_1} \left\| (x\partial_{x})^{m_{1}} \partial_{x}^{l+1} P^{k} v(t) \right\|_{\widehat{H}^{r}_{1+\theta}(I_{x_{0}})}\\
					&\leq c_{\eta}  \sum_{m_{1}=0}^{m} \binom{m}{m_1} \alpha_{10}^{k+l+m_1+1} \alpha_{11}^{m_1} (k+m_1+l+1)!\\
					&\leq  c \alpha_{10}^{k+l+m+1} \alpha_{11}^{m+1} (k+m+l+1)! \left[ \frac{c_{\eta}}{\alpha_{11}} \sum_{j=0}^{m} \frac{1}{j!} \left(\frac{1}{\alpha_{10}\alpha_{11}}\right)^{j} \right] \\
					&\leq c \alpha_{10}^{k+l+m+1} \alpha_{11}^{m+1} (k+m+l+1)! \left[ \frac{c_{\eta}}{\alpha_{11}} \exp\left(\frac{1}{\alpha_{10}\alpha_{11}}\right) \right].
				\end{aligned}
			\end{equation*}
			By choosing the constant $\alpha_{11} > 0$ sufficiently large such that $\frac{c_{\eta}}{\alpha_{11}} \exp\left(\frac{1}{\alpha_{10}\alpha_{11}}\right) \leq 1$, we recover the required bound for $m+1$. This closes the induction.
			
			Next, we show that there exists positive constants  $c$ and $\alpha_{12}$ such that:
			\begin{equation*}
				\left\|(t\partial_{t})^{m}\partial_{x}^{l} v(t, \cdot) \right\|_{\widehat{H}^{r}_{1+\theta}(I_{x_{0}})}\leq c\alpha_{11}^{l+m}(l+m)!,\quad l,m=0,1,2,\cdots,.
			\end{equation*} 
			Since $t\partial_{t} = \frac{1}{3}(P - x\partial_{x})$, it is clear that
			\begin{equation*}
				\begin{aligned}
					\left\|(t\partial_{t})^{m}\partial_{x}^{l} v(t, \cdot) \right\|_{\widehat{H}^{r}_{1+\theta}(I_{x_{0}})} 
					&\leq \frac{1}{3^{m}} \sum_{\substack{m_1 + m_2 = m \\ 0 \leq m_1, m_2 \leq m}} \frac{m!}{m_1! m_2!} \left\| (x\partial_{x})^{m_1}\partial_{x}^{l}(P-I)^{m_{2}} v(t, \cdot) \right\|_{\widehat{H}^{r}_{1+\theta}(I_{x_{0}})} \\
					&\leq \frac{1}{3^{m}} \sum_{\substack{m_1 + m_2 = m \\ 0 \leq m_1, m_2 \leq m}} \sum_{j=0}^{m_2} \frac{m!}{m_1! j! (m_2 - j)!} \left\| (x\partial_{x})^{m_1} \partial_{x}^{l} P^{j} v(t, \cdot) \right\|_{\widehat{H}^{r}_{1+\theta}(I_{x_{0}})} \\
					&\leq \frac{c}{3^{m}} \sum_{\substack{m_1 + m_2 = m \\ 0 \leq m_1, m_2 \leq m}} \sum_{j=0}^{m_2} \frac{m!}{m_1! j! (m_2 - j)!} \alpha_{10}^{j+l+m_1} \alpha_{11}^{m_1} (j+m_1+l)!
				\end{aligned}
			\end{equation*}
			By the monotonicity of the factorial and the powers of $\alpha$, we observe that for all $j \leq m_2$ and $m_1 + m_2 = m$, we have
			\begin{equation*}
				\alpha_{10}^{j+l+m_1} \alpha_{11}^{m_1} (j+m_1+l)! \leq \alpha_{10}^{m+l} \alpha_{11}^{m} (m+l)!.
			\end{equation*}
			Substituting this into the sum and extracting the constants, we obtain
			\begin{equation*}
				\begin{aligned}
					\left\|(t\partial_{t})^{m}\partial_{x}^{l} v(t, \cdot) \right\|_{\widehat{H}^{r}_{1+\theta}(I_{x_{0}})} 
					&\leq \frac{c}{3^{m}} \alpha_{10}^{m+l} \alpha_{11}^{m} (m+l)! \left\{ \sum_{m_1 + m_2 = m} \binom{m}{m_1} \sum_{j=0}^{m_2} \binom{m_2}{j} \right\} \\
					&= \frac{c}{3^{m}} \alpha_{10}^{m+l} \alpha_{11}^{m} (m+l)! \left\{ \sum_{m_1 + m_2 = m} \binom{m}{m_1} 2^{m_2} \right\} \\
					&= \frac{c}{3^{m}} \alpha_{10}^{m+l} \alpha_{11}^{m} (m+l)!  3^{m}  \\
					&= c \alpha_{10}^{m+l} \alpha_{11}^{m} (m+l)!.
				\end{aligned}
			\end{equation*}
			This confirms the result for the index $m$, effectively canceling the $3^{-m}$ factor originating from the operator $t\partial_t$.
			
			To remove the explicit $t$ dependence and derive estimates for the pure time derivatives $\partial_{t}^{m}$, we consider the interval $J_{t_{0}} = (t_{0} - \epsilon, t_{0} + \epsilon)$, where we have $|t_{0}| - \epsilon > 0$. We assume the inductive hypothesis:
			\begin{equation*}
				\left\| \partial_{t}^{m} \partial_{x}^{l} P^{k} v(t) \right\|_{\widehat{H}^{r}_{1+\theta}(I_{x_{0}})} \leq c \alpha_{10}^{k+l+m} \alpha_{11}^{m} (k+l+m)!
			\end{equation*}
			holds up to $m$. To prove the case $m+1$, we use the identity $t\partial_{t}^{m+1} = \partial_{t}^{m}(t\partial_{t}) - m\partial_{t}^{m}$. Applying this to $\partial_{x}^{l} P^{k} v$ and taking the norm, since $|t|^{-1} \leq (|t_{0}| - \epsilon)^{-1}$, we obtain:
			\begin{equation*}
				\begin{aligned}
					\left\| \partial_{t}^{m+1} \partial_{x}^{l} P^{k} v(t) \right\|_{\widehat{H}^{r}_{1+\theta}(I_{x_{0}})} &\leq \frac{1}{|t_{0}| - \epsilon} \left\| t\partial_{t}^{m+1} \partial_{x}^{l} P^{k} v(t) \right\|_{\widehat{H}^{r}_{1+\theta}(I_{x_{0}})} \\
					&\leq \frac{1}{|t_{0}| - \epsilon} \left\{ \left\| \partial_{t}^{m} (t\partial_{t}) \partial_{x}^{l} P^{k} v(t) \right\|_{\widehat{H}^{r}_{1+\theta}(I_{x_{0}})} + m \left\| \partial_{t}^{m} \partial_{x}^{l} P^{k} v(t) \right\|_{\widehat{H}^{r}_{1+\theta}(I_{x_{0}})} \right\}.
				\end{aligned}
			\end{equation*}
			Substituting the operator identity $t\partial_{t} = \frac{1}{3}(P - x\partial_{x})$ directly into the first term, it acts on the entire spatial derivative block:
			\begin{equation*}
				(t\partial_{t}) \left( \partial_{x}^{l} P^{k} v \right) = \frac{1}{3} P \left( \partial_{x}^{l} P^{k} v \right) - \frac{1}{3} x\partial_{x} \left( \partial_{x}^{l} P^{k} v \right) = \frac{1}{3} \partial_{x}^{l} P^{k+1} v - \frac{1}{3} x \partial_{x}^{l+1} P^{k} v.
			\end{equation*}
			Using the spatial localization  $|x| \leq c_{\eta}$ on $I_{x_{0}}$, the norm of this first term expands as:
			\begin{equation*}
				\begin{split}
					\left\| \partial_{t}^{m} \left( \frac{1}{3}\partial_{x}^{l} P^{k+1} - \frac{1}{3}x \partial_{x}^{l+1} P^{k} \right) v(t) \right\|_{\widehat{H}^{r}_{1+\theta}(I_{x_{0}})} &\leq \frac{1}{3} \left\| \partial_{t}^{m} \partial_{x}^{l} P^{k+1} v(t) \right\|_{\widehat{H}^{r}_{1+\theta}(I_{x_{0}})}\\
					&\quad  + \frac{c_{\eta}}{3} \left\| \partial_{t}^{m} \partial_{x}^{l+1} P^{k} v(t) \right\|_{\widehat{H}^{r}_{1+\theta}(I_{x_{0}})}.
				\end{split}
			\end{equation*}
			Now, applying the inductive hypothesis to all three terms (the two from $t\partial_{t}$ and the $+m$ term), we find:
			\begin{equation*}
				\begin{aligned}
					\left\| \partial_{t}^{m+1} \partial_{x}^{l} P^{k} v(t) \right\|_{\widehat{H}^{r}_{1+\theta}(I_{x_{0}})} &\leq \frac{c \alpha_{11}^{m} \alpha_{10}^{k+l+m+1} (k+l+m+1)!}{|t_{0}| - \epsilon} \left\{ \frac{1}{3} + \frac{c_{\eta}}{3} + \frac{m}{\alpha_{10}(k+l+m+1)} \right\} \\
					&\leq c \alpha_{10}^{k+l+m+1} \alpha_{11}^{m+1} (k+l+m+1)! \left\{\frac{1 + c_{\eta} + 3\alpha_{10}^{-1}}{3(|t_{0}| - \epsilon)\alpha_{11}} \right\}.
				\end{aligned}
			\end{equation*}
			Assuming $\alpha_{10} \geq 1$, we can bound $3\alpha_{10}^{-1} \leq 3$. By choosing the constant $\alpha_{11}$ large enough such that:
			\begin{equation*}
				\alpha_{11} \geq \frac{4 + c_{\eta}}{3(|t_{0}| - \epsilon)},
			\end{equation*}
			the term in the brackets becomes $\leq 1$. This concludes the induction and removes the $t$ dependence from the estimate.
				\end{claimproof}
		\end{proof}
	With the analyticity of the flow map for the modified Korteweg-de Vries equation (\textit{mKdV}) established, we now extend our considerations to the cubic KdV equation. This transition is motivated by the fact that the cubic interaction, while structurally more complex, shares a similar dispersive soul with the \textit{mKdV} through the underlying linear operator $\partial_{t} + \partial_{x}^{3}$.
	
	The following proof adapts the previously established \emph{Claim-based framework} to handle the higher-order multi-linear interactions. In particular, we demonstrate that the sequence space techniques and the sharp estimates in the $X^{r}_{s,b}$ framework are robust enough to capture the analytic smoothing effects for the cubic non-linearity. By establishing a correspondence between the estimates derived in \Cref{apendiceA} and the requirements of the cubic flow, we provide a unified treatment of these rough Sobolev regimes.
	
	We begin by establishing the necessary multi-linear bounds for the cubic term, following a series of claims that parallel the methodology utilized in the modified case.
	To demonstrate the geometric precision and versatility of the continuous scaling flow framework, we present two distinct families of initial data. These examples serve as canonical mathematical models, showing how the time-parameter complexification seamlessly absorbs local spatial singularities and global frequency behaviors.
    \section{Proof of Theorem \ref{main2}}\label{proofthmB}

	\begin{proof}
	The proof is divided into a sequence of claims. We begin by collecting several auxiliary results for the local well-posedness theory of the generalized Korteweg-de Vries equation ($\mathrm{gKdV}$-3), which will be used in the proof of Claim~1.

Our first goal is to establish the local well-posedness of the following system, corresponding to the case $m=4$ in the general formulation \eqref{SYS1}:
\begin{equation}\label{System1}
\begin{cases}
\partial_t v_k+\partial_x^3v_k+\Pi_k^{(4)}(v_0,\ldots,v_k)=0,
& (x,t)\in\mathbb{R}\times\mathbb{R},\\
v_k(0,x)=\phi_k(x)=(x\partial_x)^k\phi_0(x),
& k=0,1,2,\ldots.
\end{cases}
\end{equation}
Here $v_0:=v$, and the nonlinear operator $\Pi_k^{(4)}(v_0,\ldots,v_k)$ is defined by \eqref{Pikm}.

 In the proof of the local well-posedness result, we employ the associated Bourgain space $X^{s,b}$ and its localized version $X_{T}^{s,b}$, both of which correspond precisely to the case $r=2$ in the notation of the spaces $X^{r}_{s,b}$ introduced in Section 1.2. The same observation applies to the spaces $\mathcal{A}_{A_0}(X^{s,b})$. The proof of the next claim relies on the following two lemmas:
	\begin{lema}(\cite{KPV2})\label{LemaB1}
		Let $s \in \mathbb{R}$, $b', b \in \left(\frac{1}{2}, \frac{7}{8}\right)$ with $b < b'$ and $\delta \in (0, 1)$. Then, for $v \in X^{s,b'-1}$, we have
		\begin{equation*}
			\|\psi_{\delta}v\|_{X^{s,b-1}} \leq c \, \delta^{\frac{b'-b}{8(1-b)} }\, \|v\|_{X^{s,b'-1}}.
		\end{equation*}
	\end{lema}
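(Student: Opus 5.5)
The plan is to reduce the estimate to a one-dimensional statement in the time variable, dualize it, and prove the dual version by Sobolev embedding, H\"older, and interpolation. We will in fact obtain the stronger gain $\delta^{b'-b}$. Since $b\le \frac78$ gives $8(1-b)\ge 1$, and $\delta<1$, we have $\delta^{b'-b}\le \delta^{\frac{b'-b}{8(1-b)}}$, so the stated bound follows.

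\emph{Reduction to a time-only estimate.} Multiplication by $\psi_\delta(t)$ commutes with the spatial multiplier $\langle\xi\rangle^s$, so we may take $s=0$. On the space--time Fourier side, multiplication by $\psi_\delta$ acts as a convolution in $\tau$ alone, with $\xi$ frozen. Set $f_\xi(t):=\mathcal{F}_x v(t,\xi)e^{it\xi^3}$. Conjugating by the unitary modulation $e^{it\xi^3}$, which commutes with multiplication by $\psi_\delta(t)$, turns the weight $\langle\tau-\xi^3\rangle$ into $\langle\tau\rangle$. By Plancherel in $\xi$, the lemma is then equivalent to the uniform-in-$\xi$ one-dimensional bound
\begin{equation*}
\|\psi_\delta f\|_{H^{b-1}_t(\mathbb{R})}\le c\,\delta^{b'-b}\|f\|_{H^{b'-1}_t(\mathbb{R})}.
\end{equation*}
Both Sobolev indices here are negative.

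\emph{Duality.} Multiplication by the real function $\psi_\delta$ is self-adjoint. Hence the displayed bound is equivalent to
\begin{equation*}
\|\psi_\delta g\|_{H^{\sigma'}_t}\le c\,\delta^{\sigma-\sigma'}\|g\|_{H^{\sigma}_t},\qquad \sigma':=1-b',\quad \sigma:=1-b .
\end{equation*}
Since $b,b'\in(\frac12,\frac78)$ and $b<b'$, the exponents satisfy $0<\sigma'<\sigma<\frac12$.

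\emph{Proof of the dual estimate.} I would establish two endpoint bounds and interpolate between them.

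First, the $L^2$ endpoint. By the Sobolev embedding $H^\sigma\hookrightarrow L^{p}$ with $\frac1p=\frac12-\sigma$, and H\"older,
\begin{equation*}
\|\psi_\delta g\|_{L^2}\le \|\psi_\delta\|_{L^{1/\sigma}}\|g\|_{L^p}\lesssim \delta^{\sigma}\|g\|_{H^\sigma}.
\end{equation*}

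Second, the $H^\sigma$ endpoint: $\|\psi_\delta g\|_{H^\sigma}\lesssim \|g\|_{H^\sigma}$ uniformly in $\delta\in(0,1)$.

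Interpolating $H^{\sigma'}=[L^2,H^\sigma]_{\sigma'/\sigma}$ between these two bounds gives the power $\delta^{\sigma(1-\sigma'/\sigma)}=\delta^{\sigma-\sigma'}=\delta^{b'-b}$, as required.

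\emph{Main obstacle.} The delicate step is the uniform $H^\sigma$ multiplier bound. It genuinely needs $\sigma<\frac12$, because $\|\psi_\delta\|_{H^\sigma}$ itself blows up as $\delta\to0$ when $\sigma>\frac12$. I would split the $H^\sigma$ norm into its $L^2$ part and its $\dot H^\sigma$ part. The $L^2$ part is trivially bounded, using $\|\psi_\delta\|_{L^\infty}\le 1$. For the homogeneous part I would use the fractional Leibniz (Kato--Ponce) inequality:
\begin{equation*}
\||\nabla_t|^\sigma(\psi_\delta g)\|_{L^2}\lesssim \|\psi_\delta\|_{L^\infty}\||\nabla_t|^\sigma g\|_{L^2}+\||\nabla_t|^\sigma\psi_\delta\|_{L^{1/\sigma}}\|g\|_{L^{p}} .
\end{equation*}
The quantity $\||\nabla_t|^\sigma\psi_\delta\|_{L^{1/\sigma}}$ is scale invariant, hence independent of $\delta$. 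The factor $\|g\|_{L^p}$ is controlled by $\|g\|_{H^\sigma}$ through the same Sobolev embedding as before. This closes the endpoint bound and completes the argument.
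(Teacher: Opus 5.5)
Your proof is correct. It necessarily takes a different route from the paper, which offers no argument of its own and simply imports the estimate, exponent $\frac{b'-b}{8(1-b)}$ included, from \cite{KPV2}.

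You make the lemma self-contained, in five steps:
\begin{itemize}
\item Freezing $\xi$ and modulating in time reduces it to the multiplier bound $H^{b'-1}_t\to H^{b-1}_t$ for $\psi_\delta$. With the paper's convention $e^{-i(t\tau+x\xi)}$ the modulation must be $e^{-it\xi^3}$ rather than $e^{it\xi^3}$; this is a harmless sign.
\item Duality moves the bound to the positive indices $0<1-b'<1-b<\frac12$.
\item H\"older and $H^{1-b}\hookrightarrow L^{p}$ give the $L^2$ gain $\delta^{1-b}$.
\item The Kato--Ponce inequality of Theorem~\ref{kp1} (with $r=2$, $p_1=1/\sigma$, $q_1=p$, $p_2=\infty$, $q_2=2$), together with the scale invariance of $\||\nabla_t|^{\sigma}\psi_\delta\|_{L^{1/\sigma}}$, gives the uniform $H^{1-b}$ bound.
\item Interpolation produces $\delta^{b'-b}$. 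It can even be done by hand, since H\"older on the Fourier side gives $\|h\|_{H^{\sigma'}}\le\|h\|_{L^2}^{1-\sigma'/\sigma}\|h\|_{H^{\sigma}}^{\sigma'/\sigma}$.
\end{itemize}

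Your route buys two things. First, the sharp power $\delta^{b'-b}$, which dominates the stated one since $8(1-b)>1$ and $\delta<1$. Second, transparency about the hypotheses: the argument runs verbatim for $\frac12<b<b'<1$, and the cutoff $\frac78$ is used only to compare the two exponents.

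The citation keeps the paper short but leaves the origin of the non-sharp exponent invisible. For the lemma's use in the contraction argument of Claim~1 in Section~\ref{proofthmB}, any positive power of $\delta$ suffices, so nothing downstream depends on which version is used.
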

	\begin{lema}(\cite{Gru2005})\label{LemaB}
		For $- \frac{1}{6} < s \leq 0$ and $- \frac{1}{2} < b'-1 < s - \frac{1}{3}$ and
		$b > \frac{1}{2}$, we have
		\begin{equation}\label{EstimateB}
			\left\|\partial_x (u_1 u_2 u_3 u_4) \, \right\|_{X^{s,b'-1}} 
			\;\leq\; 
			c \, \prod_{i=1}^4 \| u_i \|_{X^{s,b}}.  
		\end{equation}
		Additionally, for $s \geq 0$,  $- \frac{1}{2} < b'-1 < - \frac{1}{3}$ and  $b > \frac{1}{2}$ the estimate \eqref{EstimateB} also holds true. 
	\end{lema}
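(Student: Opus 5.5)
By duality, the estimate \eqref{EstimateB} is equivalent to the five-linear bound
\begin{equation*}
\Big|\int_{\mathbb{R}^2} \partial_x(u_1u_2u_3u_4)\,\overline{u_0}\,dx\,dt\Big| \lesssim \|u_0\|_{X^{-s,1-b'}}\prod_{i=1}^4\|u_i\|_{X^{s,b}} .
\end{equation*}
On the Fourier side I would write this as an integral over the hyperplane $\xi_0=\xi_1+\dots+\xi_4$, $\tau_0=\tau_1+\dots+\tau_4$, with multiplier
\begin{equation*}
|\xi_0|\,\langle\xi_0\rangle^{s}\prod_{i=1}^4\langle\xi_i\rangle^{-s}.
\end{equation*}
Since $s\le 0$, the factors $\langle\xi_i\rangle^{-s}$ with $i\ge1$ are losses, and $\langle\xi_0\rangle^{s}$ is a gain. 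After symmetrizing in $u_1,\dots,u_4$ I order $|\xi_1|\ge|\xi_2|\ge|\xi_3|\ge|\xi_4|$. The tools are standard Airy estimates transferred to $X^{0,b}$ and $X^{0,1-b'}$ spaces by the transference principle (as in Lemma \ref{stabilitylinfty}): the Strichartz bounds $L^8_{xt}$ and $L^6_{xt}$, the Kenig--Ponce--Vega maximal function and Kato smoothing estimates, and above all the bilinear refinement
\begin{equation*}
\big\|I^{1/2}(u,v)\big\|_{L^2_{xt}}\lesssim \|u\|_{X^{0,b}}\|v\|_{X^{0,b}},\qquad \mathcal{F}I^{1/2}(u,v)(\xi)=\int_{\xi_1+\xi_2=\xi}|\xi_1^2-\xi_2^2|^{1/2}\,\widehat u(\xi_1)\widehat v(\xi_2)\,d\xi_1 .
\end{equation*}
The bilinear estimate is also needed with one factor only in $X^{0,1-b'}$. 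For that I would interpolate the $b>1/2$ version with a trivial estimate, which costs a small power of the frequency. This is exactly where the restriction $b'-1<s-\frac13$ enters.

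I then split into cases according to which of the five frequencies are comparable.
\begin{itemize}
\item \emph{Case A: $|\xi_1|\gg|\xi_2|$.} Here $|\xi_0|\sim|\xi_1|$, and the pairs $(u_1,u_2)$ and $(u_0,u_3)$ are both transversal. Two applications of the $L^2$ bilinear estimate give a gain $|\xi_1|^{-1/2}\cdot|\xi_1|^{-1/2}$ up to the interpolation loss. This absorbs the derivative $|\xi_0|$. The leftover factor $\langle\xi_4\rangle^{-s}\lesssim\langle\xi_1\rangle^{1/6}$ is paid by placing $u_4$ in $L^\infty_{xt}$ via Sobolev or the maximal function, after a frequency-restricted Hölder step.
\item \emph{Case B: $|\xi_1|\sim|\xi_2|\gg|\xi_3|$.} I pair $u_1$ with $u_3$ and $u_2$ (or $u_0$) with $u_4$ in bilinear estimates. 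If $|\xi_0|\ll|\xi_1|$, the derivative is cheap and the gain $\langle\xi_0\rangle^{s}$ is not needed.
\item \emph{Case C: all $|\xi_i|\sim N$.} There is no bilinear gain, so I use the resonance function $\tau_0-\xi_0^3-\sum_{i}(\tau_i-\xi_i^3)$ instead. It is nondegenerate away from a small set and gives $\max\langle\tau_i-\xi_i^3\rangle\gtrsim N^3$ up to that set. I would put the factor carrying the largest modulation in $L^2$ and the other four in $L^8_{xt}$. This gains $N^{-3(1-b')}$ or $N^{-3b}$, which must beat $N^{1-4s+s}=N^{1-3s}$. The condition $1-b'>\frac13-s$ is precisely what closes this count.
\end{itemize}

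The main obstacle is the nearly-resonant part of Case C and the mixed high--high--low configurations of Case B. In these regions neither transversality nor a large modulation is available outright. There one needs the sharp exponent bookkeeping that produces the threshold $s>-\frac16$, combined with the interpolated bilinear estimate in which $u_0$ lies only in $X^{0,1-b'}$ with $1-b'<\frac12$.

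For $s\ge0$, the loss $\langle\xi_i\rangle^{-s}$ is dominated by $\langle\xi_0\rangle^{-s}$ via $\langle\xi_0\rangle^{s}\lesssim\prod_i\langle\xi_i\rangle^{s}$. The estimate therefore reduces to the case $s=0$, which is why the condition there becomes $b'-1<-\frac13$.
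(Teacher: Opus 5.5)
The paper gives no argument for this lemma. It is quoted from Gr\"unrock \cite{Gru2005}, whose proof uses the same ingredients you list: the $I^{1/2}$ bilinear Airy estimate and the Kenig--Ponce--Vega estimates, transferred to Bourgain spaces. Your toolbox is therefore the right one, and your reduction of $s\ge0$ to $s=0$ is correct.

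For $-\frac16<s\le0$, however, what you have written is an outline that stops exactly where the lemma has content. You say yourself that the near-resonant part of Case C and the high--high--low part of Case B are the main obstacle, and you do not treat them. In addition, three of the steps you do write are wrong or incomplete as stated.

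\emph{Case C, large modulation.} When the largest modulation sits on some $u_i$ with $i\ge1$, your H\"older split puts $u_0$ into $L^8_{xt}$. But $u_0$ lies only in $X^{0,1-b'}$ with $1-b'<\frac12$, and $X^{0,\beta}$ does not embed into $L^8_{xt}$ for $\beta<\frac12$. For $\widehat u=\mathbf{1}_{\{N\le\xi\le2N,\ |\tau|\le N^3\}}$ the ratio of the two norms is $\gtrsim N^{3/2-3\beta}$.

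\emph{Case C, resonance.} The resonance function $\xi_0^3-\sum_{i=1}^4\xi_i^3$ is not of size $N^3$ off a negligible set. Within distance $\delta$ of $(\xi_1,\dots,\xi_4)=(N,-N,N,-N)$ it equals $-3N^2\xi_0+O(N\delta^2)$. At the same time $|\xi_i^2-\xi_j^2|\lesssim N\delta$ for all $1\le i<j\le4$, so neither modulation nor transversality among $u_1,\dots,u_4$ is available there. Depending on whether your Case C includes $\xi_0$, this configuration is either missing from your case list or contradicts the lower bound you claim there. The configuration $|\xi_1|\sim|\xi_2|\sim|\xi_3|\gg|\xi_4|$ is missing in any case.

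\emph{Case A, bookkeeping.} For $|\xi_1|\gg|\xi_2|$ one has $|\xi_1^2-\xi_2^2|^{1/2}\sim|\xi_1|$, so each separated pair gains a full derivative. With only $|\xi_1|^{-1/2}$ per pair, nothing is left to pay for $\langle\xi_2\rangle^{-s}\langle\xi_3\rangle^{-s}$. Nothing is left either for the factor $\langle\xi_4\rangle^{1/2+}$ that an $L^\infty_{xt}$ bound costs; the maximal function only gives $L^4_xL^\infty_t$.

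What is missing in the degenerate region is to use the dual function itself as a bilinear partner. If $|\xi_0|\ll N:=\max_i|\xi_i|$, take $j$ with $|\xi_j|=N$. If $|\xi_0|\sim N$, four numbers whose moduli all lie within a factor $1+\delta$ of $|\xi_0|$ cannot add up to $\xi_0$. Either way there is $j\ge1$ with $|\xi_0^2-\xi_j^2|\gtrsim N^2$, and the bilinear estimate is applied to $(u_0,u_j)$ in interpolated form.

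Concretely, interpolate the $X^{0,b}$ bound $N^{-1}$ with the trivial bound $\min(|\xi_0|,|\xi_j|)^{1/2}$, valid for $u_0\in X^{0,0}$. This gives $N^{-\theta}\min(|\xi_0|,|\xi_j|)^{(1-\theta)/2}$ with $\theta=(1-b')/b$. The remaining three factors go into $L^6_{xt}$ with their $|\xi|^{-1/6}$ gain.

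Near $(N,-N,N,-N)$, with $|\xi_0|=K\ll N$, this produces $K^{1+s+\frac{1-\theta}{2}}N^{-4s-\theta-\frac12}\lesssim N^{1-3s-\frac32\theta}$. That is summable precisely under $b'-1<s-\frac13$, after letting $b\downarrow\frac12$. This is allowed because the $X^{s,b}$ norm increases with $b$.

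Your Case A placement, once the full-derivative gain is used, gives the same exponent $|\xi_1|^{1-3s-\frac32\theta+}$. So the hypothesis on $b'$ is used up in several regions, and each has to be counted explicitly. Your outline names the ingredients but does not carry out these counts, and they are the substance of the lemma.
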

	The well-posedness result for system \eqref{System1} is the following. 
	
\setcounter{claim}{0}
\begin{claim}\label{PropA}
Let $-1/6<s \leq 0$ and $b>\tfrac12$. Suppose $\phi_k\in H^s(\mathbb R)$, $k=0,1,\dots$, satisfy
\[
\|\vec\phi\|_{\A_{A_0}}:=\sum_{k=0}^\infty\frac{A_0^k}{k!}\,\|\phi_k\|_{H^s}<\infty .
\]
 Then: 
\begin{enumerate}
		\item \underline{Existence and Uniqueness:} There exist $T=T(\|\vec\phi\|_{\A_{A_0}})>0$ and a unique solution $\mathbf v(t)=(v_k(t))_{k\ge0}$ of system \eqref{System1} with $v_k\in C([-T,T];H^s)\cap X_T^{s,b}$ and $\sum_{k}\frac{A_0^k}{k!}\|v_k\|_{X_T^{s,b}}<\infty$.
		\item \underline{Uniform Bound:} The solution $\mathbf{v}$ satisfies the analytical estimate:
		\begin{equation*}
			\|\mathbf{v}\|_{\mathcal{A}_{A_0}(X^{s,b}_T)} \leq c \|\boldsymbol{\phi}\|_{\mathcal{A}_{A_0}\left(H^{s}(\mathbb{R})\right)}.
		\end{equation*}
		
 \item \underline{Lipschitz continuity:} If $\boldsymbol{\phi},\tilde{\boldsymbol{\phi}}$ both satisfy $\|\boldsymbol{\phi}\|_{\A_{A_0}},\|\tilde{\boldsymbol{\phi}}\|_{\A_{A_0}}\le R'$ and $\mathbf v,\widetilde{\mathbf v}$ are the corresponding solutions on the common interval $[-T(R'),T(R')]$, then
\[
\|\mathbf v-\widetilde{\mathbf v}\|_{\A_{A_0}(X_T^{s,b})}+\|\mathbf v-\widetilde{\mathbf v}\|_{C([-T,T];\A_{A_0}(H^s))}\le C(T)\,\|\boldsymbol{\phi}-\tilde{\boldsymbol{\phi}} \|_{\A_{A_0}(H^s)};
\]
\end{enumerate}
\end{claim}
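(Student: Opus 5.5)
The proof will mirror Claim~\ref{c0} of Section~\ref{sectA}, now on the Hilbert scale $r=2$. The Fourier--Lebesgue trilinear estimate of Theorem~\ref{thm2} is replaced by Grünrock's quadrilinear estimate, Lemma~\ref{LemaB}. The gain of a small power of $T$ comes from Lemma~\ref{LemaB1}.

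\emph{Setup.} Fix $-\frac16<s\le0$ and $b>\frac12$ close to $\frac12$. Choose $b'$ with $b<b'$, $b,b'\in(\frac12,\frac78)$, and $b'-1<s-\frac13$. This is possible because $s-\frac13>-\frac12$ exactly when $s>-\frac16$, which is where the hypothesis on $s$ enters. On the Banach space $\A_{A_0}(X^{s,b})$ define $\Phi(\mathbf v)=(\Phi_k(\mathbf v))_{k\ge0}$ by
\[
\Phi_k(\mathbf v)(t)=\psi(t)S(t)\phi_k-\psi(t)\int_0^tS(t-t')\,\psi_T(t')\,\Pi^{(4)}_k(v_0,\dots,v_k)(t')\,dt'.
\]

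\emph{Estimate for each component.} Apply Lemma~\ref{Lemma6} with $r=2$: part (1) controls the linear term, and part (2) with $b'=b-1$ controls the Duhamel term. Then Lemma~\ref{LemaB1} and Lemma~\ref{LemaB}, applied to each product $v_{k_2}v_{k_3}v_{k_4}v_{k_5}$ in \eqref{Pikm}, give
\[
\|\Phi_k(\mathbf v)\|_{X^{s,b}}\le c_0\|\phi_k\|_{H^s}+c_1T^{\gamma}\!\!\sum_{k_1+\dots+k_5=k}\!\!\frac{k!\,2^{k_1}}{k_1!\cdots k_5!}\prod_{p=2}^5\|v_{k_p}\|_{X^{s,b}},\qquad \gamma=\tfrac{b'-b}{8(1-b)}>0.
\]

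\emph{Summation over $k$.} Multiply by $A_0^k/k!$ and sum in $k$. The sum factorizes as a Cauchy product,
\[
\sum_k\frac{A_0^k}{k!}\sum\frac{k!2^{k_1}}{k_1!\cdots k_5!}\prod_p a_{k_p}=\Big(\sum_{k_1}\frac{(2A_0)^{k_1}}{k_1!}\Big)\prod_{p=2}^5\Big(\sum_{k_p}\frac{A_0^{k_p}}{k_p!}a_{k_p}\Big).
\]
This yields
\[
\|\Phi(\mathbf v)\|_{\A_{A_0}(X^{s,b})}\le c_0\|\boldsymbol\phi\|_{\A_{A_0}(H^s)}+c_1e^{2A_0}T^\gamma\|\mathbf v\|^4_{\A_{A_0}(X^{s,b})}.
\]
The difference estimate is obtained the same way, by multilinearity and telescoping:
\[
\|\Phi(\mathbf v)-\Phi(\mathbf w)\|\le 4c_1e^{2A_0}T^\gamma(\|\mathbf v\|+\|\mathbf w\|)^3\|\mathbf v-\mathbf w\|.
\]

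\emph{Fixed point.} Set $R=2c_0\|\boldsymbol\phi\|$ and take $T^\gamma\sim(e^{2A_0}R^3)^{-1}$ small. Then $\Phi$ is a contraction on the ball $B_R$. Banach's theorem gives existence, and restricting to $[-T,T]$, where $\psi\equiv\psi_T\equiv1$, gives the Duhamel form of \eqref{System1}. The fixed point lies in $B_R$, which is item~2. Uniqueness in the whole class $X_T^{s,b}$ follows from the standard argument of \cite{BOP1998}, exactly as in Claim~\ref{c0}.

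\emph{Continuity and Lipschitz bound.} Continuity in time, $v_k\in C([-T,T];H^s)$, follows from the embedding $X^{s,b}\hookrightarrow C_tH^s$ for $b>\frac12$, summed termwise with the weights $A_0^k/k!$. For item~3, write $\mathbf v-\widetilde{\mathbf v}=I+II$ as in Claim~\ref{c0}. Then
\[
\|\mathbf v-\widetilde{\mathbf v}\|\le\frac{c_0}{1-L}\|\boldsymbol\phi-\tilde{\boldsymbol\phi}\|,
\]
and the same embedding gives the bound in $C([-T,T];\A_{A_0}(H^s))$.

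\emph{Main obstacle.} The delicate point is the time-gain step. Lemma~\ref{LemaB} only maps into $X^{s,b'-1}$ with $b'-1<s-\frac13$, while Lemma~\ref{Lemma6}(2) needs the source in $X^{s,b-1}$. Lemma~\ref{LemaB1} bridges the two, but it requires $b<b'<\frac78$, and $b'-1<s-\frac13$ forces $b'<s+\frac23\le\frac23$. So one must pick
\[
\tfrac12<b<b'<\min\big(s+\tfrac23,\tfrac78\big).
\]
This is nonempty precisely for $s>-\frac16$, and it is the place where the threshold enters. I would also check that the constant in Lemma~\ref{LemaB} is independent of the indices $k_p$, which is automatic since it is applied factor by factor. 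That independence is what keeps the constant in the $k$-sum uniform, and hence what lets the series close.
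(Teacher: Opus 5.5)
Your proposal is correct and is essentially the paper's proof. The paper only says the argument is identical to Claim~1 in the proof of Theorem~\ref{main1}, using the standard linear estimate together with Lemmas~\ref{LemaB1} and~\ref{LemaB}; your contraction in $\mathcal{A}_{A_0}(X^{s,b})$, with the Cauchy-product summation producing the factor $e^{2A_0}\|\mathbf v\|^4$, is that argument written out, and your explicit choice $\frac12<b<b'<s+\frac23$ is what those two lemmas require (the paper leaves it implicit), so the claim is really obtained for $b$ in that range rather than for every $b>\frac12$.
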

\begin{claimproof}
The proof is identical to that of Claim 1 in Theorem A, relying on the linear estimate in \cite[Lemma 3.2]{KPV2} together with Lemmas \ref{LemaB1} and \ref{LemaB}. We therefore omit the details. 
\end{claimproof}
\begin{corolario}\label{CoroA}
Let $-1/6<s \leq 0$, $b>1/2$. Suppose $\phi_k:=(x\partial_x)^k\phi_0\in H^s(\mathbb R)$, $k=0,1,\dots$, satisfies $\|\vec\phi\|_{\A_{A_0}}<\infty$. Then there exist $T=T(\|\vec\phi\|_{\A_{A_0}})>0$ and a unique solution $v$ of the cubic gKdV  equation with $v\in C([-T,T];H^s)\cap X_T^{s,b}$ and $\sum_k\frac{A_0^k}{k!}\|P^kv\|_{X_T^{s,b}}<\infty$; moreover $\phi_0 \mapsto v(t)$ is Lipschitz continuous in the following sense:
\[
\|P^kv-P^k\tilde v\|_{X_T^{s,b}}+\|P^kv-P^k\tilde v\|_{C([-T,T];H^s)}\le\frac{k!}{A_0^k}\,C(T)\sum_{m=0}^\infty\frac{A_0^m}{m!}\|(x\partial_x)^m(\phi_0-\tilde{\phi_0})\|_{H^s}.
\]
\end{corolario}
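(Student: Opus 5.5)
The plan is to deduce Corollary~\ref{CoroA} from Claim~\ref{PropA} by identifying the components of the system solution with the iterates $P^k v$ of a single cubic gKdV solution. First apply Claim~\ref{PropA} to the data $\phi_k=(x\partial_x)^k\phi_0$. This gives $T=T(\|\vec\phi\|_{\A_{A_0}})$ and a unique $\mathbf v=(v_k)_{k\ge0}$ solving \eqref{System1}, with $\sum_k\frac{A_0^k}{k!}\|v_k\|_{X_T^{s,b}}<\infty$. The component $v:=v_0$ solves the cubic gKdV equation with datum $\phi_0$, because $\Pi_0^{(4)}(v_0)=\partial_x(v_0^4)$. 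Since Lemma~\ref{LemaB} is the standard well-posedness estimate in $X_T^{s,b}$ (applied with $k=0$), $v$ is also the unique solution in $C([-T,T];H^s)\cap X_T^{s,b}$.

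The key step is the identification $v_k=P^kv$ on $[-T,T]\times\mathbb R$. The derivation in Section~\ref{sect2} shows formally that $w_k:=P^kv$ satisfies \eqref{System1}. It uses $LP^k=(P+3I)^kL$ and $(P+3I)^k\partial_x=\partial_x(P+2I)^k$, together with the trace $w_k(0)=(x\partial_x)^k\phi_0$, which holds because the $3t\partial_t$ part of $P$ vanishes at $t=0$. To make this rigorous I would argue in one of two ways:
\begin{itemize}
\item[(a)] Induct on $k$ in the distributional sense. Suppose $v_j=P^jv$ for $j\le k$. Then $Pv_k$ is defined in $\mathcal S'$, and applying $P$ to the Duhamel equation for $v_k$ shows that $Pv_k$ satisfies the $(k+1)$-st linear equation, whose forcing $\Pi_{k+1}^{(4)}$ involves only $v_0,\dots,v_{k+1}$ and is linear in the top term $v_{k+1}$. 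Uniqueness for that linear problem in $X_T^{s,b}$, which follows from Lemmas~\ref{LemaB1}--\ref{LemaB} and the contraction with frozen lower-order terms, then gives $v_{k+1}=Pv_k$.
\item[(b)] Approximate $\phi_0$ by Schwartz data with the same Nelson norm bound. For smooth decaying solutions the identity $v_k=P^kv$ is classical. Pass to the limit using the Lipschitz estimate in item 3 of Claim~\ref{PropA}; the linear map $\mathbf v\mapsto v_k$ is continuous and $P^k$ is continuous in $\mathcal S'$.
\end{itemize}
I prefer (b). The main obstacle is checking that the approximants can be chosen with $\|\vec\phi^{(n)}-\vec\phi\|_{\A_{A_0}}\to0$. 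I would try dilation-invariant mollification: apply a multiplicative convolution on $\mathbb R_+$ and $\mathbb R_-$, which commutes with $x\partial_x$, followed by a cutoff argument. The slightly smaller radius obtained this way can be compensated by taking $A_0$ marginally smaller in the Nelson condition if needed.

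Finally, the Lipschitz bound. For data $\phi_0,\tilde\phi_0$, item 3 of Claim~\ref{PropA} gives
\[
\sum_k\frac{A_0^k}{k!}\Big(\|P^kv-P^k\tilde v\|_{X_T^{s,b}}+\|P^kv-P^k\tilde v\|_{C([-T,T];H^s)}\Big)\le C(T)\sum_{m}\frac{A_0^m}{m!}\|(x\partial_x)^m(\phi_0-\tilde\phi_0)\|_{H^s}.
\]
This uses linearity, $\phi_k-\tilde\phi_k=(x\partial_x)^k(\phi_0-\tilde\phi_0)$, and the embedding $X_T^{s,b}\hookrightarrow C([-T,T];H^s)$ for $b>1/2$. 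Bounding the single $k$-th term by the whole series and multiplying by $k!/A_0^k$ yields the stated estimate.
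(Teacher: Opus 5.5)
Your skeleton matches the paper's. The paper's proof of the corollary is exactly your last paragraph: apply Claim~\ref{PropA} to $\phi_k=(x\partial_x)^k\phi_0$ and isolate the $k$-th term of the Lipschitz series. The identification $v_k=P^kv$ is left to the subsequent Remark, which uses the regularization you choose in (b).

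\textbf{The gap} is in the step you yourself flag, and the mollification you propose cannot close it. Multiplicative convolution commutes with dilations and so preserves homogeneity. It therefore does nothing at the fixed point $x=0$: it maps the paper's example $x_+^\lambda$ to a constant multiple of itself. All regularization at the origin would then have to come from the cutoff. But a cutoff $\eta$ for which $y\mapsto\eta(e^y)$ is not real analytic destroys the Nelson condition, and this covers every compactly supported cutoff and every cutoff vanishing near $0$. For instance $(x\partial_x)^k(\eta x_+^\lambda)=\big((x\partial_x+\lambda)^k\eta\big)x_+^\lambda$, and on the transition region of $\eta$ these norms are not $O(B^kk!)$ for any $B$. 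Hence $\eta x_+^\lambda\notin\A_A(H^s)$ for \emph{every} $A>0$, and shrinking the radius does not rescue it. Option (a) is circular as stated: uniqueness in $X_T^{s,b}$ applies only once you know $Pv_k\in X_T^{s,b}$, which does not follow from $v_k\in X_T^{s,b}$ and is essentially the claim. The paper's Remark passes over the same point.

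\textbf{A fix} is to avoid approximating in the infinite-order norm altogether.
\begin{enumerate}
\item For each $K$, the equations for $(v_0,\dots,v_K)$ form a closed subsystem. It is linear in the top component, since the only term of $\Pi_k^{(4)}$ containing $v_k$ is $4\partial_x(v_0^3v_k)$.
\item The fixed-point estimate behind Claim~\ref{PropA}, restricted to $k\le K$, involves only $\sum_{k\le K}\frac{A_0^k}{k!}\|\phi_k\|_{H^s}\le\|\vec\phi\|_{\A_{A_0}}$. So on the same interval (using the strict margin in the choice of $T$) the subsystem is well posed and Lipschitz in $(\phi_0,\dots,\phi_K)\in(H^s)^{K+1}$.
\item You then need Schwartz $\phi_0^{(n)}$ with $(x\partial_x)^k\phi_0^{(n)}\to(x\partial_x)^k\phi_0$ in $H^s$ only for $k\le K$. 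Ordinary mollification and cutoff suffice. Indeed $[x\partial_x,\rho_\epsilon*]f=\big((z\rho'+\rho)_\epsilon\big)*f$ has a mean-zero kernel, $(x\partial_x)^j[\psi(\cdot/R)]=\big((x\partial_x)^j\psi\big)(\cdot/R)$, and only finitely many $j$ occur, so no factorial control is needed.
\item For the smooth solutions, prove $v^{(n)}_k=P^kv^{(n)}$ by induction on $k$. Use uniqueness for the linear equation of $v_k$ with $v_0,\dots,v_{k-1}$ frozen, together with $P^kv^{(n)}\in X_T^{s,b}$ from persistence of Schwartz regularity. Do not use the uniqueness class of Claim~\ref{PropA}: its summability condition is not known a priori for $(P^kv^{(n)})_k$.
\item Pass to the limit in $\mathcal D'((-T,T)\times\mathbb R)$, where $P^K$ is continuous.
\end{enumerate}

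If you do want approximants in $\A_A(H^s)$ itself, use regularizers analytic in $\log|\xi|$ and $\log|x|$, such as $e^{-\delta x^2}e^{\delta\partial_x^2}$. Their $(\xi\partial_\xi)^j$ and $(x\partial_x)^j$ iterates obey $CB^jj!$ bounds uniformly in $\delta$, so this works at a fixed loss of radius.
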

\begin{proof}
Apply Proposition~\ref{PropA} to $\phi_k := (x\partial_x)^k\phi_0$. Since estimate~(3) is a series in $k$ with non-negative terms, we may isolate the $k$-th term and divide both sides by $A_0^k/k!$, yielding the desired inequality.
\end{proof}
\begin{obs}
By virtue of Corollary \ref{CoroA}, the components $v_k$ of the infinite system \eqref{System1} can be rigorously identified with $P^k v$, where $v$ is the solution to gKdv-3. Although this relation is classical for smooth solutions, its extension to the low-regularity setting of Proposition \ref{PropA} follows from a standard regularization argument: approximating the initial data by smooth functions yields smooth solutions for which $v_{k,n} = P^k v_n$ holds, and the limit as $n \to \infty$ preserves this identity in the weak sense due to the Lipschitz continuous dependence.
\end{obs}
The next result will be used repeatedly in the following claims.
\begin{lema}\label{LemaC}
		Let $u$ belong to the Bourgain space $X^{0,b}$ associated to the KdV/Airy group. Then there exists a constant $c=c(b)$ such that
		\begin{equation*}
			\|u\|_{L^{4}_{xt}} \leq c \|u\|_{X^{0,b}}, \;\;\; b > 1/3,
		\end{equation*}
		and 
		\begin{equation*}
			\|u\|_{L^{8}_{xt}} \leq c \|u\|_{X^{0,b}}, \;\;\; b > 1/2. 
		\end{equation*}  
		If the estimates are stated on a finite time interval $[0,T]$, write $X^{0,b}_T$ and allow $c=c(b,T)$.
	\end{lema}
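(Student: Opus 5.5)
The plan is the standard transference route: first prove the two bounds for free Airy solutions, then pass to $X^{0,b}$ by superposing modulated free solutions.

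\emph{Step 1 (free estimates).} For $S(t)=e^{-t\partial_x^3}$ we want
\[
\|S(t)f\|_{L^8_{xt}}\lesssim\|f\|_{L^2}
\quad\text{and}\quad
\|S(t)f\|_{L^4_{xt}}\lesssim \|f\|_{\dot H^{-1/12}}\le\|f\|_{L^2}
\]
on the appropriate frequency range. The $L^8$ bound is the Kenig--Ponce--Vega Strichartz estimate (Theorem~\ref{kpvstrichartz}), already used in Claim~\ref{c2.2}. For $L^4$ we interpolate between the trivial $L^\infty_tL^2_x$ bound and the $L^8_{xt}$ bound. Concretely, the estimate $\|S(t)f\|_{L^4_tL^\infty_x}\lesssim\|D^{-1/4}f\|_{L^2}$ combined with the energy identity, or Strichartz interpolation, gives $\|S(t)f\|_{L^4_{xt}}\lesssim\|D^{-1/12}f\|_{L^2}$. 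This has to be handled carefully at low frequency. If that causes trouble, we fall back on the local-in-time form $\|S(t)f\|_{L^4([0,T];L^4_x)}\lesssim_T\|f\|_{L^2}$, which follows from Hölder in time applied to $L^8_tL^4_x$ estimates. The statement explicitly allows $c=c(b,T)$ on finite intervals, so this suffices.

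\emph{Step 2 (transference to $X^{0,b}$, $b>1/2$).} Write
\[
u(t)=\int_{\mathbb R}e^{it\lambda}\,S(t)f_\lambda\,d\lambda,
\qquad \widehat{f_\lambda}(\xi)=\mathcal F_{t,x}u(\lambda+\xi^3,\xi).
\]
Minkowski's inequality and Step 1 give
\[
\|u\|_{L^p_{xt}}\le c\int\|f_\lambda\|_{L^2}\,d\lambda .
\]
Cauchy--Schwarz with the weight $\langle\lambda\rangle^{b}$ bounds this by $\|\langle\lambda\rangle^{-b}\|_{L^2_\lambda}\|u\|_{X^{0,b}}$, which is finite exactly when $b>1/2$. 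This is the same mechanism as Lemma~\ref{stabilitylinfty} with $r=2$, and it settles the $L^8$ statement and the $L^4$ statement for $b>1/2$.

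\emph{Step 3 (lowering to $b>1/3$ for $L^4$).} This is the main obstacle. We interpolate (complex interpolation of the weighted $L^2$ spaces $X^{0,b}$) between the Step 2 bound $X^{0,1/2+}\to L^4$ with some room to spare and the Sobolev-type bound $X^{0,0}=L^2_{xt}\to L^2_{xt}$. More precisely, Step 2 actually gives $X^{0,1/2+}\to L^8_{xt}$, and interpolating $L^2\to L^2$ with $X^{0,1/2+}\to L^8$ at $\theta=2/3$ yields $\frac1p=\frac{1/3}{2}+\frac{2/3}{8}=\frac14$. This gives $X^{0,b}\to L^4$ for $b=\frac23\cdot(\frac12+)=\frac13+$, which is exactly the stated threshold.

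On a finite interval $[0,T]$ we apply the global estimate to an extension realizing the restriction norm and take the infimum, so the constant depends on $b$ and possibly on $T$. The only delicate point is making sure the interpolation in Step 3 is legitimate. It is, since $X^{0,b}$ is a weighted $L^2$ space in $(\tau,\xi)$, so Stein--Weiss/complex interpolation applies directly with the operator being the identity.
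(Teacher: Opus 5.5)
Your argument is correct once one piece is discarded, and its core is the standard proof behind the paper's reference; the paper itself gives no argument and only cites Gr\"unrock.

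\textbf{What works.} The $L^8$ bound follows from the Kenig--Ponce--Vega estimate $\|S(t)f\|_{L^8_{xt}}\lesssim\|f\|_{L^2}$ together with your transference/Cauchy--Schwarz step, which is exactly where $b>1/2$ is needed. The $L^4$ bound follows by interpolating the identity between $X^{0,0}=L^2_{xt}\to L^2_{xt}$ and $X^{0,b_1}\to L^8_{xt}$, $b_1>1/2$, at $\theta=2/3$. This interpolation is legitimate: apply Stein's analytic interpolation to $F\mapsto\mathcal{F}^{-1}_{t,x}\bigl(\langle\tau-\xi^3\rangle^{-zb_1}F\bigr)$ on $L^2_{\tau\xi}$, where the imaginary powers are unimodular, so the endpoint bounds are uniform in $\operatorname{Im} z$. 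The restricted-time version follows by taking the infimum over extensions. The constant then does not depend on $T$ at all.

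\textbf{What to delete.} Remove the $L^4$ part of Step~1, and the claim in Step~2 that transference settles $L^4$ for $b>1/2$.
\begin{itemize}
\item The free bound $\|S(t)f\|_{L^4_{xt}}\lesssim\|D^{-1/12}f\|_{L^2}$ is false. The scaling $u\mapsto u(\lambda^3t,\lambda x)$ forces $\dot H^{-1/2}$ on the right-hand side.
\item Interpolating $L^4_tL^\infty_x$ (with $D^{-1/4}$) against $L^\infty_tL^2_x$ meets the diagonal $q=p$ only at $L^6_{xt}$ with $\dot H^{-1/6}$; it never reaches $L^4_{xt}$.
\item No global $L^4_{xt}$ bound for free Airy waves holds in any norm. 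Take $\widehat f$ smooth, supported in $[1,2]$, and not identically zero. For $t\gg1$, stationary phase gives $|S(t)f(x)|\gtrsim t^{-1/2}$ on a set of length $\gtrsim t$. Hence $\|S(t)f\|_{L^4_x}^4\gtrsim t^{-1}$, and therefore $\|S(t)f\|_{L^4_{xt}}=\infty$.
\end{itemize}
So transference cannot give the global $L^4$ estimate for any $b$; only your Step~3 does. Since Step~3 covers every $b>1/3$, including $b>1/2$, the local-in-time fallback you proposed is unnecessary.
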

	\begin{proof}
		See \cite{Gru2005}. 
	\end{proof}
\setcounter{claim}{1}
		\begin{claim}\label{ct2.1}
			Let $\vec{\boldsymbol{\epsilon}} = (\epsilon, \epsilon) \in (0, \infty)^{2}$ and let $\chi_{(t_{0},x_{0}),\boldsymbol{\vec{\epsilon}}}$ be the cutoff function defined in \eqref{cutoff1}. There exist constants $c > 0,$  such that the following estimate holds for all $k \in \mathbb{N}_0$:
			\begin{equation*}
				\left\| \chi_{(t_{0},x_{0}),\boldsymbol{\vec{\epsilon}}} \,v_{k} \right\|_{L^{2}_{xt}(\mathbb{R}^2)}  \leq c \alpha^{k}_{1} k!.
			\end{equation*}
		\end{claim}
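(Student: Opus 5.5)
The bound will follow from Claim~\ref{PropA} together with Lemma~\ref{LemaC}, in the same way as Claims~\ref{c2.1} and \ref{c2.2} in the mKdV case. By Corollary~\ref{CoroA} and the subsequent remark, $v_k=P^kv$. By Claim~\ref{PropA}(2), each term of the analytic series is controlled:
\[
\frac{A_0^k}{k!}\|v_k\|_{X^{s,b}_T}\le \|\mathbf v\|_{\mathcal A_{A_0}(X^{s,b}_T)}\le c\|\boldsymbol\phi\|_{\mathcal A_{A_0}(H^s)} .
\]
Hence $\|v_k\|_{X^{s,b}_T}\le C\,A_0^{-k}k!$. We then set $\alpha_1:=A_0^{-1}$, or any larger constant.

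The difficulty is that $s$ may be negative, $-\tfrac16<s\le 0$, so $X^{s,b}$ does not embed into $L^2_{xt}$ directly. I would handle this through the cutoff. Take the time support of $\chi_{(t_0,x_0),\vec{\boldsymbol\epsilon}}$ inside $[-T,T]$, which is the only region where $v_k$ is defined. Then
\[
\|\chi v_k\|_{L^2_{xt}}=\|\widehat{\chi v_k}\|_{L^2_{\tau\xi}} .
\]
I would estimate the right-hand side by duality against $g\in L^2_{xt}$, that is, through $\langle v_k,\chi g\rangle$. When $s<0$ this requires bounding $\chi g$ in $X^{-s,1-b}$ by $\|g\|_{L^2}$, which fails for general $g$.

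Since that fails, the first thing I would actually try is different. For $s=0$, $\chi$ bounded gives $\|\chi v_k\|_{L^2}\le\|v_k\|_{L^2_{T}L^2_x}\lesssim\|v_k\|_{X^{0,b}_T}$ by the embedding $X^{0,b}\hookrightarrow C_tL^2_x$. For $s<0$, I would gain $|s|<\tfrac16$ derivatives through the Kato local smoothing effect of the Airy group. This effect gives one full derivative in $L^\infty_xL^2_t$, and it transfers to $X^{s,b}$ via the standard transference principle for $b>\tfrac12$. Localization in $x$ by $\chi$ converts $L^\infty_x$ into $L^2_x$, since the $x$-support is compact. This yields
\[
\|\chi |\nabla_x|^{1}v_k\|_{L^2_{xt}}\lesssim_\epsilon \|v_k\|_{X^{0,b}} ,
\]
and by interpolation with the $X^{s,b}$ bound a gain of $|s|$ derivatives. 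In other words, $\|\chi\langle\nabla_x\rangle^{-s}v_k\|_{L^2}\lesssim\|v_k\|_{X^{s,b}}$ after commuting the fractional derivative past the cutoff. The commutator $[\chi,\langle\nabla_x\rangle^{-s}]$ is of order $-s-1<0$, so it is harmless. Since $-s>0$, this controls $\|\chi v_k\|_{L^2}$.

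The main obstacle is this last step: making the local smoothing transference rigorous in $X^{s,b}$ with negative $s$, with constants independent of $k$. Uniformity in $k$ is automatic, because the estimate is linear and applies to each $v_k$ separately with the same constant $c(b,T,\epsilon,\chi)$. Multiplying by $\|v_k\|_{X^{s,b}_T}\le C\alpha_1^kk!$ then gives the claimed bound $\|\chi_{(t_0,x_0),\vec{\boldsymbol\epsilon}}v_k\|_{L^2_{xt}}\le c\,\alpha_1^kk!$.
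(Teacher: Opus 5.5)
Your argument is correct, but it takes a different route from the paper. Write $\chi$ for $\chi_{(t_0,x_0),\vec{\boldsymbol{\epsilon}}}$.

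\textbf{The paper's route.} The paper applies H\"older, $\|\chi v_k\|_{L^2_{xt}}\le\|\chi\|_{L^4_{xt}}\|v_k\|_{L^4_{xt}}$, then the $L^4$ bound $\|u\|_{L^4_{xt}}\lesssim\|u\|_{X^{0,b}}$ for $b>\frac13$ from Lemma~\ref{LemaC}, and then Claim~\ref{PropA}. This is shorter and needs no spatial localization. As written, however, it uses $\|v_k\|_{X^{0,b}}\lesssim\alpha_1^k k!$, and Claim~\ref{PropA} supplies that only when $s=0$. For $-\frac16<s<0$ the $X^{s,b}$ norm is strictly weaker, which is exactly the difficulty you identified.

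\textbf{Your route.} Local smoothing handles the case $s<0$. The step you call the main obstacle is routine: the transference $\||\nabla_x|u\|_{L^\infty_xL^2_t}\lesssim\|u\|_{X^{0,b}}$ for $b>\frac12$ is just Minkowski's inequality in the modulation variable. Combined with $\|\chi F\|_{L^2_{xt}}\le\|\chi\|_{L^2_xL^\infty_t}\|F\|_{L^\infty_xL^2_t}$, it gives a local gain of up to one derivative, which covers every $s\ge-1$.

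\textbf{Bookkeeping.} The bound $\|\chi\langle\nabla_x\rangle^{-s}v_k\|_{L^2}\lesssim\|v_k\|_{X^{s,b}}$ is a gain of $2|s|$, not $|s|$. That is harmless, since $2|s|<1$. The commutator that then removes $\langle\nabla_x\rangle^{-s}$ is $[\langle\nabla_x\rangle^{-s},\chi]\colon H^s_x\to H^{2s+1}_x\subset L^2_x$.

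\textbf{A cleaner version.} You can avoid the commutator entirely. Split $v_k=v_k^{\mathrm{lo}}+v_k^{\mathrm{hi}}$ according to $|\xi|\le1$ and $|\xi|>1$. Bound the low part by $\|v_k\|_{X^{s,0}}$. Bound the high part by $\|\chi\|_{L^2_xL^\infty_t}\|v_k^{\mathrm{hi}}\|_{L^\infty_xL^2_t}\lesssim\|v_k\|_{X^{-1,b}}\le\|v_k\|_{X^{s,b}}$.

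Relative to the paper, your route costs only the requirement $b>\frac12$ instead of $b>\frac13$, and Claim~\ref{PropA} assumes $b>\frac12$ anyway.
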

		\begin{claimproof}
			From Lemma \ref{LemaC}, we obtain 
			\begin{equation*}
				\left\| \chi_{(t_{0},x_{0}),\boldsymbol{\vec{\epsilon}}} \,v_{k} \right\|_{L^{2}_{xt}(\mathbb{R}^2)}  \lesssim \|  \chi_{(t_{0},x_{0}),\boldsymbol{\vec{\epsilon}}} \|_{L^{4}_{xt}(\mathbb{R}^2)} \|v_k\|_{L^{4}_{xt}(\mathbb{R}^2)} \lesssim \|v_k\|_{X^{0,b}}.
			\end{equation*}
			After combing Lemma \ref{LemaC} with Proposition \ref{PropA} it  yields
			\begin{equation*}
				\left\| \chi_{(t_{0},x_{0}),\boldsymbol{\vec{\epsilon}}} \,v_{k} \right\|_{L^{2}_{xt}(\mathbb{R}^2)} \lesssim\alpha_{1}^{k} k!,\quad \forall\,\, k\in \mathbb{N}_{0}.
			\end{equation*}
		\end{claimproof}
		\begin{claim}\label{ct2.2}
			Let $\vec{\boldsymbol{\epsilon}} = (\epsilon, \epsilon) \in (0, \infty)^{2}$ and let $\chi_{(t_{0},x_{0}),\boldsymbol{\vec{\epsilon}}}$ be the cutoff function defined in \eqref{cutoff1}. There exist constants $c > 0,$  such that the following estimate holds for all $k \in \mathbb{N}_0$:
			\begin{equation*}
				\left\| \chi_{(t_{0},x_{0}),\boldsymbol{\vec{\epsilon}}} \,v_{k} \right\|_{H^{1/2}_{xt}(\mathbb{R}^2)} \leq c \alpha^{k}_{1} k!.
			\end{equation*}
		\end{claim}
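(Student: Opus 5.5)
We mirror Claim~\ref{c3} of the mKdV argument, now with $r=2$: an interpolation-type (ellipticity) bound trades spacetime regularity of $\chi_{(t_0,x_0),\vec{\boldsymbol\epsilon}}v_k$ for control of $t\partial_x^3$ and $P^3$ applied to it, measured in a norm three units lower. Concretely, by Lemma~\ref{interpo1} with $r=2$ we would write
\begin{equation*}
\|\chi_{(t_0,x_0),\vec{\boldsymbol\epsilon}}v_k\|_{H^{1/2}_{xt}}\lesssim \|\chi v_k\|_{H^{-5/2}_{xt}}+\|t\partial_x^3(\chi v_k)\|_{H^{-5/2}_{xt}}+\|P^3(\chi v_k)\|_{H^{-5/2}_{xt}}=:\Xi_1+\Xi_2+\Xi_3 ,
\end{equation*}
abbreviating $\chi=\chi_{(t_0,x_0),\vec{\boldsymbol\epsilon}}$. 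If Lemma~\ref{interpo1} only yields a gain to $H^1$ from $H^{-2}$, we instead use this pair and then interpolate with Claim~\ref{ct2.1}, since $H^{1/2}$ sits between $L^2$ and $H^1$. In either form, the key point is that $\Xi_1$ and all lower-order pieces are bounded by $\|\chi v_k\|_{L^2_{xt}}$ (or by $\|\tilde\chi v_{k}\|_{L^2_{xt}}$ for an enlarged cutoff $\tilde\chi\equiv1$ on $\supp\chi$). This quantity is $\lesssim\alpha_1^kk!$ by Claim~\ref{ct2.1}.

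For $\Xi_2$ we expand with \eqref{identity2}; commutator terms carry fewer derivatives on $v_k$ and smooth factors of $\chi$, so they are handled by the multiplier bound Lemma~\ref{bach1} and Claim~\ref{ct2.1}. The principal term $t\chi\partial_x^3v_k$ is rewritten through the analogue of \eqref{identity1} for $m=4$:
\begin{equation*}
t\chi\partial_x^3v_k=-t\chi\Pi_k^{(4)}(v)-\tfrac13\chi v_{k+1}+\tfrac13x\chi\partial_xv_k .
\end{equation*}
The term $\chi v_{k+1}$ gives $\alpha_1^{k+1}(k+1)!$. The term $x\chi\partial_xv_k$ equals $\partial_x(x\chi v_k)$ minus a smooth multiple of $v_k$, so it costs one derivative, which the negative index absorbs.

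For the nonlinear term we move the $\partial_x$ in $\Pi_k^{(4)}$ onto the negative Sobolev weight. We then use Sobolev embedding in $\mathbb R^2$ dually, $L^1_{xt}\hookrightarrow H^{-1-}_{xt}$, or the Lorentz/Bessel-potential argument of Claim~\ref{c3}. This reduces the estimate to $\|t\chi\,v_{k_2}v_{k_3}v_{k_4}v_{k_5}\|_{L^1_{xt}}$, which Hölder bounds by $\|t\chi\|_{L^2}\prod\|v_{k_j}\|_{L^8_{xt}}$. Lemma~\ref{LemaC} together with Claim~\ref{PropA} (via Corollary~\ref{CoroA}) bounds each $L^8$ factor by $\|v_{k_j}\|_{X^{0,b}}\lesssim\|v_{k_j}\|_{X^{s,b}}\langle\cdot\rangle$ adjustments. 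Summing the multinomial coefficients then gives $k!\sum 2^{k_1}\alpha^{k-k_1}/k_1!\lesssim e^{2/\alpha}(k+5)!\alpha^k$, and the polynomial factor is absorbed by enlarging $\alpha_1$.

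$\Xi_3$ is handled by the Leibniz rule for $P$, $P^3(\chi v_k)=\sum_j\binom3j P^{3-j}\chi\,v_{k+j}$, together with Lemma~\ref{bach1} and Claim~\ref{ct2.1}. This gives $\lesssim\alpha_1^{k+3}(k+3)!$, again absorbed into $c\,\alpha_1^kk!$ after enlarging $\alpha_1$.

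We expect the main obstacle to be the $L^8$ step for $s<0$. Lemma~\ref{LemaC} requires $X^{0,b}$ control, while Claim~\ref{PropA} only provides $X^{s,b}$ with $s>-1/6$. To bridge this we first try Grünrock's refined Strichartz estimate $L^8_{xt}\lesssim X^{-1/6+,b}$, which follows from $\|S(t)f\|_{L^8}\lesssim\||\nabla|^{-1/4}\cdot\|$-type smoothing. Failing that, we would use the $L^4$ bound with a negative-derivative gain and redistribute exponents in Hölder, replacing $L^1$ by $L^{p}$ with $p$ slightly above $1$ and using a correspondingly stronger negative Sobolev embedding.
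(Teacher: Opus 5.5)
Your argument follows the paper's proof. Both use Lemma~\ref{LemaA} (equivalently Lemma~\ref{interpo1} with $r=2$) at order $\tfrac12$ starting from $H^{-5/2}_{xt}$, the Leibniz expansion of $t\partial_x^3(\chi v_k)$, and the identity $t\chi\partial_x^3v_k=-\tfrac13\chi v_{k+1}+\tfrac13x\chi\partial_xv_k-t\chi\Pi_k^{(4)}(v)$. Both also use $L^1_{xt}\hookrightarrow H^{-3/2}_{xt}$ for the quartic term, the Leibniz rule for $P^3$, and Claim~\ref{ct2.1} for every linear piece. The only difference is cosmetic: you close the quartic term with $\|t\chi\|_{L^2}\prod_j\|v_{k_j}\|_{L^8_{xt}}$, whereas the paper uses $\|t\chi\|_{L^\infty}\prod_j\|v_{k_j}\|_{L^4_{xt}}$.

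The obstacle you flag is a genuine gap, and the paper does not resolve it. It applies Lemma~\ref{LemaC} to $v_{k_\ell}$ and then quotes Claim~\ref{PropA}, which only controls $\|v_k\|_{X^{s,b}}$ with $-\frac16<s\le0$. Claim~\ref{ct2.1}, on which your $\Xi_1$, $\Xi_3$ and the linear parts of $\Xi_2$ rest, is proved the same way.

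Your first repair is false. $L^8_{xt}$ is the $L^2$-critical Airy norm. For $f_N(x)=N^{1/2}g(Nx)$ with $\widehat g$ supported in $\{|\xi|\sim1\}$, one has $\int_0^1\!\int|S(t)f_N|^8\,dx\,dt=\int_0^{N^3}\|S(t')g\|_{L^8_x}^8\,dt'\to c_g>0$, while $\|f_N\|_{H^s}\sim N^{s}\to0$. Hence no bound $\|u\|_{L^8_{xt}}\lesssim\|u\|_{X^{s,b}}$ holds for any $s<0$.

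Your second repair, read as a negative-regularity version of the $L^4_{xt}$ bound in Lemma~\ref{LemaC}, fails below $s=-\frac18$. Knapp packets $\widehat f=\phi\big(N^{1/2}(\xi-N)\big)$ give $\|S(t)f\|_{L^4([0,c]\times\mathbb R)}/\|f\|_{H^s}\gtrsim N^{-1/8-s}$.

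What does work down to $-\frac16$ is the Airy Strichartz estimate $\||\nabla_x|^{1/6}S(t)f\|_{L^6_{xt}}\lesssim\|f\|_{L^2}$. With Bernstein at low frequencies, it transfers to $\|\psi(t)u\|_{L^6_{xt}}\lesssim\|u\|_{X^{-1/6,b}}$ for $b>\frac12$ and a time cutoff $\psi\equiv1$ on the time-support of $\chi$. Then
\[
\|t\chi\,v_{k_2}v_{k_3}v_{k_4}v_{k_5}\|_{L^1_{xt}}\le\|t\chi\|_{L^3_{xt}}\prod_{j=2}^{5}\|\psi v_{k_j}\|_{L^6_{xt}},\qquad \|\chi v_k\|_{L^2_{xt}}\le\|\chi\|_{L^3_{xt}}\|\psi v_k\|_{L^6_{xt}}.
\]
These close both the quartic term and Claim~\ref{ct2.1} for all $s>-\frac16$, with the same factorial bookkeeping. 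Alternatively, $\||\nabla_x|^{1/4}S(t)f\|_{L^4_tL^\infty_x}\lesssim\|f\|_{L^2}$, combined with the compact $x$-support of $\chi$, does the same job.
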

		\begin{claimproof}
			Denote $v_{k}:= P^{k} v$. Taking $\mu = 1/2$ in Lemma \ref{LemaA}, we have that
			\begin{equation*}
				\begin{split}
					\left\|\langle \nabla_{t,x} \rangle^{3}\left(  \chi_{(t_{0},x_{0}),\boldsymbol{\vec{\epsilon}}} v_k\right) \right\|_{H^{- 5/2}_{xt}(\mathbb{R}^{2})} & = \left\|\langle \nabla_{t,x} \rangle^{1/2} \left( \chi_{(t_{0},x_{0}),\boldsymbol{\vec{\epsilon}}}v_k\right) \right\|_{L^{2}_{xt}(\mathbb{R}^2)} \\
					& \lesssim \left\|  \chi_{(t_{0},x_{0}),\boldsymbol{\vec{\epsilon}}} v_k \right\|_{H^{- 5/2}_{xt}(\mathbb{R}^{2})} + \left\| t \partial_{x}^{3} ( \chi_{(t_{0},x_{0}),\boldsymbol{\vec{\epsilon}}} v_k) \right\|_{H^{- 5/2}_{xt}(\mathbb{R}^{2})} \\
					&\quad + \left\| P^{3}( \chi_{(t_{0},x_{0}),\boldsymbol{\vec{\epsilon}}} v_k)\right \|_{H^{-5/2}_{xt}(\mathbb{R}^{2})}.
				\end{split}
			\end{equation*}
			Notice that  by {\sc claim } \ref{ct2.1} it follows that  for all $k\in\mathbb{N}_{0}$:
			\begin{equation*}
				\left\|  \chi_{(t_{0},x_{0}),\boldsymbol{\vec{\epsilon}}} v_k \right\|_{H^{-5/2}_{xt}(\mathbb{R}^{2})}
				\lesssim
				\left\|  \chi_{(t_{0},x_{0}),\boldsymbol{\vec{\epsilon}}} v_k \right\|_{L^{2}_{xt}(\mathbb{R}^2)}
				\lesssim
				A_{0}^{k}\, k!.
			\end{equation*}
			Additionally, by the Leibniz rule it follows that 
			\begin{equation*}
				\begin{aligned}
					\left\| P^{3}\left( \chi_{(t_{0},x_{0}),\boldsymbol{\vec{\epsilon}}} v_k\right) \right\|_{H^{-5/2}_{xt}(\mathbb{R}^{2})}
					&\lesssim
					\sum_{j = 0}^{3} \binom{3}{j}\,\left\| P^{3-j}  \chi_{(t_{0},x_{0}),\boldsymbol{\vec{\epsilon}}} \,P^{j} v_k \right\|_{L^{2}_{xt}(\mathbb{R}^2)}\\
					&\lesssim
					\sum_{j = 0}^{3} \binom{3}{j}\,\left\| P^{3 - j}  \chi_{(t_{0},x_{0}),\boldsymbol{\vec{\epsilon}}} \right\|_{L^{\infty}_{xt}(\mathbb{R}^2)} \,\|v_{j +k}\|_{L^{2}_{xt}} \\
					&\lesssim A_{0}^{k +3} (k +3)!. 
				\end{aligned}
			\end{equation*}
			Additionally,
			\begin{equation}\label{eqt2.1}
				\begin{aligned}
					\left\| t\,\partial_{x}^{3} \left(\chi_{(t_{0},x_{0}),\boldsymbol{\vec{\epsilon}}}  v_k\right) \right\|_{H^{-5/2}_{xt}(\mathbb{R}^{2})}
					&\lesssim
					\left\|\chi_{(t_{0},x_{0}),\boldsymbol{\vec{\epsilon}}} \, t\,\partial_{x}^3 v_k\right\|_{H^{-5/2}_{xt}(\mathbb{R}^{2})}+ \left\|\partial_{x}^{2}\left(t\,\partial_x \chi_{(t_{0},x_{0}),\boldsymbol{\vec{\epsilon}}}  v_k\right)\right\|_{H^{-5/2}_{xt}(\mathbb{R}^{2})}\\
					&\quad+\left \|\partial_{x}\left(t\,\partial_{x}^2 \chi_{(t_{0},x_{0}),\boldsymbol{\vec{\epsilon}}} v_k\right)\right\|_{H^{-5/2}_{xt}(\mathbb{R}^{2})}+ \left\| t\,\left(\partial_x^{3} \chi_{(t_{0},x_{0}),\boldsymbol{\vec{\epsilon}}} \right)\,v_k \right\|_{H^{-5/2}_{xt}(\mathbb{R}^{2})} \\
					&= \mathcal{I}_{3,1} + \mathcal{I}_{3,2} + \mathcal{I}_{3,3} + \mathcal{I}_{3,4}. 
				\end{aligned}
			\end{equation}
			For estimate the term $\mathcal{I}_{3,1}$ we use the identity
			\begin{equation*}
				\chi_{(t_{0},x_{0}),\boldsymbol{\vec{\epsilon}}} t \partial_{x}^3 v_k = - \frac{\chi_{(t_{0},x_{0}),\boldsymbol{\vec{\epsilon}}}}{3} P v_k + \frac{\chi_{(t_{0},x_{0}),\boldsymbol{\vec{\epsilon}}}}{3}  x \partial_x v_k - t \chi_{(t_{0},x_{0}),\boldsymbol{\vec{\epsilon}}} \Pi_{k}^{(4)}(v).
			\end{equation*}
			Thus, for all $k \in \mathbb{N}_{0}$:
			$$
			\begin{aligned}
				\left\| \chi_{(t_{0},x_{0}), \vec{\epsilon}} P v_k \right\|_{H^{-5/2}_{xt}(\mathbb{R}^{2})}
				&\le \left\| \chi_{(t_{0},x_{0}), \vec{\epsilon}} P v_k \right\|_{L^{2}_{xt}(\mathbb{R}^2)} \\
				&= \left\| \chi_{(t_{0},x_{0}), \vec{\epsilon}} v_{k+1} \right\|_{L^{2}_{xt}(\mathbb{R}^2)} \\
				&\lesssim  A_{0}^{k} (k+1)!
			\end{aligned}
			$$
			and 
			$$
			\begin{aligned}
				\left\| \chi_{(t_{0},x_{0}), \vec{\epsilon}} x \partial_x v_k \right\|_{H^{-5/2}_{xt}(\mathbb{R}^{2})}
				&\le \left\| \left(\chi_{(t_{0},x_{0}), \vec{\epsilon}} x\right) v_k \right\|_{H^{-3/2}_{xt}(\mathbb{R}^{2})} + \left\| v_k \partial_{x}(x \chi_{(t_{0},x_{0}), \vec{\epsilon}}) \right\|_{H^{-5/2}_{xt}(\mathbb{R}^{2})} \\[4pt]
				&\le \left\| \left(\chi_{(t_{0},x_{0}), \vec{\epsilon}} x\right) v_k \right\|_{L^{1}_{xt}(\mathbb{R}^{2})} + \left\| v_k \partial_{x}(x\chi_{(t_{0},x_{0}), \vec{\epsilon}} ) \right\|_{L^{1}_{xt}(\mathbb{R}^{2})} \\[4pt]
				&\le \left\| \chi_{(t_{0},x_{0}), \vec{\epsilon}} x \right\|_{L^{2}_{xt}(\mathbb{R}^2)} \left\| v_k \right\|_{L^{2}_{xt}(\mathbb{R}^2)}\\
				&\quad  + \left\| \partial_{x}\left(x \chi_{(t_{0},x_{0}), \vec{\epsilon}}\right)\right\|_{L^{\infty}_{xt}(\mathbb{R}^2)} \left\| \chi_{(t_{0},x_{0}), \tilde{\vec{\epsilon}}} v_k \right\|_{L^{2}_{xt}(\mathbb{R}^2)} \\[4pt]
				&\lesssim \left\| \chi_{(t_{0},x_{0}), \tilde{\vec{\epsilon}}} v_k \right\|_{L^{2}_{xt}(\mathbb{R}^2)} \\[4pt]
				&\lesssim A_{0}^{k} k!,
			\end{aligned}
			$$
			where we employ the cut-off function $\chi_{(t_{0},x_{0}), \tilde{\vec{\epsilon}}}$ as defined in \eqref{cutoff1}, specializing to the case $\tilde{\vec{\epsilon}} = (2\epsilon, 2\epsilon)$. The penultimate estimate follows by applying {\sc Claim} \ref{ct2.1} to $\chi_{(t_{0},x_{0}), \tilde{\vec{\epsilon}}} v_k.$
			
			Next,
			\begin{equation*}
				\left\|t \chi_{(t_{0},x_{0}), \vec{\epsilon}} \Pi_{k}^{(4)}(v)\right\|_{H^{- 5/2}_{xt}(\mathbb{R}^2)} \lesssim   \sum_{\substack{k=k_{1}+k_{2}+k_{3}+k_{4} +k_{5}\\ 0 \leq k_{1},k_{2},k_{3},k_{4},k_{5} \leq k}} \frac{2^{k_{1}}k!}{k_{1}!k_{2}!k_{3}!k_{4}!k_{5}!} \,\left\| t  \chi_{(t_{0},x_{0}), \vec{\epsilon}} \partial_{x}(v_{k_2} v_{k_3} v_{k_4} v_{k_5})\right\|_{H^{-5/2}_{xt}(\mathbb{R}^{2})},
			\end{equation*}
			and by H\"older's inequality we get 
			\begin{align*}
				&	\left\| t  \chi_{(t_{0},x_{0}), \vec{\epsilon}} \partial_{x}(v_{k_2} v_{k_3} v_{k_4} v_{k_5})\right\|_{H^{-5/2}_{xt}(\mathbb{R}^{2})} \\
				&\leq \left\| v_{k_2} v_{k_3} v_{k_4} v_{k_5}\left(t \chi_{(t_{0},x_{0}), \vec{\epsilon}}\right)\right\|_{H^{-3/2}_{xt}(\mathbb{R}^{2})} 
				+ \left\| \partial_{x}\left(t  \chi_{(t_{0},x_{0}), \vec{\epsilon}}\right)\,v_{k_2} v_{k_3} v_{k_4} v_{k_5}\right\|_{H^{-5/2}_{xt}(\mathbb{R}^{2})} \\
				&\lesssim \left\| v_{k_2} v_{k_3} v_{k_4} v_{k_5} \left(t \chi_{(t_{0},x_{0}), \vec{\epsilon}}\right)\right\|_{L^{1}_{xt}(\mathbb{R}^2)} 
				+\left \| \partial_{x}\left(t\chi_{(t_{0},x_{0}), \vec{\epsilon}} \right) v_{k_2} v_{k_3} v_{k_4} v_{k_5} \right\|_{L^{1}_{xt}(\mathbb{R}^2)} \\
				&\lesssim \left\| \prod_{i =2}^{5} v_{k_i} \left(t \chi_{(t_{0},x_{0}), \vec{\epsilon}}\right)\right\|_{L^{1}_{xt}(\mathbb{R}^2)}
				+ \left\| \prod_{i =2}^{5}  v_{k_i} \partial_{x}\left(t \chi_{(t_{0},x_{0}), \vec{\epsilon}}\right)\right\|_{L^{1}_{xt}(\mathbb{R}^2)} \\
				&\lesssim  \prod_{i =2}^{5} \left\| v_{k_i}\right\|_{L^{4}_{xt}(\mathbb{R}^2)}  \left\|t \chi_{(t_{0},x_{0}), \vec{\epsilon}}\right\|_{L^{\infty}_{xt}(\mathbb{R}^2)}
				+ \prod_{i =2}^{5} \left\|v_{k_i}\right\|_{L^{4}_{xt}}  \left\|t \partial_x \chi_{(t_{0},x_{0}), \vec{\epsilon}}\right\|_{L^{\infty}_{xt}(\mathbb{R}^2)} \\
				&\sim   \prod_{i =2}^{5} \left\| v_{k_i}\right\|_{L^{4}_{xt}(\mathbb{R}^2)}.
			\end{align*}
			Thus,
			\begin{equation*}
				\left\|t \chi_{(t_{0},x_{0}), \vec{\epsilon}} \Pi_{k}^{(4)}(v)\right\|_{H^{- 5/2}_{xt}(\mathbb{R}^2)} \lesssim \sum_{\substack{k=k_{1}+k_{2}+k_{3}+k_{4} +k_{5}\\ 0 \leq k_{1},k_{2},k_{3},k_{4},k_{5} \leq k}} 2^{k_1}  \frac{k!}{k_1! k_2! k_3! k_4! k_5!} \prod_{i =2}^{5} \left\|  v_{k_i}\right\|_{L^{4}_{xt}(\mathbb{R}^2)},
			\end{equation*}
			from which we get, after applying  Lemma \ref{LemaC} that 
			\begin{align*}
				\left\|t \,  \chi_{(t_{0},x_{0}), \vec{\epsilon}} \, \Pi_{k}^{(4)}(v)\right\|_{H^{-5/2}_{xt}(\mathbb{R}^2)} 
				&\lesssim \sum_{\substack{k=k_{1}+k_{2}+k_{3}+k_{4} +k_{5}\\ 0 \leq k_{1},k_{2},k_{3},k_{4},k_{5} \leq k}}  2^{k_1} \frac{k!}{k_1! k_2! k_3! k_4! k_5!} \prod_{\ell=2}^{5} A_{\ell}^{k_{\ell}} k_{\ell}! \\
				&=\sum_{\substack{k=k_{1}+k_{2}+k_{3}+k_{4} +k_{5}\\ 0 \leq k_{1},k_{2},k_{3},k_{4},k_{5} \leq k}}  2^{k_1} \frac{k!}{k_1!} A_{2}^{k_2} A_{3}^{k_3} A_{4}^{k_4} A_{5}^{k_5} \\
				&\lesssim k! \sum_{\substack{k=k_{1}+k_{2}+k_{3}+k_{4} +k_{5}\\ 0 \leq k_{1},k_{2},k_{3},k_{4},k_{5} \leq k}} \frac{2^{k_1}}{k_1!} A_{6}^{k_2 + k_3 + k_4 + k_5} \\
				&\lesssim k! \sum_{\substack{k=k_{1}+k_{2}+k_{3}+k_{4} +k_{5}\\ 0 \leq k_{1},k_{2},k_{3},k_{4},k_{5} \leq k}} \frac{2^{k_1}}{k_1!} A_{6}^{k - k_1} \\
				&\lesssim A_{6}^{k} (k + 4)!.
			\end{align*}
			For the remaining terms in \eqref{eqt2.1} we obtain
			\begin{align*}
				&\mathcal{I}_{3,2} + \mathcal{I}_{3,3} + \mathcal{I}_{3,4}\\
				&\;\lesssim\; \left\|t\,\partial_x \chi_{(t_{0},x_{0}), \vec{\epsilon}}  v_k\right\|_{H^{-1/2}_{xt}(\mathbb{R}^{2})} 
				+\left\|t\,\partial_{x}^2 \chi_{(t_{0},x_{0}), \vec{\epsilon}}  v_k\right\|_{H^{-3/2}_{xt}(\mathbb{R}^{2})} \\
				&\quad 
				+ \left\| t\,\left(\partial_x^{3} \chi_{(t_{0},x_{0}), \vec{\epsilon}}\right)\,v_k \right\|_{H^{-5/2}_{xt}(\mathbb{R}^{2})}  \\[4pt]
				&\;\lesssim\; \left(\left\|\partial_x \chi_{(t_{0},x_{0}), \vec{\epsilon}} \right\|_{L^{\infty}_{xt}(\mathbb{R}^2)}
				+\left\|\partial_{x}^2 \chi_{(t_{0},x_{0}), \vec{\epsilon}} \right\|_{L^{\infty}_{xt}(\mathbb{R}^2)}
				+ \left\| \partial_x^{3}  \chi_{(t_{0},x_{0}), \vec{\epsilon}}\right\|_{L^{\infty}_{xt}(\mathbb{R}^2)}\right)
				\left\|\chi_{(t_{0},x_{0}), \tilde{\vec{\epsilon}}}  v_k \right\|_{L^{2}_{xt}(\mathbb{R}^2)} \\[4pt]
				&\;\lesssim\; A_{7}^{k}\, k!.
			\end{align*}
		\end{claimproof}
		\begin{claim}\label{ct2.2.1}
			Let $\vec{\boldsymbol{\epsilon}} = (\epsilon, \epsilon) \in (0, \infty)^{2}$ and let $\chi_{(t_{0},x_{0}),\boldsymbol{\vec{\epsilon}}}$ be the cutoff function defined in \eqref{cutoff1}. There exist constants $c > 0,$  such that the following estimate holds for all $k \in \mathbb{N}_0$:
			\begin{equation*}
				\left\|\chi_{(t_{0},x_{0}),\boldsymbol{\vec{\epsilon}}}P^{k} v \right\|_{H^{3/2}_{xt}(\mathbb{R}^2)} \leq c A_{2}^{k} \; k! , \; \; k = 0,1,2, \ldots 
			\end{equation*}
		\end{claim}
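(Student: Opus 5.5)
I will follow the bootstrap pattern of Claim~\ref{ct2.2}, now lifting from $H^{1/2}_{xt}$ to $H^{3/2}_{xt}$. The first step is to apply Lemma~\ref{LemaA} with $\mu=3/2$, which gives
\begin{equation*}
\left\|\chi_{(t_{0},x_{0}),\vec{\boldsymbol{\epsilon}}} v_k\right\|_{H^{3/2}_{xt}} \lesssim \left\|\chi_{(t_{0},x_{0}),\vec{\boldsymbol{\epsilon}}} v_k\right\|_{H^{-3/2}_{xt}} + \left\|t\partial_x^3\left(\chi_{(t_{0},x_{0}),\vec{\boldsymbol{\epsilon}}} v_k\right)\right\|_{H^{-3/2}_{xt}} + \left\|P^3\left(\chi_{(t_{0},x_{0}),\vec{\boldsymbol{\epsilon}}} v_k\right)\right\|_{H^{-3/2}_{xt}}.
\end{equation*}
The first term is dominated by the $L^2_{xt}$ bound of Claim~\ref{ct2.1}. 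For the $P^3$ term I will expand by the Leibniz rule: $P^{3-j}\chi$ is again a bounded, compactly supported function, so each piece is at most $\|\chi_{(t_0,x_0),\tilde{\vec{\boldsymbol\epsilon}}}v_{k+j}\|_{L^2_{xt}}$. Claim~\ref{ct2.1} then gives $A^{k+3}(k+3)!$, and this is absorbed by enlarging $A_2$.

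For the middle term I will use the commutator decomposition \eqref{identity2}. The three pieces in which derivatives fall on the cutoff, namely $\partial_x^2(t\partial_x\chi\, v_k)$, $\partial_x(t\partial_x^2\chi\, v_k)$ and $t\partial_x^3\chi\, v_k$, are bounded by $\|t\partial_x\chi\,\chi_{\tilde{\vec{\boldsymbol\epsilon}}}v_k\|_{H^{1/2}}$ and by lower norms. These are controlled through Claim~\ref{ct2.2} applied with the doubled cutoff $\tilde{\vec{\boldsymbol\epsilon}}=(2\epsilon,2\epsilon)$, using that multiplication by a smooth compactly supported function is bounded on $H^{1/2}_{xt}$. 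The main piece $t\chi\,\partial_x^3 v_k$ I will rewrite via the identity $\chi t\partial_x^3 v_k=-\tfrac13\chi v_{k+1}+\tfrac13\chi x\partial_x v_k - t\chi\,\Pi^{(4)}_k(v)$. The term $\chi v_{k+1}$ is handled by Claim~\ref{ct2.1}. The term $\chi x\partial_x v_k$ in $H^{-3/2}$ is bounded by $\|x\chi\,\chi_{\tilde{\vec{\boldsymbol\epsilon}}} v_k\|_{H^{-1/2}}$ plus an $L^2$ term, and both of these follow from Claim~\ref{ct2.1}.

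The main obstacle is the quartic term $t\chi\,\partial_x(v_{k_2}v_{k_3}v_{k_4}v_{k_5})$ in $H^{-3/2}_{xt}$. This reduces to bounding $\|t\chi\,\prod_i v_{k_i}\|_{H^{-1/2}_{xt}}$, and the crude $L^1$ bound used in Claim~\ref{ct2.2} is no longer enough, since $L^1(\mathbb{R}^2)\not\hookrightarrow H^{-1/2}$. I will instead localize every factor, writing $\prod_i\chi_{\tilde{\vec{\boldsymbol\epsilon}}}v_{k_i}$, and use the dual Sobolev embedding $L^{4/3}_{xt}(\mathbb{R}^2)\hookrightarrow H^{-1/2}_{xt}(\mathbb{R}^2)$. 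It then remains to bound the product of four factors in $L^{4/3}$, and Hölder reduces this to putting each localized factor in $L^{16/3}_{xt}$. On the other hand, the $H^{1/2}_{xt}$ bound of Claim~\ref{ct2.2} together with the Sobolev embedding $H^{1/2}(\mathbb{R}^2)\hookrightarrow L^4$ only gives $L^4$. My fallback is to place two factors in $L^8_{xt}$ via Lemma~\ref{LemaC} and Claim~\ref{PropA}, which is the $b>1/2$ estimate. I will interpolate whichever way makes the exponents close, for example $L^8\cdot L^8\cdot L^4\cdot L^4$ into $L^{4/3}$, and then check the bookkeeping.

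Once every factor carries a bound of the form $A^{k_i}k_i!$, I will close with the same combinatorial summation as in Claim~\ref{ct2.2}:
\begin{equation*}
\sum \frac{2^{k_1}k!}{k_1!\,k_2!\cdots k_5!}\prod_{i=2}^{5} A^{k_i}k_i! \lesssim A^k(k+4)!.
\end{equation*}
The polynomial factor $(k+4)!/k!$ is absorbed by increasing $A_2$, which yields $\|\chi v_k\|_{H^{3/2}_{xt}}\le cA_2^k k!$.
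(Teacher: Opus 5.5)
Your proposal follows the paper's proof essentially step for step: Lemma~\ref{LemaA} with $\mu=3/2$, the Leibniz expansion of $P^3(\chi v_k)$, and the splitting of $t\partial_x^3(\chi v_k)$ into cutoff-derivative terms (handled by Claims~\ref{ct2.1}--\ref{ct2.2} with the doubled cutoff) plus the main term rewritten via identity~\eqref{IdentityI}. The only difference is the quartic term: the paper bounds $\|t\chi\prod_{\ell}v_{k_\ell}\|_{H^{-1/2}_{xt}}$ by the $L^2_{xt}$ norm and puts all four factors in $L^8_{xt}$ via Lemma~\ref{LemaC}, so your $L^{4/3}\hookrightarrow H^{-1/2}$ route with $L^8\cdot L^8\cdot L^4\cdot L^4$ exponents is valid but the obstacle you anticipated does not actually arise.
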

		\begin{claimproof} 
			After  taking $\mu =3/2$ in Lemma \ref{LemaA} we get  
			\begin{equation*}
				\begin{split}
					\left\|\langle \nabla_{t,x}\rangle^{3/2} \,\left(\chi_{(t_{0},x_{0}),\boldsymbol{\vec{\epsilon}}} v_{k}\right)\right\|_{L^{2}_{xt}(\mathbb{R}^2)}
					&\lesssim
					\left\|\chi_{(t_{0},x_{0}),\boldsymbol{\vec{\epsilon}}} v_k\right\|_{H^{-3/2}_{xt}(\mathbb{R}^2)}
					\;+\; \left\|t \,\partial_x^3\left(\chi_{(t_{0},x_{0}),\boldsymbol{\vec{\epsilon}}} v_{k}\right)\right\|_{H^{-3/2}_{xt}(\mathbb{R}^2)} \\
					&\quad + \left\| P^{3}\left(\chi_{(t_{0},x_{0}),\boldsymbol{\vec{\epsilon}}} v_{k}\right) \right\|_{H^{-3/2}_{xt}(\mathbb{R}^2)}\\
					&=\Pi_{1}+\Pi_{2}+\Pi_{3}.
				\end{split}
			\end{equation*}
			In the case of $\Pi_{1}$ it follows that  for all $k\in\mathbb{N}_{0}:$
			\begin{equation*}
				\Pi_{1}
				\lesssim
				\left\|\chi_{(t_{0},x_{0}),\boldsymbol{\vec{\epsilon}}} v_{k}\right\|_{L^{2}_{xt}(\mathbb{R}^2)} 
				\;\leq\; A_{1}^{k} k ! ,
			\end{equation*}
			where we have used {\sc claim } \ref{ct2.1}.
			
			On the other hand, applying the Leibniz rule in combination with {\sc Claim} \ref{ct2.1} yields 
			\begin{align*}
				\Pi_{3}
				&\;\lesssim\; 
				\sum_{j=0}^3 \binom{3}{j} 
				\left\| P^{j+k} v \, P^{3-j} \chi_{(t_{0},x_{0}),\boldsymbol{\vec{\epsilon}}}\right\|_{H^{-3/2}_{xt}(\mathbb{R}^2)} \\[2mm]
				&\;\lesssim\; 
				\sum_{j=0}^3 \binom{3}{j} 
				\left\|P^{3-j} \chi_{(t_{0},x_{0}),\boldsymbol{\vec{\epsilon}}}\right\|_{L^{\infty}_{xt}(\mathbb{R}^2)} 
				\;\left\|\chi_{(t_{0},x_{0}), \tilde{\vec{\epsilon}}} P^{k+j} v\right\|_{L^{2}_{xt}(\mathbb{R}^2)} \\[1mm]
				&\;\lesssim\; 
				\sum_{j=0}^3 \binom{3}{j} 
				\left\|P^{3-j}\chi_{(t_{0},x_{0}),\boldsymbol{\vec{\epsilon}}} \right\|_{L^{\infty}_{xt}(\mathbb{R}^2)} 
				\; A_{1}^{k +j} (k + j)! \\[1mm]
				&\;\lesssim\; A_{1}^{k +3} (k + 3)!,
			\end{align*}
			where we employed the cut-off function $\chi_{(t_{0},x_{0}), \tilde{\vec{\epsilon}}}$ defined in \eqref{cutoff1}, specialized to the case $\tilde{\vec{\epsilon}} = (2\epsilon, 2\epsilon)$. The penultimate estimate then follows by applying {\sc Claim} \ref{ct2.1} to the product $\chi_{(t_{0},x_{0}), \tilde{\vec{\epsilon}}} v_k.$

			On the other hand, the identity 
			\begin{equation}\label{IdentityA}
				\begin{split}
					\partial_{x}^{3}\left(\chi_{(t_{0},x_{0}),\boldsymbol{\vec{\epsilon}}} f\right) &= \chi_{(t_{0},x_{0}),\boldsymbol{\vec{\epsilon}}}\, \partial_{x}^{3} f + 3 \left(\partial_{x} \chi_{(t_{0},x_{0}),\boldsymbol{\vec{\epsilon}}}\right) \partial_{x}^{2} f + 3 \left(\partial_{x}^{2} \chi_{(t_{0},x_{0}),\boldsymbol{\vec{\epsilon}}}\right) \partial_{x} f\\
					&\quad  + \left(\partial_{x}^{3} \chi_{(t_{0},x_{0}),\boldsymbol{\vec{\epsilon}}}\right) f,
				\end{split}
			\end{equation}
			allows us to obtain that 
			\begin{equation*}
				\begin{split}
					\Pi_{2} 
					&\lesssim \left\| \chi_{(t_{0},x_{0}),\boldsymbol{\vec{\epsilon}}} t \partial_{x}^{3}v_k\right\|_{H^{-3/2}_{xt}(\mathbb{R}^2)} 
					+ \left\| \partial_{x}^{2}\left(t \partial_{x}\chi_{(t_{0},x_{0}),\boldsymbol{\vec{\epsilon}}}  v_k\right) \right\|_{H^{-3/2}_{xt}(\mathbb{R}^2)}
					+ \left\| \partial_{x} \left(t \partial_{x}^{2}  \chi_{(t_{0},x_{0}),\boldsymbol{\vec{\epsilon}}}  v_k\right) \right\|_{H^{-3/2}_{xt}(\mathbb{R}^2)}\\
					&\quad 
					+ \left\| t \left(\partial_{x}^{3}\chi_{(t_{0},x_{0}),\boldsymbol{\vec{\epsilon}}}  \right) v_k \right\|_{H^{-3/2}_{xt}(\mathbb{R}^2)}\\
					&=\Pi_{2,1}+\Pi_{2,2}+\Pi_{2,3}+\Pi_{2,4}.
				\end{split}
			\end{equation*}
			Notice that by {\sc claims } \ref{ct2.1} and \ref{ct2.2}  combined with Theorem \ref{kp1}, we obtain 
			\begin{equation*}
				\begin{split}
					\Pi_{2,2} &\sim \left\| t \partial_{x}\chi_{(t_{0},x_{0}),\boldsymbol{\vec{\epsilon}}}  v_k \right\|_{H^{1/2}_{xt}(\mathbb{R}^2)} \\
					&= \left\| t \partial_x \chi_{(t_{0},x_{0}),\boldsymbol{\vec{\epsilon}}} \left(\chi_{(t_{0},x_{0}), \tilde{\vec{\epsilon}}} v_k\right)\right\|_{H^{1/2}_{xt}(\mathbb{R}^2)} \\
					& \lesssim\left\|t \partial_x \chi_{(t_{0},x_{0}),\boldsymbol{\vec{\epsilon}}} \right\|_{L^{\infty}_{xt}(\mathbb{R}^{2})} \left\| \chi_{(t_{0},x_{0}), \tilde{\vec{\epsilon}}} v_k\right\|_{H^{1/2}_{xt}(\mathbb{R}^2)}+\left\|t \partial_x \chi_{(t_{0},x_{0}),\boldsymbol{\vec{\epsilon}}} \right\|_{H^{1/2}_{xt}(\mathbb{R}^{2})}\left\| \chi_{(t_{0},x_{0}), \tilde{\vec{\epsilon}}} v_k\right\|_{L^{2}_{xt}(\mathbb{R}^2)} \\
					&\lesssim A_{1}^{k} k!.
				\end{split}
			\end{equation*}
			Additionally, the terms $\Pi_{2,3}$ and $\Pi_{2,4}$ satisfy:
			\begin{equation*}
				\begin{split}
					\Pi_{2,3}&\sim  \left\| t \, \partial_x^2 \chi_{(t_{0},x_{0}),\boldsymbol{\vec{\epsilon}}}\, v_{k}\right\|_{H^{-1/2}_{xt}(\mathbb{R}^2)}\\
					&\lesssim \left\|\chi_{(t_{0},x_{0}), \tilde{\vec{\epsilon}}}  v_{k} \right\|_{L^{2}_{xt}(\mathbb{R}^2)} \, \left\| t \, \partial_x^2  \chi_{(t_{0},x_{0}),\boldsymbol{\vec{\epsilon}}} \right\|_{L^{\infty}_{xt}(\mathbb{R}^{2})}\\
					&\lesssim A_1^k \, k!,
				\end{split}
			\end{equation*}
			and 
			\begin{equation*}
				\begin{split}
					\Pi_{2,4} &=  \left\| t \partial_{x}^{3} \chi_{(t_{0},x_{0}),\boldsymbol{\vec{\epsilon}}} \, v_k \right\|_{H^{-3/2}_{xt}(\mathbb{R}^2)}\\
					& \lesssim \left\| t \partial_{x}^{3} \chi_{(t_{0},x_{0}),\boldsymbol{\vec{\epsilon}}} \right\|_{L^{\infty}_{xt}(\mathbb{R}^{2})} \left\|\chi_{(t_{0},x_{0}), \tilde{\vec{\epsilon}}} v_k\right\|_{L^{2}_{xt}(\mathbb{R}^2)} \\
					& \lesssim A_1^k \, k!
				\end{split}
			\end{equation*}
			To estimate the term $\Pi_{2,1}$, we notice that 
			\begin{equation}\label{IdentityI}
				\chi_{(t_{0},x_{0}),\boldsymbol{\vec{\epsilon}}} t  \partial_x^3 v_{k} = -\tfrac{\chi_{(t_{0},x_{0}),\boldsymbol{\vec{\epsilon}}}}{3}  \, P v_{k} + \tfrac{\chi_{(t_{0},x_{0}),\boldsymbol{\vec{\epsilon}}}}{3}  x  \partial_x v_k- t \chi_{(t_{0},x_{0}),\boldsymbol{\vec{\epsilon}}} \, \Pi_{k}^{(4)}(v).   
			\end{equation}
			Therefore
			\begin{equation*}
				\begin{split}
					\Pi_{2,1}& = \left\| t \chi_{(t_{0},x_{0}),\boldsymbol{\vec{\epsilon}}} \, \partial_x^3 v_{k} \right\|_{H^{-3/2}(\mathbb{R}^2)}\\
					&\lesssim \| \chi_{(t_{0},x_{0}),\boldsymbol{\vec{\epsilon}}} \,  \, v_{k +1}\|_{H^{-3/2}_{xt}(\mathbb{R}^{2})}
					+ \left\| \chi_{(t_{0},x_{0}),\boldsymbol{\vec{\epsilon}}} x \, \partial_x v_k \right\|_{H^{-3/2}_{xt}(\mathbb{R}^{2})}
					+ \left\| t \chi_{(t_{0},x_{0}),\boldsymbol{\vec{\epsilon}}} \, \Pi_{k}^{(4)}(v)\right\|_{H^{-3/2}_{xt}(\mathbb{R}^{2})}\\
					&=\Lambda_{1}+\Lambda_{2}+\Lambda_{3}.
				\end{split}
			\end{equation*}
			In the first place, by {\sc claim } \ref{ct2.1}
			\begin{equation*}
				\begin{split}
					\Lambda_{1} &\lesssim \left\| \chi_{(t_{0},x_{0}),\boldsymbol{\vec{\epsilon}}} \, v_{k +1}\right\|_{H^{-3/2}_{xt}(\mathbb{R}^{2})} \\
					&\lesssim \left\| \chi_{(t_{0},x_{0}),\boldsymbol{\vec{\epsilon}}}\, v_{k +1}\right\|_{L^{2}_{xt}(\mathbb{R}^2)} \\
					&\lesssim A_1^{k} (k +1)!.
				\end{split}
			\end{equation*}
			In the second place, combining H\"older's inequality and {\sc claim } \ref{ct2.1}
			\begin{equation*}
				\begin{split}
					\Lambda_{2}&= \left\| \chi_{(t_{0},x_{0}),\boldsymbol{\vec{\epsilon}}} \,x \, \partial_x v_k \right\|_{H^{-3/2}_{xt}(\mathbb{R}^{2})} \\
					&\lesssim \left\| \partial_{x}\left(\chi_{(t_{0},x_{0}),\boldsymbol{\vec{\epsilon}}} x v_k\right)\right\|_{H^{-3/2}_{xt}(\mathbb{R}^{2})}
					+ \left\| \partial_{x}\left(\chi_{(t_{0},x_{0}),\boldsymbol{\vec{\epsilon}}} x\right) \chi_{(t_{0},x_{0}), \tilde{\vec{\epsilon}}}  v_k \right\|_{H^{-3/2}_{xt}(\mathbb{R}^{2})} \\
					&\lesssim \left\| \chi_{(t_{0},x_{0}),\boldsymbol{\vec{\epsilon}}} x v_k\right\|_{H^{-1/2}_{xt}(\mathbb{R}^{2})}
					+ \left\| \partial_{x}\left(\chi_{(t_{0},x_{0}),\boldsymbol{\vec{\epsilon}}} x\right)\right\|_{L^{\infty}_{xt}(\mathbb{R}^{2})}
					\left\|\chi_{(t_{0},x_{0}), \tilde{\vec{\epsilon}}}  v_{k}\right\|_{L^{2}_{xt}(\mathbb{R}^{2})} \\
					&\lesssim \left\|\chi_{(t_{0},x_{0}),\boldsymbol{\vec{\epsilon}}} x\right\|_{L^{\infty}_{xt}(\mathbb{R}^{2})} 
					\left\|\chi_{(t_{0},x_{0}), \tilde{\vec{\epsilon}}}  v_k\right\|_{L^{2}_{xt}(\mathbb{R}^{2})}
					+ \left\| \partial_{x}\left(\chi_{(t_{0},x_{0}),\boldsymbol{\vec{\epsilon}}} x\right)\right\|_{L^{\infty}_{xt}(\mathbb{R}^{2})}
					\left\|\chi_{(t_{0},x_{0}), \tilde{\vec{\epsilon}}}  v_{k}\right\|_{L^{2}_{xt}(\mathbb{R}^{2})} \\
					&\lesssim A_1^{k} k!.
				\end{split}
			\end{equation*}
			In the third place,
			\begin{equation*}
				\begin{split}
					\Lambda_{3}&\lesssim  \sum_{\substack{k=k_{1}+k_{2}+k_{3}+k_{4} +k_{5}\\ 0 \leq k_{1},k_{2},k_{3},k_{4},k_{5} \leq k}}\frac{2^{k_1} k!}{k_1! k_2! k_3! k_4! k_5!} 
					\; \left\| t \chi_{(t_{0},x_{0}),\boldsymbol{\vec{\epsilon}}} \, \partial_x (v_{k_2} v_{k_3} v_{k_4} v_{k_5}) \right\|_{H^{-3/2}_{xt}(\mathbb{R}^{2})}.
				\end{split}
			\end{equation*}
			Observe that by Lemma \ref{LemaC}
			\begin{equation*}
				\begin{split}
					&\left\| t \chi_{(t_{0},x_{0}),\boldsymbol{\vec{\epsilon}}} \, \partial_x \bigl(v_{k_2} v_{k_3} v_{k_4} v_{k_5}\bigr) \right\|_{H^{-3/2}_{xt}(\mathbb{R}^{2})}\\
					&\;\lesssim\; 
					\left\| \partial_x \left(t \chi_{(t_{0},x_{0}),\boldsymbol{\vec{\epsilon}}} \, v_{k_2} v_{k_3} v_{k_4} v_{k_5}\right) \right\|_{H^{-3/2}_{xt}(\mathbb{R}^{2})}
					+ \left\| \partial_{x}\left(t  \chi_{(t_{0},x_{0}),\boldsymbol{\vec{\epsilon}}}\right) \, v_{k_2} v_{k_3} v_{k_4} v_{k_5} \right\|_{H^{-3/2}_{xt}(\mathbb{R}^{2})}\\
					&\lesssim 
					\left\| t \chi_{(t_{0},x_{0}),\boldsymbol{\vec{\epsilon}}} \, \prod_{\ell =2}^{5}  v_{k_{\ell}} \right\|_{H^{-1/2}_{xt}(\mathbb{R}^{2})}
					+ \left\| \partial_x \left(t \chi_{(t_{0},x_{0}),\boldsymbol{\vec{\epsilon}}}\right) \,  \prod_{\ell =2}^{5}  v_{k_{\ell}}\right\|_{L^{1}_{xt}(\mathbb{R}^{2})}\\
					&\lesssim  
					\left\| t \chi_{(t_{0},x_{0}),\boldsymbol{\vec{\epsilon}}} \,  \prod_{\ell =2}^{5}  v_{k_{\ell}} \right\|_{L^{2}_{xt}(\mathbb{R}^{2})}
					+ \left\| \partial_x \left(t \chi_{(t_{0},x_{0}),\boldsymbol{\vec{\epsilon}}}\right) \right\|_{	L^{\infty}_{xt}(\mathbb{R}^{2})}  
					\left\|  \prod_{\ell =2}^{5}  v_{k_{\ell}} \right\|_{L^{1}_{xt}(\mathbb{R}^{2})}\\
					&\lesssim 
					\prod_{\ell =2}^{5} \bigl\|  v_{k_{\ell}} \bigr\|_{L^{8}_{xt}(\mathbb{R}^{2})}
					+  \prod_{\ell =2}^{5} \bigl\|  v_{k_{\ell}} \bigr\|_{L^{4}_{xt}(\mathbb{R}^{2})}.
				\end{split}
			\end{equation*}
			Thus,
			\begin{equation*}
				\begin{split}
					\Lambda_{3}& \lesssim k!   \sum_{\substack{k=k_{1}+k_{2}+k_{3}+k_{4} +k_{5}\\ 0 \leq k_{1},k_{2},k_{3},k_{4},k_{5} \leq k}}   \frac{2^{k_1}}{k_1! } A_{7}^{k- k_1} \\
					&\lesssim A_{7}^{k} (k +4)!.
				\end{split}
			\end{equation*}
		\end{claimproof}
		\begin{claim}\label{ct3.2}
			Let $\vec{\boldsymbol{\epsilon}} = (\epsilon, \epsilon) \in (0, \infty)^{2}$ and let $\chi_{(t_{0},x_{0}),\boldsymbol{\vec{\epsilon}}}$ be the cutoff function defined in \eqref{cutoff1}. There exist constants $c > 0,$  such that the following estimate holds for all $k \in \mathbb{N}_0$:
			\begin{equation*}
				\left\|\chi_{(t_{0},x_{0}),\boldsymbol{\vec{\epsilon}}}P^{k} v \right\|_{H^{2}_{xt}(\mathbb{R}^2)} \leq c A_{2}^{k} \; k! , \; \; k = 0,1,2, \ldots 
			\end{equation*}
		\end{claim}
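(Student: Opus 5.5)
We follow the scheme of Claims \ref{ct2.2} and \ref{ct2.2.1}, now with $\mu=2$ in Lemma \ref{LemaA}. This gives
\begin{equation*}
\left\|\chi_{(t_{0},x_{0}),\boldsymbol{\vec{\epsilon}}}\, v_{k}\right\|_{H^{2}_{xt}}\lesssim \left\|\chi_{(t_{0},x_{0}),\boldsymbol{\vec{\epsilon}}}\, v_{k}\right\|_{H^{-1}_{xt}}+\left\|t\partial_x^3\left(\chi_{(t_{0},x_{0}),\boldsymbol{\vec{\epsilon}}}\, v_k\right)\right\|_{H^{-1}_{xt}}+\left\|P^3\left(\chi_{(t_{0},x_{0}),\boldsymbol{\vec{\epsilon}}}\, v_k\right)\right\|_{H^{-1}_{xt}},
\end{equation*}
and we call the three terms on the right $\Pi_1,\Pi_2,\Pi_3$.

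\emph{The terms $\Pi_1$ and $\Pi_3$.} $\Pi_1$ is bounded by the $L^2$ norm, so {\sc Claim} \ref{ct2.1} applies directly. For $\Pi_3$, the Leibniz rule for $P$ reduces it to $\sum_j \|P^{3-j}\chi\|_{L^\infty}\,\|\chi_{(t_0,x_0),\tilde{\vec\epsilon}}\, v_{k+j}\|_{L^2}$, with $\tilde{\vec\epsilon}=(2\epsilon,2\epsilon)$. This is at most $A_1^{k+3}(k+3)!$, exactly as in {\sc Claim} \ref{ct2.2.1}.

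\emph{The commutator terms in $\Pi_2$.} We use \eqref{IdentityA} to split $\Pi_2$ into $\Pi_{2,1},\dots,\Pi_{2,4}$. The commutator pieces are $\partial_x^{2}(t\partial_x\chi\, v_k)$, $\partial_x(t\partial_x^2\chi\, v_k)$ and $t\partial_x^3\chi\, v_k$ in $H^{-1}$. These are controlled by $\|t\partial_x\chi\, v_k\|_{H^{1}}$, $\|t\partial_x^2\chi\, v_k\|_{L^2}$ and $\|\cdot\|_{L^2}$ respectively. Inserting the enlarged cutoff $\chi_{(t_0,x_0),\tilde{\vec\epsilon}}$ and using the Kato--Ponce product estimate (Theorem \ref{kp1}), these reduce to {\sc Claim} \ref{ct2.2.1}, since $H^{3/2}\supset H^1$ control suffices. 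The bound is $A_2^k k!$.

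\emph{The main term $\Pi_{2,1}$.} By \eqref{IdentityI}, $\Pi_{2,1}\le\Lambda_1+\Lambda_2+\Lambda_3$. For $\Lambda_1$, we have $\|\chi v_{k+1}\|_{H^{-1}}\le\|\chi v_{k+1}\|_{L^2}\lesssim A_1^{k+1}(k+1)!$, and this loss of one factorial order is absorbed by enlarging the constant as in the earlier claims. For $\Lambda_2$, we write $\chi x\partial_x v_k=\partial_x(\chi x v_k)-\partial_x(\chi x)v_k$. In $H^{-1}$ this costs only $\|\chi_{\tilde{\vec\epsilon}} v_k\|_{L^2}$.

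\emph{The nonlinear term $\Lambda_3$, the main obstacle.} $\Lambda_3$ is $\|t\chi\,\partial_x(v_{k_2}v_{k_3}v_{k_4}v_{k_5})\|_{H^{-1}}$, summed against the multinomial weights. Moving $\partial_x$ outside leaves $\|t\chi\prod_\ell v_{k_\ell}\|_{L^2}$ plus a harmless lower-order term. Here the global bound $\prod\|v_{k_\ell}\|_{L^8}$ from Lemma \ref{LemaC} suffices: since $\|\prod_{\ell=2}^5 f_\ell\|_{L^2}\le\prod\|f_\ell\|_{L^8}$, the estimate closes. Each $\|v_{k_\ell}\|_{L^8}\lesssim\|v_{k_\ell}\|_{X^{0,b}}$ is bounded by $A^{k_\ell}k_\ell!$ via Proposition \ref{PropA}. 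This step requires $s\ge0$, or else the $L^8$ control must be taken at $X^{s,b}$ level with $s>-1/6$.

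We expect the difficulty to lie precisely here. For $-1/6<s<0$ the $L^8$ bound is not directly available. We would first try replacing one or two factors by their localized versions $\chi_{\tilde{\vec\epsilon}}v_{k_\ell}$, which lie in $H^{3/2}_{xt}\subset L^\infty_{xt}$ in two dimensions by {\sc Claim} \ref{ct2.2.1}. The remaining factors would then be handled with the $L^4$ Strichartz bound, which requires only $b>1/3$. This gives the bound $k!\sum 2^{k_1}A^{k-k_1}/k_1!\lesssim A^k(k+4)!$, and the polynomial factor is absorbed into a larger $A_2$.
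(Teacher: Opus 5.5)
Your proof of the estimate follows the paper's proof step for step:
\begin{itemize}
\item Lemma~\ref{LemaA} with $\mu=2$;
\item \textsc{Claim}~\ref{ct2.1} for $\Pi_1$, and, after the Leibniz rule for $P$, for $\Pi_3$;
\item the splitting \eqref{IdentityA}, with the commutator pieces handled by Theorem~\ref{kp1} and the $H^{3/2}_{xt}$ bound of \textsc{Claim}~\ref{ct2.2.1} (the inclusion you want there is $H^{3/2}\subset H^{1}$);
\item \eqref{IdentityI} for the main term, whose quartic part is closed exactly as in the paper, via $\|\prod_{\ell=2}^{5}v_{k_\ell}\|_{L^2_{xt}}\le\prod_{\ell=2}^{5}\|v_{k_\ell}\|_{L^8_{xt}}$ and Lemma~\ref{LemaC}.
\end{itemize}

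Your closing caveat identifies a real weakness that the paper does not address. Lemma~\ref{LemaC} bounds $L^4_{xt}$ and $L^8_{xt}$ norms by $X^{0,b}$ norms. \textsc{Claim}~\ref{PropA}, however, only controls $\|v_k\|_{X^{s,b}}$, which is weaker for $-\tfrac16<s<0$. The same step already appears in the paper's proof of \textsc{Claim}~\ref{ct2.1}. Your alternative of taking the $L^8$ control at the $X^{s,b}$ level is not available either: the Airy $L^8$ estimate is scale invariant at the $L^2$ level, so no bound $\|u\|_{L^8_{xt}}\lesssim\|u\|_{X^{s,b}}$ with $s<0$ can hold.

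The remedy you sketch does not repair this, for two reasons.
\begin{itemize}
\item The $L^4$ bound of Lemma~\ref{LemaC} is also an $X^{0,b}$ estimate. Lowering the requirement from $b>\tfrac12$ to $b>\tfrac13$ is beside the point, because the obstruction is the spatial index $0$ versus $s$, not the modulation exponent.
\item The $H^{3/2}_{xt}\subset L^\infty_{xt}$ control of $\chi_{(t_0,x_0),\tilde{\vec{\epsilon}}}\,v_{k_\ell}$ comes from \textsc{Claim}~\ref{ct2.2.1}. That proof rests on \textsc{Claim}~\ref{ct2.1} and \textsc{Claim}~\ref{ct2.2}, and its own nonlinear term uses the same $L^4$/$L^8$ bounds of Lemma~\ref{LemaC}. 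Invoking it for $s<0$ is therefore circular.
\end{itemize}

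To genuinely cover $-\tfrac16<s<0$, you would first have to extract a localized bound directly from the $X^{s,b}$ control, say $\|\chi_{(t_0,x_0),\vec{\epsilon}}\,v_k\|_{H^{\mu}_{xt}}\lesssim A^k k!$ with some $\mu>1$. One possible starting point is the first part of Lemma~\ref{LemaA}, which is formulated in $X^{r,b-1}$ with $r\le0$ and is stated but not used in the paper, combined with the multilinear estimate of Lemma~\ref{LemaB}. Once such a bound is available, all four factors of the quartic term can be localized and placed in $L^\infty_{xt}$, and no global Strichartz bound is needed.
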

		\begin{claimproof} 
			After taking $\mu = 2 $ in Lemma \ref{LemaA} we get  
			\begin{equation*}
				\begin{split}
					\left\|\langle \nabla_{t,x}\rangle^{2} \,\left( \chi_{(t_{0},x_{0}),\boldsymbol{\vec{\epsilon}}}v_{k}\right)\right\|_{L^{2}_{xt}(\mathbb{R}^2)}
					&\lesssim\;
					\left\|\chi_{(t_{0},x_{0}),\boldsymbol{\vec{\epsilon}}} v_k\right\|_{H^{-1}_{xt}(\mathbb{R}^2)}
					+ \left\|t \,\partial_x^3\left(\chi_{(t_{0},x_{0}),\boldsymbol{\vec{\epsilon}}} v_{k}\right)\right\|_{H^{-1}_{xt}(\mathbb{R}^2)}\\
					&\quad + \left\| P^{3} \left(\chi_{(t_{0},x_{0}),\boldsymbol{\vec{\epsilon}}} v_{k}\right) \right\|_{H^{-1}_{xt}(\mathbb{R}^2)}\\
					&=\Pi_{1}+\Pi_{2}+\Pi_{3}.
				\end{split}
			\end{equation*}
			In the case of $\Pi_{1}$ and $\Pi_{2}$, {\sc claim } \ref{ct2.1} implies that  for all $k\in \mathbb{N}_{0}:$
			\begin{equation*}
				\Pi_{1}
				\;\lesssim\; 
				\left\|\chi_{(t_{0},x_{0}),\boldsymbol{\vec{\epsilon}}}v_{k}\right\|_{L^{2}_{xt}(\mathbb{R}^2)}
				\;\leq\; A_{1}^{k} k ! ,
			\end{equation*}
			and 
			\begin{equation*} 
				\begin{split}
					\left\|P^3\left(\chi_{(t_{0},x_{0}),\boldsymbol{\vec{\epsilon}}} v_{k}\right)\right\|_{H^{-
							3/2}_{xt}(\mathbb{R}^2)} 
					&\lesssim
					\sum_{j=0}^3 \binom{3}{j} 
					\left\| P^{j+k} v \, P^{3-j} \chi_{(t_{0},x_{0}),\boldsymbol{\vec{\epsilon}}}\right\|_{H^{-3/2}_{xt}(\mathbb{R}^2)} \\
					&\;\lesssim\; 
					\sum_{j=0}^3 \binom{3}{j} 
					\left\|P^{3-j} \chi_{(t_{0},x_{0}),\boldsymbol{\vec{\epsilon}}}\right\|_{L^{\infty}_{xt}(\mathbb{R}^2)} 
					\left\|\chi_{(t_{0},x_{0}), \tilde{\vec{\epsilon}}}  P^{k+j} v\right\|_{L^{2}_{xt}(\mathbb{R}^2)} \\
					&\lesssim
					\sum_{j=0}^3 \binom{3}{j} 
					\left\|P^{3-j} \chi_{(t_{0},x_{0}),\boldsymbol{\vec{\epsilon}}}\right\|_{L^{\infty}_{xt}(\mathbb{R}^2)}
					A_{1}^{k +j} (k + j)! \\
					&\lesssim A_{1}^{k +3} (k + 3)!.
				\end{split}
			\end{equation*}
			Using the  identity \eqref{IdentityA} we get
			\begin{equation*}
				\begin{split}
					\left\| t \partial_{x}^{3}\left(\chi_{(t_{0},x_{0}),\boldsymbol{\vec{\epsilon}}} v_k\right) \right\|_{H^{-
							1}_{xt}(\mathbb{R}^2)} 
					&\lesssim \left\| \chi_{(t_{0},x_{0}),\boldsymbol{\vec{\epsilon}}}t \partial_{x}^{3}v_k \right\|_{H^{-
							1}_{xt}(\mathbb{R}^2)}
					+ \left\| \partial_{x}^{2}\left(t \partial_{x}\chi_{(t_{0},x_{0}),\boldsymbol{\vec{\epsilon}}}  v_k\right) \right\|_{H^{-
							1}_{xt}(\mathbb{R}^2)}\\
					&\quad
					+ \left\| \partial_{x} \left(t \partial_{x}^{2} \chi_{(t_{0},x_{0}),\boldsymbol{\vec{\epsilon}}}  v_k\right) \right\|_{H^{-
							1}_{xt}(\mathbb{R}^2)}
					+ \left\| t \left(\partial_{x}^{3} \chi_{(t_{0},x_{0}),\boldsymbol{\vec{\epsilon}}}\right) v_k \right\|_{H^{-
							1}_{xt}(\mathbb{R}^2)}\\
					&\lesssim \left\| \chi_{(t_{0},x_{0}),\boldsymbol{\vec{\epsilon}}}t \partial_{x}^{3}v_k \right\|_{H^{-
							1}_{xt}(\mathbb{R}^2)} 
					+ \left\| t \partial_{x}\chi_{(t_{0},x_{0}),\boldsymbol{\vec{\epsilon}}}  v_k \right\|_{H^{
							1}_{xt}(\mathbb{R}^2)} \\
					&	+ \left\| t \partial_{x}^{2} \chi_{(t_{0},x_{0}),\boldsymbol{\vec{\epsilon}}}  v_k \right\|_{L^{
							2}_{xt}(\mathbb{R}^2)}  
					+ \left\| t \left(\partial_{x}^{3} \chi_{(t_{0},x_{0}),\boldsymbol{\vec{\epsilon}}}\right) v_k \right\|_{H^{-
							1}_{xt}(\mathbb{R}^2)}\\
					&= \Lambda_{1}+\Lambda_{2}+\Lambda_{3}+\Lambda_{4}.
				\end{split}
			\end{equation*}
			The terms $\Lambda_{2}, \Lambda_{3}$, and $\Lambda_{4}$ are straightforward to handle. 
			In this sense, we use $\chi_{(t_{0},x_{0}), \tilde{\vec{\epsilon}}}$ to handle them, 
			combining it with Theorem \ref{kp1} and {\sc claims} \ref{ct2.1}-\ref{ct2.2} in order to obtain, 
			that for all $k\in \mathbb{N}_{0}:$
			\begin{equation*}
				\begin{split}
					\Lambda_{2}
					&\lesssim \left\| t \partial_x \chi_{(t_{0},x_{0}),\boldsymbol{\vec{\epsilon}}} \right\|_{H^{3/2}_{xt}(\mathbb{R}^2)} \left\| \chi_{(t_{0},x_{0}), \tilde{\vec{\epsilon}}} v_k \right\|_{H^{3/2}_{xt}(\mathbb{R}^2)} \\
					&\lesssim A_{1}^{k} k!,
				\end{split}
			\end{equation*}
			and
			\begin{equation*}
				\begin{split}
					\Lambda_{3} 
					&\lesssim \left\| \chi_{(t_{0},x_{0}), \tilde{\vec{\epsilon}}} v_{k} \right\|_{L^{2}_{xt}(\mathbb{R}^2)} \left\| t \partial_x^2 \chi_{(t_{0},x_{0}),\boldsymbol{\vec{\epsilon}}}\right\|_{L^{\infty}_{xt}(\mathbb{R}^2)} \\
					&\lesssim A_1^k k!,
				\end{split}
			\end{equation*}
			and 
			\begin{equation*}
				\begin{split}
					\Lambda_{4} 
					&\lesssim \left\| t \partial_{x}^{3} \chi_{(t_{0},x_{0}),\boldsymbol{\vec{\epsilon}}}\right\|_{L^{\infty}_{xt}(\mathbb{R}^2)} \left\| \chi_{(t_{0},x_{0}), \tilde{\vec{\epsilon}}}v_k\right\|_{L^{2}_{xt}(\mathbb{R}^2)} \\
					&\lesssim A_1^k k!.
				\end{split}
			\end{equation*}
			The term $\Lambda_{1}$ is more involved, and it requires a more delicate analysis.  In this sense, we apply \eqref{IdentityI} to  obtain
			\begin{equation*}
				\begin{split}
					\Lambda_{1}&
					\lesssim \left\| \chi_{(t_{0},x_{0}),\boldsymbol{\vec{\epsilon}}} \, v_{k +1}\right\|_{H^{
							-1}_{xt}(\mathbb{R}^2)} 
					+ \left\| \chi_{(t_{0},x_{0}),\boldsymbol{\vec{\epsilon}}} x \, \partial_x v_k \right\|_{H^{
							-1}_{xt}(\mathbb{R}^2)} 
					+ \left\| t \chi_{(t_{0},x_{0}),\boldsymbol{\vec{\epsilon}}} \, \Pi_{k}^{(4)}(v)\right\|_{H^{
							-1}_{xt}(\mathbb{R}^2)} \\
					&=\Lambda_{1,1}+\Lambda_{1,2}+\Lambda_{1,3}.
				\end{split}
			\end{equation*}
			Now,  after combining {\sc claims } \ref{ct2.1}  we obtain the following bound
			\begin{equation*}
				\begin{split}
					\Lambda_{1,1}&\lesssim \left\| \chi_{(t_{0},x_{0}),\boldsymbol{\vec{\epsilon}}}\, v_{k +1}\right\|_{H^{
							-1}_{xt}(\mathbb{R}^2)} \\
					&\lesssim \left\| \chi_{(t_{0},x_{0}),\boldsymbol{\vec{\epsilon}}} \, v_{k +1}\right\|_{L^{
							2}_{xt}(\mathbb{R}^2)} \\
					& \lesssim A_1^{k} (k +1)!,\quad \forall \,k\in \mathbb{N}_{0}.
				\end{split}
			\end{equation*}
			Also, by {\sc claim } \ref{ct2.1} 
			\begin{equation*}
				\begin{split}
					\Lambda_{1,2}&= \left\| \chi_{(t_{0},x_{0}),\boldsymbol{\vec{\epsilon}}} x \, \partial_x v_k \right\|_{H^{
							-1}_{xt}(\mathbb{R}^2)} \\
					&\lesssim \left\| \partial_{x}\left( \chi_{(t_{0},x_{0}),\boldsymbol{\vec{\epsilon}}} x v_k\right)\right\|_{H^{
							-1}_{xt}(\mathbb{R}^2)}
					+ \left\| \partial_{x}\left( \chi_{(t_{0},x_{0}),\boldsymbol{\vec{\epsilon}}} x\right) \chi_{(t_{0},x_{0}), \tilde{\vec{\epsilon}}} v_k \right\|_{H^{
							-1}_{xt}(\mathbb{R}^2)} \\
					&\lesssim \left\| \chi_{(t_{0},x_{0}),\boldsymbol{\vec{\epsilon}}} x v_k\right\|_{L^{
							2}_{xt}(\mathbb{R}^2)} 
					+ \left\| \partial_{x}\left( \chi_{(t_{0},x_{0}),\boldsymbol{\vec{\epsilon}}} x\right)\right\|_{L^{
							\infty}_{xt}(\mathbb{R}^2)} 
					\left\|\chi_{(t_{0},x_{0}), \tilde{\vec{\epsilon}}} v_{k}\right\|_{L^{
							2}_{xt}(\mathbb{R}^2)} \\
					&\lesssim \left\| \chi_{(t_{0},x_{0}),\boldsymbol{\vec{\epsilon}}} x\right\|_{L^{
							\infty}_{xt}(\mathbb{R}^2)}
					\left\|\chi_{(t_{0},x_{0}), \tilde{\vec{\epsilon}}} v_k\right\|_{L^{
							2}_{xt}(\mathbb{R}^2)}
					+ \left\| \partial_{x}\left( \chi_{(t_{0},x_{0}),\boldsymbol{\vec{\epsilon}}} x\right)\right\|_{L^{
							\infty}_{xt}(\mathbb{R}^2)} 
					\left\|\chi_{(t_{0},x_{0}), \tilde{\vec{\epsilon}}} v_{k}\right\|_{L^{
							2}_{xt}(\mathbb{R}^2)}\\
					&\lesssim A_1^{k} k!\quad \forall \,k\in \mathbb{N}_{0}.
				\end{split}
			\end{equation*}
			Additionally, 
			\begin{equation*}
				\Lambda_{1,3} \lesssim  \sum_{\substack{k=k_{1}+k_{2}+k_{3}+k_{4} +k_{5}\\ 0 \leq k_{1},k_{2},k_{3},k_{4},k_{5} \leq k}}   \frac{2^{k_1} k!}{k_1! k_2! k_3! k_4! k_5!} 
				\; \left\| t \chi_{(t_{0},x_{0}),\boldsymbol{\vec{\epsilon}}} \, \partial_x (v_{k_2} v_{k_3} v_{k_4} v_{k_5}) \right\|_{H^{
						-1}_{xt}(\mathbb{R}^2)},
			\end{equation*}
			from where it is straightforward to obtain 
			\begin{equation*}
				\begin{split}
					&\left\| t \chi_{(t_{0},x_{0}),\boldsymbol{\vec{\epsilon}}} \, \partial_x \bigl(v_{k_2} v_{k_3} v_{k_4} v_{k_5}\bigr) \right\|_{H^{
							-1}_{xt}(\mathbb{R}^2)}\\
					&\;\lesssim\; 
					\left\| \partial_x \bigl(t \chi_{(t_{0},x_{0}),\boldsymbol{\vec{\epsilon}}} \, v_{k_2} v_{k_3} v_{k_4} v_{k_5}\bigr) \right\|_{H^{
							-1}_{xt}(\mathbb{R}^2)}
					+ \left\| \partial_{x}\left(t \chi_{(t_{0},x_{0}),\boldsymbol{\vec{\epsilon}}}\right) \, v_{k_2} v_{k_3} v_{k_4} v_{k_5} \right\|_{H^{
							-1}_{xt}(\mathbb{R}^2)}\\
					&\lesssim 
					\left\| \left(t\chi_{(t_{0},x_{0}),\boldsymbol{\vec{\epsilon}}}\right) \, \prod_{\ell =2}^{5} v_{k_{\ell}} \right\|_{L^{
							2}_{xt}(\mathbb{R}^2)}
					+ \left\| \partial_x \left(t \chi_{(t_{0},x_{0}),\boldsymbol{\vec{\epsilon}}} \right) \,  \prod_{\ell =2}^{5}  v_{k_{\ell}} \right\|_{L^{
							2}_{xt}(\mathbb{R}^2)} \\
					&\lesssim  
					\Big(\left\|t\chi_{(t_{0},x_{0}),\boldsymbol{\vec{\epsilon}}}\right\|_{L^{
							\infty}_{xt}(\mathbb{R}^2)}
					+ \left\| \partial_x \left(t \chi_{(t_{0},x_{0}),\boldsymbol{\vec{\epsilon}}}\right) \right\|_{L^{
							\infty}_{xt}(\mathbb{R}^2)} \Big) \left\| \,  \prod_{\ell =2}^{5} v_{k_{\ell}} \right\|_{L^{
							2}_{xt}(\mathbb{R}^2)} \\
					&\lesssim \left(\left\|t\chi_{(t_{0},x_{0}),\boldsymbol{\vec{\epsilon}}}\right\|_{L^{
							\infty}_{xt}(\mathbb{R}^2)}  + \left\| \partial_x \left(t \chi_{(t_{0},x_{0}),\boldsymbol{\vec{\epsilon}}}\right) \right\|_{L^{
							\infty}_{xt}(\mathbb{R}^2)}  \right)  \prod_{\ell =2}^{5} \bigl\|  v_{k_{\ell}} \bigr\|_{L^{
							8}_{xt}(\mathbb{R}^2)},
				\end{split}
			\end{equation*}
			where we have used 
			Lemma \ref{LemaC} .
			
			Hence
			\begin{equation*}
				\begin{split}
					\Lambda_{1,3} &\lesssim k!  \sum_{\substack{k=k_{1}+k_{2}+k_{3}+k_{4} +k_{5}\\ 0 \leq k_{1},k_{2},k_{3},k_{4},k_{5} \leq k}}   \frac{2^{k_1}}{k_1! } A_{8}^{k- k_1}\\
					& \lesssim A_{8}^{k} (k +4)!,\quad \forall k\in \mathbb{N}_{0}.
				\end{split}
			\end{equation*}
		\end{claimproof}
		\begin{claim}\label{ct3.3}
			Let $\vec{\boldsymbol{\epsilon}} = (\epsilon, \epsilon) \in (0, \infty)^{2}$ and let $\chi_{(t_{0},x_{0}),\boldsymbol{\vec{\epsilon}}}$ be the cutoff function defined in \eqref{cutoff1}. There exist constants $c > 0,$  such that the following estimate holds for all $k \in \mathbb{N}_0$:
			\begin{equation*}
				\left\|\chi_{(t_{0},x_{0}),\boldsymbol{\vec{\epsilon}}}P^{k} v \right\|_{H^{7/2}_{xt}(\mathbb{R}^2)} \leq c A_{2}^{k} \; k! , \; \; k = 0,1,2, \ldots 
			\end{equation*}
		\end{claim}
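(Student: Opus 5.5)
We follow the bootstrap scheme of Claims \ref{ct2.2}--\ref{ct3.2}, now one rung higher. We take $\mu=7/2$ in Lemma \ref{LemaA}, which gives
\begin{equation*}
\left\|\chi_{(t_{0},x_{0}),\boldsymbol{\vec{\epsilon}}}v_k\right\|_{H^{7/2}_{xt}}\lesssim \left\|\chi_{(t_{0},x_{0}),\boldsymbol{\vec{\epsilon}}}v_k\right\|_{H^{1/2}_{xt}}+\left\|t\partial_x^3(\chi_{(t_{0},x_{0}),\boldsymbol{\vec{\epsilon}}}v_k)\right\|_{H^{1/2}_{xt}}+\left\|P^3(\chi_{(t_{0},x_{0}),\boldsymbol{\vec{\epsilon}}}v_k)\right\|_{H^{1/2}_{xt}}=:\Pi_1+\Pi_2+\Pi_3 .
\end{equation*}
Claim \ref{ct2.2} bounds $\Pi_1$ by $A_1^k k!$ directly. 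For $\Pi_3$ we expand $P^3$ by the Leibniz rule. We then insert the enlarged cutoff $\chi_{(t_0,x_0),\tilde{\vec\epsilon}}$, $\tilde{\vec\epsilon}=(2\epsilon,2\epsilon)$, and apply the product estimate of Theorem \ref{kp1} in $H^{1/2}_{xt}$ with the smooth factors $P^{3-j}\chi$. This reduces $\Pi_3$ to $\|\chi_{(t_0,x_0),\tilde{\vec\epsilon}}v_{k+j}\|_{H^{1/2}_{xt}}$, and Claim \ref{ct2.2} gives $\Pi_3\lesssim A^{k+3}(k+3)!$. The three extra factorials are absorbed by enlarging the constant, exactly as in the previous claims.

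For $\Pi_2$ we use identity \eqref{IdentityA}. The commutator terms are
\begin{equation*}
\partial_x^2(t\partial_x\chi\, v_k),\qquad \partial_x(t\partial_x^2\chi\, v_k),\qquad t\,\partial_x^3\chi\, v_k .
\end{equation*}
These need $t\partial_x\chi\,v_k\in H^{5/2}$, $t\partial_x^2\chi\,v_k\in H^{3/2}$ and $t\partial_x^3\chi\,v_k\in H^{1/2}$. Theorem \ref{kp1} combined with $\chi_{\tilde{\vec\epsilon}}$ reduces them to Claims \ref{ct3.2} and \ref{ct2.2.1}. For the $H^{5/2}$ bound we interpolate between the $H^2$ bound of Claim \ref{ct3.2} and the $H^{7/2}$ bound currently being proved. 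If that is circular, we instead first prove an intermediate $H^{5/2}$ or $H^3$ claim by the same scheme, with $\mu=5/2$ or $\mu=3$. The main term $\chi t\partial_x^3v_k$ is split by \eqref{IdentityI} into three pieces:
\begin{equation*}
\chi v_{k+1},\qquad \chi x\partial_xv_k,\qquad t\chi\Pi_k^{(4)}(v).
\end{equation*}
The first piece is bounded in $H^{1/2}$ by Claim \ref{ct2.2}. For the second we write $x\chi\partial_xv_k=\partial_x(x\chi v_k)-\partial_x(x\chi)v_k$ and control it in $H^{1/2}$ by $\|\chi_{\tilde{\vec\epsilon}}v_k\|_{H^{3/2}}$ via Claim \ref{ct2.2.1}.

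The main obstacle is the nonlinear piece
\begin{equation*}
\left\|t\chi\,\partial_x(v_{k_2}v_{k_3}v_{k_4}v_{k_5})\right\|_{H^{1/2}_{xt}} .
\end{equation*}
It requires a quadrilinear estimate at positive regularity $H^{3/2}$ for the localized product $t\chi\prod_{\ell=2}^{5}\chi_{\tilde{\vec\epsilon}}v_{k_\ell}$. Lebesgue bounds from Lemma \ref{LemaC} no longer suffice here. We would use $\chi_{\tilde{\vec\epsilon}}\equiv1$ on the support of $\chi$ to insert a cutoff on each factor. Iterating the fractional Leibniz rule (Theorem \ref{kp1}) in $H^{3/2}_{xt}(\mathbb{R}^2)$ then puts the full $H^{3/2}$ derivative on one factor, controlled by Claim \ref{ct2.2.1}, while the remaining factors go into $L^\infty_{xt}$. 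Since $H^{2}_{xt}(\mathbb{R}^2)\hookrightarrow L^\infty$, these are controlled by Claim \ref{ct3.2}. This convolution-type argument is the same as the treatment of $\Upsilon_j$ in Claim \ref{c5}. Each factor then carries $A^{k_\ell}k_\ell!$. The multinomial sum collapses as before to
\begin{equation*}
k!\sum\frac{2^{k_1}}{k_1!}A^{k-k_1}\lesssim A^k(k+4)!,
\end{equation*}
with the polynomial factor absorbed into the geometric constant. This closes the bound $\lesssim A_2^k k!$.
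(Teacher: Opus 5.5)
Your proof goes through, but only via your fallback, so drop the interpolation option. The $H^{5/2}_{xt}$ bound you need is for $\chi_{(t_0,x_0),\tilde{\vec{\epsilon}}}\,v_k$ at the enlarged scale. Interpolating would therefore bring in $\|\chi_{(t_0,x_0),\tilde{\vec{\epsilon}}}\,v_k\|_{H^{7/2}_{xt}}$, which is not the left-hand side (so it cannot be absorbed) and is not known to be finite. Instead, prove an intermediate claim with $\mu=5/2$ (right-hand norms in $H^{-1/2}_{xt}$) or $\mu=3$ (norms in $L^2_{xt}$), using the same scheme and only Claims \ref{ct2.1}--\ref{ct3.2}. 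That claim closes without difficulty, and your argument is then complete. Your treatment of $\Pi_1$, $\Pi_3$ and of the pieces $\chi v_{k+1}$, $\chi x\partial_x v_k$ coincides with the paper's.

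With that fix, your route differs from the paper's in the commutator part of $t\partial_x^3(\chi v_k)$.

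\emph{The paper's route.} It does not use the divergence form at this step. It expands $t\partial_x^3(\chi v_k)=\sum_{\ell=0}^{3}\binom{3}{\ell}\,t\,\partial_x^{3-\ell}v_k\,\partial_x^{\ell}\chi$, so the worst term $t\,\partial_x\chi\,\partial_x^{2}v_k$ in $H^{1/2}_{xt}$ costs a third $x$-derivative on $v_k$. That derivative is removed through the equation, by re-using \eqref{KeyEstimateII} (the bound on $t\chi\,\partial_x^3v_k$ obtained from \eqref{IdentityI}) with the enlarged cutoff. Everything else needs only the $H^2$ bound of Claim \ref{ct3.2}, so this saves a rung.

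\emph{Your route.} It costs one extra claim, but only ever invokes isotropic $H^{s}_{xt}$ bounds already proved. That is a real advantage. The paper's step bounds an $H^{1/2}_{xt}$ norm by $L^2_{xt}$ norms of the function and its $x$-derivative alone. This misses the $\tau$-part of the weight and needs a supplementary $\partial_t$ estimate (obtainable from $3t\partial_t=P-x\partial_x$).

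\emph{The quartic term.} Here the two routes are interchangeable. The paper uses that $H^{3/2}_{xt}(\mathbb{R}^2)$ is an algebra and bounds every localized factor by Claim \ref{ct2.2.1}. Your Kato--Ponce split with the other factors in $L^\infty$ is the same estimate. Since $H^{3/2}(\mathbb{R}^2)\hookrightarrow L^\infty$, you do not even need Claim \ref{ct3.2} there.
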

		\begin{claimproof} 
			By Lemma \ref{LemaA}, it is clear that 
			\begin{equation*}
				\begin{split}
					\left\| \langle \nabla_{t,x} \rangle^{7/2} \left(\chi_{(t_{0},x_{0}),\boldsymbol{\vec{\epsilon}}} v_k\right) \right\|_{L^{
							2}_{xt}(\mathbb{R}^2)}
					&\lesssim
					\left\|\chi_{(t_{0},x_{0}),\boldsymbol{\vec{\epsilon}}} v_k\right\|_{H^{1/2
						}_{xt}(\mathbb{R}^2)}
					+ \left\|t \,\partial_{x}^{3} \left(\chi_{(t_{0},x_{0}),\boldsymbol{\vec{\epsilon}}} v_k\right)\right\|_{H^{1/2
						}_{xt}(\mathbb{R}^2)}\\
					&\quad 
					+ \left\|P^3\left(\chi_{(t_{0},x_{0}),\boldsymbol{\vec{\epsilon}}} v_k\right)\right\|_{H^{1/2
						}_{xt}(\mathbb{R}^2)}.\\
					&= 	\overset{\sim}{\Xi_{1}} +	\overset{\sim}{\Xi_{2}} +	\overset{\sim}{\Xi_{3}} .
				\end{split}
			\end{equation*}
			Notice that by {\sc claim } \ref{ct2.2} we have 
			\begin{equation*}
				\overset{\sim}{\Xi_{1}}	\leq c A_{1}^{k} k!, \quad \forall \,k\in \mathbb{N}_{0}.
			\end{equation*}
			Additionally, by combining Claim \ref{ct3.2}, the Leibniz rule, Theorem \ref{kp1}, and interpolation, we obtain
			\begin{equation*}
				\begin{split}
					\overset{\sim}{\Xi_{3}} 
					&\lesssim \sum_{j=0}^{3} \binom{3}{j} 
					\left\| \chi_{(t_{0},x_{0}), \tilde{\vec{\epsilon}}} P^{j+k} v \, P^{3-j} \chi_{(t_{0},x_{0}),\boldsymbol{\vec{\epsilon}}} \right\|_{H^{1/2}_{xt}(\mathbb{R}^2)}\\
					&\lesssim \sum_{j=0}^{3} \binom{3}{j} 
					\left\| \chi_{(t_{0},x_{0}), \tilde{\vec{\epsilon}}} v_{j+k} \right\|_{H^{1/2}_{xt}(\mathbb{R}^2)} \, \left\| P^{3-j} \chi_{(t_{0},x_{0}),\boldsymbol{\vec{\epsilon}}} \right\|_{H^{1/2}_{xt}(\mathbb{R}^2)} \\
					&\lesssim  A_2^k \, (k+3)!, \quad \forall \,k\in \mathbb{N}_{0}.
				\end{split}
			\end{equation*}
			By the Leibniz rule and the triangular inequality 
			\begin{equation*}
				\overset{\sim}{\Xi_{2}}
				\lesssim
				\sum_{\ell=0}^3 \left\|t\, \partial_x^{3-\ell} v_k \, \partial_x^\ell \chi_{(t_{0},x_{0}),\boldsymbol{\vec{\epsilon}}}\right\|_{H^{1/2}_{xt}(\mathbb{R}^2)} .
			\end{equation*}
			In order to estimate the last expression, we consider the terms separately. If $\ell = 0$,  we get after make use of identity \eqref{IdentityI}
			\begin{equation*}
				\begin{split}
					\left\| t \chi_{(t_{0},x_{0}),\boldsymbol{\vec{\epsilon}}}  \partial_{x}^{3} v_k \right\|_{H^{1/2}_{xt}(\mathbb{R}^2)} &\lesssim \left\| \chi_{(t_{0},x_{0}),\boldsymbol{\vec{\epsilon}}} \; P v_k\right\|_{H^{1/2}_{xt}(\mathbb{R}^2)} 
					+ \left\|\chi_{(t_{0},x_{0}),\boldsymbol{\vec{\epsilon}}} x \partial_x v_k\right\|_{H^{1/2}_{xt}(\mathbb{R}^2)}\\
					&\quad  + \left\| t\chi_{(t_{0},x_{0}),\boldsymbol{\vec{\epsilon}}} \Pi_{k}^{(4)}(v)\right\|_{H^{1/2}_{xt}(\mathbb{R}^2)} .
				\end{split}
			\end{equation*}
			The first two terms in the above inequality are easy to estimate. Indeed, we have by {\sc claim } \ref{ct2.2}
			\begin{equation*}
				\begin{split}
					\left\| \chi_{(t_{0},x_{0}),\boldsymbol{\vec{\epsilon}}} \,P v_k\right\|_{H^{1/2}_{xt}(\mathbb{R}^2)}  &= \| \chi_{(t_{0},x_{0}),\boldsymbol{\vec{\epsilon}}} \; v_{k +1}\|_{H^{1/2}_{xt}(\mathbb{R}^2)} \\
					&\leq c A_{1}^{k} (k+1)!, \; \; k = 0, 1,2,\ldots
				\end{split}
			\end{equation*}
			and  by  combining {\sc claim } \ref{ct2.2.1} and Theorem \ref{kp1}
			\begin{equation*}
				\begin{split}
					\left\|\chi_{(t_{0},x_{0}), \tilde{\vec{\epsilon}}}\chi_{(t_{0},x_{0}),\boldsymbol{\vec{\epsilon}}} \,x \partial_x v_k\right\|_{H^{1/2}_{xt}(\mathbb{R}^2)}&  = \| \chi_{(t_{0},x_{0}),\boldsymbol{\vec{\epsilon}}} x \partial_{x}\left( v_k\right) \|_{H^{1/2}_{xt}(\mathbb{R}^2)}  \\
					&\lesssim  \left\| \chi_{(t_{0},x_{0}),\boldsymbol{\vec{\epsilon}}}x \right\|_{H^{1/2}_{xt}(\mathbb{R}^2)} \left\| \chi_{(t_{0},x_{0}), \tilde{\vec{\epsilon}}}  v_{k} \right\|_{H^{3/2}_{xt}(\mathbb{R}^2)} \\
					&\leq c A_{2}^{k} \; k! , \; \; k = 0,1,2, \ldots
				\end{split}
			\end{equation*}
			The last term can be estimated as follows 
			\begin{equation*}
				\begin{split}
					&\left\|t\, \chi_{(t_{0},x_{0}),\boldsymbol{\vec{\epsilon}}}\, \Pi_{k}^{(4)}(v)\right\|_{H^{1/2}_{xt}(\mathbb{R}^2)}\\
					&\lesssim
					\sum_{\substack{k=k_{1}+k_{2}+k_{3}+k_{4} +k_{5}\\ 0 \leq k_{1},k_{2},k_{3},k_{4},k_{5} \leq k}}  \frac{2^{k_1} k!}{k_1! k_2! k_3! k_4 ! k_5 !} 
					\left\|t\, \chi_{(t_{0},x_{0}),\boldsymbol{\vec{\epsilon}}}\, \partial_x \left( v_{k_2} v_{k_3} v_{k_4} v_{k_5}\right)\right\|_{H^{1/2}_{xt}(\mathbb{R}^2)}\\
					&\leq c\sum_{\substack{k=k_{1}+k_{2}+k_{3}+k_{4} +k_{5}\\ 0 \leq k_{1},k_{2},k_{3},k_{4},k_{5} \leq k}}  \frac{2^{k_1} k!}{k_1! k_2! k_3! k_4 ! k_5 !} 
					\left\|\partial_x \left(t\, \chi_{(t_{0},x_{0}),\boldsymbol{\vec{\epsilon}}}\,  v_{k_2} v_{k_3} v_{k_4} v_{k_5}\right)\right\|_{H^{1/2}_{xt}(\mathbb{R}^2)}\\
					&\quad +\sum_{\substack{k=k_{1}+k_{2}+k_{3}+k_{4} +k_{5}\\ 0 \leq k_{1},k_{2},k_{3},k_{4},k_{5} \leq k}}  \frac{2^{k_1} k!}{k_1! k_2! k_3! k_4 ! k_5 !} 
					\left\|\partial_x \left(t\, \chi_{(t_{0},x_{0}),\boldsymbol{\vec{\epsilon}}}\right)\,  v_{k_2} v_{k_3} v_{k_4} v_{k_5}\right\|_{H^{1/2}_{xt}(\mathbb{R}^2)}.
				\end{split}
			\end{equation*}
			Nevertheless, applying Theorem \ref{kp1}, interpolation, and Sobolev embedding allows us to bound each term in the summation as follows:
			\begin{equation*}
				\begin{split}
					\left\|\partial_x \left(t \chi_{(t_{0},x_{0}),\boldsymbol{\vec{\epsilon}}} \, v_{k_2} v_{k_3} v_{k_4} v_{k_5}\right)\right\|_{H^{1/2}_{xt}(\mathbb{R}^2)} 
					&\;\lesssim\;
					\left\| t \chi_{(t_{0},x_{0}),\boldsymbol{\vec{\epsilon}}} \, v_{k_2} v_{k_3} v_{k_4} v_{k_5}\right\|_{H^{3/2}_{xt}(\mathbb{R}^2)}  \\
					&\lesssim \left\| t \chi_{(t_{0},x_{0}),\boldsymbol{\vec{\epsilon}}} \right\|_{H^{3/2}(\mathbb{R}^2)} 
					\left\| \prod_{j=2}^{5} \chi_{(t_{0},x_{0}), \tilde{\vec{\epsilon}}}v_{k_{j}}\right\|_{H^{3/2}_{xt}(\mathbb{R}^2)}  \\
					&\lesssim \left\| t \chi_{(t_{0},x_{0}),\boldsymbol{\vec{\epsilon}}}  \right\|_{H^{3/2}_{xt}(\mathbb{R}^2)} \;
					\prod_{j=2}^{5} \left\| \chi_{(t_{0},x_{0}), \tilde{\vec{\epsilon}}}v_{k_{j}} \right\|_{H^{3/2}_{xt}(\mathbb{R}^2)} \\
					&\lesssim \prod_{j=2}^{5} A_{j}^{k_{j}} k_{j}! ,
				\end{split}
			\end{equation*}
			where we have used {\sc claim}  \ref{ct2.2.1} in the last inequality adapted to $ \chi_{(t_{0},x_{0}), \tilde{\vec{\epsilon}}}$.
			
			\emph{Mutatis mutandis}, the term $\left\| t \partial_x \chi_{(t_{0},x_{0}),\boldsymbol{\vec{\epsilon}}} \left(\prod_{j = 2}^{5} v_{k_{j}} \right)\right\|_{H^{3/2}_{xt}(\mathbb{R}^2)}$ can be estimated in an analogous manner.  Indeed,
			\begin{equation*}
				\left\| t \partial_x \chi_{(t_{0},x_{0}),\boldsymbol{\vec{\epsilon}}} \left(\prod_{j = 2}^{5} v_{k_{j}} \right)\right\|_{H^{3/2}_{xt}(\mathbb{R}^2)}\lesssim \prod_{j=2}^{5} A_{j}^{k_{j}} k_{j}!.
			\end{equation*}
			Finally, we gather the estimates above to obtain 
			\begin{align*}
				\left\|t \chi_{(t_{0},x_{0}),\boldsymbol{\vec{\epsilon}}} \,\Pi_{k}^{(4)}(v)\right\|_{H^{1/2}_{xt}(\mathbb{R}^2)}
				&\lesssim\sum_{\substack{k=k_{1}+k_{2}+k_{3}+k_{4} +k_{5}\\ 0 \leq k_{1},k_{2},k_{3},k_{4},k_{5} \leq k}}  
				\frac{2^{k_1} \, k!}{k_1!\,k_2!\,k_3!\,k_4!\,k_5!}\,
				A_6^{\,k-k_1}\, k_2! \, k_3! \, k_4! \, k_5! \\
				&= \sum_{\substack{k=k_{1}+k_{2}+k_{3}+k_{4} +k_{5}\\ 0 \leq k_{1},k_{2},k_{3},k_{4},k_{5} \leq k}}  \frac{2^{k_1} \, k!}{k_1!}\, A_6^{\,k-k_1} \\
				&= k!\, A_6^k \sum_{\substack{k=k_{1}+k_{2}+k_{3}+k_{4} +k_{5}\\ 0 \leq k_{1},k_{2},k_{3},k_{4},k_{5} \leq k}} \frac{2^{k_1}}{k_1 !\, A_6^{\,k_1}} \\
				&\lesssim\; A_7^k (k+4)!.
			\end{align*}
			Hence, for all $k\in \mathbb{N}_{0}$,
			\begin{equation}\label{KeyEstimateII}
				\left\| t \chi_{(t_{0},x_{0}),\boldsymbol{\vec{\epsilon}}} \partial_{x}^{3} v_k \right\|_{H^{1/2}_{xt}(\mathbb{R}^2)}\lesssim A_{8}^{k} k!.   
			\end{equation}
			The remaining expression is 
			\begin{equation*}
				\begin{split}
					\sum_{\ell=1}^3 \left\|t\, \partial_x^{3-\ell} v_k \,\partial_x^\ell \chi_{(t_{0},x_{0}),\boldsymbol{\vec{\epsilon}}} \right\|_{H^{1/2}_{xt}(\mathbb{R}^2)}
					&= \; \left\|t\,\partial_x^{2}v_k\,\partial_x \chi_{(t_{0},x_{0}),\boldsymbol{\vec{\epsilon}}} \right\|_{H^{1/2}_{xt}(\mathbb{R}^2)}
					+ \left\|t\,\partial_x v_k\,\partial_x^{2}\chi_{(t_{0},x_{0}),\boldsymbol{\vec{\epsilon}}} \right\|_{H^{1/2}_{xt}(\mathbb{R}^2)}\\
					&\quad 
					+ \left\|t\,v_k\,\partial_x^{3}\chi_{(t_{0},x_{0}),\boldsymbol{\vec{\epsilon}}} \right\|_{H^{1/2}_{xt}(\mathbb{R}^2)}.\\
				\end{split}
			\end{equation*}
			Each of these terms can be estimated individually. We begin with the first term, $\big\|t\,\partial_x^{2}v_k\,\partial_x \chi_{(t_{0},x_{0}),\boldsymbol{\vec{\epsilon}}} \big\|_{H^{1/2}_{xt}(\mathbb{R}^2)}$, which satisfies
			\begin{equation*}
				\begin{split}
					\left\|t\,\partial_x^{2}v_k\,\partial_x \chi_{(t_{0},x_{0}),\boldsymbol{\vec{\epsilon}}}\right\|_{H^{1/2}_{xt}(\mathbb{R}^2)} 
					&\lesssim \left\|t\,\partial_x^{2}v_k\,\partial_x \chi_{(t_{0},x_{0}),\boldsymbol{\vec{\epsilon}}}\right\|_{L^{
							2}_{xt}(\mathbb{R}^2)} + \left\| t\, \partial_{x}\left(\partial_x^{2}v_k\,\partial_x \chi_{(t_{0},x_{0}),\boldsymbol{\vec{\epsilon}}}\right)\right\|_{L^{
							2}_{xt}(\mathbb{R}^2)} .
				\end{split}
			\end{equation*}
			For the first component on the right-hand side, we have
			\begin{equation*}
				\left\|t\,\partial_x^{2}v_k\,\partial_x \chi_{(t_{0},x_{0}),\boldsymbol{\vec{\epsilon}}}\right\|_{L^{
						2}_{xt}(\mathbb{R}^2)}\lesssim \left\|t \partial_x \chi_{(t_{0},x_{0}),\boldsymbol{\vec{\epsilon}}} \right\|_{L^{
						\infty}_{xt}(\mathbb{R}^2)} \,\left\|  \, \partial_{x}^2\left(\chi_{(t_{0},x_{0}), \tilde{\vec{\epsilon}}} v_k\right)\right\|_{L^{
						2}_{xt}(\mathbb{R}^2)} \lesssim \left\| \chi_{(t_{0},x_{0}), \tilde{\vec{\epsilon}}} v_k\right\|_{H^{
						2}_{xt}(\mathbb{R}^2)},  
			\end{equation*}
			where we have used the fact that $\chi_{(t_{0},x_{0}), \tilde{\vec{\epsilon}}}\equiv 1$ on $\operatorname{supp}_{t,x}\left(t\,\partial_x \chi_{(t_{0},x_{0}),\boldsymbol{\vec{\epsilon}}}\right)$.
			
			Consequently, Claim~\ref{ct3.2} combined with the bound \eqref{KeyEstimateII} adapted to $\chi_{(t_{0},x_{0}), \tilde{\vec{\epsilon}}}$ yields
			\begin{equation*}
				\begin{split}
					\left\| t\, \partial_{x}\left(\partial_x^{2}v_k\,\partial_x \chi_{(t_{0},x_{0}),\boldsymbol{\vec{\epsilon}}}\right)\right\|_{L^{
							2}_{xt}(\mathbb{R}^2)}& \lesssim \left\| \left(t \partial_x \chi_{(t_{0},x_{0}),\boldsymbol{\vec{\epsilon}}}\right) \partial_{x}^{3} v_k \right\|_{L^{
							2}_{xt}(\mathbb{R}^2)} + \left\| t \partial_{x}^2 \chi_{(t_{0},x_{0}),\boldsymbol{\vec{\epsilon}}} \partial_{x}^2 v_k \right\|_{L^{
							2}_{xt}(\mathbb{R}^2)}\\
					&\lesssim \left\|  \partial_x \chi_{(t_{0},x_{0}),\boldsymbol{\vec{\epsilon}}}\|_{L^{
							\infty}_{xt}(\mathbb{R}^2)} \,\|t\chi_{(t_{0},x_{0}), \tilde{\vec{\epsilon}}} \partial_{x}^{3} v_k \right\|_{L^{2}(\mathbb{R}^2)} \\
					&\quad + \left\| t \partial_{x}^2 \chi_{(t_{0},x_{0}),\boldsymbol{\vec{\epsilon}}} \chi_{(t_{0},x_{0}), \tilde{\vec{\epsilon}}} \partial_{x}^2 v_k \right\|_{L^{2}(\mathbb{R}^2)}\\
					&\lesssim A_{8}^{k}k!,\quad \forall\, k\in \mathbb{N}_{0}.
				\end{split}
			\end{equation*}
		\end{claimproof}
		\begin{claim}\label{ct3.3-4}
			Let $t_0 \neq 0$, $x_0 \in \mathbb{R}$, and let $\varepsilon$ satisfy $0 < \varepsilon < |t_0|$. Define the intervals $I_{t_0} = (t_0 - \varepsilon, t_0 + \varepsilon)$ and $I_{x_0} = (x_0 - \varepsilon, x_0 + \varepsilon)$, and let $\vec{\varepsilon} = (\varepsilon, \varepsilon)$. 
			
			Let $\chi_{(t_{0},x_{0}),\vec{\varepsilon}}$ be the cutoff function defined in \eqref{cutoff1}. Suppose there exists a constant $c > 0$ such that the estimate
			\begin{equation}\label{hipo1}
				\left\|\chi_{(t_{0},x_{0}),\vec{\varepsilon}} \, P^{k} v \right\|_{H^{7/2}_{xt}(\mathbb{R}^2)} \leq c A_{2}^{k} \, k!
			\end{equation}
			holds for all $k \in \mathbb{N}_0$. Then we have
			\begin{equation*}
				\sup_{t \in I_{t_0}} \left\| (t^{1/3} \partial_{x})^{l} P^k v \right\|_{H^{1}_{x}(I_{x_0})} \le c A_{4}^{k + l} \, (k+l)!
			\end{equation*}
			for all $k, l \in \mathbb{N}_0$.
		\end{claim}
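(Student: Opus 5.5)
The argument is an induction on $l$, parallel to Claim~\ref{c7} for mKdV but now in the $L^2$-based scale. The hypothesis \eqref{hipo1} is turned into fixed-time bounds by a trace theorem, and higher powers of $\mathcal{M}:=t^{1/3}\partial_x$ are traded for $P$ through the equation.

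\emph{Base cases.} For $l\in\{0,1,2\}$ I would write $\mathcal{M}^lP^kv=t^{l/3}\partial_x^lP^kv$ and use $|t|^{l/3}\le(|t_0|+\varepsilon)^{l/3}$ on $I_{t_0}$. The trace theorem in $t$ (the $r=2$ case of Theorems~\ref{trace1}--\ref{trace2}, which costs $1/2+$ derivatives) then gives
\[
\sup_{t\in I_{t_0}}\|\partial_x^lP^kv(t)\|_{H^1_x(I_{x_0})}\lesssim\|\chi_{(t_0,x_0),\vec\varepsilon}P^kv\|_{H^{l+3/2+}_{xt}}\le\|\chi_{(t_0,x_0),\vec\varepsilon}P^kv\|_{H^{7/2}_{xt}},
\]
which is valid since $l+3/2<7/2$ for $l\le 1$. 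For $l=2$ the count is exactly borderline. I would therefore first try to use the strict slack of \eqref{hipo1} (e.g.\ $H^{7/2}_{xt}$ against $H^{3+1/2+}$). If that is not available, I would treat $l=2$ by the recursion below with $l-2=0$, after applying $\mathcal{M}^{-1}$ at the $H^1$ level. Bounding $A_2^kk!\le A_4^{k+l}(k+l)!$ then closes the base cases.

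\emph{Inductive step.} Assume the bound holds for all indices up to $l$, with $l\ge2$. Using $L P^kv=-\partial_x(P+2I)^k(v^4)$ and $P=x\partial_x+3t\partial_t$, the identity
\[
\mathcal{M}^3P^kv=-t\,\partial_x(P+2I)^k(v^4)-\tfrac13P^{k+1}v+\tfrac13x\partial_xP^kv
\]
holds. Hence $\mathcal{M}^{l+1}P^kv=\mathcal{M}^{l-2}\mathcal{M}^3P^kv$ splits into three pieces $\Pi_1,\Pi_2,\Pi_3$.
\begin{itemize}
\item $\Pi_2=\frac13\mathcal{M}^{l-2}P^{k+1}v$ follows directly from the inductive hypothesis with index $(k+1,l-2)$.
\item For $\Pi_3$, commute as in Claim~\ref{c7}: $\mathcal{M}^{l-2}x\partial_x=x\partial_x\mathcal{M}^{l-2}+(l-2)\mathcal{M}^{l-2}$. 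Then $x\partial_x\mathcal{M}^{l-2}=t^{-1/3}x\,\mathcal{M}^{l-1}$, and I bound it with $|t|^{-1/3}\le(|t_0|-\varepsilon)^{-1/3}$ and a cutoff $\eta$ equal to $1$ on $I_{x_0}$. Multiplication by $x\eta$ is bounded on $H^1$, which gives index $(k,l-1)$.
\item For $\Pi_1$, expand $\mathcal{M}^{l-1}(P+2I)^k(v^4)$ by the Leibniz and multinomial formulas into products $\prod_{i=1}^4\mathcal{M}^{n_i}P^{m_i}v$. Each product is estimated using the algebra property of $H^1_x(I_{x_0})$: $H^1$ on an interval is a Banach algebra, so via the extension/infimum argument of Claim~\ref{c7}, $\|fg\|_{H^1(I)}\lesssim\|f\|_{H^1(I)}\|g\|_{H^1(I)}$.
\end{itemize}

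\emph{Combinatorial count.} Inserting the inductive bounds $A_4^{m_i+n_i}(m_i+n_i)!$ into $\Pi_1$ requires summing multinomial coefficients times $\prod(m_i+n_i)!$. The identity $\sum\prod_i\binom{m_i+n_i}{m_i}=$ a polynomial in $(j+l)$ times $\binom{j+l-1}{j}$ (the four-factor analogue of the identity used in Claim~\ref{c7}) gives at most $C(k+l)^3(k+l-1)!$. This is absorbed into $(k+l+1)!$ once the polynomial loss is paid for by enlarging $A_4$. The factor $2^{k-j}$ is handled via $\sum_j\binom kj2^{k-j}A^{j}\le(A+2)^k$.

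\emph{Main obstacle.} I expect the hard step to be closing the induction with a constant $A_4$ that is uniform in both $k$ and $l$. There is an overall constant $C_{t_0,\varepsilon}$, coming from $t^{\pm1/3}$, the cutoff $\eta$ and the algebra constant, that multiplies each step. To absorb it, I would carry a weighted hypothesis $\le c\,A_4^{k+l}(k+l)!/(k+l+1)^2$, or alternatively choose $A_4\ge 2C_{t_0,\varepsilon}$ and verify that the number of terms in $\Pi_1$ grows at most polynomially. The base case $l=2$ is a secondary, potentially borderline point, as noted above.
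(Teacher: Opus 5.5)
Your route is the paper's: induction on $l$, the trace theorem for $l\le 2$, the identity for $\mathcal{M}^3P^kv$, the commutation for the $x\partial_x$ term, and the algebra property of $H^1_x(I_{x_0})$. Your treatment of $\Pi_2$ and $\Pi_3$ is fine. The gap is in closing the induction for $\Pi_1$. Your count itself is correct: the generating function $(1-x-y)^{-4}$ gives $\sum_{|m|=j,\,|n|=l-1}\prod_{i=1}^4\binom{m_i+n_i}{m_i}=\binom{j+l+2}{3}\binom{j+l-1}{j}$. So, with $K=(|t_0|+\varepsilon)^{2/3}C_{\mathrm{alg}}^3$ and $C_{\mathrm{alg}}$ the algebra constant,
\[
\Pi_1\le Kc^4\sum_{j=0}^{k}\binom{k}{j}2^{k-j}A_4^{j+l-1}\,\frac{(j+l+2)!}{6}.
\]
The problem is the $j=k$ term. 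Divided by the target $cA_4^{k+l+1}(k+l+1)!$, it equals $Kc^3(k+l+2)/(6A_4^2)$, which is unbounded in $k+l$. Enlarging $A_4$ cannot absorb this. The same $A_4$ appears on both sides, and the total index drops by exactly two in $\Pi_1$, so one step gains only the fixed factor $A_4^2$. Any loss growing with $k+l$ therefore survives. Here it is polynomial; your bound $\sum_j\binom kj 2^{k-j}A^j\le (A+2)^k$ adds a geometric loss $(1+2/A_4)^k$ as well. This is exactly where four factors differ from the three in Claim~\ref{c7}. There, $j!\,(l-1)!\binom{j+l+1}{2}\binom{j+l-1}{j}=(j+l+1)!/2$ still fits under $(k+l+1)!$. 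The paper's own estimate of $\mathcal{I}_3$ does not settle this either, because $\sum_{l_1+\dots+l_4=l-1}\frac{(l-1+k-k_1)!}{l_1!\cdots l_4!}=\frac{(l-1+k-k_1)!}{(l-1)!}\,4^{l-1}$, not $4^{l-1+k-k_1}$.

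The weighted hypothesis you list only as an option for the constants is in fact necessary. Prove $\sup_{t\in I_{t_0}}\|\mathcal{M}^lP^kv(t)\|_{H^1_x(I_{x_0})}\le cA_4^{k+l}(k+l)!/(k+l+1)^2$. Group the sum by $N_i=m_i+n_i$. Vandermonde gives $\sum_m\prod_i\binom{N_i}{m_i}=\binom{j+l-1}{j}$, and three applications of $\sum_{a+b=n}(a+1)^{-2}(b+1)^{-2}\le C_0(n+1)^{-2}$ give
\[
\Pi_1\le Kc^4C_0^3\sum_{j=0}^{k}\binom{k}{j}2^{k-j}A_4^{j+l-1}\,\frac{(j+l-1)!}{(j+l)^2}.
\]
Two facts now close the estimate:
\begin{itemize}
\item $\binom kj\frac{(j+l-1)!}{(k+l-1)!}\le\frac{1}{(k-j)!}$;
\item the two extra factors $(k+l)(k+l+1)$ in the target absorb its weight $(k+l+2)^{-2}$.
\end{itemize}
Hence the $j$-th term is $\lesssim Kc^3C_0^3A_4^{-2}\frac{(2/A_4)^{k-j}}{(k-j)!}$ times $cA_4^{k+l+1}(k+l+1)!/(k+l+2)^2$. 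Summing in $j$, $\Pi_1$ is at most a third of the target once $A_4$ is large. Here $c$ is fixed beforehand by the base cases; these carry the weight without difficulty, as do $\Pi_2$ and $\Pi_3$.

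Finally, $l=2$ is not a borderline case. The $L^2$ trace $H^{\mu}(\mathbb{R}^2)\to H^{\mu-1/2}(\mathbb{R})$ is lossless for $\mu>\frac12$; Theorem~\ref{trace1} with $r=2$ allows $\mu=s+\frac12$. So the $H^1_x$ norm of $\partial_x^2P^kv(t)$ is controlled by exactly the $H^{7/2}_{xt}$ norm of the localized $P^kv$. Drop the $\mathcal{M}^{-1}$ fallback, which is not meaningful on restricted norms.
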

		\begin{claimproof} 
			Introduce the operator abbreviation $\mathcal{M} := t^{1/3} \partial_x$. For any fixed $t \in I_{t_0}$, note that
			\begin{equation*}
				\mathcal{M}^{3} P^{k} v	= - \frac{1}{3} P^{k +1}v + \frac{1}{3} x \partial_{x} P^{k} v -  t \partial_{x}(P + 2)^{k} (v^4). 
			\end{equation*}
			We will show that for some positive constants $c$ and $A_4$, the following inequality holds:
			\begin{equation}\label{EqX}
				\left\| \mathcal{M}^l P^k v \right\|_{H^1_x\left(I_{x_0}\right)} \le c A_4^{k+l} (k+l)! , 
				\quad k, l = 0, 1, 2, \dots
			\end{equation}
			We establish this inequality by induction with respect to $l$. In the cases $l = 0,1,2$, it is easy to show that \eqref{EqX} follows directly from the assumption \eqref{hipo1}. In fact, by the trace theorem,
			\begin{equation}\label{4.29}
				\begin{split}
					\left\|  \mathcal{M}^l P^k v(t, \cdot) \right\|_{H^1_x(I_{x_0})} 
					&\le \left\| t^{l/3} \partial_x^l P^k v(t, \cdot) \right\|_{H^1_x(I_{x_0})}  \\
					&\le (|t_0| + \varepsilon)^{l/3} \left\| \partial_x^l P^k v \right\|_{H^{1}_{xt}(I_{t_0} \times I_{x_0})}  \\
					&\le (|t_0| + \varepsilon)^{l/3} \left\| P^k v \right\|_{H^{7/2}(I_{t_0} \times I_{x_0})} \\
					&\le (|t_0| + \varepsilon)^{l/3} \left\| \chi_{(t_{0},x_{0}),\vec{\varepsilon}}  P^k v \right\|_{H^{7/2}_{xt}(\mathbb{R}^2)}\\
					&\le c A_2^{k} k! \\
					&\le c A_2^{k+l} (k + l) ! ,
				\end{split}
			\end{equation}
			where we take $c \sim  (|t_0|+\varepsilon)^{l/3} $ and $A_4 = \max\{1, A_2\}$.
			
			Assume that \eqref{EqX} is valid up to $l$ with  $l \ge 2$. Since
			\begin{equation*}
				\mathcal{M}^3 P^k v = -\frac{1}{3} P^{k+1} v +\frac{1}{3} x \partial_x P^k v - t \partial_x (P + 2)^{k} (v^4) ,
			\end{equation*}
			we have
			\begin{equation*}\label{4.30}
				\begin{split}
					\left\| \mathcal{M}^{l+1} P^k v \right\|_{H^1_x(I_{x_0})} 
					&\le \left\| \mathcal{M}^{l-2} \mathcal{M}^3 P^k v \right\|_{H^1_x(I_{x_0})} \\
					&\le \frac{1}{3} \left\| \mathcal{M}^{l-2} P^{k+1} v \right\|_{H^1_x(I_{x_0})} 
					+ \left\| \mathcal{M}^{l-2} x \partial_x P^k v \right\|_{H^1_x(I_{x_0})}  \\
					&\quad + \left\| t \mathcal{M}^{l-2}  \partial_x (P + 2)^{k} (v^4)\right\|_{H^1_x(I_{x_0})}\\
					&= \mathcal{I}_1 + \mathcal{I}_2 + \mathcal{I}_3 .
				\end{split}
			\end{equation*}
			By the induction hypothesis, it is clear that 
			\begin{equation*}\label{4.31}
				\mathcal{I}_1 \le \frac{1}{3} c A_4^{k+l+1} (k + l + 1)! \quad \text{for } l,k \ge 0.
			\end{equation*}
			Since 
			\begin{equation*}
				\mathcal{M}^{l-2} (x \partial_x) = x \partial_x \mathcal{M}^{l-2} + (l-2) \mathcal{M}^{l-2} \; \mbox{for } \; \; l = 3,4,\dots,
			\end{equation*}
			then
			\begin{equation*}\label{4.32}
				\begin{split}
					\mathcal{I}_2 &\le \left\| x \partial_x \mathcal{M}^{l-2} P^k v \right\|_{H^1_x(I_{x_0})} + (l-2) \left\| \mathcal{M}^{l-2} P^k v \right\|_{H^1_x(I_{x_0})}  \\
					&\le \left\| x t^{-1/3} \mathcal{M}^{l-1} P^k v \right\|_{H^1_x(I_{x_0})} + (l-2) \left\| \mathcal{M}^{l-2} P^k v \right\|_{H^1_x(I_{x_0})}  \\
					&\le c |t_0|^{-1/3} (|x_0| + \varepsilon + 1) \left\| \mathcal{M}^{l-1} P^k v \right\|_{H^1_x(I_{x_0})} \\
					&\quad + (l-2) \left\| \mathcal{M}^{l-2} P^k v \right\|_{H^1_x(I_{x_0})} \\
					&\le  c A_4^{k+l+1} (k + l - 1)!, 
				\end{split}
			\end{equation*}
			where $c \sim   \Big( |t_0|^{-\frac{1}{3}} (|x_0| + \epsilon +1) + 1 \Big)$. 
			
			Now, we  estimate $ \mathcal{I}_{3}$ which satisfies
			\begin{equation*}
				\begin{split}
					\mathcal{I}_3 &= \biggl\|  t  \mathcal{M}^{l-2} \, \partial_x (P + 2)^k (v^4)  
					\biggr\|_{H_x^{1}(I_{x_0})}\\
					&\lesssim 
					\sum_{\substack{k=k_{1}+k_{2}+k_{3}+k_{4} +k_{5}\\ 0 \leq k_{1},k_{2},k_{3},k_{4},k_{5} \leq k}} 
					2^{k_1} \frac{k!}{k_{1}!k_{2}!k_{3}!k_{4}!k_{5}!}
					\biggl\| t^{\tfrac{2}{3}} \mathcal{M}^{l-1}\left(v_{k_2} v_{k_3} v_{k_4} v_{k_5}\right) 
					\biggr\|_{H_x^{1}(I_{x_0})}.
				\end{split}
			\end{equation*}
			Applying the inductive hypothesis yields the bound on $\mathcal{I}_{3}$:
			\begin{equation*}
				\begin{split}
					\mathcal{I}_{3}
					&\lesssim (|t_0| + \varepsilon)^{\tfrac{2}{3}} 
					\sum_{\substack{k=k_{1}+\dots+k_{5} \\ 0 \leq k_{j} \leq k}} 
					\frac{2^{k_1} \, k!}{k_{1}! \, k_{2}! \, k_{3}! \, k_{4}! \, k_{5}!}
					\left( 
					\sum_{\substack{l-1=l_{1}+\dots+l_{4} \\ 0 \leq l_{j} \leq l-1}} 
					\frac{(l-1)!}{l_{1}! \, l_{2}! \, l_{3}! \, l_{4}!}
					\Big\| \prod_{j=1}^{4} M^{l_j} v_{k_{j+1}} \Big\|_{H_x^1(I_{x_0})} 
					\right) \\
					&\lesssim (|t_0| + \varepsilon)^{\tfrac{2}{3}}
					\sum_{\substack{k=k_{1}+\dots+k_{5} \\ 0 \leq k_{j} \leq k}}  
					\sum_{\substack{l-1=l_{1}+\dots+l_{4} \\ 0 \leq l_{j} \leq l-1}} 
					\frac{(l_1 + k_2)!}{l_1! \, k_2!}
					\frac{(l_2 + k_3)!}{l_2! \, k_3!}
					\frac{(l_3 + k_4)!}{l_3! \, k_4!}
					\frac{(l_4 + k_5)!}{l_4! \, k_5!}
					\frac{2^{k_1} \, k!}{k_1!} (l - 1)! \, A_{2}^{\,l - 1 + k - k_1} \\
					&\lesssim (|t_0| + \varepsilon)^{\tfrac{2}{3}}
					\sum_{\substack{k=k_{1}+\dots+k_{5} \\ 0 \leq k_{j} \leq k}}  
					\frac{A_{2}^{\,l - 1 + k - k_1} 2^{k_1} \, k! \, (l - 1)!}{k_1! \, k_2! \, k_3! \, k_4! \, k_5!}
					\sum_{\substack{l-1=l_{1}+\dots+l_{4} \\ 0 \leq l_{j} \leq l-1}} 
					\frac{(l - 1 + k - k_1)!}{l_1! \, l_2! \, l_3! \, l_4!}
					\frac{\prod_{j = 1}^{4} (l_j + k_{j+1})!}{(l - 1 + k - k_1)!}.
				\end{split}
			\end{equation*}
			Regrouping the combinations and extending the inner summation range over the full partition of the shifted index then simplifies the expression directly to:
			\begin{equation*}
				\begin{split}
					\mathcal{I}_3 
					&\lesssim \left(|t_0| + \varepsilon\right)^{\tfrac{2}{3}}
					\sum_{\substack{k=k_{1}+\dots+k_{5} \\ 0 \leq k_{j} \leq k}}  
					\frac{A_{2}^{\,l - 1 + k - k_1} 2^{k_1} \, k! \, (l - 1)!}{k_1! \, k_2! \, k_3! \, k_4! \, k_5!}
					\left(\sum_{\substack{l-1+k-k_{1}=l_{1}+\dots+l_{4} \\ 0 \leq l_{j} \leq l-1+k-k_{1}}} 
					\frac{(l - 1 + k - k_1)!}{l_1! \, l_2! \, l_3! \, l_4!}\right).
				\end{split}
			\end{equation*}
			Nevertheless, by the multinomial theorem,
			\begin{equation*}
				\sum_{\substack{l-1=l_{1}+\dots+l_{4} \\ 0 \leq l_{j} \leq l-1}} 
				\frac{(l - 1 + k - k_1)!}{l_1! \, l_2! \, l_3! \, l_4!}
				= 4^{\,l - 1 + k - k_1}.
			\end{equation*}
			Thus,
			\begin{equation*}
				\begin{split}
					\mathcal{I}_3 
					&\lesssim \left(|t_0| + \varepsilon\right)^{\tfrac{2}{3}}
					\big(\max(4, A_{2})\big)^{\,l - 1 + k}\left(
					\sum_{\substack{k=k_{1}+k_{2}+k_{3}+k_{4} +k_{5}\\ 0 \leq k_{1},k_{2},k_{3},k_{4},k_{5} \leq k}} 
					\frac{
						\left(\tfrac{2}{A_{2}}\right)^{k_1} 
						k! \, (l - 1)!
					}{
						k_1! \, k_2! \, k_3! \, k_4! \, k_5!
					}
					\right)\\
					&\lesssim (|t_0| + \varepsilon)^{\tfrac{2}{3}}
					\big(\max(4, A_{2})\big)^{\,l - 1 + k}
					(l - 1)!\left(
					\sum_{\substack{k=k_{1}+k_{2}+k_{3}+k_{4} +k_{5}\\ 0 \leq k_{1},k_{2},k_{3},k_{4},k_{5} \leq k}}
					\frac{k!}{k_{1}! \, k_{2}! \, k_{3}! \, k_{4}! \, k_{5}!}\right)
					\\
					&\lesssim (|t_0| + \varepsilon)^{\tfrac{2}{3}}
					\big(\max(4, A_{2})\big)^{\,l - 1 + k}
					5^k \,
					(l - 1)!,
				\end{split}
			\end{equation*}
			which finishes the proof of the claim.
		\end{claimproof}
		\begin{claim}\label{ct3.3-5}
			Let $x_{0} \in \mathbb{R}$, $t_{0} \in \mathbb{R}\setminus\{0\}$, and $0 < \varepsilon < |t_0|$. Let $I_{x_{0}} = (x_0 - \varepsilon, x_0 + \varepsilon)$ and $J_{t_0} = (t_0 - \varepsilon, t_0 + \varepsilon)$. Assume there exist positive constants $c$ and $A_4$ such that the bound
			\begin{equation*}
				\sup_{t \in J_{t_0}} \left\| \partial_x^l P^k v(t, \cdot) \right\|_{H^1(I_{x_{0}})} \le c A_4^{k+l} (k+l)!
			\end{equation*}
			holds for all $k, l \in \mathbb{N}_{0}$. Then, there exist positive constants $\widetilde{c}$ and $A_5$ such that the estimate
			\begin{equation*}
				\sup_{t \in J_{t_0}} \left\| \partial_t^m \partial_x^l v(t, \cdot) \right\|_{H^1(I_{x_{0}})} \le \widetilde{c} A_5^{m+l} (m+l)!
			\end{equation*}
			is satisfied for all $m, l \in \mathbb{N}_{0}$, where the constants $\widetilde{c}$ and $A_5$ depend only on $c$, $A_4$, $\varepsilon$, and $(t_0,x_0)$.
		\end{claim}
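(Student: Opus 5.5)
The plan is to mirror the argument of {\sc Claim}~\ref{c8} in the mKdV section, now on the $H^1(I_{x_0})$ scale. The proof has three inductive steps: control $(x\partial_x)^m$, then $(t\partial_t)^m$, then the pure time derivatives $\partial_t^m$.

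\emph{Step 1.} We first prove that, for every fixed $t\in J_{t_0}$,
\[
\|(x\partial_x)^m\partial_x^lP^kv(t)\|_{H^1(I_{x_0})}\le c\,A_{10}^{k+l+m}A_{11}^m(k+l+m)!,
\]
by induction on $m$. The case $m=0$ is the hypothesis. For the inductive step we use
\[
(x\partial_x)^{m+1}=x\,(x\partial_x+I)^m\partial_x
\]
and expand binomially. The factor $x$ is then removed by the localization trick used before: take a cutoff $\eta\in C_0^\infty$ with $\eta\equiv1$ on $I_{x_0}$, choose a near-optimal extension $\Psi_\delta$, and note that $x\eta\Psi_\delta$ is an admissible extension. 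Since multiplication by the smooth compactly supported function $x\eta$ is bounded on $H^1(\mathbb R)$, we get
\[
\|xf\|_{H^1(I_{x_0})}\le c_\eta\|f\|_{H^1(I_{x_0})}.
\]
Summing the binomial terms gives a factor
\[
\frac{c_\eta}{A_{11}}\exp\!\big(1/(A_{10}A_{11})\big),
\]
which is at most $1$ once $A_{11}$ is taken large.

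\emph{Step 2.} Next we write $t\partial_t=\tfrac13(P-x\partial_x)$. Because $P$ and $x\partial_x$ commute, and $(x\partial_x)\partial_x^l=\partial_x^l(x\partial_x-l)$, we can expand $(t\partial_t)^m\partial_x^l$ as a combination of terms $(x\partial_x)^{m_1}\partial_x^l(P-c)^{m_2}$. Expanding $(P-c)^{m_2}$ binomially, the multinomial sums produce a factor $3^m$. This cancels the prefactor $3^{-m}$, up to geometric constants absorbed into $A_{10}$. Here one has to be careful with the shifts produced by commuting $\partial_x^l$ past $x\partial_x$: they contribute powers of $l$ that are dominated by $(k+l+m)!$, at the price of enlarging the base constant.

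\emph{Step 3.} Finally we pass to $\partial_t^m$ by induction on $m$, using
\[
t\,\partial_t^{m+1}=\partial_t^m(t\partial_t)-m\,\partial_t^m
\]
and $|t|^{-1}\le(|t_0|-\varepsilon)^{-1}$ on $J_{t_0}$. We again substitute $t\partial_t=\tfrac13(P-x\partial_x)$, which gives
\[
t\partial_t\,\partial_x^lP^kv=\tfrac13\partial_x^lP^{k+1}v-\tfrac13x\partial_x^{l+1}P^kv,
\]
and we carry the induction with $k$ free so that $P^{k+1}$ is covered. The factor $x$ is bounded by $c_\eta$ as in Step 1; this works because $\partial_t$ commutes with multiplication by $\eta(x)x$. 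The three resulting terms are bounded by the inductive hypothesis, and $A_{11}\ge(4+c_\eta)/(3(|t_0|-\varepsilon))$ closes the induction. Setting $k=0$ yields the claim with $A_5=A_{10}A_{11}$.

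The main obstacle is Step 3, specifically that $\partial_t$ does not commute with restriction to a fixed time slice in the $\sup_t$ norm. I would justify it by noting that $\partial_t^m\partial_x^lP^kv$ is well defined as a distribution on $J_{t_0}\times I_{x_0}$, since $\partial_t=(P-x\partial_x)/(3t)$ there. I would then carry the whole induction on the mixed norm $\sup_{t\in J_{t_0}}\|\cdot\|_{H^1(I_{x_0})}$ and apply all identities pointwise in $t$. The extension constant $c_\eta$ does not depend on $t$, so it stays uniform throughout.
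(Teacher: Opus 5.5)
Your argument is correct in substance, but your Step~3 follows the template of {\sc Claim}~\ref{c8} rather than the paper's own proof of Claim~\ref{ct3.3-5}.

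\textbf{How the paper argues.} Its first two steps match yours. It bounds $(x\partial_x)^m\partial_x^lP^kv$ by induction on $m$, using $(x\partial_x)^{m+1}=x(x\partial_x+1)^m\partial_x$. It then bounds $(t\partial_t)^m\partial_x^lv$, using $t\partial_t=\tfrac13(P-x\partial_x)$ and $P^{m_2}\partial_x^l=\partial_x^l(P-l)^{m_2}$, so that $P$ is eliminated entirely. To remove the weight $t$, it runs a double induction in $m$ on $\|(t\partial_t)^j\partial_t^m\partial_x^lv\|_{H^1(I_{x_0})}$. The base case $m=0$ is the second step, and the inductive step uses only $(t\partial_t)^j\partial_t^{m+1}=\partial_t(t\partial_t-1)^j\partial_t^m$ and $\partial_t=t^{-1}(t\partial_t)$. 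In that scheme the first two steps are indispensable.

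\textbf{How your route differs.} In your argument Steps~1 and~2 can be deleted. Your Step~3 inducts on $\partial_t^m\partial_x^lP^kv$ with $k,l$ free, and its base case $m=0$ is the hypothesis itself. At each level the single $x\partial_x$ is absorbed as $x\partial_x^{l+1}$, at the cost of the constant $c_\eta$.
\begin{itemize}
\item Your route is shorter.
\item Your bound $|t|^{-1}\le(|t_0|-\varepsilon)^{-1}$ works under the stated hypothesis $\varepsilon<|t_0|$. The paper's bound $|t|^{-1}\le 2|t_0|^{-1}$ tacitly needs $\varepsilon\le|t_0|/2$.
\item The paper's route has the merit of separating the spatial dilation algebra from a purely one-variable conversion $t\partial_t\to\partial_t$.
\end{itemize}

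\textbf{A correction to Step~3.} Because $P\partial_x^l=\partial_x^l(P-l)$ (the same commutation you invoke in Step~2), the identity must read
\[
t\partial_t\,\partial_x^lP^kv=\tfrac13\partial_x^lP^{k+1}v-\tfrac{l}{3}\,\partial_x^lP^kv-\tfrac13\,x\partial_x^{l+1}P^kv .
\]
The extra term is harmless. Combined with the $-m\,\partial_t^m$ term, it contributes $(\tfrac l3+m)\,cA_{10}^{k+l+m}A_{11}^m(k+l+m)!$. Since $(\tfrac l3+m)(k+l+m)!\le(k+l+m+1)!$, your threshold $A_{11}\ge(4+c_\eta)/(3(|t_0|-\varepsilon))$ still closes the induction.

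\textbf{On the ``main obstacle''.} The difficulty you flag is not really delicate. Every identity is applied at a fixed $t$, the scalar $t$ factors out of the $H^1(I_{x_0})$ norm, and $\partial_t^m$ commutes with multiplication by $x\eta(x)$. The genuine bookkeeping points are the $-l$ shift above and the choice of absorption constants.
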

		\begin{claimproof}
			Fix $t \in J_{t_0}$. 
			Notice that the case $m=0$ follows directly from the hypothesis, so there's nothing to prove. In the case $m=1$ we have 
			\begin{equation*}
				\begin{split}
					\left\|(x \partial_{x}) \partial_x^{l} P^{k} v \right\|_{H_{x}^{1}(I_{x_0})} &\leq (1+\epsilon+|x_{0}|)\left\|\partial_x^{l+1} P^{k} v \right\|_{H_{x}^{1}(I_{x_0})}\\
					&\leq c(1+\epsilon+|x_{0}|)A_{4}^{k+l+1}(k+l+1)!
				\end{split}
			\end{equation*}
			First, we show that for some positive constants $C_6, A_6$, and $B_6$ 
			\begin{equation}\label{Key-Estimate}
				\left\|(x \partial_{x})^{m} \partial_x^{l} P^{k} v \right\|_{H_{x}^{1}(I_{x_0})} \leq C_6 A_{6}^{k + m + l} B_{6}^{m} (k + m + l)!    \quad \mbox{for}\quad	k, l, m = 0 , 1, 2, \dots.
			\end{equation}
			We will use an  induction argument on $m$, assuming \eqref{Key-Estimate} valid up to $m$:
			\begin{align*}
				\left\|(x \partial_x)^{m+1} \partial_{x}^{l} P^{k} v \right\|_{H^{1}_{x}(I_{x_0})} &= \left\| (x \partial_x) (x \partial_x)^{m} \partial_x^{l} P^{k} v \right\|_{H_{x}^{1}(I_{x_{0}})}\\
				&\leq (|x_0| + \epsilon + 1) \left\|(x \partial_x + 1)^{m} \partial_{x}^{l + 1} P^{k} v \right\|_{H^{1}_{x}(I_{x_0})} \\
				&\leq C(|x_0|, \epsilon) \sum_{m_1 = 0}^{m} \frac{m!}{m_1 ! (m - m_1)!} \left\|(x \partial_x)^{m_1} \partial_{x}^{l +1} P^{k} v \right\|_{H^{1}_{x}(I_{x_0})} \\
				&\leq  C(|x_0|, \epsilon) \sum_{m_1 = 0}^{m} \frac{m!}{m_1 ! (m - m_1)!} C_6 A_6^{k + m_1 + l +1} B_{6}^{m} (k + m_1 + l + 1)! \\
				&\leq C_6 A_{6}^{k + m + l + 1} B_6^{m} (k + m + l + 1)! \sum_{m_1 = 0 }^{m} \frac{(A_6 B_6)^{- (m - m_1)}}{(m - m_1)!} \frac{m!}{m_1 !} \frac{(k + m_1 + l + 1)!}{(k + m + l + 1)!}\\
				&\leq e^{- A_6 B_6} C_6 A_{6}^{k + m + l +1} B_6^{m} (k + m + l + 1)!, 
			\end{align*}
			where we take $B_6$ so large that 
			\begin{equation*}
				B_6 \geq \max ((|x_0| + \epsilon + 1) e^{- A_6 B_6} , 1). 
			\end{equation*}
			Next, we show that for some positive constants $C_7, A_7$ such that 
			\begin{equation*}
				\left\|(t \partial_{t})^{m} \partial_{x}^{l} v \right\|_{H^{1}(I_{x_0})} \leq C_7 A_{7}^{l + m} (l + m)!, 
			\end{equation*}
			for $(l, m = 0, 1, 2, \ldots)$. Note that $t \partial_{t} = \frac{1}{\textbf{3}} (P - x \partial_x)$, then 
			\begin{align*}
				\left\|(t \partial_{t})^{m} \partial_{x}^{l} v \right\|_{H^{1}(I_{x_0})} &\leq 3^{-m} \sum_{m_1 + m_2 = m} \frac{m!}{m_1 ! m_2 !} \left\|(x \partial_x)^{m_1} P^{m_2} \partial_{x}^{l} v \right\|_{H^{1}_{x}(I_{x_0})} \\
				&= 3^{-m} \sum_{m_1 + m_2 = m} \frac{m!}{m_1 ! m_2 !} \left\|(x \partial_x)^{m_1} \partial_{x}^{l} (P - l)^{m_2}v \right\|_{H^{1}_{x}(I_{x_0})} .
			\end{align*}
			To justify the last equality, it is enough to note that (by induction on $m_2$) we have 
			\begin{equation*}
				P^{m_2 +1}(\partial_x^{l} v) = P P^{m_2}(\partial_x^{l} v) = P(\partial_x^{l} (P - l)^{m_2} v)
			\end{equation*}
			\begin{equation*}
				= (P\partial_x^{l}) (P- l)^{m_2} v = \partial_{x}^{l} (P - l)^{m_2 + 1} v. 
			\end{equation*}
			Therefore, by the induction hypothesis, we have with $B_7 = A_6 B_6 (\geq 1)$ that 
			\begin{align*}
				\left\|(t \partial_{t})^{m} \partial_{x}^{l} v \right\|_{H^{1}(I_{x_0})} &\leq  3^{-m} \sum_{m_1 + m_2 + m_3 = m} \frac{m!}{m_1 ! m_2 ! m_3!} l^{m_3} \left\|(x \partial_x)^{m_1} \partial_{x}^{l} P^{m_2} v\right\|_{H^{1}_{x}(I_{x_0})}\\
				&\leq 3^{-m} \sum_{m_1 + m_2 + m_3 = m} \frac{m!}{m_1 ! m_2 ! m_3!} l^{m_3} C_6 B_{7}^{m_1 + m_2 + l} (m_1 + m_2 + l)! \\
				&= 3^{-m} C_6 B_{7}^{m + l} (m + l)!  \sum_{m_1 + m_2 + m_3 = m} B_{7}^{-m_3}  \frac{m!}{m_1 ! m_2 ! m_3!} \underbrace{l^{m_3}  \frac{(m_1 + m_2 + l)!}{(m + l)!} }_{\leq 1 }\\
				&\leq  C_6 B_{7}^{m + l} (l + m)!  \; \; (\mbox{by using the multinomial theorem}).
			\end{align*}
			Finally, we give a process to remove $t$ in the time derivative in \eqref{Key-Estimate}. To this end, we show that for some positive constants $C_{8}, A_{8}$ and $B_{8}$ we have 
			\begin{equation}\label{MainEquation}
				\left\|(t \partial_t)^{j} \partial_{t}^{m} \partial_{x}^{l} v\right\|_{H^{1}_{x}(I_{x_0})} \leq C_8 A_{8}^{j + m + l} B_{8}^{m} (j + m + l)!,    
			\end{equation}
			for $j, l, m = 0, 1, 2, \ldots$. We proceed by using the induction argument on $m$. It follows that 
			\begin{align*}
				&\left\|(t \partial_t)^{j} \partial_{t}^{m+1} \partial_{x} v \right\|_{H_{x}^{1}(I_{x_0})}\\
				 &= \left\|\partial_{t} (t \partial_t - 1)^{j} \partial_{t}^{m} \partial_{x}^{l} v \right\|_{H^{1}_{x}(I_{x_0})} \\
				&= |t|^{-1} \left\|t \partial_{t} (t \partial_t - 1)^{j} \partial_{t}^{m} \partial_{x}^{l} v \right\|_{H^{1}_{x}(I_{x_0})} \\
				&\leq 2 |t_0|^{-1} \sum_{j_1 = 0}^{j} \frac{j!}{j_1 ! (j - j_1)!} \left\|(t \partial_t)^{j_1 + 1} \partial_{t}^{m} \partial_{x}^{l} v \right\|_{H^{1}_{x}(I_{x_0})}\\
				&\leq 2 |t_0|^{-1}  \sum_{j_1 = 0}^{j} \frac{j!}{j_1 ! (j - j_1)!} C_{8} A_{8}^{j_1 + m + l + 1} B_{8}^{m} (j_1 + m + l + 1)! \; \\
				&=2 C_8 |t_0|^{-1}  A_{8}^{j + m + l + 1} B_{8}^{m}  (j + m + l +1)!   \sum_{j_1 = 0}^{j} \frac{A_{8}^{- (j - j_1)}}{(j - j_1)!} \underbrace{\frac{j!}{j_1 !} \frac{(j_1 + m + l +1)!}{(j + m + l +1)!}}_{\leq 1}\\
				&\leq 2 C_{8}  |t_0|^{-1}  e^{- A_{8}} A_{8}^{j + m + l + 1} B_{8}^{m}  (j + m + l +1)!  \\
				&\leq  C_{8}  A_{8}^{j + m + l + 1} B_{8}^{m + 1}  (j + m + l +1)!,    
			\end{align*}
			where we take $B_8 \geq  2 |t_0|^{-1} e^{- A_{8}}$. Finally, we choose $j = 0$ in \eqref{MainEquation} and take $C_5 = C_8$ and $A_{5} = A_8 B_8$ for obtaining the conclusion of proposition. 
		\end{claimproof}
		This completes the proof of analyticity. 
		
		We are now ready to prove the analyticity of solutions in both space and time variables. 
		\begin{prop}
			Suppose that there exist positive constants $C_4$ and $A_4$ such that
			\begin{equation*}
				\sup_{t_0 - \varepsilon <t < t_0 + \varepsilon} 
				\| \partial_x^l P^k v \|_{H^1(x_0 - \varepsilon, x_0 + \varepsilon)} 
				\le C_4 A_4^{k+l} (k+l)! , \quad k,l = 0,1,2,\dots
			\end{equation*}
			Then we have
			\begin{equation*}
				\sup_{t_0 - \varepsilon < t <  t_0 + \varepsilon} 
				\| \partial_t^m \partial_x^l v \|_{H^1(x_0 - \varepsilon, x_0 + \varepsilon)} 
				\le C_5 A_5^{m+l} (m+l)! , \quad l,m = 0,1,2,\dots
			\end{equation*}
			where the constants $C_5$ and $A_5$ only depend on $C_4$, $A_4$, $\varepsilon$ and $(t_0,x_0)$  with $0 < \varepsilon < \frac{|t_0|}{2}$.
		\end{prop}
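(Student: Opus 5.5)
The argument has three inductive stages. Throughout, $t$ stays in $J:=(t_0-\varepsilon,t_0+\varepsilon)$, where $|t|\ge |t_0|/2$ because $\varepsilon<|t_0|/2$, and we only use the algebraic identities $3t\partial_t=P-x\partial_x$, $P\partial_x^l=\partial_x^l(P-l)$ and $\partial_t(t\partial_t)=(t\partial_t+1)\partial_t$. All norms are $\sup_{t\in J}\|\cdot\|_{H^1(x_0-\varepsilon,x_0+\varepsilon)}$.

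\emph{Stage 1: absorb the dilation in $x$.} We prove
\[
\|(x\partial_x)^m\partial_x^lP^kv\|\le C_6A_6^{k+l+m}B_6^m(k+l+m)!
\]
by induction on $m$. The key identity is $(x\partial_x)^{m+1}\partial_x^l=x(x\partial_x+1)^m\partial_x^{l+1}$. Multiplication by $x$ is bounded on $H^1(I_{x_0})$ with constant $\lesssim 1+|x_0|+\varepsilon$. We expand $(x\partial_x+1)^m$ binomially and apply the inductive hypothesis with $l$ replaced by $l+1$. The resulting sum $\sum_{m_1}\binom{m}{m_1}A_6^{-(m-m_1)}B_6^{-(m-m_1)}\frac{(k+m_1+l+1)!}{(k+m+l+1)!}$ is bounded by $e^{1/(A_6B_6)}$, since $\binom{m}{m_1}\frac{(k+m_1+l+1)!}{(k+m+l+1)!}\le \frac{1}{(m-m_1)!}$. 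Choosing $B_6\gtrsim(1+|x_0|+\varepsilon)e$ then closes the induction. The base case $m=0$ is exactly the hypothesis.

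\emph{Stage 2: pass to $t\partial_t$.} We write $(t\partial_t)^m\partial_x^lv=3^{-m}(P-x\partial_x)^m\partial_x^lv$. Since $P$ and $x\partial_x$ commute, this expands binomially. We then move $P^{m_2}$ through $\partial_x^l$ using $P^{m_2}\partial_x^l=\partial_x^l(P-l)^{m_2}$ and expand $(P-l)^{m_2}$ into a trinomial sum with weights $l^{m_3}$. Stage 1 bounds each term. The factor $l^{m_3}(m_1+m_2+l)!/(m+l)!\le1$ and the multinomial identity $\sum\frac{m!}{m_1!m_2!m_3!}=3^m$ cancel the $3^{-m}$. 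This gives $\|(t\partial_t)^m\partial_x^lv\|\le C_7A_7^{m+l}(m+l)!$.

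\emph{Stage 3: remove the weight $t$ (the main obstacle).} Here the $t^{-1}$ must be paid for only once per $\partial_t$, not factorially. We prove
\[
\|(t\partial_t)^j\partial_t^m\partial_x^lv\|\le C_8A_8^{j+m+l}B_8^m(j+m+l)!
\]
by induction on $m$; Stage 2 is the case $m=0$. For the step we use $(t\partial_t)^j\partial_t=\partial_t(t\partial_t-1)^j$ and $\partial_t=t^{-1}(t\partial_t)$, which gives
\[
(t\partial_t)^j\partial_t^{m+1}\partial_x^l v=t^{-1}(t\partial_t)(t\partial_t-1)^j\partial_t^m\partial_x^lv .
\]
Expanding $(t\partial_t-1)^j$ binomially and using $|t|^{-1}\le 2/|t_0|$ reduces every term to the inductive bound with $j_1+1\le j+1$. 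The combinatorial sum is again dominated by $e^{1/A_8}$, so choosing $B_8\ge 2|t_0|^{-1}e^{1/A_8}$ closes the induction. Taking $j=0$ gives the claim with $C_5=C_8$ and $A_5=A_8B_8$.

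The delicate points are keeping the constants independent of $j,m,l,k$, and checking that $t\partial_t$ commutes with $\partial_x^l$ and with the restriction to $I_{x_0}$. Both hold because $t\partial_t$ acts only in $t$, and because $\partial_t^m v(t,\cdot)\in H^1(I_{x_0})$ is justified a posteriori by the bounds themselves.
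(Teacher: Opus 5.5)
Your proposal is correct and follows the paper's proof essentially step for step. It uses the same three inductions: on $m$ for $(x\partial_x)^m\partial_x^lP^kv$ via $(x\partial_x)^{m+1}\partial_x^l=x(x\partial_x+1)^m\partial_x^{l+1}$, then the passage to $t\partial_t=\tfrac13(P-x\partial_x)$ using $P^{m_2}\partial_x^l=\partial_x^l(P-l)^{m_2}$, then on $m$ for $(t\partial_t)^j\partial_t^m\partial_x^lv$ via $(t\partial_t)^j\partial_t=\partial_t(t\partial_t-1)^j$ and $|t|^{-1}\le 2/|t_0|$. Your exponential factors $e^{1/(A_6B_6)}$ and $e^{1/A_8}$, and hence your choices $B_6\gtrsim(1+|x_0|+\varepsilon)e$ and $B_8\ge 2|t_0|^{-1}e^{1/A_8}$, are the correct ones: the paper's $e^{-A_6B_6}$ and $e^{-A_8}$ are slips, since each sum's top term alone already equals $1$.
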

		\begin{claimproof}
			Fix $t \in J_{t_0} = (t_0 - \epsilon, t_0 + \epsilon)$. First, we show that for some positive constants $C_6, A_6$, and $B_6$ 
			\begin{equation}\label{Key-Estimate}
				\|(x \partial_{x})^{m} \partial_x^{l} P^{k} v \|_{H_{x}^{1}(I_{x_0})} \leq C_6 A_{6}^{k + m + l} B_{6}^{m} (k + m + l)!    
			\end{equation}
			for $k, l, m = 0 , 1, 2, \ldots$. We use the induction argument on $m$, assuming \eqref{Key-Estimate} valid up to $m$:
			\begin{align*}
				&\|(x \partial_x)^{m+1} \partial_{x}^{l} P^{k} v \|_{H^{1}_{x}(I_{x_0})} \\
				&=  \| (x \partial_x) (x \partial_x)^{m} \partial_x^{l} P^{k} v \|_{H_{x}^{1}(I_{x_{0}})}\\
				&\leq (|x_0| + \epsilon + 1) \|(x \partial_x + 1)^{m} \partial_{x}^{l + 1} P^{k} v \|_{H^{1}_{x}(I_{x_0})} \\
				&\leq C(|x_0|, \epsilon) \sum_{m_1 = 0}^{m} \frac{m!}{m_1 ! (m - m_1)!} \|(x \partial_x)^{m_1} \partial_{x}^{l +1} P^{k} v \|_{H^{1}_{x}(I_{x_0})} \\
				&\leq  C(|x_0|, \epsilon) \sum_{m_1 = 0}^{m} \frac{m!}{m_1 ! (m - m_1)!} C_6 A_6^{k + m_1 + l +1} B_{6}^{m} (k + m_1 + l + 1)! \\
				&\leq C_6 A_{6}^{k + m + l + 1} B_6^{m} (k + m + l + 1)! \sum_{m_1 = 0 }^{m} \frac{(A_6 B_6)^{- (m - m_1)}}{(m - m_1)!} \frac{m!}{m_1 !} \frac{(k + m_1 + l + 1)!}{(k + m + l + 1)!}\\
				&\leq e^{- A_6 B_6} C_6 A_{6}^{k + m + l +1} B_6^{m} (k + m + l + 1)!, 
			\end{align*}
			where we take $B_6$ so large that 
			\begin{equation*}
				B_6 \geq \max ((|x_0| + \epsilon + 1) e^{- A_6 B_6} , 1). 
			\end{equation*}
			Next, we show that for some positive constants $C_7, A_7$ such that 
			\begin{equation*}
				\left\|(t \partial_{t})^{m} \partial_{x}^{l} v \right\|_{H^{1}(I_{x_0})} \leq C_7 A_{7}^{l + m} (l + m)!, 
			\end{equation*}
			for $(l, m = 0, 1, 2, \ldots)$. Note that $t \partial_{t} = \frac{1}{\textbf{3}} (P - x \partial_x)$, then 
			\begin{align*}
				\|(t \partial_{t})^{m} \partial_{x}^{l} v \|_{H^{1}(I_{x_0})} &\leq 3^{-m} \sum_{m_1 + m_2 = m} \frac{m!}{m_1 ! m_2 !} \left\|(x \partial_x)^{m_1} P^{m_2} \partial_{x}^{l} v \right\|_{H^{1}_{x}(I_{x_0})} \\
				&= 3^{-m} \sum_{m_1 + m_2 = m} \frac{m!}{m_1 ! m_2 !} \left\|(x \partial_x)^{m_1} \partial_{x}^{l} (P - l)^{m_2}v \right\|_{H^{1}_{x}(I_{x_0})} .
			\end{align*}
			To justify the last equality, it is enough to note that (by induction on $m_2$) we have 
			\begin{equation*}
				P^{m_2 +1}(\partial_x^{l} v) = P P^{m_2}(\partial_x^{l} v) = P(\partial_x^{l} (P - l)^{m_2} v)
			\end{equation*}
			\begin{equation*}
				= (P\partial_x^{l}) (P- l)^{m_2} v = \partial_{x}^{l} (P - l)^{m_2 + 1} v. 
			\end{equation*}
			Therefore, by the induction hypothesis, we have with $B_7 = A_6 B_6 (\geq 1)$ that 
			\begin{align*}
				\|(t \partial_{t})^{m} \partial_{x}^{l} v \|_{H^{1}(I_{x_0})} &\leq  3^{-m} \sum_{m_1 + m_2 + m_3 = m} \frac{m!}{m_1 ! m_2 ! m_3!} l^{m_3} \|(x \partial_x)^{m_1} \partial_{x}^{l} P^{m_2} v\|_{H^{1}_{x}(I_{x_0})}\\
				&\leq 3^{-m} \sum_{m_1 + m_2 + m_3 = m} \frac{m!}{m_1 ! m_2 ! m_3!} l^{m_3} C_6 B_{7}^{m_1 + m_2 + l} (m_1 + m_2 + l)! \\
				&= 3^{-m} C_6 B_{7}^{m + l} (m + l)!  \sum_{m_1 + m_2 + m_3 = m} B_{7}^{-m_3}  \frac{m!}{m_1 ! m_2 ! m_3!} \underbrace{l^{m_3}  \frac{(m_1 + m_2 + l)!}{(m + l)!} }_{\leq 1 }\\
				&\leq  C_6 B_{7}^{m + l} (l + m)!  \; \; (\mbox{by using the multinomial theorem}).
			\end{align*}
			Finally, we give a process to remove $t$ in the time derivative in \eqref{Key-Estimate}. To this end, we show that for some positive constants $C_{8}, A_{8}$ and $B_{8}$ we have 
			\begin{equation}\label{MainEquation}
				\|(t \partial_t)^{j} \partial_{t}^{m} \partial_{x}^{l} v\|_{H^{1}_{x}(I_{x_0})} \leq C_8 A_{8}^{j + m + l} B_{8}^{m} (j + m + l)!,    
			\end{equation}
			for $j, l, m = 0, 1, 2, \ldots$. We proceed by using the induction argument on $m$. It follows that 
			\begin{align*}
				\|(t \partial_t)^{j} \partial_{t}^{m+1} \partial_{x} v \|_{H_{x}^{1}(I_{x_0})} &= \|\partial_{t} (t \partial_t - 1)^{j} \partial_{t}^{m} \partial_{x}^{l} v \|_{H^{1}_{x}(I_{x_0})} \\
				&= |t|^{-1} \|t \partial_{t} (t \partial_t - 1)^{j} \partial_{t}^{m} \partial_{x}^{l} v \|_{H^{1}_{x}(I_{x_0})} \\
				&\leq 2 |t_0|^{-1} \sum_{j_1 = 0}^{j} \frac{j!}{j_1 ! (j - j_1)!} \|(t \partial_t)^{j_1 + 1} \partial_{t}^{m} \partial_{x}^{l} v \|_{H^{1}_{x}(I_{x_0})}\\
				&\leq 2 |t_0|^{-1} \sum_{j_1 = 0}^{j} \frac{j!}{j_1 ! (j - j_1)!} C_{8} A_{8}^{j_1 + m + l + 1} B_{8}^{m} (j_1 + m + l + 1)! \; \\
				&= 2 C_8  |t_0|^{-1} A_{8}^{j + m + l + 1} B_{8}^{m}  (j + m + l +1)!   \sum_{j_1 = 0}^{j} \frac{A_{8}^{- (j - j_1)}}{(j - j_1)!} \underbrace{\frac{j!}{j_1 !} \frac{(j_1 + m + l +1)!}{(j + m + l +1)!}}_{\leq 1}\\
				&\leq  2 C_{8} |t_0|^{-1} e^{- A_{8}} A_{8}^{j + m + l + 1} B_{8}^{m}  (j + m + l +1)!  \\
				&\leq  C_{8}  A_{8}^{j + m + l + 1} B_{8}^{m + 1}  (j + m + l +1)!,    
			\end{align*}
	where we take $B_8 \geq  2 |t_0|^{-1} e^{- A_{8}}$. Finally, we choose $j = 0$ in \eqref{MainEquation} and take $C_5 = C_8$ and $A_{5} = A_8 B_8$ for obtaining the conclusion of proposition. 
	\end{claimproof}
	This completes the proof of analyticity. 
\end{proof}
	\appendix
	\section{Appendix: Technical Estimates}\label{apendiceA}
	\setcounter{teorema}{0}
	\renewcommand{\theteorema}{\thesection.\arabic{teorema}}
	\setcounter{claim}{0}
	\renewcommand{\theclaim}{\thesection.\arabic{claim}}
	In this appendix, we collect several technical estimates and auxiliary lemmas that are essential for the rigorous derivation of the main results. While these estimates are primarily technical in nature, they provide the necessary bounds for the multi-linear interactions and the convergence of the sequence spaces defined in the preceding sections. Their inclusion here ensures a self-contained presentation of the analytical framework while maintaining the clarity of the primary arguments in the main body of the work.
	
	Their inclusion here ensures a self-contained presentation of the analytical framework while maintaining the clarity of the primary arguments in the main body of the work.
	
	Since our main results are local in nature,  the analyticity of solutions is established in a neighborhood of a fixed point $(t_{0},x_{0})$ with $t_{0}\neq 0$. We first recall the precise notions of restriction and extension of distributions, together with the associated localized Fourier-Lebesgue-Sobolev norms.
	\setcounter{teorema}{0}
	\renewcommand{\theteorema}{\Alph{teorema}}
	\begin{defini}
		Let $\Omega \subset \mathbb{R}^n$ be an open set. Given a global distribution $f \in \mathcal{D}'(\mathbb{R}^n)$, its restriction to $\Omega$, denoted by $f|_\Omega \in \mathcal{D}'(\Omega)$, is defined by its action on test functions $\phi \in C^\infty_0(\Omega)$ as
		\begin{equation*}
			\langle f|_\Omega, \phi \rangle := \langle f, \widetilde{\phi} \rangle,
		\end{equation*}
		where $\widetilde{\phi} \in C^\infty_0(\mathbb{R}^n)$ denotes the extension of $\phi$ by zero outside of $\Omega$.
	\end{defini}
	\begin{defini}
		Let $\Omega \subset \mathbb{R}^n$ and $u \in \mathcal{D}'(\Omega)$. A global distribution $g \in \mathcal{D}'(\mathbb{R}^n)$ is said to be an extension of $u$ if $g|_\Omega = u$ in the sense of distributions.
	\end{defini}
	With this notion of extension at hand, we introduce localized versions of the $\widehat{H}^{r}_{s}$ norms on space-time intervals, which will will be the natural setting for the trace estimates established below.
	\begin{defini}\label{restridefin}
		Let $J_{t_0}, I_{x_0} \subset \mathbb{R}$ be open intervals containing $t_0$ and $x_0$, respectively, and let $U = J_{t_0} \times I_{x_0}$ be the associated space-time domain. The localized spatial norm and localized space-time norm are defined, respectively, as
		\begin{equation*}
			\|u\|_{\widehat{H}^{r}_{s}(I_{x_0})} := \inf \left\{ \|g\|_{\widehat{H}^{r}_{s}(\mathbb{R})} : g \in \widehat{H}^{r}_{s}(\mathbb{R}) \text{ and } g|_{I_{x_0}} = u \right\}
		\end{equation*}
		and
		\begin{equation*}
			\|v\|_{\widehat{H}^{r}_{s}(U)} := \inf \left\{ \|\widetilde{v}\|_{\widehat{H}^{r}_{s}(\mathbb{R}^2)} : \widetilde{v} \in \widehat{H}^{r}_{s}(\mathbb{R}^2) \text{ and } \widetilde{v}|_U = v \right\}.
		\end{equation*}
	\end{defini}
	Next, we establish a global trace bound for the Fourier-Lebesgue-Sobolev spaces $\widehat{H}^{r}_{s}(\mathbb{R}^2)$; this bound is what allows us to control the regularity of a solution at a fixed time slice, a step needed to close the continuity argument in the proof of Theorem \ref{main1}.
	\begin{teorema}\label{trace1}
		Let $r,s$ be real numbers such that $1<r\leq 2$  and $r'$ is its harmonic conjugate.	Let $\mu>\frac{1}{r}$ and $\mu\geq \frac{1}{r}+s.$ For any $t\in \mathbb{R}$ there exists a positive constant  such that for any $v\in \widehat{H}^{r}_{s}(\mathbb{R}^{2}):$
		\begin{equation*}
			\|v(t,\cdot )\|_{ \widehat{H}^{r}_{s}(\mathbb{R})}\leq c\|v\|_{ \widehat{H}^{r}_{s}(\mathbb{R}^{2})}.
		\end{equation*}
		\begin{proof}
			We  start by assuming that $v\in \mathcal{S}(\mathbb{R}^{2}).$ In this sense,  H\"older's inequality  yields
			\begin{equation*}
				\begin{split}
					\left|\mathcal{F}_{x}(v(t,\cdot))(\xi)\right|\leq  \left\|\frac{1}{\langle(\cdot,\xi)\rangle^{\mu }}\right\|_{L^{r}_{\tau}}\,\left(\int_{\mathbb{R}} \langle(\tau,\xi)\rangle^{\mu  r'}\,\left|\mathcal{F}_{t,x}(v)(\tau,\xi)\right|^{r'}\, \mathrm{d}\tau\right)^{\frac{1}{r'}}.
				\end{split}
			\end{equation*}
			Now, we perform a change of variables in the first integral. More precisely, we make $\tau=\langle \xi\rangle \eta$, so we get from the first integral the following expression  
			\begin{equation*}
				\left\|\frac{1}{\langle(\cdot,\xi)\rangle^{\mu }}\right\|_{L^{r}_{\tau}}\sim_{\mu,r}\frac{1}{\langle \xi\rangle^{\mu -\frac{1}{r}}},
			\end{equation*}
			whenever $\mu>\frac{1}{r}.$
			
			Now, 
			\begin{equation*}
				\begin{split}
					\langle \xi\rangle^{s}\,\left|\mathcal{F}_{x}(v(t,\cdot))(\xi)\right|\lesssim\frac{1}{\langle \xi\rangle^{\mu -\frac{1}{r}-s}}\,\left(\int_{\mathbb{R}} \langle(\tau,\xi)\rangle^{\mu  r'}\,\left|\mathcal{F}_{t,x}(v)(\tau,\xi)\right|^{r'}\, \mathrm{d}\tau\right)^{\frac{1}{r'}},
				\end{split}
			\end{equation*}
			from where we obtain for $\mu\geq s+\frac{1}{r}$ that 
			\begin{equation*}
				\left\|\langle \xi\rangle^{s}\,\mathcal{F}_{x}(v(t,\cdot))(\xi)\right\|_{L^{r'}_{\xi}}\lesssim \left\|\int_{\mathbb{R}} \langle(\tau,\xi)\rangle^{\mu  r'}\,\left|\mathcal{F}_{t,x}(v)(\tau,\xi)\right|^{r'}\, \mathrm{d}\tau\right\|_{L^{r'}_{\xi}}\sim \left\|v\right\|_{\widehat{H}^{r}_{s}(\mathbb{R}^{2})},
			\end{equation*}
			which is the required bound.
		\end{proof}
	\end{teorema}
	Now, we provide a localized version of the trace theorem restricted to intervals; it is precisely this version, rather than its global counterpart, that is invoked in the proof of Theorem \ref{main1} in Section \ref{sectA}.
	\begin{teorema}\label{trace2}
		Let $r, s \in \mathbb{R}$ satisfy $1 < r \leq 2$, and let $\mu \in \mathbb{R}$ such that
		\begin{equation*}
			\mu > \frac{1}{r} \quad \text{and} \quad \mu \geq s + \frac{1}{r}.
		\end{equation*}
		For fixed $t_0, x_0 \in \mathbb{R}$, let $J_{t_0}$ and $I_{x_0}$ be symmetric open intervals centered at $t_0$ and $x_0$, respectively. Then, for any $t \in J_{t_0}$, there exists a constant $c > 0$ such that the inequality
		\begin{equation*}
			\|v(t, \cdot)\|_{\widehat{H}^{r}_{s}(I_{x_{0}})} \leq c \|v\|_{\widehat{H}^{r}_{\mu}(J_{t_{0}} \times I_{x_{0}})}
		\end{equation*}
		holds for all $v \in \widehat{H}^{r}_{\mu}(J_{t_0} \times I_{x_0})$.
	\end{teorema}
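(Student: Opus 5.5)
The plan is to deduce Theorem~\ref{trace2} from the global trace bound of Theorem~\ref{trace1}, using the infimum definition of the localized norms in Definition~\ref{restridefin}. Fix $t\in J_{t_0}$ and $v\in\widehat{H}^{r}_{\mu}(J_{t_0}\times I_{x_0})$. Given $\delta>0$, the definition of $\|v\|_{\widehat{H}^{r}_{\mu}(U)}$ as an infimum over extensions provides $\widetilde v\in\widehat{H}^{r}_{\mu}(\mathbb{R}^2)$ with $\widetilde v|_{U}=v$ and
\[
\|\widetilde v\|_{\widehat{H}^{r}_{\mu}(\mathbb{R}^2)}\le \|v\|_{\widehat{H}^{r}_{\mu}(U)}+\delta .
\]
The first step is to apply Theorem~\ref{trace1} to $\widetilde v$. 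More precisely, I would use the Hölder/change-of-variables computation in its proof, which actually gives the bound of the trace in $\widehat{H}^{r}_{s}(\mathbb{R})$ by $\|\langle(\tau,\xi)\rangle^{\mu}\mathcal{F}_{t,x}\widetilde v\|_{L^{r'}}=\|\widetilde v\|_{\widehat{H}^{r}_{\mu}(\mathbb{R}^2)}$ under exactly the hypotheses $\mu>\frac1r$ and $\mu\ge s+\frac1r$. This yields
\[
\|\widetilde v(t,\cdot)\|_{\widehat{H}^{r}_{s}(\mathbb{R})}\le c\,\|\widetilde v\|_{\widehat{H}^{r}_{\mu}(\mathbb{R}^2)}.
\]
The trace is first defined for Schwartz functions and then extended by density. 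The constant is independent of $t$, because the estimate is pointwise in $\xi$ after taking $|e^{it\tau}|=1$.

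The second step is to check that $\widetilde v(t,\cdot)$ is an admissible competitor in the infimum defining $\|v(t,\cdot)\|_{\widehat{H}^{r}_{s}(I_{x_0})}$, that is, $\widetilde v(t,\cdot)|_{I_{x_0}}=v(t,\cdot)$. This is the main obstacle, because restriction to an open set and the trace at a fixed time must be shown to commute, and $v(t,\cdot)$ itself has to be given a meaning. I would define $v(t,\cdot)$ as the restriction to $I_{x_0}$ of the trace of any extension, and show that this is independent of the extension.

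To prove independence, let $\widetilde v_1,\widetilde v_2$ be two extensions. Their difference $w$ vanishes as a distribution on $U$. Take $\phi\in C_0^\infty(I_{x_0})$ and an approximate identity $\rho_n(t')$ concentrating at $t$ with support inside $J_{t_0}$, which is possible since $J_{t_0}$ is open and $t\in J_{t_0}$. Then
\[
\langle w(t,\cdot),\phi\rangle=\lim_n\langle w,\rho_n\otimes\phi\rangle=0 .
\]
The limit is justified because the trace map $\widehat{H}^{r}_{\mu}(\mathbb{R}^2)\to\widehat{H}^{r}_{s}(\mathbb{R})$ is continuous and $t'\mapsto w(t',\cdot)$ is continuous into $\widehat{H}^{r}_{s}$. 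That continuity follows from dominated convergence in the Fourier representation, since $\langle\tau\rangle^{-\mu}\in L^{r}_\tau$ when $\mu>\frac1r$. Hence the restriction is well defined, and $\widetilde v(t,\cdot)$ is a competitor.

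The final step combines the two bounds:
\[
\|v(t,\cdot)\|_{\widehat{H}^{r}_{s}(I_{x_0})}\le\|\widetilde v(t,\cdot)\|_{\widehat{H}^{r}_{s}(\mathbb{R})}\le c\big(\|v\|_{\widehat{H}^{r}_{\mu}(U)}+\delta\big).
\]
Letting $\delta\to0$ gives the claim, with $c$ independent of $t$ and of the intervals.
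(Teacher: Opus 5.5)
Your proposal is correct and follows the same route as the paper. You take a near-optimal extension $\widetilde v$ of $v$ and apply the global trace bound of Theorem~\ref{trace1}, correctly read with $\|\widetilde v\|_{\widehat{H}^{r}_{\mu}(\mathbb{R}^2)}$ on the right, which is what its proof actually yields. You then observe that $\widetilde v(t,\cdot)$ is an admissible extension of $v(t,\cdot)$ and let $\delta\to0$. Your extra argument that $\widetilde v(t,\cdot)|_{I_{x_0}}$ does not depend on the chosen extension (via $\rho_n\otimes\phi$ and the time-continuity of the trace) justifies a step that the paper only asserts.
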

	\begin{proof}
		Let $v\in \widehat{H}^{r}_{\mu}(J_{t_0} \times I_{x_0}).$  By definition \ref{restridefin}, for all $\epsilon>0,$ there exists $ \widetilde{v}\in \widehat{H}^{r}_{s}(\mathbb{R}^2),$ such that    $ \widetilde{v}|_{J_{t_0} \times I_{x_0}} = v $ and $\|\widetilde{v}\|_{\widehat{H}^{r}_{s}(\mathbb{R}^{2})} <\|\widetilde{v}\|_{\widehat{H}^{r}_{s}(J_{t_0} \times I_{x_0})} +\epsilon.$
		
		Additionally, notice that  for all $t\in J_{t_{0}}$, the restriction of the global extension  at time $t$ detonated by  $\widetilde{v}(t,\cdot)$ agrees with $v(t,\cdot)$ for  all $x\in I_{x_{0}}.$ Thus, $\widetilde{v}(t,\cdot)$ is a  global extension of  the localized trace at $v(t,\cdot)$, which implies that 
		\begin{equation*}
			\|v(t,\cdot)\|_{\widehat{H}^{r}_{\mu}( I_{x_0})}\leq \|\widetilde{v}\|_{\widehat{H}^{r}_{s}(\mathbb{R}^{2})}.
		\end{equation*}
		Now, Theorem \ref{trace1} implies that 
		\begin{equation*}
			\|\widetilde{v}(t,\cdot)\|_{\widehat{H}^{r}_{s}(\mathbb{R})}\lesssim \|\widetilde{v}\|_{\widehat{H}^{r}_{s}(\mathbb{R}^{2})}.
		\end{equation*}
		Finally, we combine the inequalities obtained above  to obtain 
		\begin{equation*}
			\|v(t, \cdot)\|_{\widehat{H}^{r}_{s}(I_{x_{0}})} \leq c\left(\|\widetilde{v}\|_{\widehat{H}^{r}_{s}(J_{t_0} \times I_{x_0})} +\epsilon\right),\quad \forall \,\, \epsilon>0,
		\end{equation*}
		which finishes the proof.
	\end{proof}
	Beyond the trace estimates above, the proof of Lemma~\ref{interpo1} below also requires controlling how multiplication by a fixed smooth cutoff interacts with the $\widehat{H}^{r}_{s}$ norms. This is summarized in the following lemma.
	\begin{lema}\label{bach1}
		Let $m\in C_{0}^{\infty}(\mathbb R^{2})$ and $p\in(1,\infty)$. Then multiplication by $m$ is bounded
		on $\widehat H^{\,p}_{s}(\mathbb R^{2})$ for every $s\in\mathbb R$, i.e. there exists a positive constant $c=c(s),$ such that 
		\begin{equation*}
			\|mf\|_{\widehat H^{\,p}_{s}(\mathbb R^{2})}\leq c \|m\|_{\widehat H^{\,\infty}_{|s|}(\mathbb R^{2})}\|f\|_{\widehat H^{\,p}_{s}(\mathbb R^{2})}.
		\end{equation*}
	\end{lema}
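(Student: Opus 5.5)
The plan is to work entirely on the Fourier side, where multiplication by $m$ becomes convolution with $\widehat m$. The weight $\langle\cdot\rangle^{s}$ is then moved onto the two factors by Peetre's inequality, and the estimate closes with Young's inequality $L^{1}*L^{p'}\to L^{p'}$. Throughout, $\widehat{\cdot}=\mathcal F_{t,x}$ and $\zeta,\eta\in\mathbb R^2$ denote the dual variables $(\tau,\xi)$.

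\emph{Step 1 (finiteness of the constant and the product formula).} Since $m\in C_0^\infty(\mathbb R^2)\subset\mathcal S(\mathbb R^2)$, we have $\widehat m\in\mathcal S(\mathbb R^2)$. Hence
\[
\|m\|_{\widehat H^{\infty}_{|s|}(\mathbb R^2)}=\|\langle\zeta\rangle^{|s|}\widehat m\|_{L^1_\zeta}<\infty .
\]
For $f\in\widehat H^{p}_{s}(\mathbb R^2)$ the function $\widehat f$ is locally integrable, and $\langle\eta\rangle^{s}\widehat f\in L^{p'}$, so $\widehat f$ has at most polynomial growth in an $L^{p'}$-averaged sense. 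Therefore the identity
\[
\widehat{mf}=(2\pi)^{-1}\,\widehat m*\widehat f
\]
(with the paper's normalization of $\mathcal F_{t,x}$) holds in $\mathcal S'$. The integral defining the convolution converges absolutely, because $\widehat m$ decays faster than any polynomial.

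\emph{Step 2 (Peetre).} For all $\zeta,\eta$ and all $s\in\mathbb R$ we have
\[
\langle\zeta\rangle^{s}\le 2^{|s|/2}\,\langle\zeta-\eta\rangle^{|s|}\,\langle\eta\rangle^{s}.
\]
Inserting this into the convolution gives, pointwise in $\zeta$,
\[
\langle\zeta\rangle^{s}\,|\widehat{mf}(\zeta)|\lesssim_s \int_{\mathbb R^2}\langle\zeta-\eta\rangle^{|s|}|\widehat m(\zeta-\eta)|\,\langle\eta\rangle^{s}|\widehat f(\eta)|\,d\eta
=\big(G*H\big)(\zeta),
\]
where $G:=\langle\cdot\rangle^{|s|}|\widehat m|$ and $H:=\langle\cdot\rangle^{s}|\widehat f|$.

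\emph{Step 3 (Young).} Taking the $L^{p'}_\zeta$ norm and using Young's inequality with exponents $1+\frac1{p'}=\frac11+\frac1{p'}$ yields
\[
\|mf\|_{\widehat H^{p}_{s}(\mathbb R^2)}\le c(s)\,\|G\|_{L^1}\,\|H\|_{L^{p'}}=c(s)\,\|m\|_{\widehat H^{\infty}_{|s|}(\mathbb R^2)}\,\|f\|_{\widehat H^{p}_{s}(\mathbb R^2)}.
\]
This is the claimed bound, with $c(s)=2^{|s|/2}(2\pi)^{-1}$.

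\emph{Main obstacle.} The only delicate point is the rigorous justification of the product formula in Step 1 when $f$ is merely a tempered distribution with negative $s$. I would handle it by first proving the estimate for $f\in\mathcal S(\mathbb R^2)$, where Steps 2--3 are elementary. Since $p\in(1,\infty)$ gives $p'<\infty$, the class $\mathcal S$ is dense in $\widehat H^{p}_{s}$: truncate and mollify $\langle\eta\rangle^{s}\widehat f$ in $L^{p'}$. The a priori bound then extends multiplication by $m$ to a bounded operator on $\widehat H^{p}_{s}$. This extension coincides with the distributional product $mf$, because both operations are continuous from $\widehat H^{p}_{s}$ into $\mathcal S'$.
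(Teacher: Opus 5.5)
Your proposal is correct and follows the paper's proof essentially step for step. The paper likewise writes $\mathcal{F}_{t,x}(mf)$ as a convolution, moves the weight with the Peetre-type bound $\langle(\tau,\xi)\rangle^{s}\lesssim_s\langle(\tau-\tau',\xi-\xi')\rangle^{|s|}\langle(\tau',\xi')\rangle^{s}$, and closes with Young's inequality $L^1*L^{p'}\to L^{p'}$. Your justification of the product formula and your explicit constant $2^{|s|/2}(2\pi)^{-1}$ go beyond what the paper writes. The density detour is not actually needed: since $m\in\mathcal{S}$, the identity $\widehat{mf}=(2\pi)^{-1}\widehat m*\widehat f$ already holds for every $f\in\mathcal{S}'$, as your Step 1 essentially shows.
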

	\begin{proof}
		Since $m\in C_{0}^{\infty}(\mathbb{R}^{2})$, we have
		$\mathcal{F}(m)\in L^{1}(\mathbb{R}^{2})$. Moreover, it is straightforward to verify that
		\begin{equation*}
			\langle(\tau,\xi)\rangle^{s}
			\lesssim_{s}
			\langle(\tau-\tau',\xi-\xi')\rangle^{|s|}\,\langle(\tau',\xi')\rangle^{s},\quad \forall \tau,\tau',\xi,\xi'\in \mathbb{R},
		\end{equation*}
		hence			
		\begin{equation*}
			\langle(\tau,\xi) \rangle^{s}\,\left|\mathcal{F}(mf)(\tau,\xi)\right|
			\lesssim_{s}
			\left(\big(\langle\cdot, \cdot \rangle^{|s|}|\mathcal{F}(m)|\big)*
			\big(\langle\cdot, \cdot\rangle^{s}|\mathcal{ F}(f)|\big)\right)(\tau,\xi),\quad \forall (\tau,\xi)\in \mathbb{R}^{2}.
		\end{equation*}
		Thus, Young's inequality  yields 
		\begin{equation*}
			\begin{split}
				\|mf\|_{\widehat H^{\,p}_{s}}&\lesssim_{s}\left\|\big(\langle\cdot, \cdot\rangle^{|s|}\mathcal{F}( m)\big)*
				\big(\langle\cdot, \cdot\rangle^{s}\mathcal{F}(f)\big)\right\|_{L^{p'}_{\tau\xi}(\mathbb R^{2})}\\
				&\lesssim\|\langle\cdot, \cdot\rangle^{|s|}\mathcal{F}(m)\|_{ L^{1}_{\tau\xi}(\mathbb R^{2})}\,\|\langle\cdot, \cdot\rangle^{s}\mathcal{F}(f)\|_{L^{p'}_{\tau\xi}(\mathbb R^{2})},
			\end{split}
		\end{equation*}
		which finishes the proof.
	\end{proof}
	We now turn to the key technical estimate of this appendix. Roughly speaking, it shows that full $\widehat{H}^{r}_{s}$ regularity of a localized piece $g$ can be recovered once one controls the action of the vector field $t\partial_{x}^{3}$ and of the dilation generator $P = 3t\partial_{t}+x\partial_{x}$ on $g$; this is the mechanism through which analytic smoothing manifests itself in our Fourier-Lebesgue setting.
	\begin{lema}\label{interpo1}
		Let $s, r \in \mathbb{R}$ and $(t_0, x_0) \in \mathbb{R}^2$ with $t_0 \neq 0$. There exists $\epsilon_0 > 0$ such that for any $\epsilon \in (0, \epsilon_0]$ and any $g \in \mathcal{S}'(\mathbb{R}^2)$ satisfying $\supp(g) \subseteq B_{2\epsilon}\left((t_0, x_0)\right)$, if 
		\begin{equation*}
			g, \,\, t\partial_x^3 g, \,\, P^3 g \in \widehat{H}^r_{s-3}(\mathbb{R}^2),
		\end{equation*}
		then 
		\begin{equation*}
			\|g\|_{\widehat{H}^r_s} \lesssim_{t_0, x_0} \|g\|_{\widehat{H}^r_{s-3}} + \left\|t\partial_x^3 g\right\|_{\widehat{H}^r_{s-3}} + \left\|P^3 g\right\|_{\widehat{H}^r_{s-3}}.
		\end{equation*}
	\end{lema}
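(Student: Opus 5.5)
The plan is a microlocal ellipticity argument. On $\supp g$ we have $|t|\ge |t_0|/2$ once $\epsilon_0\le |t_0|/4$, so the two vector fields $t\partial_x^3$ and $P^3$ together control all third-order derivatives of $g$. The weight comparison
\[
\langle(\tau,\xi)\rangle^{s}\lesssim \langle(\tau,\xi)\rangle^{s-3}\bigl(1+|\tau|^{3}+|\xi|^{3}\bigr)
\]
gives
\[
\|g\|_{\widehat H^r_s}\lesssim \|g\|_{\widehat H^r_{s-3}}+\|\partial_x^3g\|_{\widehat H^r_{s-3}}+\|\partial_t^3g\|_{\widehat H^r_{s-3}}.
\]
It therefore suffices to bound $\|\partial_x^3 g\|_{\widehat H^r_{s-3}}$ and $\|\partial_t^3 g\|_{\widehat H^r_{s-3}}$ by the right-hand side of the lemma, up to terms that can be absorbed. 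Throughout, I fix $\theta\in C_0^\infty(\mathbb R^2)$ with $\theta\equiv1$ on $B_{2\epsilon}((t_0,x_0))$ and $\supp\theta\subset\{|t|\ge|t_0|/4\}$. Then $\theta/t$, $\theta x$, $\theta x^j/t^i$, etc. are smooth compactly supported multipliers whose $\widehat H^\infty_{|s-3|}$ norms depend only on $(t_0,x_0)$. Lemma~\ref{bach1} then lets me multiply by them at a cost $\lesssim_{t_0,x_0}1$.

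\emph{Spatial part.} Since $g=\theta g$, I write $\partial_x^3 g=(\theta/t)\,(t\partial_x^3 g)$. Lemma~\ref{bach1} immediately gives
\[
\|\partial_x^3g\|_{\widehat H^r_{s-3}}\lesssim_{t_0,x_0}\|t\partial_x^3g\|_{\widehat H^r_{s-3}}.
\]

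\emph{Temporal part.} I expand $P^3=(3t\partial_t+x\partial_x)^3$ and move all coefficients to the left using $[\partial_t,t]=1$ and $[\partial_x,x]=1$. This yields
\[
P^3g=27t^3\partial_t^3g+\sum_{a+b=3,\;a\le2}c_{ab}(t,x)\,\partial_t^a\partial_x^b g+R g,
\]
where the $c_{ab}$ are polynomials and $R$ is a differential operator of order $\le2$ with polynomial coefficients. Multiplying by $\theta/(27t^3)$ and applying Lemma~\ref{bach1}, I get
\[
\|\partial_t^3g\|_{\widehat H^r_{s-3}}\lesssim \|P^3g\|_{\widehat H^r_{s-3}}+\sum_{a\le 2}\|\partial_t^a\partial_x^{3-a}g\|_{\widehat H^r_{s-3}}+\|g\|_{\widehat H^r_{s-1}}.
\]
For the mixed third-order terms I use Young's inequality on the symbol, $|\tau|^a|\xi|^{3-a}\le \eta|\tau|^3+C_\eta|\xi|^3$ for $a\le2$. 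This bounds them by $\eta\|\partial_t^3 g\|+C_\eta\|\partial_x^3 g\|$, and the second piece is already controlled by the spatial part. The lower-order term is handled by interpolation in the Fourier-Lebesgue scale (Hölder in $L^{r'}$ with the weights): $\|g\|_{\widehat H^r_{s-1}}\le \eta\|g\|_{\widehat H^r_s}+C_\eta\|g\|_{\widehat H^r_{s-3}}$.

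\emph{Absorption.} Choosing $\eta$ small relative to the $(t_0,x_0)$-dependent constants, I absorb the $\eta\|\partial_t^3g\|$ and $\eta\|g\|_{\widehat H^r_s}$ terms into the left-hand side, which gives the stated inequality. I expect this step to be the main obstacle. First, the coefficients $c_{ab}$ are not small, since $|x_0|$ may be large, so absorption must come from the choice of $\eta$ in the symbol inequality rather than from shrinking $\epsilon$. Second, absorption presupposes $\|g\|_{\widehat H^r_s}<\infty$. I would handle the latter by running the argument for the mollified $g_\delta=\rho_\delta*g$, which is still supported in $B_{3\epsilon}$, lies in every $\widehat H^r_\sigma$, and has commutators $[t\partial_x^3,\rho_\delta*]$ and $[P^3,\rho_\delta*]$ bounded uniformly in $\delta$ on $\widehat H^r_{s-3}$ (they are of the form $(\text{smooth coefficient})\times(\text{order-}3\text{ operator with a }\delta\text{-scaled kernel})$). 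I would then let $\delta\to0$ via Fatou's lemma on the Fourier side.
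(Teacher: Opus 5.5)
Your a priori argument is correct, and it follows a genuinely different route from the paper's. The paper freezes coefficients at $(t_0,x_0)$. It starts from the elliptic bound $\langle(\tau,\xi)\rangle^3\lesssim_{t_0,x_0}1+|t_0\xi^3|+|3t_0\tau+x_0\xi|^3$, and writes $t_0\partial_x^3g=t\partial_x^3g+(t_0-t)\partial_x^3g$ and $P_0^3g=P^3g-(P^3-P_0^3)g$ with $P_0=3t_0\partial_t+x_0\partial_x$. The differences are treated as $O(\epsilon)$ perturbations, because $|t-t_0|,|x-x_0|\lesssim\epsilon$ on the support. The commutators of $\langle\nabla_{t,x}\rangle^{s-3}$ with the cutoffs are paid for via Lemma~\ref{lemh} and interpolation, and absorption comes from $\epsilon\ll1$.

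You never freeze anything. You use $|t|\ge|t_0|/2$ on the support to divide exactly by $t$ and $27t^3$ with smooth multipliers, so Lemma~\ref{bach1} is your only tool. You isolate $27t^3\partial_t^3$ in the expansion of $P^3$, and you absorb the mixed third-order terms through $|\tau|^a|\xi|^{3-a}\le\eta|\tau|^3+C_\eta|\xi|^3$ rather than through small support. This is shorter and needs no pseudodifferential commutator bounds. It also uses smallness of $\epsilon$ only to stay away from $\{t=0\}$, so it gives the estimate for any support compactly contained in $\{t\neq0\}$.

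The paper's Kato--Ogawa template, in turn, uses only joint ellipticity of the frozen principal symbols. It would therefore survive without your triangular structure, namely $t\partial_x^3$ alone controlling $\partial_x^3$ and $\partial_t^3$ carrying the invertible coefficient $27t^3$. Do fix $\theta$ once, independently of $\epsilon$ (say $\theta\equiv1$ near $\overline{B_{3\epsilon_0}((t_0,x_0))}$). An $\epsilon$-dependent cutoff would make $\|\theta/t\|_{\widehat H^\infty_{|s-3|}}$ grow like $\epsilon^{-|s-3|}$.

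The step that fails as written is the regularisation. Both you and the paper absorb a multiple of $\|g\|_{\widehat H^r_s}$, which requires knowing that $g\in\widehat H^r_s$. The paper never addresses this, and your remedy rests on a false claim: the commutators are \emph{not} uniformly bounded on $\widehat H^r_{s-3}$. For instance, $[t\partial_x^3,\rho_\delta*]h=(t\rho_\delta)*\partial_x^3h$ has Fourier multiplier $c\,\delta\,(\partial_\tau\widehat\rho)(\delta\tau,\delta\xi)\,\xi^3$, which has size $\sim\delta^{-2}$ at $|(\tau,\xi)|\sim\delta^{-1}$. Friedrichs' lemma gains only one order, so on distributions supported near $(t_0,x_0)$ both commutators are uniformly bounded only from $\widehat H^r_{\sigma-1}$ into $\widehat H^r_{\sigma-3}$.

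For $t\partial_x^3$ you can still rescue the bound, using $\partial_x^3g\in\widehat H^r_{s-3}$ from your spatial step. However, $[P^3,\rho_\delta*]g$ contains $[27t^3,\rho_\delta*]\partial_t^3g$. The one-order gain bounds this only by $\|\partial_t^3g\|_{\widehat H^r_{s-4}}$, essentially $\|g\|_{\widehat H^r_{s-1}}$, which is not a hypothesis.

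There are two standard repairs.
\begin{enumerate}
\item Bootstrap in unit steps: run your mollified estimate at $\sigma=s-2,\,s-1,\,s$. At each stage the commutator error, which is $\lesssim\|g\|_{\widehat H^r_{\sigma-1}}$, is finite by the previous stage, and Fatou gives $g\in\widehat H^r_\sigma$.
\item More simply, drop the mollifier and run your whole argument with the truncated weights $\langle\zeta\rangle^{\sigma}\langle\delta\zeta\rangle^{-3}$, where $\zeta=(\tau,\xi)$ and $0<\delta\le1$. By Peetre's inequality, Lemma~\ref{bach1} holds for these weights uniformly in $\delta$, and all your pointwise symbol inequalities survive multiplication by $\langle\delta\zeta\rangle^{-3}$. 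The quantity being absorbed is finite, since $\langle\zeta\rangle^{s}\langle\delta\zeta\rangle^{-3}\le\delta^{-3}\langle\zeta\rangle^{s-3}$, and the right-hand sides are bounded by the untruncated norms. Monotone convergence as $\delta\to0$ then yields $g\in\widehat H^r_s$ together with the estimate.
\end{enumerate}
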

	\begin{proof}
		We start by noticing that for any point $(t_{0},x_{0})$  with $t_{0}\neq 0$, the following inequality holds
		\begin{equation}\label{ine1}
			\langle (\tau, \xi)\rangle ^{3}\lesssim_{t_{0},x_{0}} 1+ |t_{0}\xi^{3}|+|3t_{0}\tau+x_{0}\xi|.
		\end{equation}
		As a consequence of \eqref{ine1} we have 
		\begin{equation*}
			\begin{split}
				\|g\|_{\widehat{H}^r_s}&\lesssim_{t_{0},x_{0}} \left\|\left\langle (\tau,\xi)\right\rangle^{s-3}\mathcal{F}_{t,x}g(\tau,\xi)\right\|_{L^{r'}_{\tau\xi}}+\left\|\left\langle (\tau,\xi)\right\rangle^{s-3}|t_{0}\xi^{3}|\mathcal{F}_{t,x}g(\tau,\xi)\right\|_{L^{r'}_{\tau\xi}}\\
				&\quad +\left\|\left\langle (\tau,\xi)\right\rangle^{s-3}|3t_{0}\tau+x_{0}\xi|^{3}\mathcal{F}_{t,x}g(\tau,\xi)\right\|_{L^{r'}_{\tau\xi}}\\
				&=\Pi_{1}+\Pi_{2}+\Pi_{3}.
			\end{split}
		\end{equation*}
		Notice that for $\Pi_{1}$  it is straightforward  since 
		\begin{equation*}
			\begin{split}
				\Pi_{1}&=c(t_{0},x_{0}) \left\|\left\langle (\tau,\xi)\right\rangle^{s-3}\mathcal{F}_{t,x}g(\tau,\xi)\right\|_{L^{r'}_{\tau\xi}}\\
				&=c(t_{0},x_{0})\|g\|_{\widehat{H}^{r}_{s-3}}.
			\end{split}
		\end{equation*}
		To handle $ \Pi_{2}$ we   consider for  $ \chi_{(t_{0}, x_{0}), \widetilde{\vec{\boldsymbol{\epsilon}}}}$, with  $\widetilde{\vec{\boldsymbol{\epsilon}}}=(2\epsilon, 2\epsilon).$
		Notice that this particular choice implies that $\chi_{(t_{0}, x_{0}), \widetilde{\vec{\boldsymbol{\epsilon}}}} g=g,$ so that
		\begin{equation*}
			\mathcal{F}_{t,x}(t_{0}\partial_{x}^{3}g)(\tau, \xi)=\mathcal{F}_{t,x}(t\partial_{x}^{3}g)(\tau,\xi)+\mathcal{F}_{t,x}((t_{0}-t)\partial_{x}
			^{3}\left(\chi_{(t_{0}, x_{0}), \widetilde{\vec{\boldsymbol{\epsilon}}}} g\right)(\tau, \xi),
		\end{equation*}
		which implies after  replacing  into $\Pi_{2}$, the following equivalent expression 
		\begin{equation*}
			\begin{split}
				\Pi_{2}&=c(t_{0},x_{0}) \left\|\left\langle (\tau,\xi)\right\rangle^{s-3}|t_{0}\xi^{3}|\mathcal{F}_{t,x}g(\tau,\xi)\right\|_{L^{r'}_{\tau\xi}}\\
				&\leq c(t_{0},x_{0})\left\{\left\|\left\langle (\tau,\xi)\right\rangle^{s-3}\mathcal{F}_{t,x}(t\partial_{x}^{3}g)(\tau,\xi)\right\|_{L^{r'}_{\tau\xi}}+\left\|\left\langle (\tau,\xi)\right\rangle^{s-3}\mathcal{F}_{t,x}((t_{0}-t)\partial_{x}^{3}\left(\chi_{(t_{0}, x_{0}), \widetilde{\vec{\boldsymbol{\epsilon}}}}g\right)(\tau, \xi)\right\|_{L^{r'}_{\tau\xi}}\right\}\\
				&= c(t_{0},x_{0})\left\{ \|t\partial_{x}^{3}g\|_{\widehat{H}^{r}_{s-3}}+c\left\|\left\langle (\tau,\xi)\right\rangle^{s-3}\mathcal{F}_{t,x}((t_{0}-t)\partial_{x}^{3}\left(\chi_{(t_{0}, x_{0}), \widetilde{\vec{\boldsymbol{\epsilon}}}} g\right)(\tau, \xi)\right\|_{L^{r'}_{\tau\xi}}\right\}\\
				&\leq c(t_{0},x_{0})\left\{ \|t\partial_{x}^{3}g\|_{\widehat{H}^{r}_{s-3}}+ \left\|(t_{0}-t) \chi_{(t_{0}, x_{0}), \widetilde{\vec{\boldsymbol{\epsilon}}}}\langle \nabla_{t,x}\rangle^{s-3}\partial_{x}^{3}g\right\|_{\widehat{L}^{r}_{xt}}+\left\|\left[\langle \nabla_{t,x}\rangle^{s-3}\partial_{x}^{3}; (t_{0}-t)\chi_{(t_{0}, x_{0}), \widetilde{\vec{\boldsymbol{\epsilon}}}}\right]g\right\|_{\widehat{L}^{r}_{xt}}\right\}
				\\
				&=\Pi_{2,1}+\Pi_{2,2}+\Pi_{2,3}.
			\end{split}
		\end{equation*}
		The term $\Pi_{2,1}$ provides the required bound. To handle $\Pi_{2}$, applying Young's inequality for convolution yields
		\begin{equation*}
			\begin{split}
				\Pi_{2,2} &= c(t_{0},x_{0})\left\| \mathcal{F}_{t,x}\left( (t_{0}-t)\chi_{(t_{0}, x_{0}), \widetilde{\vec{\boldsymbol{\epsilon}}}} \right) \ast \mathcal{F}_{t,x}\left( \langle \nabla_{t,x} \rangle^{s-3} \partial_{x}^{3} g \right) \right\|_{L^{r'}_{\tau\xi}} \\
				&\leq c(t_{0},x_{0}) \left\| \mathcal{F}_{t,x}\left( (t_{0}-t)\chi_{(t_{0}, x_{0}), \widetilde{\vec{\boldsymbol{\epsilon}}}}\right) \right\|_{L^{1}_{\tau\xi}} \left\| \mathcal{F}_{t,x}\left( \langle \nabla_{t,x} \rangle^{s-3} \partial_{x}^{3} g \right) \right\|_{L^{r'}_{\tau\xi}} \\
				&\leq 2\epsilon c(t_{0},x_{0}) \|g\|_{\widehat{H}^{r}_{s}}.
			\end{split}
		\end{equation*}
		Next, we focus on the $\Pi_{2,3}$ term, which is estimated using a similar analysis as the one used in Lemma \ref{lemh} in the Appendix. From this estimation, we find that there exists a parameter $\delta_{1}>0$ (to be defined later) such that 
		\begin{equation*}
			\begin{split}
				\Pi_{2,3}&\leq \frac{c(t_{0},x_{0})}{\epsilon^{|s-3|+1}} \|g\|_{\widehat{H}^{r}_{s-1}(\mathbb{R}^{2})}\\
				&\leq \delta_{1}\epsilon \|g\|_{\widehat{H}^{r}_{s}(\mathbb{R}^{2})}+ \frac{1}{9}\left(\frac{2c(t_{0},x_{0})}{\delta_{1}\epsilon ^{|s-3|+2}}\right)^{2}\|g\|_{\widehat{H}^{r}_{s-3}(\mathbb{R}^{2})}.
			\end{split}
		\end{equation*}
		Finally, we handle $\Pi_{3},$ which we estimate after noticing the identity satisfied by $P$ and $P_{0}.$ More precisely,
		\begin{equation}\label{decompo2}
			P^{3}-P_{0}^{3}
			=
			\left(P-P_{0}\right)\left(P^{2}+P_{0}P+P_{0}^{2}\right)+3P_{0}P+6P_{0}^{2}-2x_{0}\partial_{x}P-4x_{0}\partial_{x}P_{0}.
		\end{equation} 
		Additionally, we fix a particular cutoff function that is decoupled in the $t$ and $x$ 
		variables, following the notation introduced in \eqref{cutoff1A}. More precisely, 
		for a given $\epsilon > 0$, we choose a positive real number $\gamma$ satisfying 
		$\gamma \ge 2$ and define the smooth indicator 
		function:
		\begin{equation*}
			\chi(t, x) = \chi_{1, t_{0}, \gamma \epsilon}(t) \, \chi_{2, x_{0}, \gamma \epsilon}(x), 
			\quad (t, x) \in \mathbb{R}^{2}.
		\end{equation*}
		It follows immediately that $g = \chi g$, which allows us to rewrite $\Pi_{3}$ according to \eqref{decompo2} as:
		It follows immediately that $g = \chi g$, which allows us to rewrite $\Pi_{3}$ according to \eqref{decompo2} as:
		\begin{equation}\label{Pi3}
			\begin{split}
				\Pi_{3} \leq c(t_{0},x_{0}) \left\{ 
				\vphantom{\left\|\mathcal{F}_{t,x}\left(\langle \nabla_{t,x}\rangle^{s-3}(P-P_{0})\underbrace{\left(P^{2}+P_{0}P+P_{0}^{2}\right)}_{:=\mathcal{Q}_{1}}(\chi g)\right)\right\|_{L^{r'}_{\tau\xi}}}
				\right. & \left. \left\|\mathcal{F}_{t,x}\left(\langle \nabla_{t,x}\rangle^{s-3}P^{3}g\right)\right\|_{L^{r'}_{\tau\xi}} \right. \\
				&+ \left. \left\|
				\mathcal{F}_{t,x}\left(\langle \nabla_{t,x}\rangle^{s-3}(P-P_{0})\underbrace{\left(P^{2}+P_{0}P+P_{0}^{2}\right)}_{=:\mathcal{Q}_{1}}(\chi g)\right)\right\|_{L^{r'}_{\tau\xi}} \right. \\
				&+ \left. \left\|\mathcal{F}_{t,x}\left(\langle \nabla_{t,x}\rangle^{s-3}\underbrace{\left(3P_{0}P+6P_{0}^{2}-2x_{0}\partial_{x}P-4x_{0}\partial_{x}P_{0}\right)}_{=:\mathcal{Q}_{2}}(\chi g)\right)\right\|_{L^{r'}_{\tau\xi}} \right\}\\
				&=\Pi_{3,1}+\Pi_{3,2}+\Pi_{3,3}.
			\end{split}
		\end{equation}
		The first term in the expression \eqref{Pi3}   clearly satisfies 
		\begin{equation*}
			\Pi_{3,1}=c(t_{0},x_{0}) \left\|P^{3}g\right\|_{\widehat{H}^{r}_{s-3}(\mathbb{R}^{2})},
		\end{equation*} 
		which provides the required bound.
		
		Next, we proceed to handle $\Pi_{3,2}$, which we rewrite as follows 
		\begin{equation*}
			\begin{split}
				\Pi_{3,2}&\leq c(t_{0},x_{0})\left\{\left\|\mathcal{F}_{t,x}\left(\langle \nabla_{t,x}\rangle^{s-3}\left[(P-P_{0})\mathcal{Q}_{1}; \chi\right]g\right)\right\|_{L^{r'}_{\tau,\xi}}+\left\|\mathcal{F}_{t,x}\left(\langle \nabla_{t,x}\rangle^{s-3}\left(\chi \,\left(P-P_{0}\right) \mathcal{Q}_{1}g\right)\right)\right\|_{L^{r'}_{\tau,\xi}}\right\}\\
				&\leq c(t_{0},x_{0})\left\{\left\|\mathcal{F}_{t,x}\left(\langle \nabla_{t,x}\rangle^{s-3}\left[(P-P_{0})\mathcal{Q}_{1}; \chi\right]g\right)\right\|_{L^{r'}_{\tau,\xi}}+\left\|\mathcal{F}_{t,x}\left(\left[\langle \nabla_{t,x}\rangle^{s-3}; (x-x_{0})\chi\right]\partial_{x}\mathcal{Q}_{1}g\right)\right\|_{L^{r'}_{\tau,\xi}}\right.\\
				&\left.+\left\|\mathcal{F}_{t,x}\left((x-x_{0})\chi \,\langle\nabla_{t,x}\rangle^{s-3}\partial_{x}\mathcal{Q}_{1}g\right)\right\|_{L^{r'}_{\tau,\xi}} +3\left\|\mathcal{F}_{t,x}\left(\left[\langle \nabla_{t,x}\rangle^{s-3}; (t-t_{0})\chi\right]\partial_{t}\mathcal{Q}_{1}g\right)\right\|_{L^{r'}_{\tau,\xi}} \right.\\
				&\left.+3\left\|\mathcal{F}_{t,x}\left((t-t_{0})\chi \,\langle\nabla_{t,x}\rangle^{s-3}\partial_{t}\mathcal{Q}_{1}g\right)\right\|_{L^{r'}_{\tau,\xi}}\right\}\\
				&=\Pi_{3,2,1}+\Pi_{3,2,2}+\Pi_{3,2,3}+\Pi_{3,2,4}+\Pi_{3,2,5}.
			\end{split}
		\end{equation*}
		Notice that by Lemma \ref{lemh}, stated in the appendix, we obtain
		\begin{equation*}
			\begin{split}
				\Pi_{3,2,1} &\leq \frac{c}{\epsilon^{y_{s}}} \|g\|_{\widehat{H}^{r}_{s-1}(\mathbb{R}^{2})} \\
				&\leq \delta_{2} \epsilon \|g\|_{\widehat{H}^{r}_{s}(\mathbb{R}^{2})} + \frac{c}{\epsilon^{\widetilde{y_{s}}}} \|g\|_{\widehat{H}^{r}_{s-3}(\mathbb{R}^{2})},
			\end{split}
		\end{equation*}
		where  $\widetilde{y_{s}}>0$ and $ \delta_{2}$ is a positive constant to be chosen. 
		
		With respect to $\Pi_{3,2,2}$ and $\Pi_{3,2,4}$, it follows from Lemma \ref{lemh} in the appendix that they satisfy
		\begin{equation*}
			\begin{aligned}
				\max\left\{\Pi_{3,2,2}, \Pi_{3,2,4}\right\} &\leq \frac{c}{\epsilon^{p_{s}}} \|g\|_{\widehat{H}^{r}_{s-1}(\mathbb{R}^{2})} \\
				&\leq \delta_{3}\epsilon \|g\|_{\widehat{H}^{r}_{s}(\mathbb{R}^{2})} + \dfrac{c}{\delta_{3}^{2}\epsilon^{p_{s}'}} \|g\|_{\widehat{H}^{r}_{s-3}(\mathbb{R}^{2})},
			\end{aligned}
		\end{equation*}
		where $\delta_{3}$ is a positive constant to be chosen, and $p_{s},p_{s}'>0.$
		
		The terms $\Pi_{3,2,3}$ and $\Pi_{2,3,5}$ are easily handled by using Lemma \ref{lemh}. More precisely, we have the following bound 
		\begin{equation*}
			\max\left\{\Pi_{3,2,3},\Pi_{2,3,5}\right\}\leq c\epsilon.
		\end{equation*}	
		Finally, collecting the estimates derived above and setting $\delta_{1} = \delta_{2} = \delta_{3} = 1/3$ and $0<\epsilon \ll1$ yields the desired result.
	\end{proof}
	\begin{obs}
		This lemma establishes a localization result within our newly defined Sobolev-Lebesgue Fourier spaces. By adapting the arguments presented in \cite{KO2000}, we extend the analysis to this broader class of spaces. Furthermore, in the specific case where $r=2$, this result recovers the classical estimates established by Kato and Ogawa \cite[Lemma 3.2]{KO2000}. This is precisely the estimate invoked in {\sc claim }~\ref{c3} of the proof of Theorem~\ref{main1}.
	\end{obs}
	For comparison with the classical, $L^2$-based ($r=2$) setting, we state below the corresponding estimate established by Kato and Ogawa \cite{KO2000}, which our Lemma~\ref{interpo1} generalizes to the Fourier-Lebesgue scale.
	\begin{lema}\label{LemaA}
		Let $P = 3t \partial_t + x \partial_x$ be the generator of the dilation and $|\nabla_{t,x}|$ be
		an operator defined by
		\begin{equation*}
			\widehat{|\nabla_{t,x}| f}(\tau, \xi) = \big( |\tau| + |\xi| \big)\,\widehat{f}(\tau, \xi).
		\end{equation*}
		We fix an arbitrary point $(t_0, x_0) \in \big( (-T,0) \cup (0,T) \big) \times \mathbb{R}$.
		\begin{itemize}
			\item Let $\epsilon >0$.  Suppose that $b \in (0,1]$, $r \in (-\infty,0]$ and $g \in X^{r, b-1}$ with  $\operatorname{supp} g \subset B_{2\epsilon}(t_0, x_0)$ and  
			$t \partial_x^3 g, \; P^3 g \in X^{r, b-1}$.  
			If $\epsilon > 0$ is sufficiently small, then we have
			\begin{equation}\label{EqA}
				\| \langle \nabla_{t,x} \rangle^{3b} g \|_{L^2(\mathbb{R}; H^r(\mathbb{R}))} 
				\;\leq\; C \Big( \| g \|_{X^{r, b-1}} 
				+ \| t \partial_x^3 g \|_{X^{r, b-1}} 
				+ \| P^3 g \|_{X^{r, b-1}} \Big),  
			\end{equation}
			where the constant $C$ depends on $(t_0, x_0)$ and $\varepsilon$.
			\item  If $g \in H^{\mu-3}(\mathbb{R}^2)$ with  
			$\operatorname{supp} g \subset B_{2\varepsilon}(t_0, x_0)$ and  
			$t \partial_x^3 g, \; P^3 g \in H^{\mu-3}(\mathbb{R}^2)$,  
			then for $0<\varepsilon\ll1$, we have
			\begin{equation}\label{EqB}
				\|\langle \nabla_{t,x} \rangle^{\mu} g\|_{L^2(\mathbb{R}^2)} 
				\;\leq\; C \Big( \| g \|_{H^{\mu-3}(\mathbb{R}^2)} 
				+ \| t \partial_x^3 g \|_{H^{\mu-3}(\mathbb{R}^2)} 
				+ \| P^3 g \|_{H^{\mu-3}(\mathbb{R}^2)} \Big),  
			\end{equation}
			where the constant $C$ depends on $(t_0, x_0)$ and $\varepsilon$.
		\end{itemize}
	\end{lema}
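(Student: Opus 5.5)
The plan is to prove both estimates by one Fourier-side mechanism. At the frozen point $(t_0,x_0)$ the two vector fields $t\partial_x^3$ and $P$ are replaced by the constant-coefficient operators $t_0\partial_x^3$ and $P_0:=3t_0\partial_t+x_0\partial_x$. These jointly control the full isotropic weight $\langle(\tau,\xi)\rangle$, and the errors made by freezing are absorbed using the small support of $g$. \eqref{EqB} is exactly the case $r=2$ of Lemma~\ref{interpo1} with $s=\mu$, since $\widehat H^2_{\mu}(\mathbb R^2)=H^{\mu}(\mathbb R^2)$. So I would cite that lemma and spend the work on \eqref{EqA}, keeping the isotropic multiplier $\langle(\tau,\xi)\rangle^{3b}$ exactly as stated.

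The key symbol inequality comes first. Fix $t_0\neq0$. Where $|\xi|$ dominates, $|t_0\xi^3|\gtrsim|\xi|^3$. Where $|\tau|\gg|\xi|$ (after adjusting constants by $x_0$), $|3t_0\tau+x_0\xi|\gtrsim|\tau|$. Together these give, as in \eqref{ine1},
\begin{equation*}
\langle(\tau,\xi)\rangle^{3}\lesssim_{t_0,x_0} 1+|t_0\xi^3|+|3t_0\tau+x_0\xi|^{3}.
\end{equation*}
Next, $\langle\tau-\xi^3\rangle\lesssim\langle\tau\rangle+\langle\xi\rangle^3\lesssim\langle(\tau,\xi)\rangle^3$ and $1-b\ge0$ give
\begin{equation*}
\langle(\tau,\xi)\rangle^{3b}=\langle(\tau,\xi)\rangle^{3}\,\langle(\tau,\xi)\rangle^{-3(1-b)}\lesssim \langle\tau-\xi^3\rangle^{b-1}\bigl(1+|t_0\xi^3|+|3t_0\tau+x_0\xi|^{3}\bigr).
\end{equation*}
Multiplying by $\langle\xi\rangle^{r}|\widehat g|$ and taking $L^2_{\tau\xi}$ norms, this yields
\begin{equation*}
\|\langle\nabla_{t,x}\rangle^{3b}g\|_{L^2_tH^r_x}\lesssim \|g\|_{X^{r,b-1}}+\|t_0\partial_x^3g\|_{X^{r,b-1}}+\|P_0^3g\|_{X^{r,b-1}}.
\end{equation*}

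Second, I would unfreeze the coefficients. Since $\operatorname{supp}g\subset B_{2\epsilon}(t_0,x_0)$, I insert the cutoff $\chi_{(t_0,x_0),\widetilde{\vec{\boldsymbol\epsilon}}}$ with $\widetilde{\vec{\boldsymbol\epsilon}}=(2\epsilon,2\epsilon)$, which equals $1$ on the support of $g$. I then write
\begin{equation*}
t_0\partial_x^3g=t\partial_x^3g+(t_0-t)\chi_{(t_0,x_0),\widetilde{\vec{\boldsymbol\epsilon}}}\,\partial_x^3g .
\end{equation*}
For $P_0^3$ I use the algebraic identity \eqref{decompo2}, which expresses $P^3-P_0^3$ through $(P-P_0)\mathcal Q_1$ plus terms of total order at most two in $P,P_0,\partial_x$. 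Here $P-P_0=3(t-t_0)\partial_t+(x-x_0)\partial_x$ has coefficients of size $O(\epsilon)$ on the support. Each error then has one of two forms:
\begin{itemize}
\item a main part, a multiplier of sup-size $O(\epsilon)$ times a third-order operator applied to $g$. Its $X^{r,b-1}$ norm I aim to bound by $C\epsilon\,\|\langle\nabla_{t,x}\rangle^{3b}g\|_{L^2_tH^r_x}$, using again $\langle\tau-\xi^3\rangle^{b-1}\langle(\tau,\xi)\rangle^{3}\lesssim\langle(\tau,\xi)\rangle^{3b}$;
\item commutators of $\chi$-type multipliers with the weights, together with the genuinely lower-order terms from \eqref{decompo2}. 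These gain at least one isotropic derivative and are bounded by $C_\epsilon\|\langle\nabla_{t,x}\rangle^{3b-1}g\|_{L^2_tH^r_x}$.
\end{itemize}
For the lower-order norm, interpolation between $\langle\nabla_{t,x}\rangle^{3b}$ and $X^{r,b-1}$ (whose weight is at least $\langle(\tau,\xi)\rangle^{3b-3}$), followed by Young's inequality, gives the bound $\delta\|\langle\nabla_{t,x}\rangle^{3b}g\|_{L^2_tH^r_x}+C_{\delta,\epsilon}\|g\|_{X^{r,b-1}}$. Choosing $\epsilon$ and then $\delta$ small, all $\|\langle\nabla_{t,x}\rangle^{3b}g\|_{L^2_tH^r_x}$ terms are absorbed into the left side. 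This absorption requires that norm to be finite a priori, so I would first run the argument for a mollification $g_n=\rho_n*g$, whose support still lies in $B_{3\epsilon}$. I would then pass to the limit, using that the commutators $[t\partial_x^3,\rho_n*]$ and $[P^3,\rho_n*]$ are uniformly bounded on $X^{r,b-1}$.

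I expect the main obstacle to be the error and commutator bounds in the anisotropic space $X^{r,b-1}$. There the weight $\langle\tau-\xi^3\rangle$ is not stable under the frequency convolutions produced by multiplication by $\chi$ or by $(t-t_0)\chi$. I would first try the elementary bound
\begin{equation*}
\langle\tau-\xi^3\rangle^{b-1}\lesssim\langle\tau'-\xi'^3\rangle^{b-1}\langle(\tau-\tau',\xi-\xi')\rangle^{3(1-b)}\langle\xi'\rangle^{2(1-b)} .
\end{equation*}
It follows from $|(\tau-\xi^3)-(\tau'-\xi'^3)|\lesssim\langle\eta\rangle^3\langle\xi'\rangle^2$, where $\eta=(\tau-\tau',\xi-\xi')$. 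The factor $\langle\eta\rangle^{3(1-b)}$ is absorbed by the rapid decay of $\widehat\chi$. The $\langle\xi'\rangle^{2(1-b)}$ loss must be paid from the isotropic gain: it is at most two-thirds of a cubic order, so it can be charged to the left side with small constant, or to $\langle\nabla_{t,x}\rangle^{3b-1}$ after checking the exponents. Verifying that this accounting closes for all $b\in(0,1]$ is the delicate point.
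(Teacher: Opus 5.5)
The paper gives no argument of its own for this lemma; it only cites \cite[Lemma~3.2]{KO2000}. Your reduction of \eqref{EqB} to Lemma~\ref{interpo1} with Lebesgue exponent $2$ agrees with the paper's remark after that lemma, and your frozen estimate for \eqref{EqA} is correct. The gap is in the unfreezing step.

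The inequality you invoke ``again'', $\langle\tau-\xi^3\rangle^{b-1}\langle(\tau,\xi)\rangle^{3}\lesssim\langle(\tau,\xi)\rangle^{3b}$, is the reverse of the one you used in the frozen step, which was $\langle\tau-\xi^3\rangle\lesssim\langle(\tau,\xi)\rangle^{3}$. It is false for every $b<1$: on the curve $\tau=\xi^3$ the left side is $\sim|\xi|^{9}$ and the right side is $\sim|\xi|^{9b}$.

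That curve is exactly where the main errors are large. In $(P-P_0)\mathcal Q_1 g$, the piece $3(t-t_0)\partial_t\mathcal Q_1 g$ has symbol of size $\epsilon|\tau|^{3}\sim\epsilon\langle(\tau,\xi)\rangle^{3}$ near the curve, and $\langle\tau-\xi^3\rangle^{b-1}$ gives no decay there. For a concrete case, take $g=\chi\,S(t)u_0$ with $\widehat{u_0}$ localized at $|\xi|\sim N$. Its space-time transform lives where $|\tau|\sim N^3$ and $|\tau-\xi^3|\lesssim N^2/\epsilon$. For this $g$, the ratio of $\|(P^3-P_0^3)g\|_{X^{r,b-1}}$ to $\|\langle\nabla_{t,x}\rangle^{3b}g\|_{L^2_tH^r_x}$ is at least of order $\epsilon^{2-b}N^{7-7b}$.

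So no smallness of $\epsilon$ lets you absorb these terms into the left-hand side. They are $\epsilon$ times the frozen right-hand side, not $\epsilon$ times the left. Even the $t\partial_x^3$ error would need $|\xi|^3\langle\tau-\xi^3\rangle^{b-1}\lesssim\langle(\tau,\xi)\rangle^{3b}$, which holds only for $b\ge\frac13$.

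You could instead try to absorb into $\|\langle\xi\rangle^{r}\langle\tau-\xi^3\rangle^{b-1}\langle(\tau,\xi)\rangle^{3}\widehat g\|_{L^2}$. That requires multiplication by $(x-x_0)\chi$ to act on $X^{r,b-1}$ with norm $O(\epsilon)$ and no loss. The $\langle\xi'\rangle^{2(1-b)}$ loss you identify then has no spare isotropic gain to be charged to. So your ``delicate point'' is where the scheme fails, not a bookkeeping check.

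The way out is never to multiply inside $X^{r,b-1}$. Since $\langle\tau-\xi^3\rangle\lesssim\langle(\tau,\xi)\rangle^{3}$ and $b\le 1$, every $f$ satisfies $\|f\|_{X^{r,b-1}}\gtrsim\|\langle\nabla_{t,x}\rangle^{3b-3}f\|_{L^2_tH^r_x}$. Hence \eqref{EqA} follows from an \eqref{EqB}-type estimate with $\mu=3b$, using the weight $\langle(\tau,\xi)\rangle^{\mu}\langle\xi\rangle^{r}$ and with each right-hand norm replaced by $\|\langle\nabla_{t,x}\rangle^{\mu-3}\,\cdot\,\|_{L^2_tH^r_x}$.

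That weight is moderate, so your freezing argument runs as in the $L^2$ case. Each error is at most $c\,\epsilon\|\langle\nabla_{t,x}\rangle^{3b}g\|_{L^2_tH^r_x}+C_\epsilon\|\langle\nabla_{t,x}\rangle^{3b-1}g\|_{L^2_tH^r_x}$, and interpolation closes the estimate.

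The only new feature is that commutators with $\langle\nabla_x\rangle^{r}$ gain just $\langle\xi\rangle^{-1}$, and this costs nothing. Because $t\partial_t$ and $x\partial_x$ commute, the top-order part of $P^3-P_0^3$ is $\sum_{j+m=3}c_{jm}(t^jx^m-t_0^jx_0^m)\partial_t^j\partial_x^m$. Every coefficient that depends on $x$ therefore multiplies at least one $\partial_x$. The $t$-only coefficients, including the one in $t\partial_x^3$, commute with $\langle\nabla_x\rangle^{r}$.
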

	\begin{proof}
		See \cite[p.~588, Lemma~3.2]{KO2000}.
	\end{proof}
	The estimates used in the proof of Lemma~\ref{interpo1} rely strongly on the following commutator estimates, which we prove separately for the sake of clarity.
	\begin{lema}\label{lemh}
		Let $s \in \mathbb{R}$ and $(t_{0}, x_{0}) \in \mathbb{R}^{2}$ with $t_{0} \neq 0$. Given $\epsilon \in (0,1)$, choose a positive real number $\gamma \geq 2$ and define the smooth indicator function $\chi(t, x) = \chi_{1, t_{0}, \gamma \epsilon}(t) \, \chi_{2, x_{0}, \gamma \epsilon}(x)$ for $(t, x) \in \mathbb{R}^{2}$. Let $g \in \mathcal{S}'(\mathbb{R}^{2})$ be a tempered distribution with $\operatorname{supp}(g) \subseteq B_{2\epsilon}(t_{0}, x_{0})$, and define the second-order differential operator $\mathcal{Q}_{1} := P^{2} + P_{0}P + P_{0}^{2}$. 
		
		Then, the following propositions hold:
		\begin{enumerate}
			\item[\rm(i)]The weighted derivative estimates satisfy:
			\begin{align*}
				\Big\|\mathcal{F}_{t,x}\left((x-x_{0})\chi \,\langle\nabla_{t,x}\rangle^{s}\partial_{x}\mathcal{Q}_{1}g\right)\Big\|_{L^{r'}_{\tau\xi}} &\leq c \epsilon \|g\|_{\widehat{H}^{r}_{s+3}(\mathbb{R}^{2})}+\frac{c}{\epsilon^{q_{s}}}\| g \|_{\widehat{H}^{r}_{s+2}}, \\
				\Big\|\mathcal{F}_{t,x}\left((t-t_{0})\chi \,\langle \nabla_{t,x}\rangle^{s} \partial_{t} \mathcal{Q}_{1} g\right)\Big\|_{L^{r'}_{\tau\xi}} &\leq c \epsilon \|g\|_{\widehat{H}^{r}_{s+3}(\mathbb{R}^{2})}+\frac{c}{\epsilon^{q_{s}}}\| g \|_{\widehat{H}^{r}_{s+2}},
			\end{align*}
			where $q_{s} > 1$, and $c$ denotes a constant depending on $s$ and the uniform norms of $\chi$ and its derivatives up to a finite order.
			\item[\rm(ii)] The commutator estimates with the spatial and temporal coordinate weights satisfy:
			\begin{align*}
				\Big\| \Big[ \langle \nabla_{t,x} \rangle^{s}; (x-x_{0})\chi \Big] \partial_{x}\mathcal{Q}_{1}g  \Big\|_{\widehat{L}^r_{xt}} &\leq \frac{c}{\epsilon^{p_{s}}}\|g\|_{\widehat{H}^{r}_{s+2}(\mathbb{R}^{2})}, \\
				\Big\| \Big[ \langle \nabla_{t,x} \rangle^{s} ; (t-t_{0})\chi \Big] \partial_{t}\mathcal{Q}_{1}g  \Big\|_{\widehat{L}^r_{xt}} &\leq \frac{c}{\epsilon^{p_{s}}}\|g\|_{\widehat{H}^{r}_{s+2}(\mathbb{R}^{2})}.
			\end{align*}
			where $p_{s} > 1$, and $c$ denotes a constant depending on $s$ and the uniform norms of $\chi$ and its derivatives up to a finite order.
			\item[\rm(iii)] The full commutator estimate with the vector field difference satisfies:
			\begin{equation*}
				\Big\| \big[(P-P_{0})\mathcal{Q}_{1}; \chi\big]g \Big\|_{\widehat{H}^{r}_{s}(\mathbb{R}^{2})} \leq \frac{c}{\epsilon^{y_{s}}}\|g\|_{\widehat{H}^{r}_{s+2}(\mathbb{R}^{2})},
			\end{equation*}
			where $y_{s} > 1$, and $c$ denotes a constant depending on $s$ and the uniform norms of $\chi$ and its derivatives up to a finite order.
		\end{enumerate}
		where $c$ is a positive constant independent of $\epsilon$ and $g$.
	\end{lema}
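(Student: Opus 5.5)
The three estimates will be proved by one device: write each operator in Fourier variables as a convolution against the Fourier transform of a cutoff-type function, and then apply Young's inequality in the form already used in Lemma~\ref{bach1}. What makes this work is the support property $\operatorname{supp} g\subseteq B_{2\epsilon}(t_0,x_0)$ together with $\chi\equiv1$ on $B_{\gamma\epsilon}\supset B_{2\epsilon}$. This lets us insert $\chi$ freely, $g=\chi g$. It also lets us move the smooth weights $(x-x_0)\chi$ and $(t-t_0)\chi$ across $\langle\nabla_{t,x}\rangle^{s}$ at the cost of commutators. The rescaled cutoff satisfies $\|\langle\cdot\rangle^{|s|}\mathcal F(\partial^\beta\chi)\|_{L^1}\lesssim \epsilon^{-|\beta|-|s|}$, and that is the source of the powers $\epsilon^{-q_s},\epsilon^{-p_s},\epsilon^{-y_s}$. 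Throughout, $P_0:=3t_0\partial_t+x_0\partial_x$, so that $P-P_0=3(t-t_0)\partial_t+(x-x_0)\partial_x$.

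For (ii), write the commutator symbolically:
\[
\mathcal F\big([\langle\nabla\rangle^s;\,a]h\big)(\zeta)=\int \big(\langle\zeta\rangle^s-\langle\zeta-\eta\rangle^s\big)\hat a(\eta)\hat h(\zeta-\eta)\,d\eta,\qquad a=(x-x_0)\chi .
\]
By the mean value theorem, $|\langle\zeta\rangle^s-\langle\zeta-\eta\rangle^s|\lesssim |\eta|\langle\eta\rangle^{|s-1|}\langle\zeta-\eta\rangle^{s-1}$. Young's inequality then gives a bound of $\|\langle\cdot\rangle^{|s-1|+1}\hat a\|_{L^1}\|h\|_{\widehat H^r_{s-1}}$. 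Take $h=\partial_x\mathcal Q_1 g$. Since $\mathcal Q_1 g=\mathcal Q_1(\chi g)$ and $\mathcal Q_1$ has coefficients that are polynomial of degree $\le2$ in $(t,x)$, localize by replacing $t,x$ with $\tilde\chi t,\tilde\chi x$, where $\tilde\chi$ is a slightly larger cutoff. Lemma~\ref{bach1} then gives $\|\partial_x\mathcal Q_1 g\|_{\widehat H^r_{s-1}}\lesssim_{t_0,x_0}\epsilon^{-c}\|g\|_{\widehat H^r_{s+2}}$, which is the loss of one derivative and proves (ii). The $t$-version is identical.

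For (i), commute once more: $(x-x_0)\chi\langle\nabla\rangle^s\partial_x\mathcal Q_1 g=\langle\nabla\rangle^s\big((x-x_0)\chi\,\partial_x\mathcal Q_1g\big)-[\ldots]$. The commutator is handled by (ii). In the main term, $(x-x_0)\chi=(x-x_0)\tilde\chi\chi$, and $\|\mathcal F((x-x_0)\tilde\chi)\|_{L^1}\lesssim\epsilon$ by scaling: $(x-x_0)\tilde\chi$ is $\epsilon$ times a fixed Schwartz profile, so its Fourier $L^1$ norm is invariant up to the factor $\epsilon$. The top-order part of $\partial_x\mathcal Q_1 g$ costs $\|g\|_{\widehat H^r_{s+3}}$. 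The lower-order parts, obtained from derivatives hitting the polynomial coefficients, cost only $\widehat H^r_{s+2}$, with $\epsilon$-dependent constants. This yields $c\epsilon\|g\|_{s+3}+c\epsilon^{-q_s}\|g\|_{s+2}$.

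For (iii), expand $[(P-P_0)\mathcal Q_1;\chi]$ by the Leibniz rule into a finite sum of terms of the form $(\partial^\beta\chi)\,\cdot$ (a differential operator of order $\le 3-|\beta|$, $|\beta|\ge1$, with polynomial coefficients), applied to $g$. Each term has order at most $2$, and each factor is again a localized smooth multiplier. Lemma~\ref{bach1} bounds each term by $\epsilon^{-|\beta|-|s|-c}\|g\|_{\widehat H^r_{s+2}}$.

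\textbf{Main obstacle.} The delicate point is the sharp gain of a factor $\epsilon$, rather than $O(1)$, on the top-order term in (i), while all losses are kept at one derivative lower. Here the weight $\langle\cdot\rangle^{|s|}$ in Lemma~\ref{bach1} must not be applied to $(x-x_0)\tilde\chi$. If it were, it would destroy the $\epsilon$ gain through a factor $\epsilon^{-|s|}$. I would instead first commute $\langle\nabla\rangle^s$ past the weight, exactly as above, and apply the plain $L^1$ Young bound with no weight to the top-order piece.
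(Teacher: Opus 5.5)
Your proposal is correct and follows essentially the paper's route: Fourier-side convolution with Young's inequality, the unweighted bound $\|\mathcal{F}_{t,x}((x-x_{0})\chi)\|_{L^{1}_{\tau\xi}}\lesssim\epsilon$ as the source of the gain in (i), a mean-value bound on $\langle(\tau,\xi)\rangle^{s}-\langle(\lambda,\eta)\rangle^{s}$ for the commutators in (ii), and a Leibniz expansion for (iii); for (ii), your single Peetre-type bound is cleaner than the paper's split into $\Sigma_{1},\Sigma_{2}$. One simplification: in (i) the weight already sits outside $\langle\nabla_{t,x}\rangle^{s}$, so the paper applies Young's inequality at once without your commute-in/commute-out detour, and the unweighted-$L^{1}$-plus-commutator device is then needed only for the top-order coefficients in $\langle\nabla_{t,x}\rangle^{s}(a_{\alpha}\chi\,\partial^{\alpha}g)$, since the plain Fourier $L^{1}$ norm of $a_{\alpha}\chi$ is $O(1)$ uniformly in $\epsilon$ (you should state this step explicitly, as it is what keeps the constant in front of $\|g\|_{\widehat{H}^{r}_{s+3}}$ of size $\epsilon$).
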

	\begin{proof}
		We divide the proof into several parts. To establish item \rm(i), we only provide the detailed argument for the first estimate, as the proof for the second estimate follows from a completely analogous analysis.
		
		Notice that 
		\begin{equation}\label{scale1}
			\begin{split}
				\left\|\mathcal{F}_{t,x}\left((x-x_{0})\chi \langle \nabla_{t,x}\rangle^{s} \partial_{x} \mathcal{Q}_{1} g\right)\right\|_{L^{r'}_{\tau\xi}} &= \left\|\mathcal{F}_{t,x}\left((x-x_{0})\chi\right)*\mathcal{F}_{t,x}\left(\langle \nabla_{t,x}\rangle^{s}\partial_{x}\mathcal{Q}_{1} g\right)\right\|_{L^{r'}_{\tau\xi}}\\
				&\leq \left\|\mathcal{F}_{t,x}\left((x-x_{0})\chi\right)\right\|_{L^{1}_{\tau\xi}}\,\left\|\mathcal{F}_{t,x}\left(\langle \nabla_{t,x}\rangle^{s}\partial_{x}\mathcal{Q}_{1} g\right)\right\|_{L^{r'}_{\tau\xi}}.
			\end{split}
		\end{equation}
		The differential operator $\partial_{x}\mathcal{Q}_{1}$ is local and can be represented as 
		\begin{equation}\label{Q1}
			\partial_{x}\mathcal{Q}_{1} = \sum_{\substack{\alpha=(\alpha_{x},\alpha_{t})\in \mathbb{N}_{0}^{2}\\ 1\leq |\alpha|\leq 3}}a_{\alpha}(t,x)\partial^{\alpha}
		\end{equation}
		where each coefficient $a_{\alpha}(t,x)$ is a polynomial in $(t,x)$ of degree $|\alpha|-1$.
		
		Since the cutoff function satisfies $\chi \equiv 1$ on the support of $g$, we have 
		\begin{equation*}
			\begin{split}
				&	\bigg\|\mathcal{F}_{t,x}\left(\langle \nabla_{t,x}\rangle^{s}\partial_{x}\mathcal{Q}_{1} g\right)\bigg\|_{L^{r'}_{\tau\xi}}\\
				&\qquad \leq \sum_{\substack{\alpha=(\alpha_{t},\alpha_{x})\in \mathbb{N}_{0}^{2}\\1\leq |\alpha|\leq 3}}\bigg\|\mathcal{F}_{t,x}\Big(\langle \nabla_{t,x}\rangle^{s}\left(a_{\alpha}\,\chi\,\partial^{\alpha}g \right)\Big)\bigg\|_{L^{r'}_{\tau\xi}} \\
				&\qquad\leq \sum_{\substack{\alpha=(\alpha_{t},\alpha_{x})\in \mathbb{N}_{0}^{2}\\1\leq |\alpha|\leq 3}}\left\{\bigg\|\mathcal{F}_{t,x}\Big(a_{\alpha}\chi\langle \nabla_{t,x}\rangle^{s}\partial^{\alpha}g\Big)\bigg\|_{L^{r'}_{\tau\xi}}+\left\|\mathcal{F}_{t,x}\Big(\big[\langle \nabla_{t,x}\rangle^{s}, a_{\alpha}\chi\big]\partial^{\alpha}g\Big)\right\|_{L^{r'}_{\tau\xi}}\right\} \\
				&\qquad \leq \sum_{\substack{\alpha=(\alpha_{t},\alpha_{x})\in \mathbb{N}_{0}^{2}\\1\leq |\alpha|\leq 3}}\left\{\underbrace{\big\|\mathcal{F}_{t,x}\big(a_{\alpha}\chi\big)\big\|_{L^{1}_{\tau\xi}} \bigg\|\mathcal{F}_{t,x}\Big(\langle \nabla_{t,x}\rangle^{s}\partial^{\alpha}g\Big)\bigg\|_{L^{r'}_{\tau\xi}}}_{\Xi_{1}} + \underbrace{\bigg\|\mathcal{F}_{t,x}\Big(\big[\langle \nabla_{t,x}\rangle^{s}, a_{\alpha}\chi\big]\partial^{\alpha}g\Big)\bigg\|_{L^{r'}_{\tau\xi}}}_{\Xi_{2}}\right\} .
			\end{split}
		\end{equation*}
		The term $\Xi_{1}$ clearly satisfies the following upper bound
		\begin{equation*}
			\Xi_{1}\leq c_{\alpha}\epsilon^{|\alpha|-1} \big\|\mathcal{F}_{t,x}\Big(\langle \nabla_{t,x}\rangle^{s+3}g\Big)\big\|_{L^{r'}_{\tau\xi}}.
		\end{equation*}
		As for $\Xi_{2}$ we have 
		\begin{equation*}
			\begin{split}
				&\left|\mathcal{F}_{t,x}\Big(\big[\langle \nabla_{t,x}\rangle^{s}, a_{\alpha}\chi\big]\partial^{\alpha}g\Big)(\tau, \xi)\right| \\
				&\leq \int_{\mathbb{R}^2} \left|\mathcal{F}_{t,x}(a_{\alpha}\chi)(\tau-\tau', \xi-\xi')\right| 
				\sigma(\tau,\xi,\lambda,\eta) 
				\left| (i\tau')^{\alpha_t}(i\xi')^{\alpha_x} \mathcal{F}_{t,x}(g)(\tau', \xi') \right| \, d\tau' \, d\xi',
			\end{split}
		\end{equation*}
		where 
		\begin{equation*}
			\sigma(\tau,\xi,\lambda,\eta):=\left\langle(\tau,\xi)\right\rangle^{s}-\left\langle(\lambda,\eta)\right\rangle^{s}.
		\end{equation*}
		An application of the Mean Value Theorem yields:
		\begin{equation*}
			|\sigma(\tau,\xi, \lambda,\eta)|\lesssim_{s}\left\langle (\tau-\lambda,\xi-\eta)\right\rangle^{|s-2|}\,\left\langle (\lambda, \eta)\right\rangle^{s-2}\left(|(\lambda,\eta)|+\left|\left(\tau-\lambda,\xi-\eta\right)\right|\right)\, \left|(\tau-\lambda, \xi-\eta)\right|.
		\end{equation*}
		We decompose the domain of integration using the two complementary regions:
		\begin{equation*}
			\Sigma_{1} := \left\{ (\lambda,\eta) \in \mathbb{R}^2 : \frac{1}{2} |(\lambda,\eta)| > |(\tau-\lambda, \xi-\eta)| \right\}
		\end{equation*}
		and
		\begin{equation*}
			\Sigma_{2} := \left\{ (\lambda,\eta) \in \mathbb{R}^2 : \frac{1}{2} |(\lambda,\eta)| \leq |(\tau-\lambda, \xi-\eta)| \right\}.
		\end{equation*}
		On the region $\Sigma_{1}$, the high-frequency variable $(\lambda, \eta)$ dominates and 
		\begin{align*}
			&\iint_{\Sigma_{1}} \left| \mathcal{F}_{t,x}(a_{\alpha}\chi)(\tau-\lambda, \xi-\eta) \sigma(\tau,\xi,\lambda,\eta) \mathcal{F}_{t,x}(\partial^{\alpha}g)(\lambda, \eta) \right| \,\mathrm{d}\lambda \, \mathrm{d}\eta \\
			&\quad \lesssim \iint_{\Sigma_{1}} \langle(\tau-\lambda,\xi-\eta)\rangle^{|s-2|+1} \left| \mathcal{F}_{t,x}(a_{\alpha}\chi)(\tau-\lambda, \xi-\eta) \right| \langle (\lambda,\eta)\rangle^{s-1} \left| \mathcal{F}_{t,x}(\partial^{\alpha}g)(\lambda, \eta) \right| \,\mathrm{d}\lambda \, \mathrm{d}\eta \\
			&\quad \le c \left( \left| \mathcal{F}_{t,x}\left(\langle \nabla_{t,x}\rangle^{|s-2|+1}(a_{\alpha}\chi)\right) \right| * \left| \mathcal{F}_{t,x}\left(\langle \nabla_{t,x}\rangle^{s-1}\partial^{\alpha}g\right) \right| \right)(\tau,\xi) \\
			&\quad \le c \left\| \mathcal{F}_{t,x}\left(\langle \nabla_{t,x}\rangle^{|s-2|+1}(a_{\alpha}\chi)\right) \right\|_{L^1_{\tau\xi}} \left\| \mathcal{F}_{t,x}\left(\langle \nabla_{t,x}\rangle^{s-1}\partial^{\alpha}g\right) \right\|_{L^{r'}_{\tau\xi}} \\
			&\quad \lesssim\frac{1}{\epsilon^{k_{1}}} \| g \|_{\widehat{H}^{r}_{s+|\alpha|-1}},
		\end{align*}
		with $k_{1}=k_{1}(s)\geq 1.$

		Conversely, on $ \Sigma_{2}$ we have 
		\begin{align*}
			&\iint_{\Sigma_{2}} \left| \mathcal{F}_{t,x}(a_{\alpha}\chi)(\tau-\lambda, \xi-\eta) \sigma(\tau,\xi,\lambda,\eta) \mathcal{F}_{t,x}(\partial^{\alpha}g)(\lambda, \eta) \right| \,\mathrm{d}\lambda \, \mathrm{d}\eta \\
			&\quad \lesssim \iint_{\Sigma_{2}} \langle(\tau-\lambda,\xi-\eta)\rangle^{|s-2|+2} \left| \mathcal{F}_{t,x}(a_{\alpha}\chi)(\tau-\lambda, \xi-\eta) \right| \langle (\lambda,\eta)\rangle^{s-2} \left| \mathcal{F}_{t,x}(\partial^{\alpha}g)(\lambda, \eta) \right| \,\mathrm{d}\lambda \, \mathrm{d}\eta \\
			&\quad \le c \left( \left| \mathcal{F}_{t,x}\left(\langle \nabla_{t,x}\rangle^{|s-2|+2}(a_{\alpha}\chi)\right) \right| * \left| \mathcal{F}_{t,x}\left(\langle \nabla_{t,x}\rangle^{s-2}\partial^{\alpha}g\right) \right| \right)(\tau,\xi) \\
			&\quad \le c \left\| \mathcal{F}_{t,x}\left(\langle \nabla_{t,x}\rangle^{|s-2|+2}(a_{\alpha}\chi)\right) \right\|_{L^1_{\tau\xi}} \left\| \mathcal{F}_{t,x}\left(\langle \nabla_{t,x}\rangle^{s-2}\partial^{\alpha}g\right) \right\|_{L^{r'}_{\tau\xi}} \\
			&\quad \lesssim \frac{1}{\epsilon^{k_{1}+1}} \| g \|_{\widehat{H}^{r}_{s+|\alpha|-2}}.
		\end{align*}
		Combining the estimates for $\Xi_{1}$ and $\Xi_{2}$, we obtain:
		\begin{equation}\label{com2}
			\begin{split}
				&\bigg\|\mathcal{F}_{t,x}\bigg(\langle \nabla_{t,x}\rangle^{s}\partial_{x}\mathcal{Q}_{1} g\bigg)\bigg\|_{L^{r'}_{\tau\xi}} \\
				&\quad \lesssim \sum_{\substack{\alpha=(\alpha_{x},\alpha_{t})\in \mathbb{N}_{0}^{2}\\1\leq |\alpha|\leq 3}} c_{\alpha}\bigg\{ \epsilon^{|\alpha|-1} \Big\|\mathcal{F}_{t,x}\Big(\langle \nabla_{t,x}\rangle^{s+3}g\Big)\Big\|_{L^{r'}_{\tau\xi}} +  \frac{1}{\epsilon^{k_{1}+1}} \| g \|_{\widehat{H}^{r}_{s+|\alpha|-1}} \bigg\}\\
				&\lesssim_{\alpha} \| g \|_{\widehat{H}^{r}_{s+3}}+\frac{1}{\epsilon^{k_{1}+1}}\| g \|_{\widehat{H}^{r}_{s+2}}.
			\end{split}
		\end{equation}
		Finally, by substituting the estimates derived above into \eqref{scale1}, we obtain the desired bound. More precisely,
		\begin{equation*}
			\bigg\|\mathcal{F}_{t,x}\left((x-x_{0})\chi \,\langle\nabla_{t,x}\rangle^{s}\partial_{x}\mathcal{Q}_{1}g\right)\bigg\|_{L^{r'}_{\tau\xi}} \leq c \epsilon \|g\|_{\widehat{H}^{r}_{s+3}(\mathbb{R}^{2})}+\frac{c}{\epsilon^{q_{s}}}\| g \|_{\widehat{H}^{r}_{s+2}},
		\end{equation*}
		where $q_{s}=k_{1}.$

		Next, we establish item \rm(ii). Here too, we restrict our attention to the detailed proof of the first estimate, as the temporal companion estimate can be handled by an identical argument.
		
		Notice that 
		\begin{equation}\label{com1}
			\begin{split}
				&\mathcal{F}_{t,x}\bigg(\big[ \langle \nabla_{t,x} \rangle^{s}; (x-x_{0})\chi \big] \partial_{x}\mathcal{Q}_{1}g \bigg)(\tau,\xi)= \\
				& \iint\limits_{\mathbb{R}^2} \sigma(\tau, \xi, \lambda, \eta) \mathcal{F}_{t,x}\big((x-x_{0})\chi\big)(\tau-\lambda, \xi-\eta) 
				\mathcal{F}_{t,x}\left(\partial_{x}\mathcal{Q}_{1}g\right)(\lambda, \eta) \,\mathrm{d}\lambda \, \mathrm{d}\eta.
			\end{split}		
		\end{equation}
		At this point, we shall point out that an  analysis similar to the one described above in item (i) yields
		\begin{align*}
			&\iint_{\Sigma_{1}} \left| \sigma(\tau, \xi, \lambda, \eta) \mathcal{F}_{t,x}\big((x-x_{0})\chi\big)(\tau-\lambda, \xi-\eta) \mathcal{F}_{t,x}\left(\partial_{x}\mathcal{Q}_{1}g\right)(\lambda, \eta) \right| \,\mathrm{d}\lambda \, \mathrm{d}\eta \\
			&\, \lesssim \iint_{\Sigma_{1}} \langle(\tau-\lambda,\xi-\eta)\rangle^{|s-2|+1} \left| \mathcal{F}_{t,x}\big((x-x_{0})\chi\big)(\tau-\lambda, \xi-\eta) \right| \langle (\lambda,\eta)\rangle^{s-1} \left| \mathcal{F}_{t,x}\left(\partial_{x}\mathcal{Q}_{1}g\right)(\lambda, \eta) \right| \,\mathrm{d}\lambda \, \mathrm{d}\eta \\
			&\, \le c \left( \left| \mathcal{F}_{t,x}\left(\langle \nabla_{t,x}\rangle^{|s-2|+1}\left((x-x_{0})\chi\right)\right) \right| * \left| \mathcal{F}_{t,x}\left(\langle \nabla_{t,x}\rangle^{s-1}\partial_{x}\mathcal{Q}_{1}g\right) \right| \right)(\tau,\xi).
		\end{align*}
		Conversely, on the region $\Sigma_{2}$, the low-frequency variable or comparable components dominate, yielding:
		\begin{align*}
			&\iint_{\Sigma_{2}} \left| \sigma(\tau, \xi, \lambda, \eta) \mathcal{F}_{t,x}\big((x-x_{0})\chi\big)(\tau-\lambda, \xi-\eta) \mathcal{F}_{t,x}\left(\partial_{x}\mathcal{Q}_{1}g\right)(\lambda, \eta) \right| \,\mathrm{d}\lambda \, \mathrm{d}\eta \\
			&\, \lesssim \iint_{\Sigma_{2}} \langle(\tau-\lambda,\xi-\eta)\rangle^{|s-2|+2} \left| \mathcal{F}_{t,x}\big((x-x_{0})\chi\big)(\tau-\lambda, \xi-\eta) \right| \langle (\lambda,\eta)\rangle^{s-2} \left| \mathcal{F}_{t,x}\left(\partial_{x}\mathcal{Q}_{1}g\right)(\lambda, \eta) \right| \,\mathrm{d}\lambda \, \mathrm{d}\eta \\
			&\, \le c \left( \left| \mathcal{F}_{t,x}\left(\langle \nabla_{t,x}\rangle^{|s-2|+2}\left((x-x_{0})\chi\right)\right) \right| * \left| \mathcal{F}_{t,x}\left(\langle \nabla_{t,x}\rangle^{s-2}\partial_{x}\mathcal{Q}_{1}g\right) \right| \right)(\tau,\xi).
		\end{align*}
		By combining the estimates obtained across both regions and applying Young's convolution inequality, we utilize the explicit formula for $\partial_{x}\mathcal{Q}_{1}$ given in \eqref{com1}. Following an argument analogous to the one used to derive \eqref{com2}, we obtain the desired bound:
		\begin{align*}
			&\bigg\| \big[ \langle \nabla_{t,x} \rangle^{s}; (x-x_{0})\chi \big] \partial_{x}\mathcal{Q}_{1}g \bigg\|_{\widehat{L}^r_{xt}} \\
			&\qquad \lesssim \left\|\langle \nabla_{t,x}\rangle^{|s-2|+2}\left((x-x_{0})\chi\right)\right\|_{\widehat{L}^r_{xt}} \sum_{\substack{\alpha=(\alpha_{x},\alpha_{t})\in \mathbb{N}_{0}^{2} \\ 1\leq |\alpha|\leq 3}} \left\|\mathcal{F}_{t,x}\left(\langle \nabla_{t,x}\rangle^{s-1}\left(a_{\alpha}\,\chi\,\partial^{\alpha}g \right)\right)\right\|_{L^{r'}_{\tau\xi}} \\
			&\qquad \leq c \left\|\langle \nabla_{t,x}\rangle^{|s-2|+2}\left((x-x_{0})\chi\right)\right\|_{\widehat{L}^r_{xt}} \sum_{\substack{\alpha=(\alpha_{x},\alpha_{t})\in \mathbb{N}_{0}^{2} \\ 1\leq |\alpha|\leq 3}} \big\|a_{\alpha}\,\chi \,\partial^{\alpha}g \big\|_{\widehat{H}^{r}_{s-1}(\mathbb{R}^{2})} \\
			&\qquad \lesssim_{\alpha} \frac{1}{\epsilon^{\widetilde{k_{1}}}}\| g \|_{\widehat{H}^{r}_{s+2}},
		\end{align*}
		where the implicit constant is independent of $\epsilon$, and $\widetilde{k_{1}} \geq 1.$
		
		Finally, we establish item \rm(iii), which addresses the full commutator estimate involving the vector field difference. By employing the classical algebraic identity for commutators, the operator can be decoupled as follows:
		\begin{equation*}
			\big[(P-P_{0})\mathcal{Q}_{1}; \chi \big] = (P-P_{0})\big[\mathcal{Q}_{1}; \chi\big] + \big[P-P_{0}; \chi\big]\mathcal{Q}_{1}.
		\end{equation*}
		A direct computation shows that the first inner commutator, $\big[\mathcal{Q}_{1}; \chi\big]$, is a first-order local differential operator of the form
		\begin{equation*}
			(P-P_{0})\big[\mathcal{Q}_{1}; \chi\big] = \sum_{\substack{\alpha=(\alpha_t, \alpha_x) \in \mathbb{N}_{0}^{2} \\ |\alpha| \le 2}}  b_{\alpha}(t,x)\partial^{\alpha}.
		\end{equation*}
		On the other hand, the second commutator, $\big[P-P_{0}; \chi\big]$, reduces to a zero-order multiplicative operator given exactly by 
		\begin{equation*}
			\big[P-P_{0}; \chi\big] = (P-P_{0})(\chi).
		\end{equation*}
		Since $\mathcal{Q}_{1}$ is linear and local, it can be written as 
		\begin{equation*}
			\mathcal{Q}_{1} = \sum_{\substack{\beta=(\beta_t, \beta_x) \in \mathbb{N}_{0}^{2} \\ |\beta| \le 2}} q_{\beta}(t,x) \partial^{\beta},
		\end{equation*}
		and thus, applying an argument analogous to the one used in \eqref{com2}, we obtain the desired bound:
		\begin{equation*}
			\begin{split}
				\Big\| \big[ (P-P_{0})\mathcal{Q}_{1}; \chi \big] g \Big\|_{\widehat{H}^r_{s}} 
				&\leq \sum_{\substack{\alpha=(\alpha_t, \alpha_x) \in \mathbb{N}_{0}^{2} \\ |\alpha| \le 2}} \Big\| \langle \nabla_{t,x} \rangle^{s} (b_{\alpha} \partial^{\alpha}g) \Big\|_{\widehat{L}^r_{t,x}} \\
				&\quad + \Big\| \langle \nabla_{t,x} \rangle^s \big( (P - P_0)(\chi) \mathcal{Q}_1 g \big) \Big\|_{\widehat{L}^r_{t,x}} \\
				&\lesssim_{\alpha} \frac{1}{\epsilon^{\widetilde{k_{1}}}}\| g \|_{\widehat{H}^{r}_{s+2}}.
			\end{split}
		\end{equation*}
	\end{proof}
	We close this appendix with an embedding lemma, used elsewhere in the paper to transfer $X^{r}_{s,b}$-norm control into pointwise-in-time bounds.
	\begin{lema}\label{stabilitylinfty}
		Let $\mathcal{Y} \subset \mathcal{S}'(\mathbb{R}^{n+1})$ be a Banach space that is stable under multiplication by $L^\infty_t$ functions, satisfying 
		\begin{equation*}
			\|\psi u\|_{\mathcal{Y}} \leq c \|\psi\|_{L^\infty_t} \|u\|_{\mathcal{Y}}, \quad \forall \psi \in L^\infty_t, \, u \in \mathcal{Y}.
		\end{equation*}
		Let $\{U_{\phi}(t)\}_{t \in \mathbb{R}}$ be the unitary group defined by $U_{\phi}(t) = e^{it\phi(D)}$, where the phase function $\phi: \mathbb{R}^n \to \mathbb{R}$ is of polynomial growth. Assume that the estimate 
		\begin{equation*}
			\|U_{\phi}(t)u_0\|_{\mathcal{Y}} \leq c \|u_0\|_{\widehat{L}^r_x}
		\end{equation*}
		holds for all $u_0 \in \widehat{L}^r_x$. Then, for any $b > \frac{1}{r}$, the following estimate holds for all $u \in X^r_{s,b}$:
		\begin{equation*}
			\|u\|_{\mathcal{Y}} \leq c(b) \|u\|_{X^{r}_{s,b}},
		\end{equation*}
		where the constant $c(b)$ depends only on $b$.
	\end{lema}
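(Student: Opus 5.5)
The plan is the standard transference principle of Bourgain-space theory, adapted to the Fourier--Lebesgue norm. Write $u\in X^r_{s,b}$ in terms of its ``profile'' $v:=U_\phi(-t)u$. Fourier-transforming in $t$, one has $\mathcal{F}_{t,x}(v)(\lambda,\xi)=\mathcal{F}_{t,x}(u)(\lambda+\phi(\xi),\xi)$, with the sign convention chosen so that the modulation weight $\langle\tau-\phi(\xi)\rangle$ becomes $\langle\lambda\rangle$. Fourier inversion in time then gives the superposition
\[
u(t)=\int_{\mathbb{R}} e^{it\lambda}\,U_\phi(t)\,f_\lambda\,d\lambda,\qquad \mathcal{F}_x(f_\lambda)(\xi):=\mathcal{F}_{t,x}(u)(\lambda+\phi(\xi),\xi),
\]
valid first for $u\in\mathcal{S}$ and then by density.

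For each fixed $\lambda$, the function $e^{it\lambda}$ is a unimodular $L^\infty_t$ multiplier. The assumed stability of $\mathcal{Y}$ together with the linear estimate therefore gives
\[
\|e^{it\lambda}U_\phi(t)f_\lambda\|_{\mathcal{Y}}\le c\,\|U_\phi(t)f_\lambda\|_{\mathcal{Y}}\le c\,\|f_\lambda\|_{\widehat{L}^r_x}.
\]
By Minkowski's inequality for Bochner integrals in the Banach space $\mathcal{Y}$ (justified by the lower semicontinuity or Fatou property of the norm, or directly on Schwartz $u$), this yields $\|u\|_{\mathcal{Y}}\le c\int_{\mathbb{R}}\|f_\lambda\|_{\widehat{L}^r_x}\,d\lambda$.

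The key step is then Hölder's inequality in $\lambda$ with exponents $(r,r')$:
\[
\int_{\mathbb{R}}\|f_\lambda\|_{\widehat L^r_x}\,d\lambda\le \|\langle\lambda\rangle^{-b}\|_{L^r_\lambda}\,\Big(\int_{\mathbb{R}}\langle\lambda\rangle^{br'}\|\mathcal{F}_x f_\lambda\|_{L^{r'}_\xi}^{r'}d\lambda\Big)^{1/r'}.
\]
The first factor is finite exactly when $br>1$, which is the hypothesis $b>1/r$. By Fubini and the change of variables $\tau=\lambda+\phi(\xi)$, the second factor equals $\|u\|_{X^r_{0,b}}$. This gives $\|u\|_{\mathcal{Y}}\le c(b)\|u\|_{X^r_{0,b}}$. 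Since $s\ge0$ in every application, this is at most $c(b)\|u\|_{X^r_{s,b}}$. If a version with a general $s$ is wanted, one instead takes the linear hypothesis at level $\widehat{H}^r_s$, and the identical argument goes through.

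The main obstacle is not the algebra but the justification. One must make sense of the vector-valued integral in an abstract $\mathcal{Y}\subset\mathcal{S}'$ and pass from Schwartz functions to general $u\in X^r_{s,b}$. I would handle this by proving the bound for $u$ with compactly supported $\mathcal{F}_{t,x}u$, where all integrals are absolutely convergent. I would then extend by density of such functions in $X^r_{s,b}$ (true since $r'<\infty$), using completeness of $\mathcal{Y}$ and continuity of the inclusion into $\mathcal{S}'$ to identify the limit with $u$.
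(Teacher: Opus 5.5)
Your proof is correct and is exactly the standard transference argument: the superposition $u(t)=\int_{\mathbb{R}}e^{it\lambda}U_\phi(t)f_\lambda\,d\lambda$, uniform boundedness of the modulations $e^{it\lambda}$ on $\mathcal{Y}$, Minkowski, and then H\"older in $\lambda$, where $\|\langle\lambda\rangle^{-b}\|_{L^r_\lambda}<\infty$ exactly when $b>\frac1r$. The paper does not reprove this but takes it from \cite[Lemma 2.1]{Gru2004}. Your caveat on $s$ is also right: a linear hypothesis at the $\widehat{L}^r_x$ level only gives control by $\|u\|_{X^r_{0,b}}$, so the stated bound holds for $s\geq 0$ (the case $s=0$ is the one used in {\sc Claim}~\ref{c2.2}), whereas for $s<0$ it fails as literally stated (e.g.\ for $\mathcal{Y}=L^\infty_t\widehat{L}^r_x$) unless the linear estimate is assumed at the $\widehat{H}^r_s$ level.
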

	\begin{proof}
		For a detailed proof of this estimate, we refer the reader to \cite[Lemma 2.1]{Gru2004}.
	\end{proof}
	Finally, for completeness, we record two classical estimates used repeatedly throughout Sections~\ref{sectA}, \ref{sect2}: the $L^8_{xt}$ Strichartz estimate for the Airy propagator, and a fractional Leibniz (Kato-Ponce type) inequality.
	\begin{teorema}\label{kpvstrichartz}
		There exists a constant $c > 0$ such that the following Strichartz estimate holds for the Airy propagator:
		\begin{equation}
			\left\|\int\limits_{\mathbb{R}}  e^{i(x\xi + t\xi^3)} \widehat{f}(\xi) \, d\xi \right\|_{L^{8}_{xt}(\mathbb{R}^2)} \leq c \|f\|_{L^{2}_{x}(\mathbb{R})}.
		\end{equation}
	\end{teorema}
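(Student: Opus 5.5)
We would prove this by the standard $TT^{*}$ argument, combining a pointwise-in-time dispersive decay estimate for the Airy group with the one-dimensional Hardy--Littlewood--Sobolev inequality in the time variable.

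First we check that the estimate is consistent with scaling. If $u(t,x)=\int e^{i(x\xi+t\xi^{3})}\widehat f(\xi)\,d\xi$ and $f_\lambda(x)=f(\lambda x)$, then the corresponding solution is $u(\lambda^{3}t,\lambda x)$. Both sides therefore scale like $\lambda^{-1/2}$, so no derivative loss is forced. In the Kenig--Ponce--Vega family $\|D_x^{\theta\alpha/2}S(t)f\|_{L^q_tL^p_x}\lesssim\|f\|_{L^2}$, with $(q,p)=\big(\tfrac{6}{\theta(\alpha+1)},\tfrac{2}{1-\theta}\big)$, the case $p=q=8$ corresponds to $\theta=3/4$ and $\alpha=0$. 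This is the derivative-free endpoint we now prove directly.

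\emph{Step 1 (dispersive estimate).} Write $S(t)f=K_t*f$ with $K_t(x)=\int e^{i(x\xi+t\xi^{3})}d\xi$. The phase has third $\xi$-derivative $6t$, so van der Corput's lemma, or the explicit Airy representation $K_t(x)=c\,t^{-1/3}\operatorname{Ai}(\cdot\, t^{-1/3}x)$ together with boundedness of $\operatorname{Ai}$ on $\mathbb R$, gives $\|K_t\|_{L^\infty}\lesssim|t|^{-1/3}$. Hence $\|S(t)f\|_{L^\infty_x}\lesssim|t|^{-1/3}\|f\|_{L^1_x}$. Riesz--Thorin interpolation with the unitarity $\|S(t)f\|_{L^2}=\|f\|_{L^2}$ then yields
\[
\|S(t)f\|_{L^{8}_x}\lesssim |t|^{-\frac13(1-\frac28)}\|f\|_{L^{8/7}_x}=|t|^{-1/4}\|f\|_{L^{8/7}_x}.
\]

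\emph{Step 2 ($TT^{*}$).} Let $T f=S(t)f$, viewed as a map $L^2_x\to L^8_{xt}$. The bound for $T$ is equivalent to $TT^{*}:L^{8/7}_{xt}\to L^{8}_{xt}$, where $TT^{*}F(t)=\int_{\mathbb R}S(t-s)F(s)\,ds$. Minkowski's inequality and Step 1 give
\[
\|TT^{*}F(t)\|_{L^8_x}\lesssim\int_{\mathbb R}|t-s|^{-1/4}\|F(s)\|_{L^{8/7}_x}\,ds .
\]
The one-dimensional Hardy--Littlewood--Sobolev inequality with kernel $|t|^{-1/4}$ maps $L^{8/7}_t\to L^8_t$, since $\tfrac78+\tfrac14=1+\tfrac18$ and $0<\tfrac14<1$. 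This gives $\|TT^{*}F\|_{L^8_{xt}}\lesssim\|F\|_{L^{8/7}_{xt}}$, and hence the claimed estimate, first for $f\in\mathcal S$ and then for all $f\in L^2$ by density.

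\emph{Main obstacle.} The only delicate point is the uniform bound $\|K_t\|_{L^\infty}\lesssim|t|^{-1/3}$. The kernel integral is oscillatory and not absolutely convergent, so we would treat it by inserting a smooth cutoff $e^{-\delta\xi^{2}}$ and noting that the van der Corput constant (third-derivative version, $|\phi'''|\ge 6|t|$) is independent of $x$ and $\delta$. Letting $\delta\to0$ then gives the bound. The remaining steps are routine.
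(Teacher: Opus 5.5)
Your argument is correct and is the standard one: the paper gives no proof of its own and only cites Theorem 2.4 of Kenig--Ponce--Vega, whose Strichartz estimates are proved exactly by combining the $|t|^{-1/3}$ dispersive bound for the Airy kernel, interpolation, and the $TT^{*}$ argument with Hardy--Littlewood--Sobolev in time. Your proof is the derivative-free case $\theta=3/4$, $\alpha=0$ of their family, where Riesz--Thorin suffices in place of Stein's analytic interpolation, and your exponents check out: the time kernel is $|t|^{-1/4}$, which maps $L^{8/7}_t\to L^{8}_t$.
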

	\begin{proof}
		For a detailed derivation of this $L^{8}_{xt}$ estimate, see Theorem 2.4 in \cite{KPV1991}.
	\end{proof}
	Together with the localized trace and commutator estimates established earlier in this appendix, this fractional Leibniz rule provides all the technical ingredients needed to close the proofs of Theorems~\ref{main1} and~\ref{main2}.
	\begin{teorema}\label{kp1}
		Let $\frac{1}{2} < r < \infty$, $1 < p_1, p_2, q_1, q_2 \le \infty$ satisfy $\frac{1}{r} = \frac{1}{p_1} + \frac{1}{q_1} = \frac{1}{p_2} + \frac{1}{q_2}$. Given $s > \max\left(0, \frac{n}{r} - n\right)$ or $s \in 2\mathbb{N}$, there exists a constant $c = c(n, s, r, p_1, q_1, p_2, q_2) < \infty$ such that for all $f, g \in \mathcal{S}(\mathbb{R}^n)$ we have:
		\begin{align*}
			\||\nabla|^s(fg)\|_{L^r(\mathbb{R}^n)} &\le c \left( \||\nabla|^s f\|_{L^{p_1}(\mathbb{R}^n)} \|g\|_{L^{q_1}(\mathbb{R}^n)} + \|f\|_{L^{p_2}(\mathbb{R}^n)} \||\nabla|^s g\|_{L^{q_2}(\mathbb{R}^n)} \right), \\[1ex]
			\|\langle \nabla\rangle^s(fg)\|_{L^r(\mathbb{R}^n)} &\le c(s, n) \left( \|f\|_{L^{p_1}(\mathbb{R}^n)} \|\langle \nabla\rangle^s g\|_{L^{q_1}(\mathbb{R}^n)} + \|\langle \nabla\rangle^s f\|_{L^{p_2}(\mathbb{R}^n)} \|g\|_{L^{q_2}(\mathbb{R}^n)} \right)
		\end{align*}
	\end{teorema}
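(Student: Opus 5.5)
This is the Kato--Ponce fractional Leibniz inequality, and I would prove it by Littlewood--Paley paraproduct decomposition plus Coifman--Meyer multiplier theory. I do not try to derive it from anything earlier in the paper.

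\emph{Decomposition.} Fix a Littlewood--Paley partition $\{\Delta_j\}$ with partial sums $S_j=\sum_{k\le j}\Delta_k$. Split
\[
fg=\sum_j S_{j-3}f\,\Delta_j g+\sum_j \Delta_j f\,S_{j-3}g+\sum_{|j-k|\le 2}\Delta_j f\,\Delta_k g=:\Pi_1+\Pi_2+\Pi_3 .
\]
Treat the two paraproducts first. In $\Pi_1$ each summand has frequency support in an annulus $\sim 2^j$. Hence $|\nabla|^s\Pi_1=\sum_j S_{j-3}f\,\widetilde\Delta_j(|\nabla|^s g)$, where $\widetilde\Delta_j$ has symbol $|\xi|^s\psi(2^{-j}\xi)/|\xi_g|^s$. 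This is a bilinear operator applied to $(f,|\nabla|^sg)$ whose symbol satisfies the Coifman--Meyer conditions. The Coifman--Meyer theorem, as extended by Grafakos--Torres and Grafakos--Oh \cite{GO2014} to the quasi-Banach range $\frac12<r<\infty$, then gives
\[
\||\nabla|^s\Pi_1\|_{L^r}\lesssim\|f\|_{L^{p_2}}\||\nabla|^s g\|_{L^{q_2}} .
\]
When $p_2=\infty$ or $q_2=\infty$ I would instead use the square-function/maximal-function argument, bounding $|S_{j-3}f|\le Mf$. $\Pi_2$ is symmetric and gives the other term.

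\emph{High--high interaction (the main obstacle).} In $\Pi_3$ the output frequency can be much smaller than $2^j$, so $|\nabla|^s$ does not localize. I would write
\[
|\nabla|^s\Pi_3=\sum_k\Delta_k|\nabla|^s\sum_{j\ge k-4}\Delta_jf\,\widetilde\Delta_jg ,
\]
bounding $2^{ks}\le 2^{js}2^{-(j-k)s}$. Since $s>0$, the factor $2^{-(j-k)s}$ is summable in $j-k$.

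For $r\ge1$ this is routine: Fefferman--Stein vector-valued maximal inequalities and the square function give the bound by $\|f\|_{L^{p_1}}\||\nabla|^sg\|_{L^{q_1}}$. For $r<1$, Littlewood--Paley projections of sums are not bounded on $L^r$. Here I would use the Peetre maximal function, or the $H^r$ atomic/Triebel--Lizorkin characterization $\||\nabla|^sF\|_{L^r}\sim\|F\|_{\dot F^{s}_{r,2}}$ for $r\le1$. This step needs exactly the restriction $s>n/r-n$, because that guarantees the $\ell^r$-summation over the off-diagonal range converges. In the alternative case $s\in2\mathbb N$, $|\nabla|^s=(-\Delta)^{s/2}$ is a differential operator, and the ordinary Leibniz rule plus H\"older and Gagliardo--Nirenberg interpolation of intermediate derivatives gives the estimate directly for all $\frac12<r<\infty$.

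\emph{Inhomogeneous version.} Decompose as before, but with $S_0$ low frequencies kept separate. The pieces at frequency $\lesssim1$ are handled by H\"older and the boundedness of $\langle\nabla\rangle^s$ on low-frequency-localized products. The high-frequency pieces are handled exactly as above, since $\langle\xi\rangle^s\sim|\xi|^s$ there, and $\langle\xi\rangle^s|\xi|^{-s}\psi$ is a Mikhlin multiplier. Density of $\mathcal S$ then completes the proof.
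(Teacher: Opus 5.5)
Your outline is correct and is, in substance, the Grafakos--Oh argument that the paper simply cites; the paper's own proof consists only of the references to Kato--Ponce and Grafakos--Oh. That argument is: Coifman--Meyer in the range $1<p,q\le\infty$, $\frac12<r<\infty$ for the two paraproducts, a Hardy-space/Peetre bound for the high--high term (which is where $s>\frac nr-n$ enters), and the Leibniz rule with Gagliardo--Nirenberg when $s\in2\mathbb N$.

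Two small repairs. First, at $r=1$ Fefferman--Stein fails on $L^1(\ell^2)$, so $r=1$ belongs with your Hardy-space case; there you need, and only have, $\||\nabla|^sF\|_{L^r}\lesssim\|F\|_{\dot F^{s}_{r,2}}$, not an equivalence. Second, for $q_2=\infty$ your $Mf$ fallback breaks down, because the $L^\infty$ factor is the high-frequency one and its square function is not bounded; no fallback is needed, though, since Coifman--Meyer already gives $L^{p}\times L^\infty\to L^{p}$.
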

	\begin{proof}
		The original case, in the $L^2$ setting, was proved by Kato and Ponce \cite{KP88}; the general version stated here is due to Grafakos and Oh \cite{GO2014}.
	\end{proof}
\section{Acknowledgment}
The figures in this paper were generated with the assistance of Claude (Anthropic). The authors would like to thank Felipe Linares for his feedback on a previous version of this work and his valuable suggestions regarding the references. We also thank Oscar Riaño for his comments and suggestions on this work.

	\end{document}